\documentclass[a4paper]{amsart} 
\usepackage[T1]{fontenc}
\usepackage[margin=1in]{geometry}
\usepackage[american]{babel}
\usepackage[utf8x]{inputenc}
\usepackage[sc]{mathpazo}
\usepackage{amsmath,amssymb,amsfonts,mathrsfs}
\usepackage[colorlinks=true]{hyperref}
\usepackage{amsthm}
\usepackage{graphicx}
\usepackage{dutchcal}
\usepackage{varioref}
\usepackage{datetime}
\usepackage{cancel}
\usepackage{mathtools}

\usepackage{booktabs}

\usepackage{tikz-cd}
\usepackage{csquotes}
\usepackage{epigraph}

\usepackage{interval}
\intervalconfig{soft open fences}
\usepackage{dsfont}
\usepackage{microtype}

\usepackage{faktor}
\usepackage{xcolor}
\usepackage{prerex}
\usepackage{epsdice}
\usepackage{booktabs}
\usepackage[normalem]{ulem}
\usepackage{thmtools}
\usepackage{thm-restate}

\usepackage{pgfplots}
\pgfplotsset{compat=newest}
\usepgfplotslibrary{groupplots}
\usepgfplotslibrary{polar}
\usepgfplotslibrary{smithchart}
\usepgfplotslibrary{statistics}
\usepgfplotslibrary{dateplot}
\usepgfplotslibrary{ternary}
\usepgfplotslibrary{fillbetween}

\DeclareMathAlphabet{\mathdutchcal}{U}{dutchcal}{m}{n}
\SetMathAlphabet{\mathdutchcal}{bold}{U}{dutchcal}{b}{n}
\DeclareMathAlphabet{\mathdutchbcal}{U}{dutchcal}{b}{n}

\newtheorem{theorem}{Theorem}[section]
\newtheorem{proposition}[theorem]{Proposition}
\newtheorem{lemma}[theorem]{Lemma}
\newtheorem{corollary}[theorem]{Corollary}

\theoremstyle{definition}
\newtheorem{definition}[theorem]{Definition}

\theoremstyle{remark}
\newtheorem{remark}[theorem]{Remark}
\theoremstyle{conjecture}

\theoremstyle{question}

\numberwithin{equation}{section}

\newcommand{\C}{\mathbb{C}}

\newcommand{\N}{\mathbb{N}}

\newcommand{\R}{\mathbb{R}}
\newcommand{\Z}{\mathbb{Z}}

\newcommand{\T}{\mathbb{T}}

\DeclarePairedDelimiter\abs{\lvert}{\rvert}
\DeclarePairedDelimiter\norm{\lVert}{\rVert}

\renewcommand{\epsilon}{\ensuremath\varepsilon}

\renewcommand{\phi}{\ensuremath{\varphi}}

\newcommand{\eps}{\epsilon}

\newcommand{\anyfield}{\mathbb{F}}
\NewDocumentCommand\field{e{_}}{\IfValueF{#1}{\anyfield}\IfNoValueF{#1}{\Z_{#1}}}

\newcommand{\Reals}{\R}
\newcommand{\Nats}{\N}
\newcommand{\into}{\hookrightarrow}

\newcommand{\longto}{\longrightarrow}

\newcommand{\Laplace}{\Delta}
\newcommand{\E}{\mathbb{E}}

\newcommand{\Prob}{\mathbb{P}}

\newcommand{\muGP}{\mu_{\Ham}}

\newcommand{\reg}{\mathfrak{r}}
\newcommand{\scl}{\mathcal{scl}}
\newcommand{\cl}{\mathcal{cl}}
\newcommand{\ldd}{\text{law-defining datum}}
\newcommand{\ldds}{\text{law-defining data}}

\newcommand{\randomness}{\vcenter{\hbox{\scalebox{0.5}{\epsdice{5}}}}}
\newcommand{\rand}{\randomness}
\newcommand{\randH}{H^{\randomness}}

\newcommand{\ul}[1]{\underline{#1}}

\DeclareMathOperator{\id}{id}

\DeclareMathOperator{\vol}{vol}
\DeclareMathOperator{\supp}{supp}

\DeclareMathOperator{\Hof}{Hofer}
\DeclareMathOperator{\Ham}{Ham}

\DeclareMathOperator{\Symp}{Symp}
\DeclareMathOperator{\Diff}{Diff}

\DeclareMathOperator{\Aut}{Aut}

\DeclareMathOperator{\osc}{osc}

\DeclareMathOperator{\Cov}{Cov}

\DeclareMathOperator{\End}{End}
\DeclareMathOperator{\LDD}{LDD}
\DeclareMathOperator{\Sym}{Sym}
\DeclareMathOperator{\pr}{pr}

\definecolor{ceruleanblue}{rgb}{0.16, 0.32, 0.75}

\tikzset{    
    mypoint/.style={
        circle,
        draw,
        inner sep=.3mm
        },  
    whitepoint/.style={
        fill=white, 
        mypoint
        },  
    blackpoint/.style={
        fill=black, 
        mypoint
        },  
    textnode/.style={
        text height=2.5ex, 
        text depth=1ex
        },  
    }

\hypersetup{
linkcolor=blue,
citecolor=blue,
urlcolor=blue
}

\begin{document}

\date{\today}
\title[Random Hamiltonians II]{Random Hamiltonians II: A central limit theorem and the Hofer geometry of random walks}
\author{Adrian Dawid}
\address{Department of Pure Mathematics and Mathematical Statistics, University of Cambridge}
\email{apd55@cam.ac.uk}

\begin{abstract}
This paper investigates the global geometry of the group of Hamiltonian diffeomorphisms $\operatorname{Ham}(M,\omega)$ using random walks.
On a large class of symplectic manifolds, we show that the expected Hofer norm of such a random walk grows at least as fast as the square root of the number of steps.
Furthermore, we show that if the random walk is restricted to an abelian subgroup, then the growth rate is also bounded from above by the square root of the number of steps. 
This provides, for subgroups of the group of Hamiltonian diffeomorphisms, a probabilistic version of the flatness observed in commutative finite-dimensional Lie groups.
We also provide some numerical evidence suggesting that this upper bound fails away from commutative subgroups.
En route, we show that the class of probability measures introduced in~\cite{dawid-2025} is a class of Borel measures with respect to the $C^\infty$-topology on $\operatorname{Ham}(M,\omega)$, show the measurability of the stable commutator length, and show a central limit theorem for Hofer-Lipschitz quasimorphisms on $\operatorname{Ham}(M,\omega)$.
\end{abstract}
\maketitle

\tableofcontents
\section{Introduction}
This paper uses probabilistic methods to study the large-scale geometry of the group of Hamiltonian diffeomorphisms endowed with the Hofer metric.
Many aspects of the large-scale geometry of this group remain mysterious.
The behavior of random walks can shed light on the large-scale geometry of a metric space, and in particular on its curvature properties.
The prequel~\cite{dawid-2025} has introduced a family of probability measures on the group of Hamiltonian diffeomorphisms which 
are adapted to the Hofer metric.
By further developing this method, 
we study the asymptotic behavior of random walks on $\Ham(M,\omega)$.
We show that on a large class of symplectic manifolds, the expected Hofer distance of the random walk from the identity grows at least as fast as the square root of the number of steps.
We show the same asymptotic growth result on the universal cover of $\Ham(M,\omega)$ for any closed toric symplectic manifold.
In contrast to the prequel, we make use of features of ``hard'' symplectic topology, in particular spectral invariants, to obtain these results.
On the $2$-sphere, we also show that when restricted to an abelian subgroup, naturally arising from the toric structure, the expected Hofer distance of the random walk from the identity grows exactly as fast as the square root of the number of steps asymptotically.
En route, we show a new central limit theorem for Lipschitz quasimorphisms on metric groups with a right-invariant metric, which is of independent interest.
We also continue to develop the class of probability measures on $\Ham(M,\omega)$ introduced in~\cite{dawid-2025}, and
show that this class is closed under convolution and equivariant under the action of the group of symplectomorphisms by conjugation.
Additionally, we show that these measures depend continuously (with respect to the Wasserstein metric) on the data used to construct them.
We also significantly expand the class of measurable events by showing that the probability measures introduced in~\cite{dawid-2025} are Borel measures with respect to the $C^k$-topology for any $k \in \Nats_0 \cup \{\infty\}$. 

\subsection{Main results}
The central aim of this paper is to study the asymptotic behavior of random walks on the group of Hamiltonian diffeomorphisms
and gain insight into the large-scale geometry of this group.
Given that we have a random walk, one might ask about its expected distance from the identity with respect to the Hofer metric, i.e., about the growth of
$\E[d_{\Hof}(\id, \Phi_n)]$.
It is obvious that this can be at most linear, i.e., 
$\E[d_{\Hof}(\id, \Phi_n)] \leq Cn$ for some $C > 0$ depending on the \ldd.
The behavior of this expected distance should be interpreted as a probabilistic proxy for the large-scale geometry of $\Ham(M,\omega)$, in particular for the curvature of the group.
If $\E[d_{\Hof}(\id, \Phi_n)]$ stays bounded, this would indicate positively curved behavior, whereas if it grows linearly, this would indicate negatively curved behavior.
If it grows like $\sqrt{n}$, this would indicate flat behavior.

In this paper, we show that whenever a non-trivial Hofer-Lipschitz quasimorphism exists, then $\E[d_{\Hof}(\id, \Phi_n)]$ is indeed at least of order $\sqrt{n}$.
In particular, we show the following theorem:
\begin{restatable}{theorem}{lowerBoundOnHofer}
    \label{thm:lower-bound-Ham}
    Let $(M,\omega)$ be a closed symplectic manifold and $\mathcal{D}$ a centered autonomously exhaustive law-defining datum on $M$ such that there is some non-trivial Hofer-Lipschitz homogeneous quasimorphism $q: \Ham(M,\omega) \to \Reals$.
    Then there is some $c > 0$ such that $\E[d_{\Hof}(\id, \Phi_n)] \geq c\sqrt{n}$ for all $n$, where $\Phi_n$ is the random walk induced by $\mathcal{D}$. 
    Furthermore, almost all sample paths of $\Phi_n$ leave any bounded region of $\Ham(M,\omega)$ eventually.
\end{restatable}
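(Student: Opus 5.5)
The strategy is to push the random walk forward through the quasimorphism $q$ and use the central limit theorem for Hofer-Lipschitz quasimorphisms announced in the introduction. Write $\Phi_n = \phi_n \circ \cdots \circ \phi_1$ with $\phi_i$ i.i.d.\ with the law $\mu_{\mathcal{D}}$. Since $q$ is Hofer-Lipschitz with constant $L$ and homogeneous (so $q(\id)=0$), we have $|q(\Phi_n)| \le L\, d_{\Hof}(\id,\Phi_n)$. It therefore suffices to show $\E|q(\Phi_n)| \ge c'\sqrt{n}$. The quasimorphism property with defect $D$ gives
\[
\Bigl|q(\Phi_n) - \sum_{i=1}^n q(\phi_i)\Bigr| \le (n-1)D .
\]
This is too crude, so instead I would use the CLT: $q(\Phi_n)/\sqrt{n}$ converges in distribution to $N(0,\sigma^2)$. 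The centering comes from the hypothesis that $\mathcal{D}$ is centered, and the ingredient I would use here is that the measure is invariant under $H \mapsto -H$, i.e.\ under inversion, so $q(\Phi_n)$ is symmetric. The variance $\sigma^2$ is the limit of $\E[q(\Phi_n)^2]/n$. Moment bounds come from $|q(\phi)|\le L\|\phi\|_{\Hof}$ together with the integrability of the Hofer norm under the law; this gives uniform integrability of $|q(\Phi_n)|/\sqrt n$, since second moments are $O(n)$ by the martingale-type argument of the CLT. Convergence in distribution plus uniform integrability then gives $\E|q(\Phi_n)|/\sqrt n \to \sigma\sqrt{2/\pi}$. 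Hence, once $\sigma>0$, the bound holds for large $n$. For small $n$ it holds after shrinking $c$, provided $\E d_{\Hof}(\id,\Phi_n)>0$ for every $n$, which is clear since the law is non-atomic at $\id$.

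The main obstacle is non-degeneracy, $\sigma^2>0$. Here I would use autonomous exhaustiveness. Since $q$ is non-trivial and homogeneous, there is some $\psi$ with $q(\psi)\ne0$. I would first pass to an autonomous element: homogeneous quasimorphisms are conjugation invariant, and Hofer-Lipschitz ones are continuous, so one expects some autonomous flow $\phi_H^1$ with $q(\phi_H^1)\neq 0$. (If needed, use that Hamiltonians on small balls/supported pieces generate and that $q$ restricted to a one-parameter subgroup is a homomorphism $t\mapsto t\,q(\phi^1_H)$.) Exhaustiveness means every autonomous $H$ lies in the support of the random generators, with positive probability of being close in Hofer norm. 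Then $q(\phi_1)$ takes values near $\pm q(\phi^1_H)\ne 0$ with positive probability, so it is non-constant.

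To transfer this to $\sigma>0$, I would write $\sigma^2$ via the CLT's representation $\sigma^2=\lim \frac1n\E[q(\Phi_n)^2]$. I would then bound from below using the decomposition $q(\Phi_n)=\sum q(\phi_i)+R_n$, together with the fact that the variance of the sum is $n\Var q(\phi_1)$. Controlling the correlation of the remainder $R_n$ is delicate. An alternative is to use the $\Phi_{n}^{k}$-block trick: take blocks of length $k$, where $q(\phi^1_{H})^k$-type terms give $\Var q(\Phi_k)\ge c k^2$ for the event that all steps are near $\phi_H^1$ with the same sign, dominating the defect $kD$. I expect this step to require the most care.

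For the escape statement, a bounded Hofer region has bounded $|q|$, say by $R$. By the CLT, $\Prob(|q(\Phi_n)|\le R)\to 0$, and a Hewitt--Savage zero-one argument on the exchangeable event $\{\liminf d_{\Hof}(\id,\Phi_n)<\infty\}$ upgrades this to almost-sure escape. To apply it, I would use the lim-sup version $\limsup |q(\Phi_n)|/\sqrt n=\infty$ a.s., whose probability is positive from the CLT and hence equal to one. Full escape, i.e.\ $\liminf$, is the subtle part: I would control it via a transience-type estimate $\sum_n \Prob(|q(\Phi_n)|\le R)$ for a recurrence argument, or alternatively settle for interpreting "leave eventually" as unboundedness of the path.
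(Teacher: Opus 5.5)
Your outline agrees with the paper's proof except at one step. The CLT for $q(\Phi_n)$, centering from the symmetry of $\mu=\muGP^{\mathcal{D}}$, the bound $|q(\Phi_n)|\le L_q\,d_{\Hof}(\id,\Phi_n)$, and patching small $n$ by positivity of $\E[d_{\Hof}(\id,\Phi_n)]$ are exactly what the paper does. Your uniform-integrability remark (second moments $O(n)$ via the bi-harmonic martingale) even supplies a justification the paper omits.

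The genuine gap is the non-degeneracy $\sigma^2>0$, and neither of your routes delivers it.
\begin{itemize}
\item In $q(\Phi_n)=\sum_i q(\phi_i)+R_n$ you only know $|R_n|\le (n-1)D$. This swamps fluctuations of order $\sqrt n$, so the variance of the sum says nothing about $\sigma^2$.
\item The block claim $\E[q(\Phi_k)^2]\ge ck^2$ is false. The event that all $k$ steps are Hofer-close to $\phi^1_H$ has probability $p^k$, so it contributes only about $p^k k^2 q(\phi^1_H)^2\to 0$. Indeed, your own estimate gives $\E[q(\Phi_k)^2]=O(k)$.
\item The preliminary reduction to an autonomous $\phi^1_H$ with $q(\phi^1_H)\neq 0$ is not justified. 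Since $q$ is not a homomorphism, the fact that autonomous maps generate $\Ham(M,\omega)$ does not force $q$ to be non-zero on one of them.
\end{itemize}

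The missing idea is to use the other half of Theorem~\ref{thm:q-clt}: $\sigma^2>0$ if and only if $q$ is not $\mu$-tame. This turns non-degeneracy into a qualitative statement about supports, with no variance estimate needed. Since $\ell_\mu(q)=0$, it suffices that for every $C$ some $\mu^{*n}$ charges $\{|q|>C\}$. A non-trivial homogeneous $q$ is unbounded, and the Hofer-Lipschitz property makes $\{|q|>C\}$ contain a Hofer ball. Autonomous exhaustiveness then finishes the argument, via the prequel's result that every Hofer ball lies in $\supp\mu^{*n}$ for large $n$. This is Remark~\ref{rmk:autotame}, on which the paper's proof rests.

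Your block idea survives in this form. Pick $\psi\in\supp\mu^{*m}$ with $q(\psi)\neq 0$, and set $p=\mu^{*m}(B_\epsilon(\psi))>0$. With probability at least $p^k$, all $k$ consecutive blocks of length $m$ are $\epsilon$-close to $\psi$. By bi-invariance, $d_{\Hof}(\Phi_{mk},\psi^k)\le k\epsilon$, hence $|q(\Phi_{mk})|\ge k\,(|q(\psi)|-L_q\epsilon)$, which is unbounded in $k$. Positivity of the probability is all that non-tameness requires.

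As for escape, the paper proves only $\limsup_n d_{\Hof}(\id,\Phi_n)=\infty$ almost surely, read off from the law of the iterated logarithm in Theorem~\ref{thm:q-clt}. So your fallback interpretation is the intended one. Your Hewitt--Savage argument for the $\limsup$ event is a valid substitute for the LIL, since that event is invariant under finite permutations: $|q(ab)-q(a'b)|\le|q(a)-q(a')|+2D$.
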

Such quasimorphisms are known to exist on a large class of symplectic manifolds, for example on the sphere $S^2$, all complex projective spaces $\mathbb{C}P^n$, and many other toric symplectic manifolds, see~\cite{entov-polterovich-2003, usher-2011, fukaya-oh-ohta-ono-2019, cristofaro-gardiner-humiliere-mak-seyfaddini-smith-2022}.
Note that the construction of these quasimorphisms is based on Floer theory, and therefore is in particular a feature of ``hard'' symplectic topology.
We also prove the same lower bound on the expected distance of the random walk from the identity on the universal cover of $\Ham(M,\omega)$ for any closed toric symplectic manifold, see Corollary~\ref{cor:lower-bound-universal-cover}.
For finite-dimensional Lie groups there is a direct relationship between flatness and commutativity, see~\cite{milnor-1976}.
As a probabilistic analogue, we show that if the random walk is restricted to an abelian subgroup of $\Ham(M,\omega)$, then the expected distance is also bounded from above by a constant times $\sqrt{n}$.
\begin{restatable}{theorem}{upperBoundCommutative}
    \label{thm:upper-bound-commutative}
    Let $\mathcal{D}$ be a centered
    and commutative law-defining datum on a closed symplectic manifold $(M,\omega)$.
    Denote by $\Phi_k$ the associated random walk on $\Ham(M,\omega)$.
    Then there exists a constant $C > 0$ such that $\E[d_{\Hof}(\id, \Phi_k)] \leq C\sqrt{k}$ for all $k \in \Nats$.
\end{restatable}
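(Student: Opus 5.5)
The plan is to reduce the Hofer norm of the random walk to the sup-norm of a sum of i.i.d.\ centered random Hamiltonians, and then to control that sum by a Hilbert-space variance computation. The standing assumption is that a commutative law-defining datum produces random Hamiltonians $H^{(1)},H^{(2)},\dots$, i.i.d.\ copies of $\randH$, all of which Poisson-commute pairwise almost surely. This holds, for instance, if they are functions of a fixed family of Poisson-commuting functions, as in the toric case. Centered means $\E[\randH]=0$ pointwise.

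\textbf{Step 1: reduce to the sup-norm of a sum.} Since the flows commute, the time-one maps satisfy
\[
\Phi_k=\phi^1_{H^{(k)}}\circ\cdots\circ\phi^1_{H^{(1)}}=\phi^1_{S_k}, \qquad S_k=\sum_{i=1}^k H^{(i)}.
\]
This is the time-one map of the autonomous or commuting sum, and it is a standard fact for Poisson-commuting Hamiltonians. If the Hamiltonians are time-dependent but commute at all times, the same identity holds with the time-dependent sum. Hence
\[
d_{\Hof}(\id,\Phi_k)\le \osc(S_k)\le 2\norm{S_k}_{\infty}.
\]

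\textbf{Step 2: bound the sup-norm by a Sobolev norm.} Choose $s>\dim M/2$, so that by Sobolev embedding $\norm{f}_\infty\le C_s\norm{f}_{H^s}$ on the closed manifold $M$. The prequel's construction of law-defining data guarantees finite moments of the relevant norms. Assume $\E\norm{\randH}_{H^s}^2<\infty$; if the paper's construction only gives $C^\ell$ moments, take $s$ correspondingly and bound $\norm{\cdot}_{H^s}\lesssim\norm{\cdot}_{C^\ell}$.

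\textbf{Step 3: the Hilbert-space variance computation.} In the Hilbert space $H^s(M)$ (using $L^2$ in time if the Hamiltonians are time-dependent), the $H^{(i)}$ are i.i.d.\ with mean zero as Bochner integrals, because they are centered. Expanding the square, the cross terms $\E\langle H^{(i)},H^{(j)}\rangle$ vanish by independence and centering, so
\[
\E\norm{S_k}_{H^s}^2=k\,\E\norm{\randH}_{H^s}^2.
\]
Cauchy--Schwarz then gives
\[
\E\norm{S_k}_{H^s}\le\sqrt{k}\,\bigl(\E\norm{\randH}_{H^s}^2\bigr)^{1/2}.
\]
Combining Steps 1--3 yields $\E[d_{\Hof}(\id,\Phi_k)]\le 2C_s\sqrt{k}\,(\E\norm{\randH}^2_{H^s})^{1/2}=:C\sqrt{k}$.

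\textbf{Main obstacle.} I expect the main difficulty to be technical rather than conceptual. First, one has to check that centering in the paper's sense gives a vanishing Bochner mean in $H^s$. Second, one needs measurability of $S_k$ as an $H^s$-valued random variable, which should follow from the Borel-measurability results stated earlier. Third, one needs the second-moment bound, which should follow from the explicit construction of the prequel's measures with bounded coefficients.

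If Poisson commutation only holds after normalization, for example with mean-zero normalization of the Hamiltonians, I would first normalize each $H^{(i)}$ by subtracting its average. This changes neither the time-one maps nor the centering, so the argument above applies unchanged.
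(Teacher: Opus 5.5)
Your proof is correct, but its probabilistic step differs from the paper's. Both arguments reduce via Lemma~\ref{lem:commutative-Hamiltonians} to bounding $\E\norm{\randH_1+\cdots+\randH_k}_\infty$. The paper first uses inversion invariance of the centered law to reorder the product; you rightly skip this, since the order is irrelevant once the factors commute.

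From there the routes split:
\begin{itemize}
\item \textbf{The paper} expands the sum as $\sum_j w_j\bigl(\sum_{i=1}^k Z_{i,j}\bigr)e_j$ and applies the triangle inequality termwise in $L^\infty$. It then uses the Gaussian identity $\E\abs{\sum_{i=1}^k Z_{i,j}}=\sqrt{2k\sigma_j^2/\pi}$. It closes with $\sup_j\sigma_j^2<\infty$ and $\sum_j w_j\norm{e_j}_\infty<\infty$, the latter resting on sup-norm bounds for Laplace eigenfunctions.
\item \textbf{You} work in $H^s(M)$ with $s>\dim M/2$. There, independent centered summands are orthogonal in $L^2(\Omega;H^s)$, so $\E\norm{S_k}_{H^s}^2=k\,\E\norm{\randH}_{H^s}^2$. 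You then need only one Sobolev embedding.
\end{itemize}

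Your route uses neither Gaussianity nor pointwise control of individual eigenfunctions. It therefore applies to any i.i.d., centered, pairwise Poisson-commuting random Hamiltonians with finite second $H^s$-moment. The paper's route stays in $L^\infty$ and gives the explicit constant $2\sup_j\sqrt{2\sigma_j^2/\pi}\,\sum_j w_j\norm{e_j}_\infty$ in terms of the datum.

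The technical points you flag are routine here:
\begin{itemize}
\item The Bochner mean vanishes, since its $e_j$-coefficient is $w_j\E[Z_j]=0$.
\item $\E\norm{\randH}_{H^s}^2$ is bounded by a constant times $\sum_j(1+\lambda_j)^s e^{-\reg\lambda_j}\sigma_j^2$, which is finite by Weyl's law.
\end{itemize}

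Two of your side remarks are unnecessary. The time-dependent aside is not needed, because commutativity is only defined for autonomous data; it would in any case require $\{F_t,G_s\}=0$ for all $s,t$, not just equal times. The renormalization remark is vacuous, because Poisson brackets ignore constants and the $e_j$ already have mean zero.
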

On $S^2$, complex projective spaces, and their monotone products, we combine these two bounds to show that 
for a certain class of \ldds\ on $S^2$, the expected distance of the random walk from the identity grows exactly as fast as $\sqrt{n}$ (see Corollary~\ref{cor:exact-growth-S2} and Corollary~\ref{cor:exact-growth-CPn}).
The support of such an \ldd\ is contained in an abelian subgroup of the Hamiltonian diffeomorphism group, namely that of Hamiltonian diffeomorphisms induced by Hamiltonians that are invariant under the standard $S^1$-action on $S^2$, or the torus action on $\C P^n$ and products of complex projective spaces.
One big question remains: is $\sqrt{n}$ still the best asymptotic upper bound 
if we do not restrict the random walk to an abelian subgroup?
In the final section of this paper, we will discuss this question and possible directions for future investigation. In particular, we present some numerical evidence against the upper bound of $\sqrt{n}$ for the expected distance from the identity in the case of a random walk on the full group $\Ham(\T^2,\omega)$.
This is to say that despite the well-known prevalence of large quasi-flats in the Hamiltonian diffeomorphism group, the group might also exhibit some negatively curved behavior.
This potential hyperbolic behavior has so far not been detected by any deterministic method.

The key to the proof of Theorem~\ref{thm:lower-bound-Ham} is a central limit theorem for Lipschitz quasimorphisms on metric groups, which we establish in Section~\ref{sec:clt}.
Here we expand on a result of Björklund and Hartnick~\cite{bjorklund-hartnick-2011},
which establishes a central limit theorem for quasimorphisms on locally compact topological groups.
In lieu of the assumption of local compactness, we assume that the quasimorphism is Lipschitz with respect to a right-invariant metric on the group.
\begin{restatable}{theorem}{CLT}
    \label{thm:q-clt}
	Let $G$ be a metric group endowed with a right-invariant metric and let $\mu$ be a Borel probability measure on $G$.
	Let $q: G \to \Reals$ be a homogeneous Lipschitz quasimorphism that is square-integrable with respect to $\mu$.
	Further, let $(z_n)_{n \in \Nats}$ be a random walk on $G$ with $\mu$-distributed increments. 
	Then,
	\begin{equation}\label{eq:q-clt}
		\frac{q(z_n) - n \ell_\mu(q)}{\sqrt{n}} \xrightarrow{d} \mathcal{N}(0,\sigma^2), 
	\end{equation} 
	and $\sigma^2 > 0$ if and only if $q$ is not $\mu$-tame.
	Furthermore, if $\sigma^2 > 0$, 
	then 
	\begin{equation*} 
		\Prob \left[ \limsup_{n \to \infty} \frac{q(z_n) - n \ell_\mu(q)}{\sqrt{n \log \log n}} = \sigma \right] = 1.
	\end{equation*}
\end{restatable}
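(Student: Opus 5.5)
The plan is to follow the martingale-approximation strategy of Björklund and Hartnick. We replace the use of local compactness (needed there to work with a stationary measure on a boundary and continuous functions) with an argument that uses only the quasimorphism property and the Lipschitz bound.

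\emph{Step 1: integrability.} Since $q$ is homogeneous, its defect $D$ is finite, and $|q(gh)-q(g)-q(h)|\le D$ for all $g,h$. Lipschitz continuity with respect to the right-invariant metric makes $q$ continuous, hence Borel, so $q(z_n)$ is a random variable. Square-integrability of $q$ under $\mu$ gives square-integrability of $q(z_n)$, because $|q(z_n)|\le \sum_i |q(g_i)| + nD$, where $z_n = g_n\cdots g_1$ (or the other order; the argument is symmetric). Set $\ell_\mu(q)=\E_\mu[q]$.

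\emph{Step 2: the decomposition.} Write
\[
q(z_n) = \sum_{i=1}^n q(g_i) + \sum_{i=1}^{n-1} \delta(z_i, g_{i+1}), \qquad \delta(x,g):=q(gx)-q(g)-q(x),
\]
where $|\delta|\le D$. The first sum is i.i.d., but the second is not. I would construct a bounded measurable function $\psi$ on $G$ (a "corrector") solving, up to a coboundary,
\[
\E_g\bigl[\delta(x,g)\bigr] = \psi(x) - \E_g\bigl[\psi(gx)\bigr] + c.
\]
Then $q(z_n)-n\ell_\mu(q)$ equals a martingale with bounded-plus-$L^2$ stationary-like increments, plus $O(1)$ errors. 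The natural candidate is the Cesàro limit
\[
\psi(x)=\lim_N \frac1N\sum_{k<N}\E\bigl[q(z_k x)-q(z_k)-q(x)\bigr],
\]
which is bounded by $D$. To get a genuine limit I would pass to a Banach limit, which avoids any need for compactness: measurability holds for each finite $N$ since everything is Lipschitz in $x$. Here the Lipschitz assumption matters, because $x\mapsto q(z_kx)$ is Lipschitz with the same constant; right-invariance is what makes $d(z_kx,z_ky)=d(x,y)$ when the composition is arranged accordingly. Uniform equicontinuity then lets a Banach limit, or an Arzelà–Ascoli-free argument on bounded Lipschitz functions, produce a Lipschitz, and hence Borel, solution. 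I expect this to be the main obstacle: without local compactness there is no compact boundary or weak-$*$ compactness of measures, so I must check that the Banach-limit corrector is measurable and satisfies the Poisson-type equation exactly. Lipschitz equicontinuity is the tool I would try first.

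\emph{Step 3: conclusion.} Given the corrector, $M_n = q(z_n)+\psi(z_n) - n\ell_\mu(q)$ is a martingale whose increments $q(g_{n+1})+\Delta_n$ have bounded conditional second moments. The martingale CLT of Brown and the martingale LIL of Stout then give both the CLT and the LIL, with
\[
\sigma^2=\lim_n \frac1n \E[M_n^2].
\]
The variance limit exists because the increments form a stationary ergodic sequence when viewed as functionals of the i.i.d.\ increments. For the dichotomy: $\sigma^2=0$ forces the martingale increments to vanish almost surely, which yields $q(g)=\ell_\mu(q)+\psi(x)-\psi(gx)$ up to bounded error on the support of $\mu$. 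This is exactly $\mu$-tameness (boundedness of $q-\ell_\mu(q)\cdot$ along the walk by a bounded coboundary). Conversely, if $q$ is $\mu$-tame, then $q(z_n)-n\ell_\mu(q)$ stays bounded and $\sigma^2=0$.
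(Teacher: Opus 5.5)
Your Steps 1--2 essentially coincide with the paper's Lemma~\ref{lem:bi-harmonic-q}. Since a homogeneous $q$ is conjugation-invariant, $q(z_kx)=q(xz_k)$, so your Banach-limit corrector is essentially $\tilde q-q$ for the paper's $\tilde q(g)=q(g)+\underline{\lambda}\bigl(\{\int_G\partial q(g,h)\,d\mu^{*n}(h)\}_n\bigr)$. The paper handles both measurability (a uniform $2L$-Lipschitz bound into $\ell^\infty$, from right-invariance) and the Poisson equation (shift-invariance of $\underline{\lambda}$ together with $\int_G\tilde q_n(g,h)\,d\mu(h)=\tilde q_{n+1}(g,e)$) just as you suggest. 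So the step you flag as the main obstacle is not where the difficulty lies. One minor slip: $\ell_\mu(q)\neq\E_\mu[q]$ in general. The drift of your martingale is $\E_\mu[q]+c$, which equals $\ell_\mu(q)$ because $\psi$ is bounded.

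The genuine gap is in Step 3. The increments of $M_n$ are $F(z_n,g_{n+1})$ with $F(x,g)=q(gx)-q(x)+\psi(gx)-\psi(x)-\ell_\mu(q)$, so they depend on the current position $z_n$ of the walk, which is not a stationary process. Being ``functionals of the i.i.d.\ increments'' does not make them stationary; that would require the form $f\circ T^n$ for one fixed $f$ on the shift space. Hence neither Billingsley's CLT nor Stout's LIL applies. Brown's CLT would need $\frac1n\sum_{k<n}V(z_k)\to\sigma^2$ in probability for $V(x)=\int_G F(x,g)^2\,d\mu(g)$, i.e.\ an ergodic theorem along the walk, which you do not have. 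The existence of $\lim_n\frac1n\E[M_n^2]$ and your dichotomy argument (``$\sigma^2=0$ forces the increments to vanish'') rest on the same unproved stationarity.

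The missing idea is \emph{bi}-harmonicity. The paper shows $\tilde q$ is quasi-harmonic on both sides, again by conjugation-invariance via the formulas for $\tilde q_n(gk,e)$ and $\tilde q_n(kg,e)$. The Bj\"orklund--Hartnick argument it invokes then converts this into an ergodic martingale with stationary increments at bounded distance from $q$. For instance, take the walk $z_n=g_1\cdots g_n$ (the other order is symmetric) on the Bernoulli shift $(G^{\Z},\mu^{\otimes\Z},T)$:
\begin{itemize}
\item Left-harmonicity makes $m\mapsto\tilde q(g_{-m}\cdots g_{-1}g_0)-\tilde q(g_{-m}\cdots g_{-1})-\tilde q(g_0)$ a bounded martingale, hence a.s.\ convergent; adding back $\tilde q(g_0)$ to the limit defines $\xi_0$.
\item Right-harmonicity makes $\xi_n-\ell_\mu(q)$, with $\xi_n=\xi_0\circ T^n$, a stationary ergodic square-integrable martingale difference sequence.
\item The cocycle identity for $\partial\tilde q$ gives $\bigl|\tilde q(g_1\cdots g_n)-\sum_{k=1}^n\xi_k\bigr|\le D(\tilde q)$ almost surely.
\end{itemize}
Only then do the CLT, the LIL, and the dichotomy follow, with $\sigma^2=\E[(\xi_0-\ell_\mu(q))^2]$, which vanishes exactly when $q$ is $\mu$-tame.
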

We refer the reader to Section~\ref{sec:prelims:quasimorphisms} for the definition of $\mu$-tameness and the $\mu$-distortion $\ell_\mu(q)$.
The above theorem implies that any homogeneous quasimorphism on $\Ham(M,\omega)$ that is Hofer-Lipschitz satisfies a central limit theorem when evaluated along the random walk $(\Phi_n)_{n \in \Nats}$.

Before proving these main results, we first establish some additional properties of the random Hamiltonian flow.
In particular, having a central limit theorem for quasimorphisms 
naturally suggests that we also investigate
the behavior of the stable commutator length along the random walk as these quantities are closely related by Bavard duality, see~\cite{bavard-1991}.
To show the measurability of the stable commutator length, we use recent results of Edtmair on the smooth perfectness of $\Ham(M,\omega)$, see~\cite{edtmair-2025}.
To make use of this fact, we first need to expand the realm of measurable events to include Borel sets with respect to the $C^\infty$-topology.
In~\cite{dawid-2025}, it is shown that the measures induced by \ldds\ are Borel measures with respect to the Hofer-topology on $\Ham(M,\omega)$.
In Section~\ref{sec:topologies}, we extend this result to show that the measures can also be considered as Borel measures with respect to the $C^k$-topology for any $k \in \Nats_0 \cup \{\infty\}$.
\begin{restatable}{theorem}{moreTopologies}
\label{thm:measurability_Ck}
    Let $\mathcal{D}$ be a \ldd, and let $\muGP^{\mathcal{D}}$ be the associated measure on $\Ham(M,\omega)$.
    Then, for any $k \in \Nats_0 \cup \{\infty\}$, the measure $\muGP^{\mathcal{D}}$ is a Borel measure with respect to the $C^k$-topology on $\Ham(M,\omega)$. 
\end{restatable}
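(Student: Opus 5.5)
The plan is to realize $\muGP^{\mathcal{D}}$ as the pushforward of a probability measure on a space of Hamiltonians under the time-one map, and to check that this map is continuous for the $C^k$-topology on $\Ham(M,\omega)$. In~\cite{dawid-2025} a \ldd\ $\mathcal{D}$ produces a random Hamiltonian $\randH$, defined on a probability space $(\Omega,\mathcal{F},\Prob)$ and typically given as a random series of fixed smooth functions on $[0,1]\times M$ with random coefficients. The measure $\muGP^{\mathcal{D}}$ is the law of $\phi^1_{\randH}$. So it suffices to show that for every Borel set $A$ in the $C^k$-topology, the set $\{\omega \in \Omega : \phi^1_{\randH(\omega)} \in A\}$ lies in $\mathcal{F}$. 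We then define $\muGP^{\mathcal{D}}(A)$ as its probability. This should be consistent with the Hofer-Borel definition, since both $\sigma$-algebras sit inside the pushforward $\sigma$-algebra $\{A : (\phi^1_{\randH})^{-1}(A) \in \mathcal{F}\}$, on which the measure is canonically defined.

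I would factor the map as
\[
\Omega \xrightarrow{\ \randH\ } C^\infty([0,1]\times M) \xrightarrow{\ \phi^1\ } \Ham(M,\omega).
\]
\textbf{Step 1.} Show that $\omega \mapsto \randH(\omega)$ is measurable into the Fréchet space $C^\infty([0,1]\times M)$. This space is Polish, so its Borel $\sigma$-algebra is generated by cylinder sets. One therefore only needs measurability of each $C^j$-norm of $\randH - G$ for fixed smooth $G$. If $\randH$ is a series $\sum_i a_i(\omega) f_i$, the coefficient decay built into a \ldd\ should give almost sure convergence in every $C^j$-norm. Partial sums are measurable, being finite-dimensional and continuous in the coefficients. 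A pointwise almost sure limit of measurable maps into a metrizable space is measurable after modification on a null set, which we absorb by completing $\mathcal{F}$.

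\textbf{Step 2.} Show that $H \mapsto \phi^1_H$ is continuous from $C^\infty([0,1]\times M)$ into $\Diff(M)$ with the $C^k$-topology, for every $k \le \infty$. This is smooth dependence of ODE solutions on parameters. By Grönwall estimates applied to the variational equations, the $C^k$-distance of $\phi^1_H$ and $\phi^1_G$ is bounded in terms of $\|X_H - X_G\|_{C^k}$, hence by $\|H-G\|_{C^{k+1}}$, locally uniformly. The $C^\infty$ case follows because that topology is the inverse limit of the $C^k$-topologies. Composing a measurable map with a continuous one gives a measurable map, so $C^k$-Borel sets pull back to $\mathcal{F}$. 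This proves the theorem.

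I expect the main obstacle to be Step 1, specifically making sure the prequel's construction really produces a random variable with values in $C^\infty$ (or at least in $C^{k+1}$ for the given $k$), rather than only in the space where Hofer-measurability was proven. If the prequel only guarantees convergence in a weaker norm, I would first check that the decay conditions in the definition of a \ldd\ give convergence in all $C^j$-norms. Failing that, for finite $k$ I would work in $C^{k+1}$ and use the corresponding $C^{k+1}\to C^k$ continuity of $\phi^1$, and recover $k=\infty$ since its Borel $\sigma$-algebra is generated by the $C^k$-Borel ones. That last claim holds because $C^k$-open sets are $C^\infty$-open and $C^\infty$-open sets are countable unions of basic $C^k$-open sets, by separability.
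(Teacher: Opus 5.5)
Your proposal is correct and follows essentially the same route as the paper: factor $\phi^1_{\randH}$ through the space of Hamiltonians, use that $H\mapsto\phi^1_H$ is continuous from the $C^{k+1}$- to the $C^k$-topology by smooth dependence of ODE solutions on parameters, and pass to $k=\infty$ through the projective-limit structure. The only difference is in Step 1. The paper settles the obstacle you anticipate by citing \cite[Lemma 3.8]{dawid-2025} (almost-sure membership of $\randH$ in every $W^{2s,2}([0,1]\times M)$) together with a Sobolev embedding into $C^{k+1}$, which is exactly your fallback; your separability remark makes the $C^\infty$ step slightly more explicit than the paper does.
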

Note that this result is of independent interest, as the $C^\infty$-topology is the natural topology on $\Ham(M,\omega)$ when viewing it as a subgroup of the diffeomorphism group $\Diff(M)$.
Using the above result, we show that the stable commutator length and any homogeneous quasimorphism are random variables with respect to the probability measures induced by \ldds, and we establish sub-Gaussian estimates for these random variables.
These results will be proven in Section~\ref{sec:scl-stats}. Leading up to this, we establish some 
general results on the family of probability measures induced by \ldds\ in Section~\ref{sec:ldds}.
In particular, we show that the measures depend continuously on the \ldd\ with respect to the Wasserstein metric, and that the class of measures is closed under convolution and equivariant under the action of the group of symplectomorphisms by conjugation.
\subsection{Acknowledgements}
First and foremost, I would like to thank my advisor Ivan Smith for his support.
During the writing of this paper I had several illuminating conversation about part one of this 
series of two papers, which have greatly influenced the contents of the present text. 
I particularly wish to thank 
Mohammed Abouzaid, 
Paul Biran, 
Erman Cineli,
Oliver Edtmair, 
Amanda Hirschi, 
Yongsheng Jia,
Francesco Morabito,
Baptiste Serraille, 
Sobhan Seyfaddini,
Egor Shelukhin, 
and 
Ibrahim Trifa for helpful discussions and comments on the first part of this series of two papers.
I would specifically like to thank Leonid Polterovich for suggesting to investigate 
the growth of the expected Hofer norm of the random walk.
I wrote part of this paper while visiting ETH Zurich during the 
fall semester 2025. I would like to thank Sobhan Seyfaddini and the whole symplectic geometry group at ETH for their warm hospitality throughout my stay.
This visit was partially supported by ERC Starting Grant 851701 "Homeomorphisms in symplectic topology and dynamics".
During the writing of this paper I was supported by EPSRC Horizon
Europe Guarantee grant “Floer theory beyond Floer (FloerPlus35)“ (project reference EP/X030660/1).
\section{Preliminaries}
In the following we will briefly recall some background material that will be needed later on.
A reader already familiar with the material might wish to skip this section.
Before we start with some more specific background material, 
let us fix some notation and conventions that will be used throughout the paper.
Henceforth, $(M,\omega)$ will always denote a closed symplectic manifold, and $\Ham(M,\omega)$ will denote the group of Hamiltonian diffeomorphisms of $(M,\omega)$.
For any smooth function $H \in C^\infty([0,1] \times M)$ the Hamiltonian vector field $X_H$ is defined by the equation $\omega(X_H, \cdot) = -dH$.
Furthermore, we denote the subset of autonomous Hamiltonian diffeomorphisms by $\Aut(M,\omega) \subset \Ham(M,\omega)$, i.e., $\Aut(M,\omega) = \{\phi_H^1 \mid H \in C^\infty(M)\}$.
When not otherwise specified, we will always assume that the Hamiltonian functions are normalized, i.e., $\int_M H(t,x) \omega^n = 0$ for all $t \in [0,1]$.
Usually, we will consider $\Ham(M,\omega)$ as a metric space with respect to the Hofer metric.
Recall that the Hofer norm of a Hamiltonian diffeomorphism $\phi \in \Ham(M,\omega)$ is defined by
\begin{equation*}
    \norm{\phi}_{\Hof} = \inf \left\{\int_0^1 \max_{x \in M} H(t,x) - \min_{x \in M} H(t,x) \, dt \mid \phi_H^1 = \phi\right\}.
\end{equation*}
The Hofer metric is then defined by $d_{\Hof}(\phi, \psi) = \norm{\phi^{-1} \circ \psi}_{\Hof}$ for $\phi, \psi \in \Ham(M,\omega)$.
Note that the Hofer metric is bi-invariant, i.e., $d_{\Hof}(\phi \circ \psi, \phi \circ \eta) = d_{\Hof}(\psi, \eta) = d_{\Hof}(\psi\circ\phi, \eta\circ \phi)$ for all $\phi, \psi, \eta \in \Ham(M,\omega)$. Furthermore, it can be lifted to a (pseudo-)metric on the universal cover $\widetilde{\Ham}(M,\omega)$ of $\Ham(M,\omega)$, which we will denote by $\widetilde{d}_{\Hof}$. In this case, the infimum is only taken over paths in the correct homotopy class of the given element in $\widetilde{\Ham}(M,\omega)$. 
\subsection{Law-defining data and the measure on $\Ham(M,\omega)$}\label{sec:prelims:ldds}
For the convenience of the reader, we will give a very brief summary of the construction from~\cite{dawid-2025}.
For all details we refer the interested reader to the arguments therein.

Let $(M,\omega)$ be a closed symplectic manifold.
The probability measures on $\Ham(M,\omega)$ that we will consider are constructed from a choice of \ldd\ $\mathcal{D}$, defined below.
\begin{definition}\label{def:ldd}
    A \textit{\ldd} $\mathcal{D}$ is a tuple $(\reg, J, \{e_n\}_{n \in \Nats_{>0}},  \{Z_n\}_{n \in \Nats_{>0}})$ such that 
    \begin{enumerate}
        \item $\reg > 0$, called the \textit{regularity parameter}, is a positive real number;
        \item $J$ is an $\omega$-compatible almost-complex structure on $M$;
        \item $\{\frac{1}{\vol_g(M)}, e_1, e_2, \ldots\}$ is an orthonormal basis of $L^2(M,g)$ and any $e_n$ is an eigenfunction of the Laplace-Beltrami operator $\Laplace_g: W^{2,2}(M, g) \to L^2(M, g)$ associated with the metric $g(\cdot,\cdot)=\omega(\cdot, J \cdot)$.
        Further, let $0 = \lambda_0 < \lambda_1 \leq \lambda_2 \leq \ldots < \infty$ be the eigenvalues of $\Laplace_g$ counted with multiplicities. Then $\{e_n\}_{n \in \Nats_{>0}}$ is ordered by increasing eigenvalue, i.e., $e_n$ is an eigenfunction with eigenvalue $\lambda_n$;
        \item $\{Z_n\}_{n \in \Nats_{>0}}$ is a family of Gaussian processes on $[0,1]$
        such that there exist constants $C,D \geq 0$ such that for all $n \in \Nats_{>0}$ and $t \in [0,1]$
        we have $\Cov[Z_n(t),Z_n(t)] < C$ and 
        $\E[\norm{Z_n}_\infty] < D$.
        We further require that $t_1, t_2 \mapsto \Cov[Z_n(t_1),Z_n(t_2)]$ and $t \mapsto \E[Z_n(t)]$ are smooth functions for any $n \in \Nats_{>0}$, which implies that the sample paths of $Z_n$ are almost-surely smooth.
        We say that the process $Z_n$ is \textit{associated with the eigenfunction $e_n$} in $\mathcal{D}$.
    \end{enumerate}
\end{definition}
In~\cite[Definition 3.2]{dawid-2025} several subtypes of law-defining data are defined, and we refer the interested reader to that definition for the details.
Given a law-defining datum $\mathcal{D}$, we can construct a probability measure 
on the space 
\begin{equation*}
    C_0^\infty([0,1] \times M) \coloneqq \{H \in C^\infty([0,1] \times M) \mid \int_M H(t,x) \omega^n = 0 \text{ for all } t \in [0,1]\}
\end{equation*}
of smooth normalized Hamiltonian functions on $M$.
The construction is as follows.
Let $\mathcal{D} = (\reg, J, \{e_n\}_{n \in \Nats_{>0}},  \{Z_n\}_{n \in \Nats_{>0}})$ be a law-defining datum. Then 
we define the random Hamiltonian function $\randH$ by
\begin{equation}\label{eq:GP_H}
    \randH(t,x) = \sum _{n \ge 1} w_n \cdot Z_n(t) \cdot e_n(x),
\end{equation} 
where $w_n = \exp(-\frac{1}{2}\lambda_n \cdot \reg)$.
We refer the reader to~\cite{dawid-2025} for the proof of 
the fact that the law of this random variable induces a probability measure $\muGP^{\mathcal{D}}$ on $C_0^\infty([0,1] \times M)$.
The measures on $\Ham(M,\omega)$ that we will consider are then obtained by pushing forward the
law of $\randH$ under the map $H \mapsto \phi_H^1$, where $\phi_H^1$ is the time-$1$ map of the Hamiltonian flow generated by $H$.
The measure on $\Ham(M,\omega)$ obtained in this way will be denoted by $\muGP^{\mathcal{D}}$.
We summarize the main existence result from~\cite{dawid-2025} in the following theorem.
\begin{theorem}\label{thm:existence-of-measure}
    Let $\mathcal{D}$ be any \ldd .
    Then the measure $\muGP^{\mathcal{D}}$ is a Hofer-Borel probability measure on $\Ham(M,\omega)$, i.e., $\muGP^{\mathcal{D}}(\Ham(M,\omega)) = 1$
    and any Hofer-open set $U \subset \Ham(M,\omega)$ is measurable.
    Furthermore, the measure $\muGP^{\mathcal{D}}$ has finite moments with respect to the Hofer norm, i.e., for any $p \in [1,\infty)$ we have
    \begin{equation*}\int_{\Ham(M,\omega)} \norm{\phi}_{\Hof}^p \, d\muGP^{\mathcal{D}}(\phi) < \infty.\end{equation*}
\end{theorem}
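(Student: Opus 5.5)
The statement to prove is Theorem~\ref{thm:existence-of-measure}, which is quoted from~\cite{dawid-2025}. The plan is to reconstruct its proof in three stages: build $\randH$ as a random element of a Fréchet space, push it forward to $\Ham(M,\omega)$, and bound Hofer moments by the oscillation of $\randH$.

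\textbf{Step 1: $\randH$ is a random smooth function.} Work on the probability space carrying the independent Gaussian processes $Z_n$. By Weyl's law, $\lambda_n \sim c\, n^{1/\dim M}$, so the weights $w_n = \exp(-\frac12\lambda_n\reg)$ decay faster than any polynomial in $n$. Standard elliptic estimates give $\norm{e_n}_{C^k} \le C_k \lambda_n^{N_k}$ for some $N_k$. We also need control of the time derivatives $\norm{Z_n}_{C^j}$. For this, assume the uniform bound on covariances provided by the paper's definition; if only $\E\norm{Z_n}_\infty<D$ is available, restrict to the moments needed. Then
\[
\sum_n w_n\,\E\norm{Z_n}_{C^j}\,\norm{e_n}_{C^k}<\infty \quad\text{for every } j,k.
\]
Hence the series \eqref{eq:GP_H} converges almost surely in every $C^{j,k}$ norm, and therefore in $C^\infty([0,1]\times M)$. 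Each partial sum is a measurable map into this separable Fréchet space, so the limit is measurable. Its law is a Borel probability measure on $C^\infty_0([0,1]\times M)$. Normalization holds because every $e_n$ with $n\ge1$ is orthogonal to constants.

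\textbf{Step 2: push forward to $\Ham(M,\omega)$.} Consider the map $\Psi: H\mapsto \phi_H^1$. Smooth dependence of ODE solutions on parameters shows that $\Psi$ is continuous from $C^\infty$ into $\Ham(M,\omega)$ with its $C^\infty$ topology. It is also $1$-Lipschitz from $C^0$ into the Hofer metric, since
\[
d_{\Hof}(\phi_H^1,\phi_K^1)=\norm{\phi_H^{-1}\phi_K}_{\Hof}\le \int_0^1\osc_x\bigl(K-H\bigr)(t,\cdot)\circ\phi_H^t\,dt\le 2\norm{K-H}_{C^0}.
\]
Here we used that $\phi_H^{-1}\phi_K$ is generated by $(K-H)\circ\phi_H^t$. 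Thus $\Psi$ is continuous into $(\Ham,d_{\Hof})$, so preimages of Hofer-open sets are open in $C^\infty$ and in particular Borel. The pushforward $\muGP^{\mathcal{D}}$ is therefore a Hofer-Borel measure with total mass $1$.

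\textbf{Step 3: moments.} By the definition of the Hofer norm,
\[
\norm{\phi_{\randH}^1}_{\Hof}\le \int_0^1\osc\,\randH(t,\cdot)\,dt \le 2\sup_{t,x}\abs{\randH} \le 2\sum_n w_n\norm{e_n}_\infty\norm{Z_n}_\infty .
\]
It therefore suffices to show that $X=\sup\abs{\randH}$ has all $L^p$ moments. The Minkowski inequality gives $\norm{X}_{L^p}\le 2\sum_n w_n\norm{e_n}_\infty\,\norm{\,\norm{Z_n}_\infty}_{L^p}$. For each $n$, $\norm{Z_n}_\infty$ is the supremum of a Gaussian process. By Borell--TIS or Fernique's theorem,
\[
\norm{\,\norm{Z_n}_\infty}_{L^p}\le C_p\bigl(\E\norm{Z_n}_\infty+\sup_t\Cov[Z_n(t),Z_n(t)]^{1/2}\bigr)\le C_p(D+\sqrt C).
\]
Combined with Hörmander's bound $\norm{e_n}_\infty\lesssim\lambda_n^{(\dim M-1)/4}$ and the superpolynomial decay of $w_n$, the series converges.

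\textbf{Main obstacle.} The delicate point is Step 1: obtaining almost-sure convergence in $C^\infty$ rather than only in $C^0$. This requires control of the time derivatives of $Z_n$, which must come from the smoothness of the covariance functions uniformly enough in $n$. If uniform control is unavailable, the fallback is to prove convergence in $C^0$ and in the spatial $C^k$ norms only. This still suffices for Hofer measurability and for the moment bounds, because the flow map is continuous under $C^0$-in-time, $C^1$-in-space convergence.
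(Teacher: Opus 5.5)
Your three-step route is the one the paper relies on: an almost-surely convergent series for $\randH$, push-forward along $H\mapsto\phi^1_H$ via $d_{\Hof}(\phi^1_H,\phi^1_K)\le 2\norm{H-K}_\infty$, and moments from $\norm{\phi^1_{\randH}}_{\Hof}\le 2\sup\abs{\randH}$ plus Gaussian concentration. The paper gives no proof here; it imports the theorem from \cite{dawid-2025}. The ingredients it later cites from that source are exactly yours: almost-sure regularity of $\randH$ \cite[Lemma 3.8]{dawid-2025}, the $2$-Lipschitz property of the flow map from $\norm{\cdot}_\infty$ to $d_{\Hof}$ \cite[Lemma 3.15]{dawid-2025}, and concentration of measure. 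Your Step 3 is sound and uses precisely the two uniform hypotheses of Definition~\ref{def:ldd}. Gaussian concentration for $\norm{Z_n}_\infty$ does not need $Z_n$ centered, and you never use independence across $n$, which the definition does not assume anyway.

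\textbf{The gap is the $C^\infty$ claim in Step 1.} Definition~\ref{def:ldd} has only two bounds uniform in $n$: $\Cov[Z_n(t),Z_n(t)]<C$ and $\E\norm{Z_n}_\infty<D$. Smoothness of the mean and covariance is imposed for each $n$ separately, so there is no uniform covariance bound on derivatives to ``assume''. For example, $Z_n(t)=\xi_n\cos(a_nt)$ with $\xi_n\sim\mathcal{N}(0,1)$ satisfies both bounds, yet $\E\norm{Z_n}_{C^1}\ge\sqrt{2/\pi}\,a_n$. If $a_n$ grows fast enough, $\sum_n w_n\E\norm{Z_n}_{C^1}\norm{e_n}_\infty=\infty$, so your summability for $j\ge1$ fails. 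The paper does assert that $\randH$ is smooth in $(t,x)$, but whatever extra hypothesis that needs is not visible in the definition as summarized here.

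\textbf{The fix is your own fallback, made the main argument.} The bound $\sum_n w_n\E\norm{Z_n}_\infty\norm{e_n}_{C^k}<\infty$ gives almost-sure convergence in $C^0([0,1];C^k(M))$ for every $k$, hence in the separable Fréchet space $C^0([0,1];C^\infty(M))$. That is all Steps 2 and 3 use. The flow exists because $X_{\randH}$ is continuous in $t$ and smooth in $x$. Measurability follows because $H\mapsto\phi^1_H$ is Lipschitz from $\norm{\cdot}_\infty$ into $d_{\Hof}$ on that space.

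\textbf{One more justification is then needed.} $\Ham(M,\omega)$ and the Hofer norm are defined through smooth Hamiltonians, so you must show that for $H$ only continuous in $t$ one still has $\phi^1_H\in\Ham(M,\omega)$ and $\norm{\phi^1_H}_{\Hof}\le\int_0^1\osc H_t\,dt$. This is standard: mollify in $t$ to get $H^{\eps}$. The correction path $\phi^t_H\circ(\phi^t_{H^{\eps}})^{-1}$ is Hamiltonian and uniformly $C^\infty$-close to the identity. In a Weinstein chart it is therefore a graph of exact forms $dF_t$. The endpoint is reached by a smooth isotopy of small Hofer length. You should state this explicitly.

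\textbf{Minor slips.} Weyl's law gives $\lambda_n\sim c\,n^{2/\dim M}$, not $n^{1/\dim M}$, though superpolynomial decay of $w_n$ is unaffected. The Lipschitz constant you actually derive is $2$, not $1$.
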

\subsection{The Wasserstein metric}\label{sec:prelims:wasserstein}
Let $(X,d)$ be a metric space.
We briefly recall the Wasserstein metric on the space of probability measures on $X$.
Note that we do not require $(X,d)$ to be Polish, which is sometimes assumed in the literature.
Let $\mathcal{P}(X)$ denote the space of probability measures on $X$,
and let $\mathcal{P}_p(X)$ denote the space of probability measures on $X$ with finite $p$-th moment, i.e.
\begin{equation*}
    \mathcal{P}_p(X) = \left\{\mu \in \mathcal{P}(X) \mid \int_X d(x_0,x)^p \, d\mu(x) < \infty \text{ for some (and hence all) } x_0 \in X\right\}.
\end{equation*}
Let $\mu, \nu \in \mathcal{P}_p(X)$.
Then we define the set of couplings of $\mu$ and $\nu$ by
\begin{equation*}
    \Pi(\mu,\nu) = \{\pi \in \mathcal{P}(X \times X) \mid (\pr_1)_* \pi = \mu, (\pr_2)_* \pi = \nu\},
\end{equation*}
where $\pr_1, \pr_2\colon X \times X \to X$ are the projections onto the first and second factor, respectively.
The idea of a coupling is that it is a joint distribution of two random variables with given marginals.
We can now define the $p$-Wasserstein distance between $\mu$ and $\nu$ by
\begin{equation*}
    W_p(\mu,\nu) = \inf_{\pi \in \Pi(\mu,\nu)} \left(\int_{X \times X} d(x,y)^p \,d\pi(x,y)\right)^{1/p}.
\end{equation*}
The space $\mathcal{P}_p(X)$ equipped with the $p$-Wasserstein distance is a metric space, and we denote it by $(\mathcal{P}_p(X), W_p)$.
Note that the original metric space $(X,d)$ can always be isometrically embedded into $(\mathcal{P}_p(X), W_p)$ by sending a point $x \in X$ to the Dirac measure $\delta_x \in \mathcal{P}_p(X)$.
Since there is a unique coupling of two Dirac measures, we have $W_p(\delta_x, \delta_y) = d(x,y)$ for all $x,y \in X$.
The Wasserstein metric is commonly used in the field of optimal transport, where it has had many applications in recent years.
We refer the interested reader to~\cite{villani-2009} for a more detailed introduction to the Wasserstein metric and its properties.
\subsection{Quasimorphisms}\label{sec:prelims:quasimorphisms}
In the following, we will briefly recall the definition of a quasimorphism and some of its properties.
For this, let $(G, \cdot)$ be a group.
\begin{definition}
    A function $q\colon G \to \R$ is called a \textit{quasimorphism} if there exists a constant $D \geq 0$ such that
    \begin{equation*}
        |q(g \cdot h) - q(g) - q(h)| \leq D \quad \text{for all } g,h \in G.
    \end{equation*}
    The smallest such constant $D$ is called the \textit{defect} of $q$ and is denoted by $D(q)$.
\end{definition}
The idea of a quasimorphism is that it is a function that is almost a homomorphism, up to a bounded error. In particular, a homomorphism is a quasimorphism with defect $0$.
However, any homomorphism $f\colon G \to \R$ 
necessarily has a kernel that is a normal abelian subgroup of $G$, which is a very strong restriction on the group $G$.
Even if there are no interesting homomorphisms from $G$ to $\R$, there can still be a large family of interesting quasimorphisms on $G$. 
In this paper, we will mostly restrict our attention to 
\emph{homogeneous} quasimorphisms, which are quasimorphisms that satisfy the additional property $q(g^n) = n \cdot q(g)$ for all $g \in G$ and $n \in \Z$.
Given any quasimorphism $q:G \to \R$, it can be homogenized to a homogeneous quasimorphism $\bar{q}$ by defining
\begin{equation*}
    \bar{q}(g) = \lim_{n \to \infty} \frac{q(g^n)}{n}.
\end{equation*}
Note that we call two quasimorphisms $q_1, q_2: G \to \R$ equivalent if $q_1 - q_2$ is bounded.
In any such equivalence class of quasimorphisms, there is a unique homogeneous quasimorphism, which can be obtained by homogenizing any quasimorphism in the equivalence class.
In the following, we will need the following definition specific to quasimorphisms on metric groups equipped with a Borel probability measure.
\begin{definition}\label{def:mu-tame}
	Let $G$ be a metric group endowed with a Borel probability measure $\mu$.
	Let $q$ be a $\mu$-integrable homogeneous quasimorphism on $G$. 
	We define the \emph{$\mu$-distortion} $\ell_\mu(q)$ of $q$ by 
	\begin{equation*}
	\ell_\mu(q) = \lim_n \frac{1}{n} \int_G q \, d\mu^{*n}.
	\end{equation*}
	We say that $q$ is \emph{$\mu$-tame} if there is a constant $C$ such that 
	$\abs{q(g) - n \ell_\mu(q) } \leq C$,
	for $\mu^{*n}$-almost every $g$ and all $n$. In particular, if $\mu$ is symmetric, then $\ell_\mu(q) = 0$ so that in this case $q$ is $\mu$-tame if
	and only if it is $\mu^{*n}$-essentially bounded on the subgroup closure of the support of $\mu$.
\end{definition}
\subsection{Quasimorphisms from Floer theory}\label{sec:prelims:floer}
In the following, we will briefly recall the construction of quasimorphisms on
$\Ham(M,\omega)$ from Floer theory, which were first introduced by Entov and Polterovich in~\cite{entov-polterovich-2003}.
Such quasimorphisms have since been studied extensively and found many applications in symplectic topology.
In this review, we will focus on some specific examples of quasimorphisms that will be relevant for our purposes.

\subsubsection{Using the Entov--Polterovich construction}
The first construction of quasimorphisms on $\Ham(M,\omega)$ from Floer theory was given by Entov and Polterovich in~\cite{entov-polterovich-2003}.
The main result is the following theorem, which we will use later on in this paper.
\begin{theorem}[Entov--Polterovich]\label{thm:entov-polterovich}
Let $(M,\omega)$ be one of the following symplectic manifolds:
\begin{enumerate}
\item the sphere $S^2$ with any area form $\omega$;
\item $S^2 \times S^2$ with the split symplectic form given by $\omega \oplus \omega$;
\item the complex projective space $\C P^n$ with the Fubini-Study form.
\end{enumerate}
Then there exists a non-trivial homogeneous 
quasimorphism $q: \Ham(M,\omega) \to \R$. Furthermore, $q$ is Hofer-Lipschitz, i.e., there exists a constant $C > 0$ such that
$|q(\phi)| \leq C \cdot \norm{\phi}_{\Hof}$ for all $\phi \in \Ham(M,\omega)$.
\end{theorem}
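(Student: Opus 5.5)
We would follow the original Entov--Polterovich argument, which builds $q$ out of Floer-theoretic spectral invariants. Fix the quantum homology ring $QH_*(M,\omega)$ with coefficients in the appropriate Novikov ring $\Lambda$, and for each $a \in QH_*(M)\setminus\{0\}$ and each normalized Hamiltonian $H$ consider the spectral invariant $c(a,H)$. It depends only on the class $\tilde\phi = [\{\phi_H^t\}] \in \widetilde{\Ham}(M,\omega)$. We will use the standard properties of spectral invariants (Schwarz, Oh):
\begin{enumerate}
\item \emph{Hofer continuity:} $\int_0^1 \min(H-K)\,dt \le c(a,H)-c(a,K) \le \int_0^1\max(H-K)\,dt$.
\item \emph{Triangle inequality:} $c(a*b, H\#K)\le c(a,H)+c(b,K)$.
\item \emph{Normalization and valuation:} $c(a,0)=\nu(a)$.
\item \emph{Poincaré duality:} this controls $c(a,\bar H)$ in terms of a dual class.
\end{enumerate}

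The key algebraic input is that for each of the three manifolds the even quantum homology $QH_{ev}(M)$ is semisimple, so it splits off a field summand. For $S^2$ and $\C P^n$, $QH_{ev}\cong\Lambda[p]/(p^{n+1}-s)$ is itself a field. For $S^2\times S^2$ it is a direct sum of two fields. Let $e$ be the unit of such a field summand; it is an idempotent. Define $r(\tilde\phi)=c(e,\tilde\phi)$.

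The triangle inequality with $e*e=e$ gives $r(\tilde\phi\tilde\psi)\le r(\tilde\phi)+r(\tilde\psi)$. For the reverse inequality up to a constant, we combine duality with the fact that in a field every nonzero element is invertible with controlled valuation. This yields $r(\tilde\phi)+r(\tilde\phi^{-1})\ge -K$ for a uniform constant $K$, and together these show that $r$ is a quasimorphism on $\widetilde{\Ham}$. This step is the main obstacle. The valuation bookkeeping in the Novikov ring, specifically showing that $\nu$ of inverses within $eQH$ is bounded, is exactly where semisimplicity is used. We would carry it out by writing $e = x*y$ with $x$ arbitrary and $y$ its inverse in the field, and bounding $\nu(y)$ uniformly using finite-dimensionality of $eQH$ over the field $\Lambda$.

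Next we homogenize: set $\tilde q(\tilde\phi)=-\lim_k r(\tilde\phi^k)/k$, which is a homogeneous quasimorphism on $\widetilde{\Ham}$. We then descend it to $\Ham$. For these manifolds $\pi_1(\Ham)$ is finite, being $\Z_2$ for $S^2$, $\Z_{n+1}$ for $\C P^n$, and finite for $S^2\times S^2$. It is also central in $\widetilde{\Ham}$. A homogeneous quasimorphism is constant on cosets of a finite central subgroup, since $\tilde q(\gamma\tilde\phi)=\tilde q(\tilde\phi)$ whenever $\gamma$ is central and torsion. Hence $\tilde q$ descends to a homogeneous quasimorphism $q$ on $\Ham(M,\omega)$.

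For the Hofer-Lipschitz property, Hofer continuity gives $|r(\tilde\phi^k)|\le k\,\|\tilde\phi\|_{\Hof}+|\nu(e)|$. Dividing by $k$ and then taking the infimum over lifts (which does not change $q$) yields $|q(\phi)|\le \|\phi\|_{\Hof}$, so the estimate holds with $C=1$. For non-triviality, take an autonomous $H$ supported in a displaceable ball. The standard energy-capacity argument shows $r(\phi_{tH}^k)$ stays bounded after subtracting $k\int H\,\omega^n/\vol(M)$. Hence $q$ restricts to a nonzero multiple of the Calabi homomorphism on such a ball, and choosing $H$ with nonzero mean gives $q(\phi_H^1)\neq 0$.
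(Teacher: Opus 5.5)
Your outline is the Entov--Polterovich construction that the paper cites. However, the step you yourself call the main obstacle is stated in the wrong direction, and as written it does not yield a quasimorphism. For a subadditive $r$, being a quasimorphism is \emph{equivalent} to a uniform upper bound on $r(\tilde\psi)+r(\tilde\psi^{-1})$. Indeed, $r(\tilde\phi)\le r(\tilde\phi\tilde\psi)+r(\tilde\psi^{-1})$ gives
\[
r(\tilde\phi\tilde\psi)-r(\tilde\phi)-r(\tilde\psi)\;\ge\;-\bigl(r(\tilde\psi)+r(\tilde\psi^{-1})\bigr),
\]
and conversely a quasimorphism satisfies $r(\tilde\psi)+r(\tilde\psi^{-1})\le r(\id)+D$. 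So what you need is $r(\tilde\psi)+r(\tilde\psi^{-1})\le K$, i.e.\ boundedness of the spectral norm attached to $e$.

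The inequality you wrote, $r(\tilde\phi)+r(\tilde\phi^{-1})\ge -K$, is automatic from the triangle inequality, since $r(\tilde\phi)+r(\tilde\phi^{-1})\ge c(e*e,\id)=\nu(e)$. It holds for $c([M],\cdot)$ on every closed symplectic manifold and uses neither duality nor semisimplicity. Together with subadditivity it does not force a quasimorphism: the Hofer norm satisfies both and is not one.

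The upper bound is where the field structure enters. Poincar\'e duality produces $b$ with $\Pi(e,b)\neq 0$ and $c(b,\tilde\phi^{-1})\le -r(\tilde\phi)+\varepsilon$. The pairing condition forces $x:=e*b$ to be a nonzero element of the field summand with $\nu(x)$ bounded below. With $u=x^{-1}$, the triangle inequality gives
\[
r(\tilde\phi^{-1})=c(u*e*b,\tilde\phi^{-1})\le \nu(u)+\nu(e)+c(b,\tilde\phi^{-1})\le \nu(u)+\nu(e)-r(\tilde\phi)+\varepsilon .
\]
So the algebraic input is the bound $\nu(x)+\nu(x^{-1})\le K$ on the finite-dimensional field summand, combined with the lower bound on $\nu(x)$ coming from duality. ``Bounding $\nu(y)$ uniformly'' for arbitrary nonzero $x$, as you propose, is impossible. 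Rescaling $x$ by a Novikov scalar of very negative valuation makes $\nu(x^{-1})$ arbitrarily large.

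The descent step has a smaller gap. Finiteness of $\pi_1(\Ham(\C P^n))$ is known only for $n\le 2$ ($\Z_2$ by Smale, $\Z_3$ by Gromov). For $n\ge 3$ it is open, so you cannot descend by torsion; the paper's informal remark takes the same shortcut. What suffices, and what Entov--Polterovich use, is the following: the homogeneous quasimorphism $\tilde q$ restricts to a homomorphism on the central subgroup $\pi_1(\Ham)$, and this homomorphism vanishes. That vanishing is proved via the Seidel representation and monotonicity, not via finiteness of $\pi_1$. For $S^2$ and $S^2\times S^2$ your torsion argument is fine. The homogenization, Hofer bound and Calabi-type non-triviality steps are the standard ones.
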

The idea behind the construction is to use \emph{spectral invariants} from Hamiltonian Floer theory 
which are associated with certain idempotents in the quantum homology of $M$.
Here one uses the fact that the quantum homology of $M$ is semi-simple, which is a property that holds for the manifolds in Theorem~\ref{thm:entov-polterovich}. Furthermore, the fact that quantum homology and Floer homology are related by the Piunikhin--Salamon--Schwarz isomorphism allows
associating spectral invariants to the idempotents in quantum homology.
A priori, the spectral invariants are only defined on the universal cover $\widetilde{\Ham}(M,\omega)$ of $\Ham(M,\omega)$, but in the cases considered in Theorem~\ref{thm:entov-polterovich} the quasimorphism descends to $\Ham(M,\omega)$.
This is due to the fact that the fundamental group $\pi_1(\Ham(M,\omega))$ is finite in these cases.
The overall strategy has been generalized to a larger class of symplectic manifolds.
The following theorem was proven independently 
by Usher (see~\cite[Theorem 1.6]{usher-2011}) and Fukaya--Oh--Ohta--Ono (see~\cite[Corollary 1.2]{fukaya-oh-ohta-ono-2019}):
\begin{theorem}[Usher, Fukaya--Oh--Ohta--Ono]\label{thm:toric-Entov-Polterovich} 
    Let $(M,\omega)$ be a closed toric symplectic manifold. Then there exists a homogeneous quasimorphism $q\colon \widetilde{\Ham}(M,\omega) \to \R$ which is Hofer-Lipschitz and non-trivial.
\end{theorem}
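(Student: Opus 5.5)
This theorem is cited from the literature (Usher, \cite[Theorem 1.6]{usher-2011}; Fukaya--Oh--Ohta--Ono, \cite[Corollary 1.2]{fukaya-oh-ohta-ono-2019}), so the plan below is a reconstruction of their argument, following the Entov--Polterovich scheme of Theorem~\ref{thm:entov-polterovich}. There are three steps: produce an idempotent in (a deformation of) quantum homology that splits off a field summand; build the quasimorphism from the spectral invariant of that idempotent; check that it is Lipschitz and non-trivial.

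\textbf{Step 1: a field summand.} For a closed toric manifold $(M,\omega)$, the small quantum homology need not be semisimple. Following Fukaya--Oh--Ohta--Ono, I would use a bulk deformation $\mathfrak{b}$. Pick a non-degenerate critical point $u$ of the bulk-deformed Landau--Ginzburg potential $\mathfrak{PO}^{\mathfrak{b}}$; such a point exists after a generic choice of $\mathfrak{b}$. Through the closed-open (Kodaira--Spencer) map, the Jacobian ring of $\mathfrak{PO}^{\mathfrak{b}}$ is isomorphic to $QH_{\mathfrak{b}}(M)$. A non-degenerate critical point then gives an idempotent $e_u$ with $e_u \cdot QH_{\mathfrak{b}}(M) \cong \Lambda$, a field summand over the Novikov field. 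Usher's alternative route instead shows directly that the quantum homology of a toric manifold always contains such a field summand; either route suffices.

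\textbf{Step 2: the quasimorphism.} Let $c(e_u,\cdot)$ be the spectral invariant of $e_u$ on $\widetilde{\Ham}(M,\omega)$, defined via the PSS isomorphism (bulk-deformed if needed). I would use three standard properties:
\begin{enumerate}
\item the triangle inequality $c(e_u, \tilde\phi\tilde\psi) \le c(e_u,\tilde\phi) + c(e_u,\tilde\psi)$, which holds because $e_u \ast e_u = e_u$;
\item Hofer--Lipschitz continuity $|c(e_u,\tilde\phi) - c(e_u,\tilde\psi)| \le \widetilde{d}_{\Hof}(\tilde\phi,\tilde\psi)$;
\item Poincaré duality, which relates $c(e_u,\tilde\phi^{-1})$ to $-c(e_u,\tilde\phi)$ up to a bounded error.
\end{enumerate}
Together these give a quasimorphism $\tilde\phi \mapsto -c(e_u,\tilde\phi)$, and I set $q$ to be its homogenization.

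\textbf{The main obstacle} is the bounded error in (3). That bound comes from the field summand: in a field, $c(a\ast b)$ is controlled by the valuation of the product, and valuations are additive, which gives $c(e_u,\tilde\phi) + c(e_u,\tilde\phi^{-1}) \le \text{const}$. Making this rigorous in the bulk-deformed setting requires the full Kuranishi-structure technology of FOOO, and that is where most of the work lies.

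\textbf{Step 3: Lipschitz and non-trivial.} Homogenization preserves the Lipschitz constant, so $q$ is Hofer-Lipschitz. For non-triviality, take an autonomous $H$ supported in a small displaceable ball. Then $c(e_u,\phi_H^t)$ is bounded independently of $t$, by the energy–capacity inequality, so $q$ vanishes on such elements. By the Calabi property, the homogenized spectral invariant on elements supported in displaceable sets is a multiple of the Calabi invariant. Next, take $H$ to be a function of the moment map, concentrated near the toric fiber corresponding to $u$. That fiber is non-displaceable, and its Floer cohomology is non-zero by the non-degeneracy of $u$. For such $H$, the value of $q(\phi_H^1)$ is computed by $H$ evaluated on that fiber, and it differs from the Calabi term. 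Hence $q \ne 0$, which completes the argument.
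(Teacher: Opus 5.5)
The paper gives no argument of its own here. It imports the statement from \cite[Theorem 1.6]{usher-2011} and \cite[Corollary 1.2]{fukaya-oh-ohta-ono-2019}. Your outline is the Entov--Polterovich scheme those sources follow: a field summand $e_u\cdot QH_{\mathfrak b}(M)\cong\Lambda$ from a generic bulk deformation, the spectral invariant of $e_u$, the quasimorphism bound from subadditivity plus duality, then homogenization. Steps 1 and 2 are fine as a sketch of the cited proof.

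Step 3, however, contains a false claim. On $\widetilde{\Ham}(M,\omega)$, spectral invariants are computed from mean-normalized generators. For $H$ supported in a displaceable ball $B$, the displacement-energy bound only controls $c(e_u,kH)$ for the compactly supported, un-normalized generator $kH$ of $\tilde{\phi}^k_H$. The element itself satisfies
\begin{equation*}
\Bigl\lvert c(e_u,\tilde{\phi}_H^k) - c(e_u,kH)\Bigr\rvert = \frac{k\,\bigl\lvert\int_M H\,\omega^n\bigr\rvert}{\int_M \omega^n},
\end{equation*}
which grows linearly in $k$. So $q$ does not vanish on such elements: $q(\tilde{\phi}^1_H)$ is a fixed non-zero multiple of $\int_M H\,\omega^n$. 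This is exactly the Calabi property you invoke in your very next sentence, which contradicts the previous one.

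That property already finishes the proof: a bump function in $B$ with non-zero integral gives $q\neq 0$. The toric-fiber detour is therefore unnecessary, and as written it is also under-justified. Reading off $q(\phi^1_H)$ from the value of $H$ on $L(u)$ requires $L(u)$ to be superheavy for the quasimorphism of $e_u$. FOOO deduce this from the closed--open map sending $e_u$ to a non-zero class in the Floer cohomology of $L(u)$. Non-vanishing of that Floer cohomology alone is not enough. In non-monotone examples, several disjoint fibers have non-zero Floer cohomology, and each is superheavy only for its own idempotent.
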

Again, the quasimorphism descends to $\Ham(M,\omega)$ if the fundamental group $\pi_1(\Ham(M,\omega))$ is finite
or more generally if the image of the Seidel homomorphism is finite.
Thus, the conclusion of Theorem~\ref{thm:entov-polterovich} can be extended to symplectic manifolds such as $((S^2)^n, \omega^{\oplus n})$ for any $n \in \Nats_{>0}$, or $\C P^{n_1} \times \ldots \times \C P^{n_k}$ with a monotone symplectic form, where $n_1, \ldots, n_k \in \Nats_{>0}$.
We refer the reader to~\cite{branson-2011} for a detailed discussion.
The general case of the theorem of Usher and Fukaya--Oh--Ohta--Ono is still of interest to us, as we will also study the universal cover of $\Ham(M,\omega)$ in this paper. 

\subsubsection{Using link spectral invariants}
On the sphere, there 
is another construction of quasimorphisms on $\Ham(S^2,\omega)$
due to 
Cristofaro-Gardiner, Humili\`ere, Mak, Seyfaddini and Smith.
We will give a brief outline of the properties that we will need later. 
The interested reader is referred to~\cite{cristofaro-gardiner-humiliere-mak-seyfaddini-smith-2022} for the details.
In the following, let $\Sigma$ be a closed surface with symplectic form $\omega$.
Let $\ul{L} = \cup_{i=1}^k L_i$ be a link, i.e., a disjoint union of embedded circles in $\Sigma$, such that the complement $\Sigma \setminus \ul{L} = \cup_{i=1}^s B_i^\circ$ is a disjoint union of planar domains $B_i^\circ$
and we assume that also the closures of the domains $B_i^\circ$ are planar.
We are interested in the following class of such links:
\begin{definition}\label{def:monotone-link}
Let $\ul{L}$ be a link as above. We call  $\ul{L}$ \emph{monotone} if there exists  $\eta \in \Reals_{\geq 0}$ such that 
\begin{equation} \label{eqn:independent_of_j}
2\eta(\tau_j-1)+A_j
\end{equation}
is independent of $j$, for $j \in \{1,\dots,s\}$.
Here $\tau_j$ is the number of boundary components of $B_j$ and $A_j$ is the area of $B_j$.
\end{definition}
In particular, if we have a link on the sphere $S^2$ such that the complement is a union of annuli and two disks with the same area, then the link is monotone.
Such a link can for example be obtained by taking two horizontal slices of the sphere.
Any given such slice can be included in a monotone link by adding the necessary number of additional slices, which are parallel to the first slice.
In fact, we will later only need this special case, which is pictured in Figure~\ref{fig:monotone-link}.
\begin{figure}[h]
    \centering
    \includegraphics[width=0.32\textwidth]{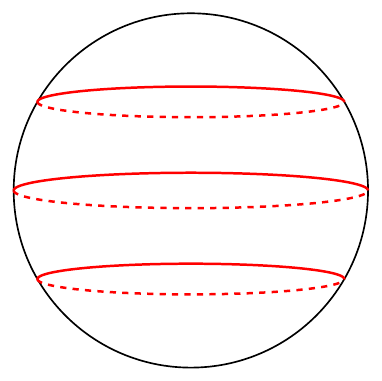}
    \hspace{1cm}
    \includegraphics[width=0.32\textwidth]{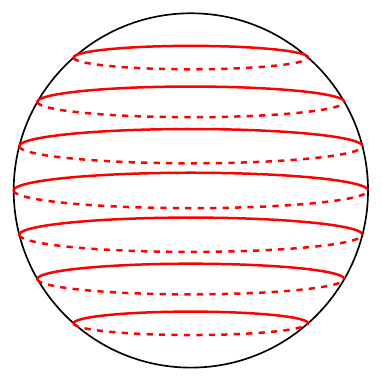}
    \caption{Two monotone links on the sphere $S^2$. On the left-hand side, the complement is a union of two disks and two annuli with the same area.
    On the right-hand side, it consists of two disks and six annuli with the same area.}\label{fig:monotone-link}
\end{figure}

Using Lagrangian Floer theory of the torus $\Sym^k(\ul{L})$ in the symmetric product $\Sym^k(\Sigma)$, the authors of~\cite{cristofaro-gardiner-humiliere-mak-seyfaddini-smith-2022} construct the following \emph{link spectral invariants}:
\begin{theorem}[Cristofaro-Gardiner, Humili\`ere, Mak, Seyfaddini and Smith]\label{thm:link-spectral-inv}
For every monotone Lagrangian link  $\ul{L}=\cup_{i=1}^k L_i$ there exists a  link spectral invariant \begin{equation*}c_{\ul{L}}\colon C^{\infty}([0,1]\times \Sigma,\omega) \to \mathbb{R}\end{equation*} satisfying the following properties.
\begin{itemize}
\item (Hofer Lipschitz) for any $H, H'$, we have
\begin{equation*}
\int_{0}^1 \min  (H_t - H'_t) dt \leq c_{\ul{L}}(H) - c_{\ul{L}}(H') \leq \int_{0}^1 \max\,  (H_t - H'_t) dt;
\end{equation*}
\item (Lagrangian control) if $H_t|_{L_i} = s_i(t)$ for each $i$, then 
\begin{equation*}
c_{\ul{L}}(H) = \frac{1}{k} \, \sum_{i=1}^k \int_0^1 s_i(t) dt;
\end{equation*}
\item (Homotopy invariance) if $H, H'$ are mean-normalized and determine the same point of the universal cover $\widetilde{\Ham}(\Sigma,\omega)$, then $c_{\ul{L}}(H) = c_{\ul{L}}(H')$;
\end{itemize}
\end{theorem}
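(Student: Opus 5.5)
This result is quoted from~\cite{cristofaro-gardiner-humiliere-mak-seyfaddini-smith-2022}, so the plan below is the route I would take to reconstruct it, not an argument needed elsewhere in this paper. The idea is to pass from the surface $\Sigma$ to the symmetric product $X=\Sym^k(\Sigma)$ and work with the Lagrangian torus $\Sym(\ul{L}) = L_1\times\cdots\times L_k/S_k \subset X$. This torus lies away from the diagonal because the $L_i$ are disjoint.

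The first step is geometric. One equips $X$ with a symplectic form $\omega_X$ that agrees with $\omega^{\oplus k}$ away from a small neighbourhood of the diagonal $\Delta$; the Perutz-type smoothing is one option. One then checks that $\Sym(\ul{L})$ is a monotone Lagrangian in $(X,\omega_X)$. Disks with boundary on $\Sym(\ul{L})$ are generated by the domains $B_j$ and by a class meeting the diagonal. Their areas are $A_j$ plus a diagonal correction proportional to $\eta$, and their Maslov indices are $2(\tau_j - 1)+2$ together with the diagonal contribution. The monotonicity condition in Definition~\ref{def:monotone-link}, with $\eta$ chosen to match the diagonal term, is exactly the requirement that area be proportional to Maslov index on $\pi_2(X,\Sym(\ul{L}))$.

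The second step is to show that the Lagrangian quantum cohomology $HF(\Sym(\ul{L}))$ is nonzero. I would do this with a disk-potential computation: enumerate the Maslov $2$ disks through a point and exhibit a critical point of the resulting potential. This disk-counting step is where I expect the main obstacle to lie. It needs a careful count of holomorphic disks in $\Sym^k(\Sigma)$, via tautological correspondence with branched covers $S\to\Sigma$ (the Lipshitz cylindrical model), to identify the potential and show it has a critical point.

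With $HF\neq 0$ in hand, one defines $c_{\ul{L}}(H)$ for $H\in C^\infty([0,1]\times\Sigma)$ as the spectral invariant of the unit, pushed through the PSS map, in the filtered Floer complex of the symmetrized Hamiltonian $\Sym(H)(t,[x_1,\dots,x_k]) = \sum_i H(t,x_i)$. This Hamiltonian is smooth away from $\Delta$, and the choice of $\omega_X$ makes its flow compatible near $\Delta$. The normalization $\frac1k$ then gives the invariant in the statement.

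The three listed properties follow from standard spectral-invariant arguments. Hofer Lipschitz continuity comes from continuation maps and the energy estimate: $\max(\Sym H-\Sym H')$ equals $k\max(H-H')$, and after dividing by $k$ this gives the stated bounds. Lagrangian control holds because when $H_t|_{L_i}=s_i(t)$, each $\Sym(\ul{L})$ is invariant and all chords are constant, with the single action value $\sum_i\int_0^1 s_i(t)\,dt$; only the unit class survives, which yields the formula. Homotopy invariance comes from the usual argument. Spectral invariants vary continuously along a homotopy of mean-normalized Hamiltonians with fixed endpoint. At the same time they take values in the spectrum, which has measure zero, so they are constant along the homotopy. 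Monotonicity is what makes the action spectrum well defined modulo the period group; mean-normalization is what removes the remaining ambiguity.
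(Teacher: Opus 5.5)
The paper does not prove this statement; it quotes it from~\cite{cristofaro-gardiner-humiliere-mak-seyfaddini-smith-2022}. Your outline follows the route taken there: a Perutz-type form on $\Sym^k(\Sigma)$, the monotone torus $\Sym(\ul{L})$, non-vanishing of its Floer cohomology via Maslov-$2$ disks and the tautological correspondence, and $c_{\ul{L}}(H)=\frac1k c_{\Sym(\ul{L})}(\Sym H)$. Two steps are wrong as written, however.

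\emph{Monotonicity.} The class $u_j$ with domain $B_j$ has Maslov index $2$ for every $j$, since $\mu(u)=2e(D)+u\cdot\Delta$ and $e(B_j)=2-\tau_j$. The number $2(\tau_j-1)$ is $u_j\cdot\Delta$, i.e.\ the number of branch points of the degree-$\tau_j$ cover $B_j\to D^2$ (Riemann--Hurwitz). It enters the area $\omega_X(u_j)=A_j+2\eta(\tau_j-1)$, not the Maslov index. With your value $2\tau_j$, monotonicity would say that $(A_j+2\eta(\tau_j-1))/\tau_j$ is independent of $j$. That is not Definition~\ref{def:monotone-link}.

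\emph{Definition of the invariant.} $\Sym(H)$ is only continuous on $\Sym^k(\Sigma)$ along $\Delta$. Already $|z_1|^2+|z_2|^2=\tfrac12(|s|^2+|s^2-4p|)$ in the coordinates $s=z_1+z_2$, $p=z_1z_2$. Near $\Delta$, $\omega_X$ also differs from the form induced by $\omega^{\oplus k}$, so the choice of $\omega_X$ does not make the flow compatible there. Standard Floer theory therefore does not directly define $c_{\Sym(\ul{L})}(\Sym H)$. This matters for your homotopy-invariance argument, which needs the values along the homotopy to lie in one fixed measure-zero set computed on $\Sigma$. The missing observation is this: a chord of $\Sym(\phi^t_H)$ starting on $\Sym(\ul{L})$ is, for every $t$, a $k$-tuple of \emph{distinct} points, so these chords stay a positive distance from $\Delta$. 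Define the invariant through smoothings of $\Sym H$ that change it only near $\Delta$; this is legitimate by the Hofer--Lipschitz estimate. Take the neighbourhood where $\omega_X$ is modified, and where you smooth, to miss these chords. The relevant capped chords and their actions are then those of $k$-tuples of chords of $H$ on $\Sigma$, which gives spectrality.

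Two smaller points. First, in Lagrangian control the chords are not constant, since points move along the $L_i$. What is true is that $\Sym(\ul{L})$ is invariant and every chord lies in it, so it can be capped with zero area. The spectrum is therefore $\int_0^1\sum_i s_i\,dt$ plus the discrete period group. Continuity along $\sum_i s_i+r(\Sym H-\sum_i s_i)$, $r\in[0,1]$, starting from the spatially constant case, then gives the formula.

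Second, for $HF(\Sym(\ul{L}))\neq0$ you leave the decisive computation open. One needs two facts:
\begin{itemize}
\item a holomorphic Maslov-$2$ disk has domain a single $B_j$, since domain multiplicities are nonnegative;
\item each class $u_j$ is represented by exactly one disk through a generic point. This is a statement about degree-$\tau_j$ proper holomorphic maps $B_j\to D^2$ with prescribed fibre over $1$.
\end{itemize}
Then $W=\sum_j\prod_{L_i\subset\partial B_j}x_i^{\pm1}$, with signs from the boundary orientation. Planarity of the $\overline{B_j}$ forces each $L_i$ to border two distinct regions, which induce opposite orientations on it. Hence $x_i\partial_{x_i}W=0$ at the trivial local system (e.g.\ over $\Z/2$). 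That local system is therefore a critical point, and $HF(\Sym(\ul{L}))\neq0$.
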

We have compressed the statement of the theorem above to only include the properties that we will need later on.
Recall that if $\Sigma \not= S^2$, then $\Ham(\Sigma,\omega) = \widetilde{\Ham}(\Sigma,\omega)$.
Thus, by the homotopy invariance property of the link spectral invariant, 
the map descends to a map $c_{\ul{L}}: \Ham(\Sigma,\omega) \to \mathbb{R}$.
By the Hofer-Lipschitz property, the map $c_{\ul{L}}: \Ham(\Sigma,\omega) \to \mathbb{R}$ is Lipschitz with respect to the Hofer metric, and hence continuous.
However, this map is not a quasimorphism even after homogenization, 
as shown in~\cite[Proposition 17]{cheuk-yu-trifa-2026}.
If $\Sigma = S^2$, then $\Ham(S^2,\omega) \not= \widetilde{\Ham}(S^2,\omega)$, and the map $c_{\ul{L}}$ does not descend to a map on $\Ham(S^2,\omega)$ directly.
However, the authors of~\cite{cristofaro-gardiner-humiliere-mak-seyfaddini-smith-2022} use an averaging procedure to construct a quasimorphism on $\Ham(S^2,\omega)$ from the link spectral invariant $c_{\ul{L}}$.
In particular, for any $\varphi \in \Ham(S^2, \omega)$ we set
\begin{equation*}
\mu_{\ul{L}}(\varphi) \coloneqq \lim_{n \to \infty} \frac{c_{\ul{L}}(\tilde\varphi^n)}{n},
\end{equation*}
where $\tilde \varphi \in \widetilde{\Ham}(S^2, \omega)$ is any lift of $\varphi$.  
Interestingly, this limit is independent of the choice of lift $\tilde \varphi$ and defines a homogeneous quasimorphism $\mu_{\ul{L}}: \Ham(S^2, \omega) \to \R$.
\begin{theorem}[Cristofaro-Gardiner, Humili\`ere, Mak, Seyfaddini and Smith]\label{thm:quasimorphisms-links}
For any monotone Lagrangian link $\ul{L}$ on $S^2$, the map $\mu_{\ul{L}} \colon \Ham(S^2, \omega) \to \R$
as defined above
is a homogeneous quasimorphism.
Furthermore, it satisfies the following properties:
\begin{enumerate}
\item (Hofer Lipschitz)  $\abs{\mu_{\ul{L}}(\varphi) - \mu_{\ul{L}}(\psi)} \leq d_{\Hof}(\varphi, \psi)$;
\item (Lagrangian control) Suppose $H$ is mean-normalized.  If $H_t|_{L_i} = s_i(t)$ for each $i$, then 
\begin{equation*}
\mu_{\ul{L}}(H) = \frac{1}{k} \, \sum_{i=1}^k \int_0^1 s_i(t) dt.
\end{equation*}
\end{enumerate}
\end{theorem}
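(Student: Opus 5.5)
The plan is to derive all claims from the properties of $c_{\ul{L}}$ in Theorem~\ref{thm:link-spectral-inv}, viewed as a function on $\widetilde{\Ham}(S^2,\omega)$ via homotopy invariance. One ingredient is not among the properties listed there and must be imported from~\cite{cristofaro-gardiner-humiliere-mak-seyfaddini-smith-2022}. This is the \emph{quasi-additivity} of $c_{\ul{L}}$ on the universal cover: there is a constant $D$ with
\begin{equation*}
\abs{c_{\ul{L}}(\tilde\varphi\tilde\psi) - c_{\ul{L}}(\tilde\varphi) - c_{\ul{L}}(\tilde\psi)} \leq D.
\end{equation*}
I would obtain it in two pieces. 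The upper bound $c_{\ul{L}}(\tilde\varphi\tilde\psi)\le c_{\ul{L}}(\tilde\varphi)+c_{\ul{L}}(\tilde\psi)$ comes from the Floer triangle product on $HF(\Sym^k(\ul{L}))$. The lower bound comes from a Poincaré duality estimate $c_{\ul{L}}(\tilde\varphi)+c_{\ul{L}}(\tilde\varphi^{-1})\le D'$. Both rely on monotonicity of $\Sym^k(\ul{L})$ (this is where Definition~\ref{def:monotone-link} enters), which gives that the unit class is non-zero with well-behaved valuations. I expect this step to be the main obstacle; everything after it is formal.

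\textbf{Existence, homogeneity and quasimorphism property.} Given quasi-additivity, the sequence $a_n = c_{\ul{L}}(\tilde\varphi^n)+D$ is subadditive. Fekete's lemma then gives existence of $\lim_n c_{\ul{L}}(\tilde\varphi^n)/n$, and the limit is finite because $\abs{c_{\ul{L}}(\tilde\varphi^n)}\le n\norm{\tilde\varphi}$. Here the bound follows from the Hofer-Lipschitz property compared against $H'=0$, using $c_{\ul{L}}(0)=0$ from Lagrangian control.

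\textbf{Independence of the lift.} Since $\pi_1(\Ham(S^2,\omega))=\Z/2$ is generated by a central loop $\gamma$, any two lifts differ by $\gamma^{\pm1}$. Because $\gamma$ is central, $(\tilde\varphi\gamma)^n=\tilde\varphi^n\gamma^n$ with $\gamma^n\in\{1,\gamma\}$. Quasi-additivity then gives $\abs{c_{\ul{L}}((\tilde\varphi\gamma)^n)-c_{\ul{L}}(\tilde\varphi^n)}\le D+\abs{c_{\ul{L}}(\gamma)}$, which is bounded independently of $n$, so the limits agree.

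With the lift fixed, $\mu_{\ul{L}}$ is the homogenization of the quasimorphism $c_{\ul{L}}$. It is therefore a homogeneous quasimorphism on $\widetilde{\Ham}$. It descends to $\Ham(S^2,\omega)$ by lift-independence, and its defect is controlled on $\Ham$ by choosing lifts $\tilde\varphi,\tilde\psi$ and using $\widetilde{\varphi\psi}=\tilde\varphi\tilde\psi$.

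\textbf{Hofer-Lipschitz.} Given $\varphi,\psi$ and $\epsilon>0$, choose lifts with $\widetilde{d}_{\Hof}(\tilde\varphi,\tilde\psi)\le d_{\Hof}(\varphi,\psi)+\epsilon$. This is possible since $d_{\Hof}$ is the infimum over lifts. The Hofer-Lipschitz property of $c_{\ul{L}}$ and bi-invariance of the lifted metric give
\begin{equation*}
\abs{c_{\ul{L}}(\tilde\varphi^n)-c_{\ul{L}}(\tilde\psi^n)}\le \widetilde{d}_{\Hof}(\tilde\varphi^n,\tilde\psi^n)\le n\,\widetilde{d}_{\Hof}(\tilde\varphi,\tilde\psi).
\end{equation*}
Dividing by $n$ and letting $n\to\infty$ and then $\epsilon\to0$ gives the claim.

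\textbf{Lagrangian control.} If $H_t|_{L_i}=s_i(t)$, then $\tilde\varphi^n$ is generated by the mean-normalized Hamiltonian $H^{(n)}(t,x)=nH(\{nt\},x)$. This satisfies $H^{(n)}_t|_{L_i}=n s_i(\{nt\})$, so Lagrangian control for $c_{\ul{L}}$ gives $c_{\ul{L}}(\tilde\varphi^n)=\frac{n}{k}\sum_i\int_0^1 s_i\,dt$. Dividing by $n$ yields the formula for $\mu_{\ul{L}}$.
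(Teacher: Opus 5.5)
The paper gives no proof of this theorem; it is imported verbatim from \cite{cristofaro-gardiner-humiliere-mak-seyfaddini-smith-2022}. Your reduction of everything to quasi-additivity of $c_{\ul{L}}$ on $\widetilde{\Ham}(S^2,\omega)$ is sound, and the formal steps are correct:
\begin{itemize}
\item Fekete's lemma gives existence of the limit.
\item Centrality of $\pi_1(\Ham(S^2,\omega))\cong\Z/2$ gives lift-independence. This step in fact needs only Hofer--Lipschitz and bi-invariance, since $\widetilde{d}_{\Hof}(\tilde\varphi^n\gamma^n,\tilde\varphi^n)=\norm{\gamma^n}$.
\item The Hofer--Lipschitz estimate follows from $\abs{c_{\ul{L}}(H\# G)-c_{\ul{L}}(H)}\le\int_0^1\osc(G_t)\,dt$ for mean-normalized $G$, together with the telescoping bound $\widetilde{d}_{\Hof}(\tilde\varphi^n,\tilde\psi^n)\le n\,\widetilde{d}_{\Hof}(\tilde\varphi,\tilde\psi)$.
\item Lagrangian control follows from the $n$-fold concatenation, after reparametrizing $H$ in time so that $nH(\{nt\},x)$ is smooth.
\end{itemize}

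Where the proposal goes wrong is your sketch of the one non-formal input. The triangle product does give $c_{\ul{L}}(\tilde\varphi\tilde\psi)\le c_{\ul{L}}(\tilde\varphi)+c_{\ul{L}}(\tilde\psi)$. However, the bound $c_{\ul{L}}(\tilde\varphi)+c_{\ul{L}}(\tilde\varphi^{-1})\le D'$ does not follow from monotonicity of $\Sym^k(\ul{L})$ and non-vanishing of the unit.

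Via Poincar\'e duality, that bound amounts to comparing the spectral invariant of the unit with those of the classes pairing non-trivially with it, such as the point class. This requires those classes to be invertible in $HF(\Sym^k(\ul{L}))$ with controlled valuation. That is a genus-zero algebraic input, tied to semisimplicity (field property) of the bulk-deformed quantum cohomology of $\Sym^k(S^2)\cong\C P^k$, as in the Entov--Polterovich argument \cite{entov-polterovich-2003}.

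The paper itself shows that monotonicity cannot be enough. Monotone links, the properties of Theorem~\ref{thm:link-spectral-inv}, and the triangle-product upper bound are all available on every closed surface. Yet for $\Sigma\neq S^2$ the invariant $c_{\ul{L}}$ is not a quasimorphism even after homogenization \cite[Proposition 17]{cheuk-yu-trifa-2026}. So it is precisely your Poincar\'e-duality bound that fails off the sphere.

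Your argument is therefore complete if you cite quasi-additivity from the reference, or equivalently the boundedness of $c_{\ul{L}}(\tilde\varphi)+c_{\ul{L}}(\tilde\varphi^{-1})$. The derivation you propose for it would not go through: the missing idea is the sphere-specific algebraic structure, not monotonicity of the link.
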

It should be noted that there are other constructions of Hofer-Lipschitz quasimorphisms on $\Ham(S^2, \omega)$, for example the original construction of Entov and Polterovich~\cite{entov-polterovich-2003}, which uses spectral invariants from Hamiltonian Floer theory.
For our purposes, the quasimorphisms constructed in~\cite{cristofaro-gardiner-humiliere-mak-seyfaddini-smith-2022} are the most convenient, and we refer the interested reader to the references above for more details on the other constructions.
\subsection{The $L^p$-metrics and Gambaudo--Ghys quasimorphisms}\label{sec:prelims:lp}
While the Hofer metric is the most commonly used metric on $\Ham(M,\omega)$, there are also other metrics that can be defined on $\Ham(M,\omega)$.
A well-studied family of such metrics are the $L^p$-metrics, which are defined as follows.
Let $J \colon TM \to TM$ be an $\omega$-compatible almost complex structure, and let $g(\cdot,\cdot) = \omega(\cdot, J \cdot)$ be the associated Riemannian metric on $M$.
Then for any smooth function $H \in C^\infty([0,1] \times M)$ we define the $L^p$-length of the Hamiltonian path $\phi_H^t$ by
\begin{equation*}
    \ell_p(\{\phi_H^t\}_{t \in [0,1]}) = \int_0^1 \left(\int_M \frac{1}{\vol(M,\omega)} \norm{X_H(t,\cdot)}_g^p \, \omega^n\right)^{1/p} dt.
\end{equation*}
The $L^p$-metric on $\Ham(M,\omega)$ is then defined by
\begin{equation*}
    d_p(\phi, \psi) = \inf\{\ell_p(\{\phi_H^t\}_{t \in [0,1]}) \mid \phi_H^1 = \psi \circ \phi^{-1}\}.
\end{equation*}
Note that since $d_p(\phi, \psi) = d_p(\id, \psi \circ \phi^{-1})$, we have $d_p(\phi \tau, \psi \tau) = d_p(\phi, \psi)$ for all $\phi, \psi, \tau \in \Ham(M,\omega)$, i.e., the $L^p$-metric is right-invariant.
The interest in the $L^p$-metrics stems from the fact that the $p=2$ case is related to the hydrodynamics of an ideal fluid, see~\cite{shnirelman-1994},
whilst the $L^1$-metric gives the average length of a trajectory of a Hamiltonian path.
The $L^p$-metrics are independent of the choice of $\omega$-compatible almost complex structure $J$ up to bi-Lipschitz equivalence.

There are certain quasimorphisms on $\Ham(M,\omega)$ that are Lipschitz with respect to the $L^p$-metrics.
In particular, in~\cite{brandenbursky-marcinkowski-shelukhin-2022} the authors show that the quasimorphisms constructed by Gambaudo-Ghys are Lipschitz with respect to the $L^p$-metric for any $p \geq 1$.
The Gambaudo-Ghys quasimorphisms are
defined on the group of area-preserving diffeomorphisms of a surface, and are constructed using braid invariants.
Intuitively, they encode a kind of average braiding of the trajectories of points under the Hamiltonian flow.
We will not go into the details of the construction here, as we only need their existence, non-triviality, and Lipschitz property with respect to the $L^p$-metrics. We refer the reader to~\cite{gambaudo-ghys-2004, brandenbursky-marcinkowski-shelukhin-2022} for more details.
The following combined result summarizes the properties of the Gambaudo-Ghys quasimorphisms that we will need later on in this paper.
\begin{theorem}[Gambaudo--Ghys, Brandenbursky--Marcinkowski--Shelukhin]\label{thm:gambaudo-ghys}
    Let $(\Sigma,\omega)$ be a closed surface with symplectic form $\omega$.
    Then there exists a family of non-trivial homogeneous quasimorphisms $\mu\colon \Ham(\Sigma,\omega) \to \R$ that are Lipschitz with respect to the $L^p$-metric for any $p \geq 1$.
\end{theorem}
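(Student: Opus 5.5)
The statement is a combination of results from the literature, so the plan is to recall the Gambaudo--Ghys construction and isolate the estimate of Brandenbursky--Marcinkowski--Shelukhin, rather than to argue from scratch.

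\emph{Construction.} Fix $n \geq 2$ and a homogeneous quasimorphism $r$ on the pure braid group $P_n(\Sigma)$ of $\Sigma$. For the disk or sphere one can take, for instance, a signature quasimorphism or a quasimorphism coming from a free subgroup. Choose, measurably on a full-measure subset of the configuration space $X_n(\Sigma)$, a system of short paths from every point to a fixed base configuration. For $\phi = \phi_H^1$ and a generic configuration $x = (x_1,\dots,x_n)$, concatenate three pieces: the short path from the base point to $x$, the trajectory $t \mapsto (\phi_H^t(x_1),\dots,\phi_H^t(x_n))$, and the short path back. This gives a braid $\gamma(\phi;x) \in P_n(\Sigma)$. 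Set
\begin{equation*}
\Psi(\phi) = \int_{X_n(\Sigma)} r(\gamma(\phi;x))\,\omega^{\otimes n}(x).
\end{equation*}
The path dependence on $H$ is harmless. For $\Sigma \neq S^2$ this is because $\Ham$ is simply connected at the level needed, and on $S^2$ one divides out the finite center.

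Since $\gamma(\phi\psi;x)$ and $\gamma(\phi;\psi(x))\gamma(\psi;x)$ differ by a uniformly bounded number of generators, and $\psi$ preserves the measure, $\Psi$ is a quasimorphism. Homogenizing gives $\bar\Psi$.

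\emph{Nontriviality.} Evaluate $\bar\Psi$ on autonomous flows, for example radial functions supported in disjoint disks, or rotations. On such flows the braids are explicit powers of twists, and the resulting Calabi-type integral formula is nonzero for suitable $H$. Varying $r$ and $n$ yields an infinite linearly independent family.

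\emph{Lipschitz estimate (the main obstacle).} We need $|\bar\Psi(\phi)| \leq C\, d_p(\id,\phi)$, and more generally $|\bar\Psi(\phi)-\bar\Psi(\psi)| \leq C\, d_p(\phi,\psi)$.

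\begin{itemize}
\item Since $r$ is bounded by a multiple of word length, it suffices to bound $\int_{X_n} |\gamma(\phi_H^1;x)|_{\mathrm{word}}\,dx$ linearly in $\ell_1(\{\phi_H^t\})$.
\item The word length is controlled by the total pairwise winding of the trajectories, $\sum_{i<j}$ of the angle swept by $\phi_H^t(x_j)$ around $\phi_H^t(x_i)$, plus a bounded error.
\item The infinitesimal winding rate of the pair $(x_i,x_j)$ is at most $\bigl(|X_H(x_i)|+|X_H(x_j)|\bigr)/\mathrm{dist}(x_i,x_j)$.
\item Integrate over configurations using invariance of $\omega^{\otimes n}$ under the flow. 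The singularity $1/\mathrm{dist}$ is integrable in dimension two, although one must handle it with a logarithmic or truncation argument as in BMS.
\item This gives a bound $C\int_0^1\int_\Sigma |X_{H}|\,\omega\,dt = C\,\ell_1$. By Hölder, $\ell_1 \leq \ell_p$ for all $p\geq 1$.
\end{itemize}

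Finally, transfer the bound to the homogenization and to differences. First, $|\bar\Psi(\phi)| \leq \limsup_k |\Psi(\phi^k)|/k \leq C\, d_p(\id,\phi)$ by the triangle inequality. Then, using that a homogeneous quasimorphism is conjugation invariant and right-invariance of $d_p$, the difference estimate $|\bar\Psi(\phi)-\bar\Psi(\psi)|$ reduces to evaluating $\bar\Psi$ on $\phi\psi^{-1}$ up to the defect. Removing the defect via homogeneity, exactly as in BMS, is the delicate part, and I would follow their argument there.
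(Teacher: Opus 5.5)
The paper gives no proof of this statement. It is imported from Gambaudo--Ghys and Brandenbursky--Marcinkowski--Shelukhin, so your outline reconstructs the literature rather than offering an alternative to an argument in the paper. The overall route is the right one: braids traced by orbits, integration against $\omega^{\otimes n}$, word length controlled by an $L^1$-type integral, and H\"older to pass to $L^p$. Your step $|\bar\Psi(\phi)|\le\limsup_k|\Psi(\phi^k)|/k\le C\,d_p(\id,\phi)$ is correct, since $d_p(\id,\phi^k)\le k\,d_p(\id,\phi)$ uses only right-invariance. If ``Lipschitz'' is read in the one-point sense, which is how the paper phrases Hofer-Lipschitz in Theorem~\ref{thm:entov-polterovich}, that step already gives the statement, modulo the issue in the next paragraph.

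The two-point estimate you go on to claim is a genuine gap. Your reduction only yields $|\bar\Psi(\phi)-\bar\Psi(\psi)|\le C\,d_p(\phi,\psi)+D(\bar\Psi)$. The standard way to remove the defect is to pass to $\phi^k,\psi^k$ and use $d(\phi^k,\psi^k)\le k\,d(\phi,\psi)$, but that inequality needs left- as well as right-invariance. For $d_p$, the element $\psi^k\phi^{-k}$ is a product of the $k$ conjugates $\psi^j(\psi\phi^{-1})\psi^{-j}$. In general $d_p(\id,\psi^jh\psi^{-j})$ is only bounded by $\|d\psi^j\|_\infty\,d_p(\id,h)$. At the level of braids, $\gamma(\psi^k;x)$ and $\gamma(\phi^k;x)$ follow different orbits. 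Any splitting of them via the quasimorphism inequality costs an error of order $k\,D(r)$, i.e.\ of order $D(r)$ after dividing by $k$.

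So ``removing the defect via homogeneity'' is not a formality. It also matters for this paper: Lemma~\ref{lem:bi-harmonic-q}, and hence Theorem~\ref{thm:q-clt}, uses the two-point inequality $|q(g)-q(g')|\le L_q\,d_G(g,g')$. You need either an actual argument or a check of which form of the bound the cited works establish.

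Second, your word-length bullet is a planar argument. On a closed surface, which is the setting of the statement, pairwise winding angles are not globally defined. The $\pi_1(\Sigma)$-part of a surface braid is also missed. For instance, a single strand running repeatedly around a handle has word length growing with the length of that one trajectory, yet it is invisible to any relative winding count.

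The correct replacement is a Schwarz--Milnor lemma for the configuration space with a metric blown up near the fat diagonal, which is the tool Brandenbursky--Marcinkowski--Shelukhin develop. It bounds word length by the sum of three terms:
\begin{itemize}
\item $\sum_i\mathrm{length}(t\mapsto\phi^t_H(x_i))$;
\item your $1/\mathrm{dist}$ term;
\item a closing cost of order $\log(1/\min_{i\neq j}\mathrm{dist}(x_i,x_j))$.
\end{itemize}
The closing cost is where the logarithm enters. The $1/\mathrm{dist}$ integrand itself needs no truncation, since $\int_\Sigma\mathrm{dist}(y,z)^{-1}\,\omega(z)$ is bounded uniformly in $y$. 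All three terms integrate to at most $C\,\ell_1+C'$, with the closing cost absorbed into the constant. So your conclusion survives, but not by the argument as you wrote it.
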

\section{The family of law-defining data and the resulting measures}\label{sec:ldds}
The focus of~\cite{dawid-2025} was on the construction of a measure $\muGP^{\mathcal{D}}$ on $\Ham(M,\omega)$ from a single law-defining datum $\mathcal{D}$.
However, the family of all law-defining data has some useful properties, which we will now discuss.
For this, we define a space of law-defining data $\LDD(M,\omega)$, which we will see can be endowed with a natural topology.
For set-theoretic reasons, we will assume that all law-defining data are defined on the same probability space $(\Omega,\mathcal{F},\Prob)$.\footnote{Otherwise, there will be no set of all law-defining data, since the probability space is part of the definition of a law-defining datum and there is no set of all probability spaces.}
However, this is no restriction, since we could have taken $(\Omega,\mathcal{F},\Prob)$ to be any fixed standard probability space to begin with.
With this in mind, we can define the set of all law-defining data as
\begin{equation*}
    \LDD(M,\omega) \coloneqq \{ \mathcal{D} = (\reg,J,\{e_n\}_{n \ge 1},\{Z_n\}_{n \ge 1}) \mid \mathcal{D} \text{ is a law-defining datum} \}.
\end{equation*}
\subsection{Closedness under convolution}
Let us fix an $\omega$-compatible almost-complex structure $J$ on $(M,\omega)$.
Let $\mathcal{D}_1$ and $\mathcal{D}_2$ be two law-defining data with the same almost-complex structure $J$.
Assume that $\phi^{\rand},\psi^{\rand} \colon \Omega \to \Ham(M,\omega)$ are two independent random variables with law $\muGP^{\mathcal{D}_1}$ and $\muGP^{\mathcal{D}_2}$, respectively.
Then, we can ask: What is the law of the products $\phi^{\rand} \circ \psi^{\rand}$ and $\psi^{\rand}\circ \phi^{\rand}$? 
In the following, we will show that there exists a law-defining datum $\mathcal{D}_1 * \mathcal{D}_2$ such that the law of $\phi^{\rand} \circ \psi^{\rand}$ is $\muGP^{\mathcal{D}_1 * \mathcal{D}_2}$.

To formulate this without the unnecessary introduction of proxy random variables, we use the \emph{convolution} of measures. 
\begin{definition}
    Let $G$ be a topological group and let $\mu_1$ and $\mu_2$ be two Borel probability measures on $G$. Then the \emph{convolution} of $\mu_1$ and $\mu_2$ is the Borel probability measure $\mu_1 * \mu_2$ on $G$ defined by
    \begin{equation*}
        (\mu_1 * \mu_2)(A) \coloneqq \int_{G\times G} \mathbb{1}_{A}(gh) d\mu_1(g)d\mu_2(h)
    \end{equation*}
    for all Borel sets $A \subseteq G$.
\end{definition}
\begin{remark}
    Note that the convolution of two measures is associative, but not necessarily commutative if $G$ is non-abelian.
    Consider for example the group $G = \langle a,b \mid a^3 = b^3 = e\rangle$ and the measures $\mu_1 = \delta_{a}$ and $\mu_2 = \delta_{b}$. Then $\mu_1 * \mu_2 = \delta_{ab} \neq \delta_{ba} = \mu_2 * \mu_1$.
\end{remark}
We can now state the main result of this section, which is 
another way in which the family of all measures $\muGP^{\mathcal{D}}$ resembles 
the family of Gaussian measures on a vector space, which is closed under convolution.
\begin{theorem}\label{thm:convolution}
    Let $\mathcal{D}_1,\mathcal{D}_2 \in \LDD(M,\omega)$ be two law-defining data with the same almost-complex structure $J$ and the same regularity $\reg > 0$. Then there exists a law-defining datum $\mathcal{D}_1 * \mathcal{D}_2$ such that 
    \begin{equation*}
        \muGP^{\mathcal{D}_1 * \mathcal{D}_2} = \muGP^{\mathcal{D}_1} * \muGP^{\mathcal{D}_2}.
    \end{equation*}
\end{theorem}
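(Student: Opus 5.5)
The plan is to realize the product $\phi_H^1\circ\phi_K^1$ as the time-one map of a single reparametrized, concatenated Hamiltonian, and then to read off the resulting random Hamiltonian as one of the form \eqref{eq:GP_H}. Let $H=\randH_1$ and $K=\randH_2$ be independent random Hamiltonians built from $\mathcal{D}_1$ and $\mathcal{D}_2$. Fix a smooth nondecreasing surjection $\chi\colon[0,1/2]\to[0,1]$ all of whose derivatives vanish at both endpoints. Define
\begin{equation*}
L(t,x)=\begin{cases} \chi'(t)\,K(\chi(t),x), & t\in[0,1/2],\\ \chi'(t-\tfrac12)\,H(\chi(t-\tfrac12),x), & t\in[1/2,1].\end{cases}
\end{equation*}
Then $\phi_L^t$ first traverses the flow of $K$ and then that of $H$, so $\phi_L^1=\phi_H^1\circ\phi_K^1$. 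The flatness of $\chi$ at the endpoints makes $L$ smooth on $[0,1]\times M$, including at $t=1/2$, and $L$ is normalized because $H$ and $K$ are.

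The next step is to expand $L$ in eigenfunctions. Since $\mathcal{D}_1$ and $\mathcal{D}_2$ share $J$, they share the metric $g$ and hence the Laplacian and its eigenvalues $\lambda_n$. Since they share $\reg$, they also share the weights $w_n$. If they also use the same eigenbasis $\{e_n\}$, then
\begin{equation*}
L(t,x)=\sum_n w_n\, Z_n(t)\, e_n(x),\qquad Z_n(t)=\mathbb{1}_{[0,1/2]}(t)\,\chi'(t)\,Z^{(2)}_n(\chi(t))+\mathbb{1}_{[1/2,1]}(t)\,\chi'(t-\tfrac12)\,Z^{(1)}_n(\chi(t-\tfrac12)).
\end{equation*}
Each $Z_n$ is Gaussian, being a linear image of the independent Gaussian processes $Z^{(1)}_n$ and $Z^{(2)}_n$. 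Its mean and covariance are smooth, since they are compositions with $\chi$ of smooth functions, multiplied by flat cutoffs, and the two pieces have disjoint supports. The bounds are uniform in $n$: the variance is at most $\norm{\chi'}_\infty^2\max(C_1,C_2)$ and $\E\norm{Z_n}_\infty\le\norm{\chi'}_\infty(D_1+D_2)$. Hence $\mathcal{D}_1*\mathcal{D}_2:=(\reg,J,\{e_n\},\{Z_n\})$ is a law-defining datum. The law of $\phi_L^1$ is, by construction, the pushforward of the joint law of $(H,K)$ under $(H,K)\mapsto\phi_H^1\circ\phi_K^1$, which by independence is $\muGP^{\mathcal{D}_1}*\muGP^{\mathcal{D}_2}$. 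I will check measurability of this map using Theorem~\ref{thm:existence-of-measure} together with Hofer-continuity of composition.

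The main obstacle is when the two data use different orthonormal eigenbases. They can differ only by an orthogonal change of basis $O^{(\lambda)}$ inside each finite-dimensional eigenspace, so I would rewrite $K$ in the basis of $\mathcal{D}_1$ with processes $\tilde Z^{(2)}_n=\sum_m O_{nm}Z^{(2)}_m$. These are still Gaussian with smooth covariance, but the uniform bounds $C$ and $D$ are not automatic, since the trivial estimate loses a factor $\sqrt{\dim E_\lambda}$ and the eigenspace dimensions can be unbounded. The first thing I would try is to note that $\sum_{n\in E_\lambda}\Cov[\tilde Z_n(t),\tilde Z_n(t)]=\sum_{m\in E_\lambda}\Cov[Z_m(t),Z_m(t)]$ is rotation invariant, and to check whether the construction of $\muGP^{\mathcal{D}}$ in~\cite{dawid-2025} really uses the bounds only in this eigenspace-summed form. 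I expect this to be the case, since the convergence there is driven by $w_n$. If not, I would restrict the statement to data with a common eigenbasis, or bound $\E\norm{\tilde Z_n}_\infty$ by a Dudley entropy estimate using the smooth covariance together with the summed variance bound.
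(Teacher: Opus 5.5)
Your argument in the common-eigenbasis case is correct and follows the same route as the paper: concatenate the two flows by a time reparametrization and read off new coefficient processes in one eigenbasis. Your formula $\chi'(t)K(\chi(t),x)$ is in fact the right one for time-dependent data. The paper's $2\beta(2t)H^{(2)}(t,x)$ evaluates $H^{(2)}$ at the unreparametrized time, so in general it yields $\phi^1_{H^{(2)}}$ only when $H^{(2)}$ is autonomous.

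The gap is the general case, which you leave open. You are right that two orthonormal eigenbases can differ by an arbitrary orthogonal matrix $O^{(\lambda)}$ on each eigenspace. The paper's assertion that only signs and reorderings occur is false as soon as some $\dim E_\lambda\ge 2$ (e.g.\ spherical harmonics on $S^2$ moved by a rotation), so the paper does not really treat this case either. But none of your fallbacks closes it:
\begin{itemize}
\item restricting the statement is not a proof;
\item appealing to how \cite{dawid-2025} uses the bounds would change Definition~\ref{def:ldd} rather than verify it;
\item a Dudley estimate cannot be uniform in $n$, since the smoothness of the covariances is not uniform in $m$.
\end{itemize}
Moreover, when the $Z^{(2)}_m$ are correlated, even the variance can be $\dim E_\lambda$ times too large. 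Take all $Z^{(2)}_m$ with indices in $E_\lambda$ equal, and a row of $O^{(\lambda)}$ equal to $(1,\dots,1)/\sqrt{\dim E_\lambda}$. With regularity $\reg$ and the basis of $\mathcal{D}_1$, the coefficient processes of $L$ are forced, so the uniform bounds can genuinely fail there.

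The missing idea is that the theorem does not require $\mathcal{D}_1*\mathcal{D}_2$ to have regularity $\reg$. Fix $\reg'\in(0,\reg)$. Let $Z_n$ be your concatenated coefficient built from $Z^{(1)}_n$ and $\tilde Z^{(2)}_n=\sum_m O_{nm}Z^{(2)}_m$, and set $Z'_n\coloneqq e^{-\lambda_n(\reg-\reg')/2}Z_n$. Then $e^{-\lambda_n\reg'/2}Z'_n=w_nZ_n$, so $(\reg',J,\{e^{(1)}_n\},\{Z'_n\})$ still has random Hamiltonian $L$. The triangle inequality (in $L^2(\Prob)$ and in sup-norm), together with $\sum_m\abs{O_{nm}}\le\sqrt{\dim E_{\lambda_n}}$, gives
\begin{equation*}
\operatorname{Var}[Z'_n(t)]\le e^{-\lambda_n(\reg-\reg')}\norm{\chi'}_\infty^2\max\bigl(C_1,\,C_2\dim E_{\lambda_n}\bigr),\qquad
\E\norm{Z'_n}_\infty\le e^{-\lambda_n(\reg-\reg')/2}\norm{\chi'}_\infty\bigl(D_1+D_2\sqrt{\dim E_{\lambda_n}}\bigr).
\end{equation*}
By Weyl's law, $\dim E_\lambda\le\#\{k:\lambda_k\le\lambda\}=O(\lambda^{\dim M/2})$. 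Hence both right-hand sides are bounded uniformly in $n$, and the tuple is a law-defining datum with law $\muGP^{\mathcal{D}_1}*\muGP^{\mathcal{D}_2}$. That $\tilde Z^{(2)}_n$ is Gaussian uses that the $Z^{(2)}_m$ in one eigenspace are jointly Gaussian (e.g.\ independent); you should state this explicitly.
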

\begin{proof}
    We use the following notation for the components of our two \ldds :
    \begin{equation*}
        \mathcal{D}_1 = (\reg,J,\{e^{(1)}_n\},\{Z^{(1)}_n\}) \qquad \text{and} \qquad
        \mathcal{D}_2 = (\reg,J,\{e^{(2)}_n\},\{Z^{(2)}_n\}).
    \end{equation*}
    Now, by construction, the convolution $\muGP^{\mathcal{D}_1} * \muGP^{\mathcal{D}_2}$ is the law of the random variable $\phi^1_{H^{(1)}} \circ \phi^1_{H^{(2)}}$,
    where $H^{(i)}$ is the random Hamiltonian function defined by $\mathcal{D}_i$ for $i = 1,2$, i.e.\
    \begin{equation*}
        H^{(i)}(t,x) = \sum _{n \ge 1} w_n \cdot Z_n^{(i)}(t) \cdot e^{(i)}_n(x). 
    \end{equation*}
    Since $\mathcal{D}_1,\mathcal{D}_2$ share the same almost-complex structure, 
    $\{e^{(1)}_n\}$ and $\{e^{(2)}_n\}$ can only differ in two ways: (1) by signs, or (2) by reordering within the (finite-dimensional)
    eigenspaces.
    Without loss of generality, we can relabel these bases and their associated coefficient processes as to agree in their ordering.
    To correct for signs, we introduce $\{\alpha_n\}_{n\in \Nats} \subset \{\pm1\}$ such that 
    \begin{equation*}
        H^{(2)}(t,x) = \sum _{n \ge 1} \alpha_n \cdot w_n \cdot Z_n^{(2)}(t) \cdot e^{(1)}_n(x). 
    \end{equation*}
    Let $\beta\colon \Reals \to \Reals_{\geq 0}$
    be a smooth bump function such that $\supp \beta \Subset (0,1)$ and $\int_0^1 \beta(t) dt = 1$.
    Then,
    the Hamiltonian function
    \begin{equation*}
        H^{(1)} 
        \sqcup H^{(2)}(t,x) \coloneqq 
        2 \cdot \left( \beta(2t) \cdot H^{(2)}(t,x) + \beta(2t-1) \cdot H^{(1)}(t,x) \right),
    \end{equation*}
    satisfies $\phi^1_{H^{(1)} \sqcup H^{(2)}} = \phi^1_{H^{(1)}} \circ \phi^1_{H^{(2)}}$.
    We are now almost done.
    Set
    \begin{equation*}
        \tilde{Z}_k(t) = \alpha_k \cdot \beta(2t) \cdot Z^{(2)}(t) + \beta(2t-1) \cdot Z^{(1)}(t).
    \end{equation*}
    For $k \ge 1$ we obtain a sequence of Gaussian processes $\tilde{Z}_k$ on $[0,1]$.
    Let $\mathcal{D}_1 * \mathcal{D}_2$ be the \ldd\ defined by replacing the coefficient processes of $\mathcal{D}$ 
    with $\tilde{Z}_k$ for $k \ge 1$.
    Note that the associated random Hamiltonian function of $\mathcal{D}_1 * \mathcal{D}_2$ is $H^{(1)} \sqcup H^{(2)}$.
    By construction, $\muGP^{\mathcal{D}_1 * \mathcal{D}_2}$ is the law of $\phi^1_{H^{(1)} \sqcup H^{(2)}}$, which is equal to $\muGP^{\mathcal{D}_1} * \muGP^{\mathcal{D}_2}$.
\end{proof}
\subsection{Equivariance under the $\Symp(M,\omega)$-action}
Note that the group of all symplectomorphisms $\Symp(M,\omega)$ acts on the subgroup of Hamiltonian diffeomorphisms $\Ham(M,\omega)$ by conjugation, i.e., for any $\phi \in \Symp(M,\omega)$ and any $\psi \in \Ham(M,\omega)$ we have that $\phi \circ \psi \circ \phi^{-1} \in \Ham(M,\omega)$.
This action allows us to define a push-forward of measures on $\Ham(M,\omega)$ by symplectomorphisms.
We denote this measure by $\phi_* \mu$ for any Borel measure $\mu$ on $\Ham(M,\omega)$ and any $\phi \in \Symp(M,\omega)$.
It is defined by $(\phi_* \mu)(A) \coloneqq \mu(\phi^{-1} A \phi)$ for all Borel sets $A \subseteq \Ham(M,\omega)$.

We will now define an action of $\Symp(M,\omega)$ on the space of law-defining data, which is compatible with the action of $\Symp(M,\omega)$ on $\Ham(M,\omega)$.
We start with the following observation: 
the measures $\muGP^{\mathcal{D}}$ 
depend on the choice of law-defining datum $\mathcal{D}$ and
 in particular on the choice of almost-complex structure $J$.
It is well-known that the space of $\omega$-compatible almost-complex structures on $(M,\omega)$ is contractible, see~\cite[Proposition 4.1.1]{mcduff-salamon-2017}.
However, while topologically trivial, the space of $\omega$-compatible almost-complex structures is vast.
We should note that there is a natural action of $\Symp(M,\omega)$ on the space of $\omega$-compatible almost-complex structures.
Let 
\begin{equation*}
    \mathcal{J}(M,\omega) \coloneqq \{ J \in \End(TM) \mid J^2 = -\id, \omega(\cdot,J\cdot) > 0\}
\end{equation*}
be the space of $\omega$-compatible almost-complex structures on $(M,\omega)$.
Then, for any $\phi \in \Symp(M,\omega)$ and any $J \in \mathcal{J}(M,\omega)$, we can define the push-forward almost-complex structure $\phi_*J$ by $(\phi_*J)_x \coloneqq d\phi_{\phi^{-1}(x)} \circ J_{\phi^{-1}(x)} \circ d\phi^{-1}_x$ for all $x \in M$.
Note that $\phi_*J$ is again an $\omega$-compatible almost-complex structure on $(M,\omega)$,
and in fact the map $\phi\colon M \to M$ is an isometry between the Riemannian metric $\omega(\cdot,J\cdot)$ and the Riemannian metric $\omega(\cdot,\phi_*J\cdot)$ as can be obtained by a straightforward computation.
Of course, $\phi^{-1}$ is an isometry in the other direction.
Thus, we also have that $\Laplace_J (f \circ \phi) = (\Laplace_{\phi_*J} f) \circ \phi$ and $\Laplace_{\phi_*J}(f \circ \phi^{-1}) = (\Laplace_J f) \circ \phi^{-1}$ for all smooth functions $f\colon M \to \Reals$.  
In particular, if $\{e_n\}_{n \ge 1}$ is an orthonormal basis of eigenfunctions of $\Laplace_J$, then $\{e_n \circ \phi^{-1}\}_{n \ge 1}$ is an orthonormal basis of eigenfunctions of $\Laplace_{\phi_*J}$ with the same eigenvalues.
Then, for any $\psi \in \Symp(M,\omega)$ and any $\mathcal{D} = (\reg,J,\{e_n\}_{n \ge 1},\{Z_n\}_{n \ge 1}) \in \LDD(M,\omega)$, we can define the push-forward law-defining datum $\psi_*\mathcal{D}$ by
\begin{equation*}
    \psi_*\mathcal{D} \coloneqq (\reg,\psi_*J,\{e_n \circ \psi^{-1}\}_{n \ge 1},\{Z_n\}_{n \ge 1}).
\end{equation*}
\begin{theorem}\label{thm:equivariance}
    Let $\mathcal{D}$ be a law-defining datum.
    Then for any $\psi \in \Symp(M,\omega)$ we have that
    \begin{equation*}
        \psi_* \muGP^{\mathcal{D}} = \muGP^{\psi_*\mathcal{D}}.
    \end{equation*}
\end{theorem}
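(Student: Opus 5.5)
The plan is to work at the level of random Hamiltonian functions and then push forward to $\Ham(M,\omega)$. Let $\randH(t,x)=\sum_{n\ge1} w_n Z_n(t)e_n(x)$ be the random Hamiltonian of $\mathcal{D}$. Since $\psi_*\mathcal{D}$ has the same regularity parameter $\reg$ and the same coefficient processes $Z_n$, and since (as observed before the statement) the eigenvalues of $\Laplace_{\psi_*J}$ for $e_n\circ\psi^{-1}$ coincide with those for $e_n$, the weights $w_n=\exp(-\tfrac12\lambda_n\reg)$ are unchanged. Hence the random Hamiltonian of $\psi_*\mathcal{D}$, realised on the same probability space $(\Omega,\mathcal{F},\Prob)$, is
\begin{equation*}
\sum_{n\ge1} w_n Z_n(t)\,(e_n\circ\psi^{-1})(x) = \randH(t,\psi^{-1}(x)).
\end{equation*}
This holds pathwise, because the series converges almost surely in $C^\infty$ by~\cite{dawid-2025}, and precomposition with the diffeomorphism $\psi^{-1}$ is continuous on $C^\infty([0,1]\times M)$, so it commutes with the limit. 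Normalisation is also preserved, since $\psi$ preserves $\omega^n$.

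Next I use the standard naturality of Hamiltonian flows under symplectomorphisms: for $H\in C^\infty([0,1]\times M)$ and $\psi\in\Symp(M,\omega)$,
\begin{equation*}
\phi^t_{H\circ\psi^{-1}} = \psi\circ\phi^t_H\circ\psi^{-1}.
\end{equation*}
This follows from $X_{H\circ\psi^{-1}}=\psi_*X_H$, which holds because $\psi^*\omega=\omega$ and the sign convention $\omega(X_H,\cdot)=-dH$ is respected; both sides then solve the same ODE with initial value $\id$. Consequently, almost surely
\begin{equation*}
\phi^1_{\randH\circ\psi^{-1}}=\psi\circ\phi^1_{\randH}\circ\psi^{-1}.
\end{equation*}

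It then remains to compare laws. For a measurable $A\subseteq\Ham(M,\omega)$ we have
\begin{equation*}
\muGP^{\psi_*\mathcal{D}}(A)=\Prob[\psi\phi^1_{\randH}\psi^{-1}\in A]=\Prob[\phi^1_{\randH}\in\psi^{-1}A\psi]=\muGP^{\mathcal{D}}(\psi^{-1}A\psi)=(\psi_*\muGP^{\mathcal{D}})(A).
\end{equation*}

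The only point requiring care is measurability: conjugation by $\psi$ must map measurable sets to measurable sets. For the Hofer-Borel $\sigma$-algebra from Theorem~\ref{thm:existence-of-measure} this is immediate, since conjugation by a symplectomorphism is a Hofer isometry; the Hofer norm is conjugation invariant, because the oscillation of $H\circ\psi^{-1}$ equals that of $H$. I expect this to be the main, though mild, obstacle, together with justifying the pathwise identification of the two random Hamiltonians, which relies on the almost sure smooth convergence established in~\cite{dawid-2025}.
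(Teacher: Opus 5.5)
Your proposal is correct and follows the paper's proof: identify the random Hamiltonian of $\psi_*\mathcal{D}$ with $\randH\circ\psi^{-1}$, apply the naturality $\phi^1_{H\circ\psi^{-1}}=\psi\circ\phi^1_H\circ\psi^{-1}$, and evaluate the law on $\psi^{-1}A\psi$. Your additions fill in details the paper leaves implicit: the unchanged weights $w_n$, the identification holding pathwise rather than only in law, and measurability of conjugation because it is a Hofer isometry.
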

\begin{proof}
    Note that the random Hamiltonian function $H^{\rand}$ associated with $\psi_*\mathcal{D}$ is given by
    \begin{equation*}
        H^{\rand}_{\psi_*\mathcal{D}}(t,x) = \sum_{n \ge 1} w_n \cdot Z_n(t) \cdot e_n(\psi^{-1}(x)).
    \end{equation*}
    Similarly, the random Hamiltonian function $H^{\rand}$ associated with $\mathcal{D}$ is given by
    \begin{equation*}
        H^{\rand}_{\mathcal{D}}(t,x) = \sum_{n \ge 1} w_n \cdot Z_n(t) \cdot e_n(x).
    \end{equation*}
    Clearly, we have that $H^{\rand}_{\psi_*\mathcal{D}}$ has the same law as $H^{\rand}_{\mathcal{D}} \circ \psi^{-1}$.
    Note that for any Hamiltonian function $H\colon [0,1] \times M \to \Reals$ and any $\psi \in \Symp(M,\omega)$ we have that $\phi^1_{H \circ \psi^{-1}} = \psi \circ \phi^1_H \circ \psi^{-1}$.
    Thus, for any Borel set $A \subseteq \Ham(M,\omega)$ we have that
    \begin{align*}
        \muGP^{\psi_*\mathcal{D}}(A)
        &= \Prob\left[\phi^1_{H^{\rand}_{\psi_*\mathcal{D}}} \in A\right] 
        = \Prob\left[\phi^1_{H^{\rand}_{\mathcal{D}} \circ \psi^{-1}} \in A\right] 
        = \Prob\left[\psi \circ \phi^1_{H^{\rand}_{\mathcal{D}}} \circ \psi^{-1} \in A\right] 
        = \Prob\left[\phi^1_{H^{\rand}_{\mathcal{D}}} \in \psi^{-1} A \psi\right] \\
        &= \muGP^{\mathcal{D}}(\psi^{-1} A \psi) 
        = (\psi_* \muGP^{\mathcal{D}})(A).
    \end{align*}
\end{proof}
\subsection{Continuous dependence on the law-defining datum}
In~\cite{dawid-2025} we have seen that 
the properties of the measure on $\Ham(M,\omega)$
are highly dependent on the \ldd\ used to define it.
For example, for some \ldds\ the measure has full support, while for others it is supported on $\Aut(M)$, or even on a single point.
In this section, we want to establish that
these measures depend continuously on the law-defining datum.

In order to make sense of that statement, we need to specify a topology 
on the space of \ldds\ and on the space of Borel measures on $\Ham(M,\omega)$.
We start with the latter.
Let $\mathcal{P}(\Ham(M,\omega), d_{\Hof})$ be the space of Borel probability measures 
on $\Ham(M,\omega)$ where this group is topologized with the Hofer metric.
Note that by~\cite[Theorem 1.2]{dawid-2025} and~\cite[Theorem 1.4]{dawid-2025}
all measures $\muGP^{\mathcal{D}}$ belong to the smaller space
$\mathcal{P}_p(\Ham(M,\omega), d_{\Hof})$ of measures with finite $p$-th moment.
We can now topologize the space $\mathcal{P}_p$ with the $p$-Wasserstein metric, see Section~\ref{sec:prelims:wasserstein} for the definition.

It remains to specify a topology on the space of law-defining data.
Since any \ldd\ is a tuple consisting of four elements, 
the most straightforward approach is to topologize these four elements and then consider the 
product topology.
The first is the regularity parameter, and we of course use the standard topology on $\Reals$.
On the space of $\omega$-compatible almost-complex structures $\mathcal{J}(M,\omega)$
we use the topology induced by the $C^2$-metric. The reason for this specific choice of 
regularity is that it is the minimal required regularity for the argument below.
To see why,
let us first recall some useful facts from Riemannian geometry.
Let $J \in \mathcal{J}(M,\omega)$ be an $\omega$-compatible almost complex structure on 
$(M,\omega)$.
Then we define the corresponding Riemannian metric as $g_J(\cdot,\cdot) = \omega(\cdot, J \cdot)$.
Note that the map $J \mapsto g_J$ is continuous in the $C^k$
topology for any $k > 0$.
In particular, this is true for the $C^2$-topology we will need later.

A law-defining datum keeps track of an almost-complex structure together 
with a preferred orthonormal basis of $L^2(M,1/(n!) \omega^n)$
made up of eigenfunctions of the Laplace-Beltrami operator of the associated metric.
We would thus like to understand how a neighborhood of these might look.

For this, we wish to understand how quickly the eigenfunctions of the Laplace-Beltrami operator can change when we change the almost-complex structure.
For this purpose, we 
use two tools from Riemannian geometry: 
the first is the following theorem of Donnelly, see~\cite[Theorem 1.6]{donnelly-2001}:
\begin{theorem}[Donnelly 2001]\label{thm:donnelly}
Let $M$ be a compact Riemannian manifold of dimension $n$ and $\Laplace$ the Laplace-Beltrami operator. Suppose that the injectivity radius of $M$ is bounded below by $c_2$ and that the absolute value of the sectional curvature of $M$ is bounded above by $c_3$. If $\Delta \phi = \lambda \phi$ and $\lambda \neq 0$, then
\begin{equation*}
\|\phi\|_\infty \leq c_1 \lambda^{\frac{n-1}{4}} \|\phi\|_2.
\end{equation*}
The constant $c_1$ depends only upon $c_2$, $c_3$ and the dimension of $M$.
\end{theorem}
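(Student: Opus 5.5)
Since this is the cited theorem of Donnelly, I would reconstruct the standard wave-equation proof of sharp sup-norm bounds (in the spirit of Hörmander and Sogge). The goal is to keep every constant depending only on the injectivity radius bound $c_2$, the curvature bound $c_3$ and the dimension. Write $\lambda = \mu^2$ and $P = \sqrt{\Laplace}$. Fix an even Schwartz function $\chi$ with $\chi(0) = 1$ and $\hat\chi$ supported in $(-\varepsilon,\varepsilon)$, where $\varepsilon < c_2/2$. Then $\chi(P - \mu)\phi = \phi$ for our eigenfunction $\phi$. Expanding in an orthonormal eigenbasis $\{e_j\}$ with $Pe_j = \mu_j e_j$ and applying Cauchy--Schwarz gives
\begin{equation*}
|\phi(x)|^2 \leq \|\phi\|_2^2 \sum_j |\chi(\mu_j - \mu)|^2 e_j(x)^2 = \|\phi\|_2^2 \, K_\mu(x,x),
\end{equation*}
where $K_\mu$ is the kernel of $|\chi|^2(P - \mu)$. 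So the theorem reduces to the uniform on-diagonal bound $K_\mu(x,x) \leq C \mu^{n-1} = C\lambda^{(n-1)/2}$. Taking square roots then gives the exponent $(n-1)/4$.

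To bound the kernel, I would write $|\chi|^2(P - \mu)$ as $\frac{1}{2\pi}\int \widehat{|\chi|^2}(t)\, e^{-it\mu}\, e^{itP}\,dt$. The transform $\widehat{|\chi|^2}$ is supported in $|t| < 2\varepsilon < c_2$. After replacing $e^{itP}$ by $\cos(tP)$ (the negative-frequency part is $O(\mu^{-N})$), finite propagation speed shows that the kernel at $(x,x)$ only depends on the metric on the geodesic ball $B(x,c_2)$. On that ball the exponential map is a diffeomorphism, so I can work in normal coordinates and use the Hadamard parametrix for $\cos(t\sqrt{\Laplace})$:
\begin{equation*}
\cos(tP)(x,y) = \sum_{k=0}^{N} \alpha_k(x,y)\, W_k(t, d(x,y)) + R_N(t,x,y).
\end{equation*}
The leading term is $(2\pi)^{-n}\int_{\R^n} \cos(t|\xi|)\,d\xi$ times $\alpha_0 = \Theta^{-1/2}$, where $\Theta$ is the volume density. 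Inserting it into the $t$-integral and evaluating at $x=y$ gives $\int \widehat{|\chi|^2}(t)e^{-it\mu}\cos(t|\xi|)\,dt \approx |\chi|^2(|\xi| - \mu)$. This is concentrated on a shell of radius $\mu$ and width $O(1)$, of volume $O(\mu^{n-1})$. Each higher term gains a factor $\mu^{-1}$, and $R_N$ is smooth enough to contribute $O(1)$ for $N$ large.

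The main obstacle is uniformity. The coefficients $\alpha_k$ and the remainder $R_N$ involve derivatives of the metric, whereas a two-sided sectional curvature bound together with the injectivity radius bound only gives $C^{1,\alpha}$ control of the metric in harmonic coordinates (Anderson, Jost--Karcher) on balls of uniform size. For the leading term this suffices: Rauch comparison bounds $\Theta$ above and below on $B(x,c_2)$. For the lower-order terms I would avoid the full parametrix. I would first try a Tauberian-type argument that only needs the positivity of $K_\mu$ and a local, uniform, short-time heat kernel or Weyl-type estimate. Concretely, I would bound $\sum_{|\mu_j - \mu|\leq 1} e_j(x)^2$ by comparing with finite-propagation-speed cosine kernels on a uniformly controlled ball, so that only volume comparison and the $C^{1,\alpha}$ structure enter. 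If this fails, I would fall back to assuming (as suffices for the application below) uniform bounds on finitely many covariant derivatives of curvature. These hold locally uniformly for $J$ in a $C^k$-neighbourhood, and with them the parametrix constants are uniform.
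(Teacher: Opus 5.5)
The paper gives no proof of this statement; it is quoted from Donnelly's Theorem~1.6. Measured against the statement, your outline has a genuine gap, and it sits exactly where the content of the theorem lies.

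\textbf{Uniformity of the constant.} For a fixed smooth metric, your reduction to $K_\mu(x,x)\lesssim\mu^{n-1}$ and the Hadamard-parametrix computation are H\"ormander's classical argument, and they are fine. What the theorem adds is that $c_1$ depends only on $(c_2,c_3,n)$. The bounds on $\alpha_k$ and $R_N$ involve covariant derivatives of the curvature, which $\abs{K}\le c_3$ and $\mathrm{inj}\ge c_2$ do not control, and you leave this step open.

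The Tauberian substitute does not close it. Heat-kernel information, with or without a Karamata-type theorem, controls $\sum_{|\mu_j-\mu|\le 1}e_j(x)^2$ only up to $o(\mu^n)$, i.e.\ essentially the exponent $n/4$. The bound $O(\mu^{n-1})$ on a window of width one needs $\int\widehat{|\chi|^2}(t)e^{-it\mu}\cos(tP)(x,x)\,dt$ up to $O(\mu^{n-1})$ over a fixed interval $\abs{t}<2\varepsilon$. That is precisely the parametrix information, and your sketch does not say how positivity, finite propagation speed and volume comparison would supply it.

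The fallback adds hypotheses, so it proves a weaker theorem. Contrary to your parenthetical, it would not serve the paper's application as set up either. There $J'$ ranges over a $C^2$-ball, on which $g_{J'}$ has bounded curvature but no control on $\nabla R$; one would have to replace $d_{C^2}$ by some $d_{C^k}$ in~\eqref{eq:metric-on-ldd}.

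For that application, incidentally, your heat-kernel idea already suffices. Write $\phi=e^{t\lambda}e^{-t\Laplace}\phi$ with $t\sim 1/\lambda$ and use a Li--Yau on-diagonal bound. This gives $\norm{\phi}_\infty\le C\lambda^{n/4}\norm{\phi}_2$ for $\lambda\ge 1$ with $C=C(c_2,c_3,n)$, and any polynomial bound is absorbed by the weights $w_k$. But it is not the stated exponent.

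\textbf{Small eigenvalues.} The claim $K_\mu(x,x)\le C\mu^{n-1}$ is false for small $\mu$. A shell of radius $\mu$ and width $O(1)$ has volume $O((1+\mu)^{n-1})$, so your argument gives $\norm{\phi}_\infty\le c_1(1+\lambda)^{(n-1)/4}\norm{\phi}_2$. No argument can do better, because for $n\ge 3$ the inequality as stated (all $\lambda\neq 0$) is false.

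Take the flat torus $(\R/L\Z)\times(\R/\Z)^{n-1}$, which has injectivity radius $1/2$ and curvature $0$. The eigenfunction $\phi=\cos(2\pi x_1/L)$ has $\norm{\phi}_\infty/\norm{\phi}_2=(2/L)^{1/2}$, while $\lambda^{(n-1)/4}=(2\pi/L)^{(n-1)/2}$, which is much smaller for large $L$.

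The theorem should therefore be read for $\lambda\ge 1$, or for $\lambda\ge\lambda_0$ with $c_1$ also depending on $\lambda_0$. This is harmless in the paper, since $\lambda_1(g_{J'})$ is bounded below uniformly on a $C^2$-ball around $J$.
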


It is easy to see, based on the definition, that the injectivity radius and the sectional curvature of a Riemannian manifold depend continuously on the Riemannian metric in the $C^2$-topology.
Thus, we can now establish a rigid control on the $\infty$-norm of the eigenfunctions of the Laplace-Beltrami operator associated with a Riemannian metric, in terms of the eigenvalues.
Thus, it remains to control their growth. 
The Weyl law tells us the asymptotic growth of the eigenvalues of the Laplace-Beltrami operator.
Note that since the volume form does not depend on the almost-complex structure, the Weyl law is independent of the almost-complex structure.
However, \textit{a priori} the quality of the estimate at any finite time depends on the Riemannian metric in an uncontrolled way.
Thus, we will instead use the following cruder but uniform estimate on the growth of the eigenvalues of the Laplace-Beltrami operator, which is a theorem of Cheng, see~\cite[Corollary 2.3]{cheng-1975}:
\begin{theorem}[Cheng 1975]\label{thm:cheng}
Suppose $M$ is a compact $n$-dimensional Riemannian manifold with Ricci curvature $\geq (n-1)(-k)$ for $k > 0$. Then, when $n = 2(m+1)$, $m \in \Nats_0$,
\begin{equation*}
\lambda_l(M) \leq \frac{(2m+1)^2}{4} k + \frac{4 l^2 (1 + 2^m)^2 \pi^2}{d_M^2},
\end{equation*}
and when $n = 2m+3$, $m \in \Nats_0$,
\begin{equation*}
\lambda_l(M) \leq \frac{(2m+2)^2 k}{4} + \frac{4 l^2 (1 + \pi^2)(1 + 2^{2m})^2}{d_M^2},
\end{equation*}
for all $l \geq 1$,
where $d_M$ is the diameter of $M$ and $\lambda_l(M)$ is the $l$-th eigenvalue of the Laplace-Beltrami operator on $M$.
\end{theorem}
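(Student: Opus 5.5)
The plan is the classical route of Cheng: combine the min-max principle with a comparison estimate for Dirichlet eigenvalues of small balls. There are three steps.

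\emph{Step 1 (reduction to disjoint balls).} Let $\gamma$ be a minimizing geodesic of length $d_M$ between two points realizing the diameter. Place $l+1$ points $p_0,\dots,p_l$ along $\gamma$ at mutual distance $d_M/l$. The balls $B_i = B(p_i, r)$ with $r = d_M/(2l)$ are then pairwise disjoint. Let $f_i$ be a first Dirichlet eigenfunction of $B_i$, extended by zero. These are $l+1$ functions in $W^{1,2}(M)$ with disjoint supports, so every nonzero linear combination $f$ has Rayleigh quotient at most $\max_i \lambda_1^D(B_i)$. The span has dimension $l+1$, so it contains a nonzero element orthogonal to the first $l$ eigenfunctions (including the constants). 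The min-max principle therefore gives
\begin{equation*}
\lambda_l(M) \leq \max_{0\le i\le l} \lambda_1^D\bigl(B(p_i, d_M/(2l))\bigr).
\end{equation*}

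\emph{Step 2 (Cheng's comparison).} Show that if $\mathrm{Ric} \geq -(n-1)k$, then $\lambda_1^D(B(p,r)) \leq \lambda_1^D(B_k(r))$, where $B_k(r)$ is the ball of radius $r$ in the simply connected space form of curvature $-k$. Take the radial eigenfunction $u(s)$ of the model ball, which is decreasing in $s$. Transplant it as $f = u(d(p,\cdot))$ and work in polar coordinates inside the cut locus, which has measure zero. The Laplacian comparison theorem gives $\Delta d(p,\cdot) \le (n-1)\sqrt{k}\coth(\sqrt{k}\,d)$, i.e.\ the mean curvature of geodesic spheres is dominated by the model one. Since $u'\le 0$, this yields $\Delta f \ge \lambda_1^D(B_k(r)) f$ in the sense of the Laplace-Beltrami operator with the paper's positive sign convention, holding in the distributional sense. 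Multiplying by $f$ and integrating gives the Rayleigh quotient bound. This step is the main obstacle: the cut locus and the distributional inequality have to be handled carefully, e.g.\ through Calabi's barrier argument or by integrating in geodesic polar coordinates on the star-shaped domain inside the cut locus.

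\emph{Step 3 (explicit bound for the model ball).} Estimate
\begin{equation*}
\lambda_1^D(B_k(r)) \le \tfrac{(n-1)^2}{4}k + \tfrac{C_n}{r^2}.
\end{equation*}
Use the substitution $u = (\sinh\sqrt{k}s)^{-(n-1)/2} v$, which conjugates the radial operator to $-v'' + V v$ with $V = \tfrac{(n-1)^2}{4}k + O\bigl((\sinh)^{-2}\bigr)$. Then test with $v(s) = \sin(\pi s/r)$-type functions, adjusted near $s=0$ to keep regularity. The odd and even dimensional cases differ because the singular term $(n-1)(n-3)/(4s^2)$ is handled via explicit Bessel-type test functions, which produce the two different constants $4(1+2^m)^2\pi^2$ and $4(1+\pi^2)(1+2^{2m})^2$.

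Substituting $r = d_M/(2l)$ then gives exactly $\tfrac{(2m+1)^2}{4}k + \tfrac{4l^2(1+2^m)^2\pi^2}{d_M^2}$ when $n = 2(m+1)$, since $(n-1)^2 = (2m+1)^2$. The analogous expression follows when $n = 2m+3$. Matching the precise constants is bookkeeping in Step 3; the conceptual content lies in Steps 1 and 2.
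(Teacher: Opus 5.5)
The paper gives no proof of this statement. It is quoted from Cheng~\cite{cheng-1975} and used only as a black box, to get $\lambda_k(g_{J'})\le C'k^2$ uniformly on a $C^2$-ball in the proof of Theorem~\ref{thm:wasserstein-continuity}. Your outline follows Cheng's own route: eigenvalue comparison for geodesic balls, then $l+1$ disjoint balls of radius $d_M/(2l)$ along a diameter-realizing geodesic, then an explicit bound for the hyperbolic model ball. Step~1 is correct as written, but two points need repair.

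\emph{Step~2 has the inequality backwards.} Write $\rho=d(p,\cdot)$ and $\lambda=\lambda_1^D(B_k(r))$. The facts $u'\le 0$ and $\operatorname{div}\nabla\rho\le(n-1)\sqrt{k}\coth(\sqrt{k}\rho)$ give $\operatorname{div}\nabla f\ge u''+(n-1)\sqrt{k}\coth(\sqrt{k}\rho)\,u'=-\lambda f$. For the paper's nonnegative Laplacian this says $\Laplace f\le\lambda f$. Only this subsolution inequality, paired with $f\ge 0$, bounds the Rayleigh quotient from above. The inequality $\Laplace f\ge\lambda f$ you wrote would bound it from below and prove nothing.

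The cut locus is easiest to handle ray by ray. Let $A_\theta(s)$ be the volume density in geodesic polar coordinates and $c=\min(r,c(\theta))$. Then $A_\theta'/A_\theta\le(n-1)\sqrt{k}\coth(\sqrt{k}s)$ for $s<c(\theta)$. Since $uu'\le 0$, which also makes the boundary term nonpositive,
\[
\int_0^c u'^2A_\theta\,ds=u(c)u'(c)A_\theta(c)-\int_0^c u\Bigl(u''+\frac{A_\theta'}{A_\theta}u'\Bigr)A_\theta\,ds\le\lambda\int_0^c u^2A_\theta\,ds .
\]
Integrating over $\theta$ (the cut locus is null) gives $\lambda_1^D(B(p,r))\le\lambda$.

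\emph{Step~3 is not actually carried out.} No constant is derived, and the claim that Bessel-type test functions reproduce Cheng's two constants is unsupported. The parity split comes from Cheng's particular computation. Since the statement is an upper bound, you only need to dominate those constants, not match them. Your own substitution does this with the unmodified test function $v(s)=\sin(\pi s/r)$.

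Set $J(s)=\sinh^{n-1}(\sqrt{k}s)$ and $u=J^{-1/2}v$. Then
\[
\int_0^r u'^2J\,ds=\int_0^r(v'^2+Vv^2)\,ds,\qquad \int_0^r u^2J\,ds=\int_0^r v^2\,ds,\qquad V=\frac{(n-1)^2}{4}k+\frac{(n-1)(n-3)k}{4\sinh^2(\sqrt{k}s)} .
\]
The boundary term $\frac{J'}{J}v^2=O(s)$ vanishes at $0$. Also $u$ has finite energy, and points have zero capacity for $n\ge 2$, so $u$ is admissible. For $n=2$ the correction term is negative, so $V\le k/4$. For $n\ge 3$, $\sinh x\ge x$ gives $V\le\frac{(n-1)^2}{4}k+\frac{(n-1)(n-3)}{4s^2}$, and
\[
\int_0^r s^{-2}\sin^2(\pi s/r)\,ds\le\frac{\pi}{r}\int_0^\infty t^{-2}\sin^2 t\,dt=\frac{\pi^2}{2r}.
\]
Hence
\[
\lambda_1^D(B_k(r))\le\frac{(n-1)^2}{4}k+\frac{\beta_n\pi^2}{r^2},\qquad \beta_2=1,\quad \beta_n=1+\frac{(n-1)(n-3)}{4}\ (n\ge 3).
\]
In both parities $\beta_n\le(m+1)^2$, and $(m+1)^2\pi^2$ lies below both $(1+2^m)^2\pi^2$ and $(1+\pi^2)(1+2^{2m})^2$. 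Setting $r=d_M/(2l)$ turns $\beta_n\pi^2/r^2$ into $4l^2\beta_n\pi^2/d_M^2$. This yields the stated inequalities, in fact with better constants.

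The factor $4l^2$ in the statement comes from this substitution. So the model-ball constants to beat are $(1+2^m)^2\pi^2$ and $(1+\pi^2)(1+2^{2m})^2$, not four times these as your Step~3 quotes them.
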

These theorems tell us that if we consider only such orthonormal bases of $L^2(M, \frac{1}{n!} \omega^n)$
as arise from almost-complex structures lying in some $C^2$-ball,
then we can control their growth in the supremum norm by a uniform polynomial.

We note the following consequence: take $(C^\infty(M), d_{C^0})^{\times \N}$ 
with the product topology.
An open set in this topology is a set where only finitely many elements of the sequence are restrained to lie in some open 
sets in $C^\infty(M)$ with respect to the $C^0$-topology.
A priori, it might seem like this topology will contain too few open sets for our purpose.
However, if we now restrict to a subset of functions which are orthonormal eigenbases of $L^2(M, \frac{1}{n!} \omega^n)$
for some $g_J$, where $J$ itself also is constrained to lie in a $C^2$-ball, the situation is more amenable.
The bound given by the two theorems from Riemannian geometry allows us to constrain all elements of the sequence sufficiently.
Thus, we wish to topologize the space of sequences of eigenfunctions with the product topology, i.e., as  $(C^\infty(M), d_{C^0})^{\times \N}$.

We have now considered the almost-complex structure and the eigenbasis.
It remains to ascertain how we wish to treat the coefficient processes.
Recall that the coefficient processes are smooth stochastic processes on $[0,1]$.
A common type of convergence to consider for stochastic processes is uniform convergence in probability. 
A sequence $\{W_n\}$ of stochastic processes on the interval $[0,1]$ converges uniformly in probability
to a stochastic process $W$ 
if $\E[\norm{W_n - W}] \to 0$ as $n \to \infty$.
In our case, we do not want the coefficient processes to be considered independently.
Thus, we want uniformity both over the interval $[0,1]$ and the index.

With all these choices, it is easy to write down a metric inducing the correct topology.
Let $\mathcal{D} = (\reg, J, \{e_n\}, \{Z_n\})$ and $\mathcal{D}' = (\reg', J', \{e'_n\}, \{Z'_n\})$ be two law-defining data.
Then we set 
\begin{equation}\label{eq:metric-on-ldd}
    d(\mathcal{D},\mathcal{D}') \coloneqq |\reg - \reg'| + d_{C^2}(J,J') + \sum_{k \ge 1} \frac{\norm{e_k - e'_k}_{\infty}}{2^k} +  \E\left[\sup_{n,t} |Z_n(t) - Z'_n(t)|\right].
\end{equation}
From now on, $\LDD(M,\omega)$ is always considered as topologized with this metric.
As we have argued above, it induces a natural topology on $\LDD(M,\omega)$.
With the topology now established, 
we can now prove the following theorem, which is the main result of this section.
\begin{theorem}
\label{thm:wasserstein-continuity}
    Let $p \in [1,\infty)$ be arbitrary.
    Then the map
    \begin{align*}
        (\LDD(M,\omega),d) &\longto (\mathcal{P}_p(\Ham(M,\omega)), W_p) \\
        \mathcal{D} &\longmapsto \muGP^{\mathcal{D}}
    \end{align*}
    is continuous, where we metrize the space of \ldds\ with the metric defined in~\eqref{eq:metric-on-ldd}
    and the space $\mathcal{P}_p(\Ham(M,\omega))$ of probability measures on $\Ham(M,\omega)$ for which the Hofer norm has finite 
    $p$-th moment is endowed with the $p$-Wasserstein metric $W_p$.
\end{theorem}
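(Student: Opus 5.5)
The approach is to bound $W_p$ through an explicit coupling. Both $\mathcal{D}$ and $\mathcal{D}'$ live on the same probability space $(\Omega,\mathcal{F},\Prob)$, so the pair $(\phi^1_{H^{\rand}_{\mathcal{D}}}, \phi^1_{H^{\rand}_{\mathcal{D}'}})$ has a joint law whose marginals are $\muGP^{\mathcal{D}}$ and $\muGP^{\mathcal{D}'}$. This joint law is a coupling, and therefore
\begin{equation*}
W_p(\muGP^{\mathcal{D}},\muGP^{\mathcal{D}'})^p \leq \E\left[d_{\Hof}\bigl(\phi^1_{H},\phi^1_{H'}\bigr)^p\right],
\end{equation*}
with $H = H^{\rand}_{\mathcal{D}}$ and $H'=H^{\rand}_{\mathcal{D}'}$. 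The standard Hofer estimate $d_{\Hof}(\phi^1_H,\phi^1_{H'}) \leq \int_0^1 \osc(H_t - H'_t)\,dt \leq 2\norm{H-H'}_{\infty}$ then reduces the claim to showing $\E[\norm{H-H'}_\infty^p] \to 0$ as $d(\mathcal{D},\mathcal{D}')\to 0$, with $\mathcal{D}$ held fixed.

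To bound this, I would split the difference term by term:
\begin{equation*}
H - H' = \sum_{n\ge1} \bigl(w_n - w_n'\bigr) Z_n e_n + \sum_{n \ge 1} w_n' (Z_n - Z_n') e_n + \sum_{n\ge 1} w_n' Z_n' (e_n - e_n').
\end{equation*}
The eigenvalues $\lambda_n$ and $\lambda_n'$ belong to different metrics. Restrict to a $C^2$-ball around $J$. There, Theorem~\ref{thm:cheng} gives a uniform bound $\lambda_n' \leq A + B n^2$, and since the Ricci lower bound and diameter are controlled, it also gives a lower bound (Weyl-type, uniformly, e.g.\ via min–max comparison of Rayleigh quotients under bi-Lipschitz metrics) $\lambda_n' \ge c\, n^{2/\dim M}$. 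Theorem~\ref{thm:donnelly} gives $\norm{e_n'}_\infty \leq c_1 (\lambda'_n)^{(\dim M-1)/4}$, with $c_1$ uniform on the ball. Together these give summable dominating sequences for all three series. The individual terms are handled as follows:
\begin{itemize}
\item In the second series, I would pull out $\sup_{n,t}|Z_n - Z_n'|$, whose $p$-th moment tends to zero. This needs finite higher moments, which follow from Gaussianity or Fernique applied to the supremum.
\item In the third series, each $\norm{e_n - e_n'}_\infty \to 0$ for fixed $n$ by the metric, and the tail is uniformly small by the domination just described.
\item In the first series, $w_n' \to w_n$ for fixed $n$, because eigenvalues depend continuously on the metric in $C^2$ (min–max) and $\reg' \to \reg$. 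The tail is again dominated.
\end{itemize}
So each piece is a "finitely many terms converge, tail is uniformly small" argument, and dominated convergence in $n$ inside the expectation finishes the proof. The moments of $\sup_n\norm{Z_n}_\infty$ and $\sup_n\norm{Z'_n}_\infty$ are uniformly controlled on a small $d$-ball by the datum bounds together with the closeness of $Z'$ to $Z$.

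I expect the main obstacle to be the uniformity of the eigenvalue lower bound that makes the weights $w_n' = e^{-\reg'\lambda_n'/2}$ decay fast enough to beat the polynomial growth of $\norm{e'_n}_\infty$, uniformly over $J'$ in a $C^2$-ball. I would first try to get this from the min–max principle: $C^2$-close metrics are uniformly bi-Lipschitz with comparable volume forms (here the volume form is even identical), so $\lambda_n' \geq \kappa\lambda_n$ for a uniform $\kappa>0$. Combining this with the Weyl law for the fixed metric $g_J$ gives the required decay. A secondary point is the $p$-th moment of $\sup_{n,t}|Z_n-Z'_n|$, since the metric only controls the first moment. I would handle this by Gaussian hypercontractivity, using that $\sup_n\norm{Z_n - Z_n'}_\infty$ is the supremum of a centered-plus-mean Gaussian family, so all its moments are comparable to the first.
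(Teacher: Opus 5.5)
Your route is the paper's. You couple $\muGP^{\mathcal{D}}$ and $\muGP^{\mathcal{D}'}$ through the common probability space, use $d_{\Hof}(\phi^1_H,\phi^1_{H'})\le 2\norm{H-H'}_\infty$, control $\norm{e'_n}_\infty$ uniformly on a $C^2$-ball via Theorems~\ref{thm:donnelly} and~\ref{thm:cheng}, and split into finitely many convergent terms plus a uniformly small tail. At two points you are more careful than the paper's written proof:
\begin{itemize}
\item The paper never bounds $\lambda'_n$ from below, so its tail estimate for $\sum_{k>N}w'_k\norm{Z'_k}_\infty\norm{e'_k}_\infty$, uniform in $\mathcal{D}'$, is unsupported; a constant bound $w'_k<w_\delta$ is not summable. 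Your min--max comparison $\lambda'_n\ge\kappa\lambda_n$, combined with Weyl's law for $g_J$, is exactly what is needed.
\item The paper's final display replaces $\E[\norm{H-H'}_\infty^p]$ by $\E[\norm{H-H'}_\infty]^p$, which is the wrong direction of Jensen for $p>1$. You correctly aim at $\E[\norm{H-H'}_\infty^p]\to 0$.
\end{itemize}

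However, your mechanism for obtaining these $p$-th moments fails as stated, for two reasons.

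First, $\sup_n\norm{Z_n}_\infty$ need not have any finite moment. Take $Z_n\equiv\xi_n$ with $(\xi_n)$ i.i.d.\ standard Gaussians; this is an admissible datum, and $\sup_n\norm{Z_n}_\infty=\infty$ almost surely. So you cannot dominate the first and third series by pulling this supremum out. The paper's constant $Z_\delta$ has the same defect.

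Second, nothing in the definition of a law-defining datum makes $Z_n$ and $Z'_n$ jointly Gaussian. Hence $Z_n-Z'_n$ need not be Gaussian, and neither hypercontractivity nor Fernique applies to $\sup_{n,t}\abs{Z_n(t)-Z'_n(t)}$. In fact $d$ gives no control of its higher moments. Set $Z'_n=-\xi_n$ for $n\le M$ and $Z'_n=\xi_n$ otherwise, with $M$ random and independent of $(\xi_n)$. Every $Z'_n$ is still standard Gaussian, but $\sup_n\abs{Z_n-Z'_n}=2\max_{n\le M}\abs{\xi_n}$. For suitable heavy-tailed $M$ with $\Prob[M\ge 1]$ small, its mean is arbitrarily small while its second moment is infinite.

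The repair is to avoid suprema over $n$ entirely:
\begin{enumerate}
\item Apply Minkowski's inequality in $L^p(\Omega)$ to the three series term by term.
\item Use moment equivalence for a single Gaussian process (Fernique or Borell--TIS): $(\E\norm{Z'_n}_\infty^q)^{1/q}\le C_q\,\E\norm{Z'_n}_\infty\le C_q\bigl(D+d(\mathcal{D},\mathcal{D}')\bigr)$, and likewise for $Z_n$. This bound is uniform in $n$ and on a $d$-ball.
\item For $X_n=\norm{Z_n-Z'_n}_\infty$ and some $q>p$, interpolate: $\norm{X_n}_{L^p}\le\norm{X_n}_{L^1}^{\theta}\norm{X_n}_{L^q}^{1-\theta}$ with $\frac1p=\theta+\frac{1-\theta}{q}$. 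Here $\norm{X_n}_{L^1}\le\E[\sup_{n,t}\abs{Z_n(t)-Z'_n(t)}]\le d(\mathcal{D},\mathcal{D}')$, uniformly in $n$.
\end{enumerate}
Your uniformly summable weights $w'_n\norm{e_n}_\infty$ and $w'_n\norm{e_n-e'_n}_\infty$, together with $\abs{w_n-w'_n}\norm{e_n}_\infty\to0$ termwise, then close the argument.
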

\begin{proof}
    Let $(\reg, J, \{e_n\}, \{Z_n\}) = \mathcal{D} \in \LDD(M,\omega)$ be a law-defining datum 
    and $\eps>0$ be arbitrary.
    To any \ldd\ $\mathcal{D} \in \LDD(M,\omega)$ we
    associate the random Hamiltonian function $H^{\rand}_{\mathcal{D}}$ given by 
    \begin{equation*}
        H^{\rand}_{\mathcal{D}}(t,x) = \sum_{n \ge 1} w_n \cdot Z_n(t) \cdot e_n(x),
    \end{equation*}
    where $w_n = \exp(-1/2 \cdot \lambda_n \cdot \reg)$ and $\lambda_n$ is the $n$-th eigenvalue of the Laplace operator associated with the almost-complex structure of $\mathcal{D}$.
    Our chosen \ldd\ $\mathcal{D}$ comes with the almost-complex structure $J$, and we denote the associated Riemannian metric by $g_J$.
    Let $\iota > 0$ be the injectivity radius of $g_J$.
    Then there exists a $\delta > 0$ such that the injectivity radius of $g_{J'}$
    lies in $(\frac{1}{2}\iota,\frac{3}{2}\iota)$
    for all $J' \in \mathcal{J}(M,\omega)$ with $d_{C^2}(J,J') < \delta$.
    Note that the Ricci and sectional curvature in this ball are bounded.
    Thus, by Theorem~\ref{thm:donnelly}, 
    there exists a constant $C > 0$
    such that 
    \begin{equation*}
        \norm{e'_k}_\infty \leq C \cdot (\lambda_k(g_{J'}))^{\frac{\dim M -1}{4}}
    \end{equation*}
    holds for any eigenfunction $e'_k$ with eigenvalue $\lambda_k(g_{J'})$ of $g_{J'}$ with $d_{C^2}(J,J') < \delta$.
    Additionally, by Theorem~\ref{thm:cheng}, there exists a constant $C' > 0$ such that 
    \begin{equation*}
        \lambda_k(g_{J'}) \leq C'k^2
    \end{equation*}
    for any $k \geq 1$ and $J'$ with $d_{C^2}(J,J') < \delta$.
    By combining these two estimates, we 
    obtain that there is a constant $C'' > 0$ such that 
    \begin{equation*}
        \norm{e'_k}_\infty \leq C'' \cdot k^{\frac{\dim M -1}{2}}
    \end{equation*}
    whenever $d_{C^2}(J,J')<\delta$ and for any eigenfunction $e_k$ of the $k$-th eigenvalue
    of $\Laplace_{J'}$.

    Let us now consider $(\reg', J', \{e'_n\}, \{Z'_n\}) = \mathcal{D}' \in \LDD(M,\omega)$ with $d(\mathcal{D}, \mathcal{D}') < \delta$.
    Clearly, by~\eqref{eq:metric-on-ldd}, 
    this implies that there are constants $w_\delta > 0$ and $Z_\delta > 0$ 
    such that $w_k(\mathcal{D'}) < w_\delta$
    and $\E[\sup_{n,t} Z_n(t)] < Z_\delta$.
    Then note that 
    for some $N \in \Nats$, 
    we have that 
    \begin{align*}
        \sum_{k >N} \E[\norm{w_k Z_k} \cdot \norm{e_k}_\infty + \norm{w'_k Z'_k} \cdot \norm{e'_k}_\infty] < \frac{\eps}{4}.
    \end{align*}
    With this $N$ fixed, 
    we can now proceed to the main part of the proof.
    Note that
    \begin{align*}
        \norm{H^{\rand}_{\mathcal{D}} - H^{\rand}_{\mathcal{D}'}}_\infty 
        &\le \sum_{k \ge 1} \sup_{\substack{t \in [0,1] \\ x \in M}}\left[\abs{{w_k Z_k(t) e_k(x) - w'_k Z'_k(t) e'_k(x)}}\right] \\
        &\leq 
        \sum_{k = 1}^N \norm{w_kZ_k - w'_kZ'_k}_\infty\cdot\norm*{\frac{e_k+e'_k}{2}}_\infty
        + \norm*{\frac{w_kZ_k + w'_kZ'_k}{2}}\cdot\norm{e_k - e'_k}_\infty \\
        &\qquad + \sum_{k >N} \sup_{\substack{t \in [0,1] \\ x \in M}}\abs{{w_k Z_k(t) e_k(x)}} +
        \sup_{\substack{t \in [0,1] \\ x \in M}}\abs{{ w'_k Z'_k(t) e'_k(x)}} \\
        &\leq 
        \sum_{k = 1}^N \norm{w_kZ_k - w'_kZ'_k}_\infty\cdot\norm*{\frac{e_k+e'_k}{2}}_\infty
        + \norm*{\frac{w_kZ_k + w'_kZ'_k}{2}}\cdot\norm{e_k - e'_k}_\infty \\
        &\qquad + \sum_{k >N} \norm{w_k Z_k} \cdot \norm{e_k}_\infty + \norm{w'_k Z'_k} \cdot \norm{e'_k}_\infty.
    \end{align*}
    It follows easily that 
    \begin{align*}
        \E[
        \norm{H^{\rand}_{\mathcal{D}} - H^{\rand}_{\mathcal{D}'}}_\infty]
        &\leq \frac{\eps}{4} +
        \sum_{k = 1}^N \E\left[\norm{w_kZ_k - w'_kZ'_k}_\infty\cdot\norm*{\frac{e_k+e'_k}{2}}_\infty\right]
        + \E\left[\norm*{\frac{w_kZ_k + w'_kZ'_k}{2}}\cdot\norm{e_k - e'_k}_\infty\right] \\
        &=\frac{\eps}{4} +
        \sum_{k = 1}^N \E\left[\norm{w_kZ_k - w'_kZ'_k}_\infty\right]\cdot\norm*{\frac{e_k+e'_k}{2}}_\infty
       + 
        \E\left[\norm*{\frac{w_kZ_k + w'_kZ'_k}{2}}\right]\cdot\norm{e_k - e'_k}_\infty\\
        &\leq 
        \frac{\eps}{4} +
        \sum_{k = 1}^N C'' \cdot N^{\frac{\dim M -1}{2}} \cdot \E\left[\norm{w_kZ_k - w'_kZ'_k}_\infty\right]
        + w_\delta \cdot Z_\delta \cdot\norm{e_k - e'_k}_\infty.
    \end{align*}
    Note that we can make $\delta > 0$ smaller without loss of generality.
    In particular, we might assume $\delta$ to be sufficiently small such 
    that 
    \begin{equation*}
         \E\left[\norm{w_kZ_k - w'_kZ'_k}_\infty\right] < \frac{\eps}{4C''N^{\frac{\dim M +1}{2}}} \qquad\text{and}\qquad \norm{e_k - e_k'}_\infty < \frac{\eps}{4Nw_\delta Z_\delta}
    \end{equation*}
    hold for any $1 \leq k \leq N$.
    This is true simply because for any single $k$ these are open conditions, and we therefore end up taking 
    a \emph{finite} intersection of open sets, which is again open.
    Combining all these estimates, we obtain that 
    \begin{align*}
        \E[
        \norm{H^{\rand}_{\mathcal{D}} - H^{\rand}_{\mathcal{D}'}}_\infty]
        \leq \frac{3}{4} \eps < \eps.
    \end{align*}
    This almost completes the proof.
    All that is left to do 
    is to notice that 
    \begin{equation*}
        W_p(\muGP^{\mathcal{D}},\muGP^{\mathcal{D}'})
        \le \left( \E\left[d_{\Hof}(\phi^{1}_{\randH_{\mathcal{D}}},\phi^{1}_{\randH_{\mathcal{D}'}})^p\right] \right)^{\frac{1}{p}}
        \le \left( 2^p \E\left[\norm{H^{\rand}_{\mathcal{D}} - H^{\rand}_{\mathcal{D}'}}_\infty \right]^p \right)^{\frac{1}{p}}
        < 2\eps.
    \end{equation*}
    At the end we use the fact that the map sending Hamiltonian functions to Hamiltonian diffeomorphisms is $2$-Lipschitz with respect to the Hofer metric and the $L^\infty$-norm, see e.g.~\cite[Lemma 3.15]{dawid-2025} for a proof.
    Thus, we have obtained that whenever $d(\mathcal{D},\mathcal{D}') < \delta$, we have 
    $W_p(\muGP^{\mathcal{D}},\muGP^{\mathcal{D}'}) < 2\eps$.
    Since $\eps > 0$ and $\mathcal{D} \in LDD$ were arbitrary, 
    this implies that the map $\mathcal{D} \mapsto \muGP^{\mathcal{D}}$ is continuous, and thereby completes the proof.
\end{proof}
\begin{remark}
    Note that the projection to the almost-complex structure is a continuous map $\LDD(M,\omega) \to \mathcal{J}(M,\omega)$.
    The latter space is contractible, but the fibers of this map are not easy to understand. One natural question 
    is whether one can always lift a path.
    Assume $\{J_t\}_{t \in [0,1]}$ is a smooth path of almost-complex structures.
    Then, we can ask whether there exists a smooth path $\{\mathcal{D}_t\}_{t \in [0,1]}$ of law-defining data such that the projection to the almost-complex structure is $J_t$ for all $t \in [0,1]$.
    As we deform the almost-complex structure $J_{t}$ smoothly, the induced Riemannian metric $\omega(\cdot,J_{t}\cdot)$ varies smoothly as well.
    It is well-known that the eigenvalues vary continuously, thus we can choose eigenvalues $\lambda_1(t) \le \lambda_2(t) \le \ldots$ for all $t \in [0,1]$ such that $\lambda_n(t)$ is continuous in $t$ for all $n \ge 1$.
    In general, the eigenfunctions $e_n(t)$ cannot be chosen to vary continuously in $t$ if the correct ordering is to be preserved.
    This is due to the fact that the eigenspaces of $\Laplace_{J_{t}}$ can have dimension greater than one, and two eigenvalues might collide at some $t \in [0,1]$.
    If this does not occur, we can clearly lift the path by choosing the eigenfunctions $e_n(t)$ to vary continuously in $t$ for all $n \ge 1$
    and keeping the coefficient processes $\{Z_n\}_{n \ge 1}$ fixed.
    If we cannot track the eigenfunctions continuously, lifting is not possible in general.
    We can see this by choosing a path of almost-complex structures such that the eigenvalues switch places along the isotopy.
    Thus, by the ordering imposed on the eigenvalues, the eigenfunctions cannot be chosen to vary continuously in $t$.
    If we take a law-defining datum where a single coefficient process is non-zero, then the law of the random Hamiltonian function will 
    jump when that switch takes place.
\end{remark}
\section{Measurability of further topologies on $\Ham(M,\omega)$}\label{sec:topologies}
In the first part of this paper, the focus has been on proving 
that the measures constructed from law-defining data are Borel measures with respect to the Hofer topology on $\Ham(M,\omega)$.
However, there are other topologies on $\Ham(M,\omega)$ that are of interest in symplectic geometry, such as the $C^k$-topologies for $k \in \Nats_0$ and the $C^\infty$-topology.
In this section, we will show that the measures constructed from law-defining data are also Borel measures with respect to these topologies.
Recall that the measures in general are created by pushing forward a probability measure from an abstract probability space to $\Ham(M,\omega)$.
Thus, the extension to further topologies on $\Ham(M,\omega)$ should \emph{not} be understood as creating new measures, but rather as
extending the class of measurable events.
Let $\mathcal{D}$ be a \ldd , and
$(\Omega, \mathcal{F}, \Prob)$ the abstract probability space on which the associated random Hamiltonian function $\randH$ is defined.
Then $\phi^1_{\randH}\colon \Omega \to \Ham(M,\omega)$ is the random variable that induces the measure $\muGP^{\mathcal{D}}$ on $\Ham(M,\omega)$.
In~\cite{dawid-2025},
it was shown that $\phi^1_{\randH}$ is measurable with respect to the Hofer topology on $\Ham(M,\omega)$.
We now extend this result to the $C^k$-topologies for $k \in \Nats_0$ and the $C^\infty$-topology.
Note that the value of any observable is independent of the topology on $\Ham(M,\omega)$, and thus the expectation of any observable is also independent of the topology.
The following diagram is (trivially) commutative:
\begin{center}
    \begin{tikzcd}
                                                                                                                                                                                                                                        &  & {(\Ham(M, \omega),d_{\Hof})}                                                     \\
                                                                                                                                                                                                                                        &  &                                                                          \\
    {(\Omega,\mathcal{F})} \arrow[rr, "\phi^1_{\randH}"] \arrow[rrd, "\phi^1_{\randH}"] \arrow[rrdd, "\phi^1_{\randH}"] \arrow[rrdddd, "\phi^1_{\randH}"] \arrow[rrddddd, "\phi^1_{\randH}", bend right] \arrow[rruu, "\phi^1_{\randH}"] &  & {(\Ham(M, \omega),d_{C^\infty})} \arrow[d, hook]                      \\
                                                                                                                                                                                                                                        &  & {(\Ham(M, \omega),d_{C^k})} \arrow[d, hook]                                      \\
                                                                                                                                                                                                                                        &  & {(\Ham(M, \omega),d_{C^{k-1}})} \arrow[d, hook]                                  \\
                                                                                                                                                                                                                                        &  & \dots \arrow[d, hook]                                                    \\
                                                                                                                                                                                                                                        &  & {(\Ham(M, \omega), d_{C^1})} \arrow[d, hook] \arrow[uuuuuu, hook, bend right=60] \\
                                                                                                                                                                                                                                        &  & {(\Ham(M, \omega),d_{C^0})}                                                     
    \end{tikzcd}
\end{center}
For any $k \in \Nats_0 \cup \{\infty\}$, $1 \leq m \leq k$, the inclusion map $(\Ham(M, \omega),d_{C^{k}}) \hookrightarrow (\Ham(M, \omega),d_{C^m})$ is continuous.
Thus, it suffices to show that $\phi^1_{\randH}$ is measurable with respect to the $C^\infty$-topology on $\Ham(M,\omega)$.
In practice, this is established by proving the measurability of $\phi^1_{\randH}$ with respect to all $C^k$-topologies for $k \in \Nats$.
The two endpoints of the diagram, $(\Ham(M, \omega),d_{\Hof})$ and $(\Ham(M, \omega),d_{C^0})$, are truly different,
as it is well-known that the Hofer topology and the $C^0$-topology on $\Ham(M,\omega)$ are not comparable.
\subsection{Sobolev bounds on the random Hamiltonian function}
Let $\mathcal{D}$ be a \ldd , and let $\randH$ be the associated random Hamiltonian function.
Recall from~\eqref{eq:GP_H} that $\randH$ is given by
\begin{equation*}
    \randH(t,x) = \sum _{n \ge 1} w_n \cdot Z_n(t) \cdot e_n(x).
\end{equation*} 
By~\cite[Lemma 3.8]{dawid-2025}, we have that $\randH$ almost-surely 
lies in the Sobolev space $W^{2s,2}([0,1] \times M)$ for any $s \in \Nats_0$.
Thus, we can see $\randH$ as a random variable taking values in the Sobolev space $W^{2s,2}([0,1] \times M)$ for any $s \in \Nats_0$.
Now fix $s \in \Nats_0$ large enough such that $W^{2s,2}([0,1] \times M)$ embeds continuously into $C^{k+1}([0,1] \times M)$ for some $k \in \Nats_0$.
Then, the random variable $\randH$ is measurable with respect to the $C^{k+1}$-topology on $C^\infty_0([0,1] \times M)$.
\subsection{Measurability of the flow with respect to the $C^k$- and $C^\infty$-topologies}
Consider the map 
\begin{align*}
     C^\infty_0([0,1] \times M) &\to \Ham(M,\omega) \\
    H &\mapsto \phi^1_H.
\end{align*}
Due to the smooth dependence of an ODE on its initial conditions and parameters, this map is continuous with respect to the $C^k$-topology on $\Ham(M,\omega)$ and the $C^{k+1}$-topology on $C^\infty_0([0,1] \times M)$ for any $k \in \Nats_0$.
Thus, if the random Hamiltonian function $\randH$ is measurable with respect to the $C^{k+1}$-topology on $C^\infty_0([0,1] \times M)$, then the random variable $\phi^1_{\randH}$ is measurable with respect to the $C^k$-topology on $\Ham(M,\omega)$.
However, we have already seen that $\randH$ is measurable with respect to the $C^{k+1}$-topology on $C^\infty_0([0,1] \times M)$ for any $k \in \Nats_0$
in the previous subsection.
Thus, we can conclude that $\phi^1_{\randH}$ is measurable with respect to the $C^k$-topology on $\Ham(M,\omega)$ for any $k \in \Nats_0$.
Finally, since the $C^\infty$-topology on $\Ham(M,\omega)$ is the projective limit of the $C^k$-topologies for $k \in \Nats_0$, we can conclude that $\phi^1_{\randH}$ is also measurable with respect to the $C^\infty$-topology on $\Ham(M,\omega)$.
This proves Theorem~\ref{thm:measurability_Ck}.
\begin{remark}
    This extension considerably enlarges the class of measurable events on $\Ham(M,\omega)$, and thus the class of observables that can be considered.
    However, there is still a property that makes the Hofer topology on $\Ham(M,\omega)$ special: the map from $C^\infty_0([0,1] \times M)$ to $\Ham(M,\omega)$ that sends a Hamiltonian function to its time-one flow is Lipschitz with respect to the Hofer metric on $\Ham(M,\omega)$ and the $L^\infty$-norm on $C^\infty_0([0,1] \times M)$. Thus, we can obtain quantitative estimates on the measure of Hofer balls (see~\cite{dawid-2025} for details), which is not possible for the $C^k$-topologies on $\Ham(M,\omega)$.
    This is due to the fact that Lipschitz functions are compatible with concentration of measure phenomena, which are the key to these results.
    While the maps above are still continuous, they do not result in similar sub-Gaussian type quantitative estimates on the measure of $C^k$-balls in $\Ham(M,\omega)$.
\end{remark}
\section{Statistics of the commutator length and stable commutator length}\label{sec:scl-stats}
The fact that we have extended the measure $\muGP^{\mathcal{D}}$ to the Borel $\sigma$-algebra of the $C^\infty$-topology allows us to study the statistics of group-theoretic invariants of $\Ham(M,\omega)$, such as the commutator length and the stable commutator length.
We will show that any homogeneous quasimorphism on $\Ham(M,\omega)$ is square-integrable with respect to $\muGP^{\mathcal{D}}$.
In recent deep work, Edtmair has shown that the commutator length on $\Ham(M,\omega)$ is locally bounded with respect to the $C^\infty$-topology.
We restate Edtmair's result from~\cite{edtmair-2025} for the convenience of the reader:
\begin{theorem}[Edtmair]\label{thm:edtmair-2025}
Let $(M^{2n},\omega)$ be a closed symplectic manifold.
Then there exists an integer $m>0$ depending only on $n$ such that there exist a $C^\infty$-open neighborhood $\mathcal{N} \subset \Ham(M,\omega)$ of the identity and a smooth map
\begin{equation*}
\Psi\colon \mathcal{N} \to \Ham(M,\omega)^{2m}
\end{equation*}
which is a local right inverse of 
the map 
\begin{align*}
\Phi\colon\Ham(M,\omega)^{2m} &\to\Ham(M,\omega) \\ (u_1,v_1,\dots,u_m,v_m) &\mapsto \prod\limits_{j=1}^m [u_j,v_j],
\end{align*}
 i.e., $\Phi\circ \Psi = \id_{\mathcal{N}}$. We can choose $\mathcal{N}$ and $\Psi$ such that $\Psi(\id)$ is arbitrarily close to the tuple $(\id,\dots,\id)$.
\end{theorem}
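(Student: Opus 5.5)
This statement is the result of Edtmair~\cite{edtmair-2025}, and in the paper it is imported rather than reproved. If I had to prove it, I would build the right inverse $\Psi$ in two stages: a smooth fragmentation step, followed by a smooth commutator decomposition of each fragment.

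\emph{Step 1: smooth fragmentation.} Fix a finite cover of $M$ by Darboux balls $B_1,\dots,B_K$. By a standard coloring argument, $K$ can be arranged so that the balls fall into $c(n)$ families of pairwise disjoint balls, with $c(n)$ depending only on $n=\dim M/2$. For $\phi$ $C^\infty$-close to $\id$, the graph of $\phi$ in a Weinstein neighbourhood of the diagonal is the graph of $dS_\phi$. Here $S_\phi$ is a generating function that depends smoothly (in the Fréchet sense) on $\phi$, and it is exact because $\phi$ is Hamiltonian and close to $\id$. Cutting $S_\phi$ off with a partition of unity subordinate to the colored cover yields factors $\phi = \phi_1\circ\dots\circ\phi_{c(n)}$. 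Each factor $\phi_j$ is supported in a disjoint union of balls and depends smoothly on $\phi$, with $\phi_j(\id)=\id$.

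\emph{Step 2: a smooth commutator formula for ball-supported maps.} For $h$ supported in a disjoint union $U$ of small balls, choose a fixed $g\in\Ham$ that displaces $U$. The aim is a formula $h=[a,g]\,[b,g']$, in the spirit of Burago--Ivanov--Polterovich, with $a,b$ depending smoothly on $h$ and $g,g'$ fixed. The natural candidate sets $a$ to be a finite product of conjugates $g^{k}hg^{-k}$. In a closed manifold the infinite telescoping product used on $\R^{2n}$ is unavailable, so I would replace it by a finite one and absorb the boundary term into a second commutator, again via a displacement. All of these operations are compositions and inversions of maps depending smoothly on $h$, so $\Psi$ is smooth. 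The total number of commutators is a fixed multiple of $c(n)$, which gives $m=m(n)$. When $h=\id$, every factor is $\id$ or a fixed element. Shrinking the balls lets the fixed displacing maps be chosen close to $\id$, which handles the final clause that $\Psi(\id)$ can be taken close to $(\id,\dots,\id)$.

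\emph{Main obstacle.} I expect Step 2 to be the crux. It must produce an exact identity, not one up to bounded error, using finitely many factors on a closed manifold, with smooth dependence and a count independent of $M$. If the explicit telescoping formula fails, my first fallback is an inverse function argument. I would show that the derivative of $\Phi$ at a suitably chosen base tuple, a map from $2m$ copies of $C^\infty_0(M)$ to $C^\infty_0(M)$, is surjective with a right inverse that is tame in the sense of Nash--Moser. This surjectivity would follow from the fragmentation step applied to the linearized operators $H\mapsto H\circ g^{-1}-H$. The Nash--Moser inverse function theorem would then give a smooth local right inverse $\Psi$ on a $C^\infty$-neighbourhood $\mathcal{N}$ of $\id$.
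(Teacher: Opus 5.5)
The paper gives no proof of this theorem; it quotes it from Edtmair, so your sketch has to stand on its own, and it does not. Step~1 is fine: smooth fragmentation via generating functions is standard, though exactness of $S_\phi$ uses discreteness of the flux group, i.e.\ Ono's theorem.

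\textbf{Step~2 cannot work as designed.} Your identity $h=[a,g][b,g']$ builds $a,b$ from $h$ and fixed elements by compositions and inversions. It rests only on the fact that maps with disjoint supports commute. It would therefore hold for \emph{every} $h\in\Ham_c(U)$, not just for $h$ near $\id$, and no such identity exists on a closed manifold. Structurally, in the group generated by $h,g,g'$ modulo commutation relations, every relator is a commutator, so $h$ survives in the abelianization. This is why finite telescoping always returns
\[
h=[a_N,g]\cdot g^{N}hg^{-N},\qquad a_N=\prod_{k<N}g^{k}hg^{-k}.
\]
The boundary term is conjugate to $h$ and no smaller, and a second displacement cannot absorb it. Concretely, on $S^2$ the Entov--Polterovich quasimorphism $q$ agrees up to normalization with the Calabi homomorphism on $\Ham_c(U)$ for a displaceable disc $U$, so it is unbounded there. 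Yet $|q|\le 3D(q)$ on products of two commutators. Any correct argument must therefore use the $C^\infty$-smallness of $h$ in an essential, analytic way.

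\textbf{Your Nash--Moser fallback points the right way, but its key claim is false.} At a base tuple with $\prod_j[u_j,v_j]=\id$, the image of $D\Phi$ is a sum of coboundary spaces $\{F-F\circ R^{-1}\}$ for finitely many $R$, namely conjugates of the $u_j^{\pm1},v_j^{\pm1}$. This image is annihilated by every probability measure invariant under all these $R$. Surjectivity onto $C^\infty_0(M)$ therefore already forces the normalized Liouville measure to be their only common invariant measure. Beyond that, you must solve a cohomological equation with tame estimates, which is a small-divisor problem. Fragmentation plus displacement does not achieve this. For $K$ supported in a ball displaced by $g$, the candidate $F=\sum_{k<N}K\circ g^{-k}$ gives $F-F\circ g^{-1}=K-K\circ g^{-N}$, the same boundary term at the linear level.

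What is missing is a KAM-type input for well-chosen base elements. Compare Herman's theorem on Diophantine translations of the torus, which underlies the analogous smooth perfectness result for $\Diff(M)$ of Haller--Rybicki--Teichmann. You would also need a smooth tame family of right inverses on a whole neighbourhood of the base tuple, as Hamilton's Nash--Moser theorem requires, not merely a right inverse at one point. That analytic step is the real content of Edtmair's theorem, and your proposal does not supply it.
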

In the above, $\Ham(M,\omega)$ is endowed with the $C^\infty$-topology.
Recall that this is the Fréchet topology generated by the family of $C^k$-norms for $k \in \Nats$.
Thus, for a given $(M^{2n},\omega)$ and $\mathcal{N} \subset \Ham(M,\omega)$, as in Theorem~\ref{thm:edtmair-2025} above, 
there exists an integer $K > 0$ and some $\eps_{K} > 0$ such that the $C^K$-ball of radius $\eps_K$ around the identity is contained in $\mathcal{N}$.
In particular, if $H$ is a Hamiltonian function with a sufficiently small $C^{K+1}$-norm, then the time-1 map of the flow generated by $H$ is contained in $\mathcal{N}$ and hence can be written as a product of $2m$ commutators.
This result will be at the heart of this section.

\subsection{Sub-Gaussian tail estimates for the commutator length}
Recall that $[\Ham(M,\omega),\Ham(M,\omega)] = \Ham(M,\omega)$, i.e., any element of $\Ham(M,\omega)$ can be written as a product of commutators.
The commutator length $\cl\colon \Ham(M,\omega) \to \Z$ is then defined 
as the minimal number of commutators needed to express a given element of $\Ham(M,\omega)$ as a product of commutators.
We will now show that the likelihood of a random Hamiltonian diffeomorphism having a large commutator length is exponentially small.
We first establish the following lemma, which we will use several times in this section.
\begin{lemma}\label{lem:cl-gaussian-tail}
Let $(M^{2n},\omega)$ be a closed symplectic manifold 
and $\mathcal{D}$ a \ldd .
Then the commutator length $\cl\colon \Ham(M,\omega) \to \Z$ is bounded from above by a measurable function $f$ 
which is such that $\muGP^{\mathcal{D}}(f^{-1}([t,\infty)))$ has sub-Gaussian tails, i.e., there exist constants $C,c > 0$ such that
\begin{align*}
    \muGP^{\mathcal{D}}(f^{-1}([t,\infty))) \leq C e^{-ct^2},
\end{align*}
for all $t \geq 0$.
In particular, $f$ is square-integrable with respect to $\muGP^{\mathcal{D}}$.
\end{lemma}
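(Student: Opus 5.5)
We plan to fragment the random Hamiltonian into many small pieces and apply Edtmair's theorem to each piece. Fix $K$ and $\eps_K$ as in the discussion after Theorem~\ref{thm:edtmair-2025}, so that every Hamiltonian $G$ with $\norm{G}_{C^{K+1}} < \eps$ (for a suitable $\eps>0$ depending on $\eps_K$ and the constant in the flow-continuity estimate at the origin) has $\phi^1_G \in \mathcal{N}$ and hence $\cl(\phi^1_G) \le m$. For a Hamiltonian $H \in C^\infty_0([0,1]\times M)$ and an integer $N \ge 1$, write $\phi^1_H = \prod_{j=0}^{N-1} \phi^1_{H^{(j)}}$ with $H^{(j)}(t,x) = \tfrac1N H(\tfrac{j+t}{N},x)$, the reparametrized flow over $[j/N,(j+1)/N]$. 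Then $\norm{H^{(j)}}_{C^{K+1}} \le \tfrac1N\norm{H}_{C^{K+1}}$, where time derivatives gain additional factors $N^{-1}$ and so only improve the bound. Consequently $N \ge \norm{H}_{C^{K+1}}/\eps$ suffices, and we obtain $\cl(\phi^1_H) \le m\,(1+\lceil \norm{H}_{C^{K+1}}/\eps\rceil)$ using subadditivity of $\cl$.

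We then define $f$ on $\Ham(M,\omega)$ as the pushforward-compatible bound $f(\phi) = m(1+\lceil \inf\{\norm{H}_{C^{K+1}} : \phi^1_H=\phi\}/\eps\rceil)$. Measurability of this infimum over preimages is awkward, so we work on $\Omega$ instead. The statement really concerns $\cl\circ\phi^1_{\randH}$, and we take $f$ to be (a version of) $m(1+\lceil \norm{\randH}_{C^{K+1}}/\eps\rceil)$ viewed as a random variable. This is measurable because $\randH$ is measurable into $C^{K+1}$, as established in Section~\ref{sec:topologies} via the Sobolev embedding, and the norm is continuous. If a genuine function on $\Ham$ is needed, we take the infimum formulation and note that it is dominated by this random variable, which suffices for the tail bound.

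The main step is the sub-Gaussian tail of $\norm{\randH}_{C^{K+1}}$. By Sobolev embedding this is bounded by $C\norm{\randH}_{W^{2s,2}}$ for large $s$. Now $\randH$ is a centered-plus-mean Gaussian random element of the separable Hilbert space $W^{2s,2}$. The mean part is deterministic and bounded, since the means of the $Z_n$ are bounded by $D$ and the weights $w_n$ decay super-polynomially. The expectation of the norm is finite by \cite[Lemma 3.8]{dawid-2025}, using the eigenvalue growth and the Donnelly/Cheng-type bounds; here $\lambda_n^{s}w_n$ is summable because $w_n=e^{-\reg\lambda_n/2}$. Fernique's theorem, or Borell--TIS applied to the supremum of the Gaussian process $\sup_{\norm{\ell}\le1}\ell(\randH)$, then gives $\Prob[\norm{\randH}_{C^{K+1}} \ge t] \le Ce^{-ct^2}$. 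The obstacle is verifying that $\randH$ is genuinely jointly Gaussian in $W^{2s,2}$. A definition of law-defining datum does not obviously force the $Z_n$ to be jointly Gaussian across $n$. If they are only individually Gaussian, we instead use the union-type bound $\norm{\randH}_{C^{K+1}} \le \sum_n w_n \norm{Z_n}_{C^{K+1}}\norm{e_n}_{C^{K+1}}$. We apply Borell--TIS to each $\norm{Z_n}_{C^{K+1}}$ separately, with uniform variance control needed; summing these with the rapidly decaying weights $w_n$ yields a sub-Gaussian tail for the sum, since a weighted sum with summable weights of uniformly sub-Gaussian variables is sub-Gaussian by the triangle inequality for the $\psi_2$-Orlicz norm.

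Finally, composing these estimates, $\{f\ge t\}\subset\{\norm{\randH}_{C^{K+1}} \ge \eps(t/m-2)\}$, which gives $C'e^{-c't^2}$ after adjusting constants for small $t$. Square-integrability follows by integrating the tail.
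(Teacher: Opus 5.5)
Your proposal is correct and follows essentially the same route as the paper: fragment $\phi^1_H$ in time into $N\approx\norm{H}_{C^{K+1}}/\eps$ pieces lying in Edtmair's neighbourhood $\mathcal{N}$, bound $\cl(\phi^1_{\randH})$ affinely by $\norm{\randH}_{C^{K+1}}$, and get the sub-Gaussian tail from Sobolev embedding, with $f$ (as in the paper) effectively a random variable on $\Omega$ rather than a Borel function on $\Ham(M,\omega)$. Your linear reparametrization $H^{(j)}(t,x)=\tfrac1N H(\tfrac{j+t}{N},x)$ is a slightly cleaner substitute for the paper's bump-function pieces, and you correctly count $m$ rather than $2m$ commutators per piece; one refinement for your non-jointly-Gaussian fallback is to use the spatial norm $\sup_t\norm{H(t,\cdot)}_{C^{K+1}(M)}$, which already controls $\phi^1_H$ in $C^K$ and shrinks by a factor of at least $N$ on each piece, because a law-defining datum gives uniform bounds only on $\Cov[Z_n(t),Z_n(t)]$ and $\E[\norm{Z_n}_\infty]$ and not on time derivatives of the $Z_n$, and those two bounds are exactly what Borell--TIS plus the $\psi_2$-triangle inequality need.
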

\begin{proof}
    To show this lemma, it is more convenient to work at the level of Hamiltonian functions instead of at the level of Hamiltonian diffeomorphisms.
    Thus, we wish to look at the following composition:
    \begin{center}
        \begin{tikzcd}
            {C^\infty([0,1]\times M)} \arrow[r] & \Ham(M, \omega)  \arrow[r]           & \Z           \\
            H \arrow[r, maps to]                & \phi_H^1 \arrow[r, maps to] & \cl(\phi_H^1).
        \end{tikzcd}
    \end{center}
    Now let $m \in \Nats$ be the integer from Theorem~\ref{thm:edtmair-2025} and let $K > 0$ and $\eps_K > 0$ be such that the $C^K$-ball of radius $\eps_K$ around the identity is contained in $\mathcal{N}$, where $\mathcal{N}$ is the neighborhood of the identity from Theorem~\ref{thm:edtmair-2025}.
    Note that the map $H \mapsto \phi_H^1$ is continuous with respect to the $C^{k+1}$-topology on $C^\infty([0,1]\times M)$ and the $C^k$-topology on $\Ham(M,\omega)$ for any $k \in \Nats$.
    Thus, there exists some $\delta_K > 0$ such that if $H$ is a Hamiltonian function with $\norm{H}_{C^{K+1}} \leq \delta_K$, then the time-1 map of the flow generated by $H$ is contained in $\mathcal{N}$ and hence can be written as a product of $2m$ commutators.
    Given any $H \in C^\infty([0,1]\times M)$, we can split $H$ into small pieces, each of which has $C^{K+1}$-norm less than $\delta_K$.
    The construction works as follows:
    let $\beta\colon [0,1] \to[0,1]$ be a smooth function such that $\beta(t) = 0$ for $t$ close to $0$, $\beta(t) = 1$ for $t$ close to $1$ and $\beta$ is monotone.
    Set $N \coloneqq \lceil \sup_{t \in [0,1]} \abs{\beta'(t)}\rceil \cdot \lceil \norm{H}_{C^{K+1}}/\delta_K \rceil$.
    Then let 
    \begin{align*}
        H_j(t,x) \coloneqq \frac{\beta'(t)}{N} \cdot H\left(\frac{j+\beta(t)}{N},x\right),
    \end{align*}
    for $j = 0,\dots,N - 1$.
    If we consider any trajectory $\gamma_j$ of the Hamiltonian vector field of $H_j$ starting at $x \in M$ at time $0$, then $\gamma_j$ is a reparametrization of the trajectory of the Hamiltonian vector field of $H$ starting at $x$ at time $\frac{j}{N}$ up to time $\frac{j+1}{N}$.
    Thus, the time-1 map of $H_j$ is given by 
    \begin{equation*}\phi_{H_j}^1 = \phi_H^{\frac{j+1}{N}} \circ (\phi_H^{\frac{j}{N}})^{-1}.\end{equation*}
    In particular, we have that
    \begin{align*}
        \phi_H^1 = \phi^1_{H_{N-1}} \circ \ldots \circ \phi^1_{H_0}.
    \end{align*}
    One easily verifies that  
    \begin{align*}
        \norm{H_j}_{C^{K+1}} \leq \frac{\sup_{t \in [0,1]} \abs{\beta'(t)}}{N} \cdot \norm{H}_{C^{K+1}} \leq \frac{1}{\lceil \norm{H}_{C^{K+1}}/\delta_K \rceil} \cdot \norm{H}_{C^{K+1}} \leq \delta_K,
    \end{align*}
    for all $j$, and hence each $\phi_{H_j}^1$ can be written as a product of $2m$ commutators.
    Thus, we obtain that 
    \begin{equation}\label{eq:cl-bound}
        \cl(\phi_H^1) \leq 2m \cdot \lceil \sup_{t \in [0,1]} \abs{\beta'(t)} \rceil \cdot \lceil \norm{H}_{C^{K+1}}/\delta_K \rceil.
    \end{equation}
    Note that both $m$ and $\delta_K$ depend only on $M$ and $\omega$, and not on $H$. 
    The constant $\lceil \sup_{t \in [0,1]} \abs{\beta'(t)} \rceil$ is universal and can assumed to be $2$ without loss of generality by choosing $\beta$ appropriately.
    Let $\randH\colon [0,1] \times M \to \Reals$ be the random Hamiltonian function obtained from the construction of the random Hamiltonian flow with respect to $\mathcal{D}$.
    Then, $\norm{\randH}_{C^{K+1}}$ is a random variable with sub-Gaussian tails and thus is in particular square-integrable with respect to $\muGP^{\mathcal{D}}$.
    To see why, let $2k > 0$ be sufficiently large such that there is a
    Sobolev embedding $W^{2k,2}([0,1]\times M) \into C^{K+1}([0,1]\times M)$.
    Then, by the Sobolev embedding theorem, there exists a constant $C > 0$ such that
    \begin{align*}
        \norm{\randH}_{C^{K+1}} \leq C \norm{\randH}_{W^{2k,2}}.
    \end{align*}
    Then, by the definition of the $W^{2k,2}$-norm and~\eqref{eq:GP_H}, 
    we can see that $\norm{\randH}_{W^{2k,2}}$ is a sub-Gaussian random variable.
    Hence, the right-hand side of the inequality in~\eqref{eq:cl-bound} is a function with the necessary sub-Gaussian tail estimate,
    which bounds $\cl(\phi_{\randH}^1)$ from above.
\end{proof}
Unfortunately, the commutator length itself is not necessarily measurable with respect to the Borel $\sigma$-algebra of the $C^\infty$-topology, 
and thus does not constitute a random variable with respect to $\muGP^{\mathcal{D}}$.
However, the following slightly weaker statement still establishes a kind of sub-Gaussian tail estimate for the commutator length.
The following corollary follows directly from Lemma~\ref{lem:cl-gaussian-tail}.
\begin{corollary}\label{cor:cl-gaussian-tail}
Let $(M^{2n},\omega)$ be a closed symplectic manifold and $\mathcal{D}$ a \ldd .
Then there exists a sequence of Borel measurable sets $\mathcal{A}_k \subset \Ham(M,\omega)$ with respect to the $C^\infty$-topology such that $\mathcal{A}_k \subset \{ \phi \in \Ham(M,\omega) \mid \cl(\phi) \geq k \}$  and there exist constants $C,c > 0$ such that $\muGP^{\mathcal{D}}(\mathcal{A}_k) \leq C e^{-ck^2}$ for all $k \geq 0$.
Thus, the probability that the commutator length is large decays at least as fast as that of a Gaussian being large.
\end{corollary}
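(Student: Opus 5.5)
The plan is to sandwich $\{\cl \geq k\}$ between a Borel set from below and the Borel superlevel set of the dominating function $f$ from Lemma~\ref{lem:cl-gaussian-tail} from above, and then use monotonicity of $\muGP^{\mathcal{D}}$. The point to keep in mind is that $\{\cl \geq k\}$ itself need not be $C^\infty$-Borel. So I will not try to measure it directly. I will choose Borel sets $\mathcal{A}_k$ \emph{inside} it, as large as possible, and then check that the Gaussian bound transfers to them.

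\textbf{Step 1 (the comparison set).} Let $f$ be the measurable upper bound for $\cl$ from Lemma~\ref{lem:cl-gaussian-tail}, i.e.\ the right-hand side of~\eqref{eq:cl-bound} pushed down to $\Ham(M,\omega)$. The lemma gives that $B_k \coloneqq f^{-1}([k,\infty))$ is $\muGP^{\mathcal{D}}$-measurable and that $\muGP^{\mathcal{D}}(B_k) \leq C e^{-ck^2}$ for all $k \geq 0$. Since $\cl \leq f$ pointwise, we have $\{\cl \geq k\} \subset B_k$.

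\textbf{Step 2 (choice of $\mathcal{A}_k$).} To make the statement non-vacuous, I take $\mathcal{A}_k$ to be a Borel kernel of $\{\cl \geq k\}$. Concretely, let
\begin{equation*}
s_k \coloneqq \sup\bigl\{\muGP^{\mathcal{D}}(A) \bigm| A \text{ Borel for the } C^\infty\text{-topology},\ A \subset \{\cl \geq k\}\bigr\},
\end{equation*}
which is the inner measure of $\{\cl \geq k\}$. Pick Borel sets $A^{(j)} \subset \{\cl \geq k\}$ with $\muGP^{\mathcal{D}}(A^{(j)}) \to s_k$. Set $\mathcal{A}_k \coloneqq \bigcup_j A^{(j)}$. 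This is a countable union, so it is Borel by Theorem~\ref{thm:measurability_Ck}. It is contained in $\{\cl \geq k\}$ and attains the supremum $s_k$.

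A more concrete, though possibly smaller, choice is available whenever there is a Hofer-Lipschitz homogeneous quasimorphism $q$ with defect $D(q) > 0$. In that case one can take instead $\mathcal{A}_k \coloneqq \{\phi \mid |q(\phi)| \geq (2k-1)D(q)\}$. The justification is that any product of $m$ commutators satisfies $|q| \leq (2m-1)D(q)$, so $|q(\phi)| \geq (2k-1)D(q)$ forces $\cl(\phi) \geq k$. This set is Hofer-closed, hence measurable by Theorem~\ref{thm:existence-of-measure}.

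\textbf{Step 3 (the estimate).} By construction $\mathcal{A}_k \subset \{\cl \geq k\} \subset B_k$. Monotonicity of $\muGP^{\mathcal{D}}$ on measurable sets then gives $\muGP^{\mathcal{D}}(\mathcal{A}_k) \leq \muGP^{\mathcal{D}}(B_k) \leq C e^{-ck^2}$. The constants are those of Lemma~\ref{lem:cl-gaussian-tail}, uniformly in $k$. The same chain also shows that the inner measure of $\{\cl \geq k\}$, which is the best Borel information available about this event, decays like a Gaussian.

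\textbf{Expected obstacle.} The only delicate point is measurability: both $\mathcal{A}_k$ and $B_k$ must be measurable with respect to the same $\sigma$-algebra. For $\mathcal{A}_k$ this is the $C^\infty$-Borel $\sigma$-algebra, which is covered by Theorem~\ref{thm:measurability_Ck}. For $B_k$ I would verify that $f$ is a Borel function of $\phi$. If $f$ is only defined on the Hamiltonian side, as a function of $\norm{\randH}_{C^{K+1}}$, I would run the whole comparison upstairs on $\Omega$ instead. There $\{\phi^1_{\randH} \in \mathcal{A}_k\} \subset \{f(\randH) \geq k\}$ holds pointwise in $\omega \in \Omega$, both events lie in $\mathcal{F}$, and the bound follows by pushing forward along $\phi^1_{\randH}$.
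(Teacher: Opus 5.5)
Your argument is correct for the statement as printed. It uses the same single ingredient as the paper, which only says the corollary follows directly from Lemma~\ref{lem:cl-gaussian-tail}: monotonicity of the measure against the superlevel set $B_k=f^{-1}([k,\infty))$ of the dominating function.

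Where you differ is the choice of $\mathcal{A}_k$. The evident deduction from the Lemma is to take $\mathcal{A}_k=B_k$ itself. This set is measurable with $\muGP^{\mathcal{D}}(B_k)\le Ce^{-ck^2}$, but it \emph{contains} $\{\cl\ge k\}$ rather than lying inside it, since $f$ overestimates $\cl$. That is evidently the intended content: the corollary's closing sentence (the probability that $\cl$ is large is Gaussian-small) is an outer-measure statement. So the printed inclusion should be read as $\{\cl\ge k\}\subset\mathcal{A}_k$.

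Under the inclusion as printed, $\mathcal{A}_k=\emptyset$ already satisfies everything. Your inner-kernel construction in Step~2 is therefore logically superfluous, and it delivers only the weaker inner-measure bound. Your remark that the inner measure is ``the best Borel information available'' is backwards. Step~1 already gives the stronger outer bound, $\{\cl\ge k\}\subset B_k$ with $\muGP^{\mathcal{D}}(B_k)\le Ce^{-ck^2}$. The inner bound follows immediately from it, and the outer bound is what you should state.

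What your write-up genuinely adds is the \emph{upstairs} formulation on $\Omega$. The $f$ built in the Lemma's proof is a function of $\norm{\randH}_{C^{K+1}}$, not of $\phi^1_{\randH}$. The inclusion $\{\cl(\phi^1_{\randH})\ge k\}\subset\{f(\randH)\ge k\}\in\mathcal{F}$ therefore gives the outer bound on $\Omega$ without needing $f$ to descend to a Borel function on $\Ham(M,\omega)$. The paper glosses over this point.

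Two small corrections:
\begin{itemize}
\item A countable union of $C^\infty$-Borel sets is Borel by the $\sigma$-algebra axioms. Theorem~\ref{thm:measurability_Ck} is only what guarantees that $\muGP^{\mathcal{D}}$ is defined on such sets.
\item In your quasimorphism variant, Hofer-closedness together with Theorem~\ref{thm:existence-of-measure} gives Hofer-Borel measurability, but the corollary asks for $C^\infty$-Borel sets. The cleanest fix is Edtmair's $C^\infty$-continuity of homogeneous quasimorphisms, which the paper invokes in Section~\ref{sec:scl-stats}. It makes $\{\abs{q}\ge(2k-1)D(q)\}$ directly $C^\infty$-closed.
\end{itemize}
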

\subsection{The stable commutator length and homogeneous quasimorphisms}
A more well-behaved group-theoretic invariant of $\Ham(M,\omega)$ is the stable commutator length $\scl$.
Recall that $\scl(g) = \lim_{n \to \infty} \frac{\cl(g^n)}{n}$ for $g \in [G,G]$ on a group $G$.
This is a real-valued function and --- unlike the commutator length -- can easily be shown to be measurable with respect to the Borel $\sigma$-algebra of the $C^\infty$-topology.
\begin{lemma}
Let $(M^{2n},\omega)$ be a closed symplectic manifold and $\mathcal{D}$ a \ldd .
Then the stable commutator length $\scl\colon \Ham(M,\omega) \to [0,\infty)$ is measurable with respect to the Borel $\sigma$-algebra of the $C^\infty$-topology.
\end{lemma}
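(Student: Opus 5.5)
The plan is to replace the commutator length by a nearby function that is Borel measurable, with an additive error that is bounded uniformly. Because $\scl$ is a limit of $\cl(g^n)/n$, a uniformly bounded error disappears in the limit. The tool that makes this work is Theorem~\ref{thm:edtmair-2025}: it makes $\cl$ ``coarsely upper semicontinuous'' in the $C^\infty$-topology. Throughout, $\Ham(M,\omega)$ carries the $C^\infty$-topology. I use that it is a topological group in this topology, so composition and inversion are continuous and right translations are homeomorphisms.

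\emph{Step 1 (coarse semicontinuity).} Let $m$ and $\mathcal{N}$ be as in Theorem~\ref{thm:edtmair-2025}. Take $g$ with $\cl(g) \le k$. The set $\mathcal{N} g$ is a $C^\infty$-open neighborhood of $g$, since right translation is a homeomorphism. Every $h \in \mathcal{N} g$ satisfies $h g^{-1} \in \mathcal{N}$, so $hg^{-1}$ is a product of $m$ commutators, and therefore $\cl(h) \le k+m$. Now define
\begin{equation*}
    U_k \coloneqq \operatorname{int}\{\phi \in \Ham(M,\omega) \mid \cl(\phi) \le k\}.
\end{equation*}
The argument above gives $\{\cl \le k\} \subset U_{k+m} \subset \{\cl \le k+m\}$. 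The sets $U_k$ increase with $k$, since interiors of increasing sets increase. They also cover $\Ham(M,\omega)$, because the group is perfect.

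\emph{Step 2 (a Borel proxy).} Set $\widetilde{\cl}(\phi) \coloneqq \min\{k \mid \phi \in U_k\}$. Its sublevel sets $\{\widetilde{\cl} \le k\} = U_k$ are open, so $\widetilde{\cl}$ is upper semicontinuous and in particular Borel. Step 1 gives $\cl \le \widetilde{\cl} \le \cl + m$ pointwise.

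\emph{Step 3 (passing to the limit).} For each $n$, the map $g \mapsto g^n$ is continuous, because composition is continuous in the $C^\infty$-topology. Hence $f_n(g) \coloneqq \widetilde{\cl}(g^n)/n$ is Borel. By Step 2,
\begin{equation*}
    \left| f_n(g) - \frac{\cl(g^n)}{n} \right| \le \frac{m}{n}.
\end{equation*}
It follows that $f_n(g) \to \scl(g)$ for every $g$, where existence of the limit defining $\scl$ is the standard subadditivity fact. A pointwise limit of Borel functions is Borel, so $\scl$ is Borel measurable with respect to the $C^\infty$-topology. Measurability with respect to $\muGP^{\mathcal{D}}$ then follows from Theorem~\ref{thm:measurability_Ck}.

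The main obstacle is Step 1: $\cl$ itself need not be semicontinuous, and the sets $\{\cl \le k\}$ need not be Borel. Edtmair's local perfectness is exactly what yields the uniform sandwich between open sets, at the cost of the additive constant $m$, which then vanishes after dividing by $n$. The only other point to verify is that $\Ham(M,\omega)$ is a topological group in the $C^\infty$-topology; this is standard for $\Diff(M)$, and $\Ham(M,\omega)$ inherits the subspace topology.
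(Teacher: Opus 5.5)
Your proof is correct, and it takes a genuinely different route from the paper. The paper's argument is two lines. It applies Bavard duality, $\scl(g)=\sup_q q(g)/(2D(q))$, together with Edtmair's corollary that every homogeneous quasimorphism on $\Ham(M,\omega)$ is $C^\infty$-continuous. This exhibits $\scl$ as a supremum of continuous functions, hence lower semicontinuous and Borel.

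You use only the local perfectness statement of Theorem~\ref{thm:edtmair-2025}. The sandwich $\{\cl\le k\}\subset U_{k+m}\subset\{\cl\le k+m\}$ produces an integer-valued, upper semicontinuous proxy $\widetilde{\cl}$ with $\cl\le\widetilde{\cl}\le\cl+m$. The bounded error then disappears after dividing by $n$. The steps check out: $\id\in\mathcal{N}$ gives $g\in\mathcal{N}g$, and Fekete's lemma gives existence of the limit.

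The paper's route is shorter but imports two nontrivial inputs. One is Bavard duality, which rests on Hahn--Banach. The other is the continuity of homogeneous quasimorphisms, which itself follows from local perfectness. Your route is self-contained given the theorem the paper already states. It also produces a Borel function at bounded distance from $\cl$, which bears directly on the paper's remark that $\cl$ itself may fail to be measurable.

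Your argument in fact yields slightly more. By subadditivity, $f_n(g)\ge \cl(g^n)/n\ge\scl(g)$, so $\scl=\inf_n f_n$. This is an infimum of upper semicontinuous functions, so $\scl$ is upper semicontinuous. Combined with the paper's lower semicontinuity, $\scl$ is actually continuous in the $C^\infty$-topology.
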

\begin{proof}
    To show this lemma, we will use \emph{Bavard duality},
    which relates the stable commutator length $\scl$ to homogeneous quasimorphisms as follows: for any $g \in [G,G]$, we have that
    \begin{align*}
        \scl(g) = \sup_{q} \frac{q(g)}{2D(q)},
    \end{align*}
    where the supremum is taken over all homogeneous quasimorphisms $q$ on $G$ and $D(q)$ is the defect of $q$.
    We use the convention that $\sup \emptyset = 0$, and thus $\scl(g) = 0$ if there is no homogeneous quasimorphism $q$ with $q(g) \neq 0$.
    Edtmair proved that any homogeneous quasimorphism on $\Ham(M,\omega)$ is continuous with respect to the $C^\infty$-topology, see~\cite[Corollary 1.4]{edtmair-2025}.
    Now, by Bavard duality, we have that $\scl$ is the supremum of a family of continuous functions with respect to the $C^\infty$-topology, and hence is lower-semicontinuous with respect to the $C^\infty$-topology. In particular, for any $s \in \Reals$, the set $\{g \in \Ham(M,\omega) \mid \scl(g) \leq s\}$ is closed with respect to the $C^\infty$-topology, and hence is measurable with respect to the Borel $\sigma$-algebra of the $C^\infty$-topology.
    Since the sets $(-\infty, s]$ generate the Borel $\sigma$-algebra of $\mathbb{R}$, the claim follows.
\end{proof}
Since $\scl$ is bounded from above by the commutator length, Lemma~\ref{lem:cl-gaussian-tail} implies that $\scl$
itself is a genuine random variable with sub-Gaussian tails with respect to $\muGP^{\mathcal{D}}$:
\begin{corollary}\label{cor:scl-gaussian-tail}
Let $(M^{2n},\omega)$ be a closed symplectic manifold and $\mathcal{D}$ a \ldd .
Then the stable commutator length $\scl\colon \Ham(M,\omega) \to [0,\infty)$ is a sub-Gaussian random variable with respect to the probability measure $\muGP^{\mathcal{D}}$.
\end{corollary}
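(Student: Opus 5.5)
We need three facts: $\scl$ is a random variable, it is dominated pointwise by a function with sub-Gaussian tails, and such domination transfers the tail bound.

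\emph{Measurability.} By the preceding lemma, $\scl$ is measurable with respect to the Borel $\sigma$-algebra of the $C^\infty$-topology. By Theorem~\ref{thm:measurability_Ck}, $\muGP^{\mathcal{D}}$ is a Borel measure for that topology. Hence $\scl$ is a genuine random variable on $(\Ham(M,\omega),\muGP^{\mathcal{D}})$. This is exactly what separates it from $\cl$, for which Corollary~\ref{cor:cl-gaussian-tail} only gives inner estimates.

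\emph{Pointwise domination.} For every $g\in\Ham(M,\omega)=[\Ham,\Ham]$ we have $\cl(g^n)\le n\,\cl(g)$, by concatenating $n$ copies of a minimal commutator expression. Dividing by $n$ and passing to the limit gives $0\le\scl(g)\le\cl(g)$. Combining with Lemma~\ref{lem:cl-gaussian-tail}, we obtain $0\le\scl\le\cl\le f$ pointwise, where $f$ is the measurable function from that lemma, which satisfies $\muGP^{\mathcal{D}}(f^{-1}([t,\infty)))\le Ce^{-ct^2}$.

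\emph{Transfer of tails.} Since both $\scl$ and $f$ are measurable and $\scl\le f$, we have $\{\scl\ge t\}\subset\{f\ge t\}$ as measurable sets. Therefore
\[
\muGP^{\mathcal{D}}(\scl\ge t)\le\muGP^{\mathcal{D}}(f\ge t)\le Ce^{-ct^2}
\]
for all $t\ge0$. Since $\scl$ is nonnegative, this one-sided bound is the full sub-Gaussian tail estimate. It also gives finiteness of all moments, in particular square integrability.

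\emph{Main obstacle.} The delicate point is the measurability of $f$, because $f$ is defined on the level of Hamiltonian functions through $\norm{\randH}_{C^{K+1}}$. This is easy to get confused about, so it is worth addressing carefully. The quantity that matters is the composite $\omega\mapsto\scl(\phi^1_{\randH(\omega)})$ on the underlying probability space $(\Omega,\mathcal{F},\Prob)$, compared with $\omega\mapsto f(\randH(\omega))$, the right-hand side of \eqref{eq:cl-bound}. The first is measurable because $\phi^1_{\randH}$ is measurable for the $C^\infty$-topology (Section~\ref{sec:topologies}) and $\scl$ is Borel. The second is measurable because $\randH$ is measurable into $C^{K+1}$ and the map $H\mapsto\norm{H}_{C^{K+1}}$ is continuous.

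The inequality $\scl(\phi^1_{\randH})\le f(\randH)$ holds pointwise on $\Omega$. So
\[
\muGP^{\mathcal{D}}(\scl\ge t)=\Prob\bigl[\scl(\phi^1_{\randH})\ge t\bigr]\le\Prob\bigl[f(\randH)\ge t\bigr]\le Ce^{-ct^2},
\]
and the proof is complete.
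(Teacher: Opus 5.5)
Your proof is correct and follows the same route as the paper: $\scl$ is measurable by the preceding lemma, and the pointwise domination $\scl \le \cl \le f$ together with Lemma~\ref{lem:cl-gaussian-tail} gives the sub-Gaussian tail. Your additional step of comparing $\scl(\phi^1_{\randH})$ with $f(\randH)$ on the underlying probability space $\Omega$ is a worthwhile clarification that the paper glosses over, since the bound in \eqref{eq:cl-bound} is a function of the Hamiltonian rather than of the diffeomorphism.
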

We obtain one further result that 
follows in the same way as Corollary~\ref{cor:scl-gaussian-tail}.
Recall that Edtmair has shown that any homogeneous quasimorphism on $\Ham(M,\omega)$ is continuous with respect to the $C^\infty$-topology, see~\cite[Corollary 1.4]{edtmair-2025}.
Thus, by Theorem~\ref{thm:measurability_Ck}, any homogeneous quasimorphism on $\Ham(M,\omega)$ is a random variable with respect to $\muGP^{\mathcal{D}}$.
Now, note the following:
given any element $g \in \Ham(M,\omega)$ and any homogeneous quasimorphism $q$ on $\Ham(M,\omega)$, we have that
\begin{align*}
    |q(g)| \leq 2D(q) \cdot \cl(g),
\end{align*}
where $D(q)$ is the defect of $q$. This follows simply from writing $g$ as a product of commutators and using the definition of a quasimorphism.
Thus, we obtain the following corollary:
\begin{corollary}\label{cor:quasimorphism-gaussian-tail}
Let $(M^{2n},\omega)$ be a closed symplectic manifold and $\mathcal{D}$ a \ldd .
Then any homogeneous quasimorphism $q$ on $\Ham(M,\omega)$ is a sub-Gaussian random variable with respect to the probability measure $\muGP^{\mathcal{D}}$.
In particular, any homogeneous quasimorphism on $\Ham(M,\omega)$ is square-integrable with respect to $\muGP^{\mathcal{D}}$.
\end{corollary}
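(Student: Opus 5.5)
The plan is to combine three ingredients already available: measurability of $q$, a pointwise bound of $|q|$ by the commutator length, and the sub-Gaussian majorant from Lemma~\ref{lem:cl-gaussian-tail}.

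First, measurability. By~\cite[Corollary 1.4]{edtmair-2025}, every homogeneous quasimorphism $q$ on $\Ham(M,\omega)$ is continuous for the $C^\infty$-topology. Hence $q$ is Borel measurable with respect to that topology. By Theorem~\ref{thm:measurability_Ck}, $\muGP^{\mathcal{D}}$ is a Borel measure for the $C^\infty$-topology, so $q$ is a genuine random variable. Concretely, $q\circ\phi^1_{\randH}$ is $\mathcal{F}$-measurable.

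Second, the pointwise bound. For a homogeneous quasimorphism, $|q([a,b])|\le D(q)$, using homogeneity and conjugation invariance. Applying the quasimorphism inequality along a product of $\cl(g)$ commutators then gives
\begin{equation*}
|q(g)| \le 2D(q)\,\cl(g)
\end{equation*}
for all $g$. This is the inequality recorded just before the statement; the constant is irrelevant. Next I would combine it with Lemma~\ref{lem:cl-gaussian-tail}, which gives a measurable $f\ge \cl$ with $\muGP^{\mathcal{D}}(f\ge t)\le Ce^{-ct^2}$. Since $q$ is measurable, $\{|q|\ge s\}$ is a measurable set contained in $\{f\ge s/(2D(q))\}$ whenever $D(q)>0$. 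This yields
\begin{equation*}
\muGP^{\mathcal{D}}(|q|\ge s)\le C e^{-c s^2/(4D(q)^2)}.
\end{equation*}
If $D(q)=0$, then $q$ is a homomorphism to $\R$ and vanishes, since $\Ham(M,\omega)$ is perfect; this case is trivial.

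Third, integrability. Square-integrability follows from the layer-cake formula $\E[q^2]=\int_0^\infty 2s\,\Prob(|q|\ge s)\,ds<\infty$.

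The only delicate point is the measurability of the majorant $f$ in Lemma~\ref{lem:cl-gaussian-tail}. That $f$ is $2m\cdot 2\cdot\lceil\norm{H}_{C^{K+1}}/\delta_K\rceil$, which is a function on the space of Hamiltonians and not on $\Ham(M,\omega)$. I would therefore work on $\Omega$ directly. The random variable $\norm{\randH}_{C^{K+1}}$ is measurable and sub-Gaussian, via the Sobolev embedding and the Gaussian series~\eqref{eq:GP_H}. We have $|q(\phi^1_{\randH})|\le 4mD(q)\lceil\norm{\randH}_{C^{K+1}}/\delta_K\rceil$ pointwise on $\Omega$. This gives the tail bound for the law of $q$ under $\muGP^{\mathcal{D}}$ directly, without needing $\cl$ itself to be measurable.
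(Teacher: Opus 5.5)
Your proposal is correct and follows the paper's route: measurability of $q$ from Edtmair's $C^\infty$-continuity together with Theorem~\ref{thm:measurability_Ck}, the pointwise bound $|q|\le 2D(q)\,\cl$, and the sub-Gaussian majorant from Lemma~\ref{lem:cl-gaussian-tail}. Your choice to work on $\Omega$ and dominate $|q\circ\phi^1_{\randH}|$ directly by a multiple of $\lceil\norm{\randH}_{C^{K+1}}/\delta_K\rceil$ is a good tightening. The majorant built in that lemma is naturally a function of the Hamiltonian rather than of the diffeomorphism, and your argument sidesteps this without needing $\cl$ itself to be measurable.
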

\section{A central limit theorem for Lipschitz quasimorphisms}\label{sec:clt}
In this section, we will show Theorem~\ref{thm:q-clt}, i.e., a central limit theorem for Lipschitz quasimorphisms on a group with a right-invariant metric.
The proof is based on~\cite{bjorklund-hartnick-2011}, where 
the authors show a central limit theorem for quasimorphisms on locally compact groups.\footnote{
For the sake of completeness, we note that~\cite{bjorklund-hartnick-2011} also contains a proof of the central limit theorem for general measured groups under the assumption of the continuum hypothesis. However, we neither want to nor need to assume the continuum hypothesis here.}
 The overall strategy is to show that one can find a suitable ergodic martingale that is at bounded 
distance from a given quasimorphism, and then apply a central limit theorem for such martingales due to Billingsley in~\cite{billingsley-1961}.
One can also apply the main result of~\cite{stout-1970} to obtain a law of the iterated logarithm based on the same martingale.
The key step hinges on the following definition:
\begin{definition}
Let $(G,\mu)$ be a measured group. A real-valued function $q$ on $G$ is \emph{quasi-right-harmonic with
respect to $\mu$} if it is Borel measurable and integrable with respect to the measure $\mu$ and there is a constant $\ell$ such that
\begin{equation*}
\int_G q(gh) \, d\mu(h) = q(g) + \ell,
\end{equation*}
for all $g$. If 
\begin{equation*}
\int_G q(hg) \, d\mu(h) = q(g) + \ell,
\end{equation*}
then $q$ is \emph{quasi-left-harmonic with respect to $\mu$}. If $q$ is both quasi-right-harmonic and quasi-left-harmonic, then we say that $q$ is \emph{quasi-bi-harmonic with respect to $\mu$}. If $\ell = 0$ in any of the above definitions, then we say that $q$ is \emph{right-harmonic}, \emph{left-harmonic} or \emph{bi-harmonic}, respectively.
\end{definition}
Obtaining a suitable martingale then boils down to showing that there is a quasi-bi-harmonic function at bounded distance from the given quasimorphism.
By replacing~\cite[Proposition 2.2]{bjorklund-hartnick-2011} by the following lemma, we can adapt the proof of Björklund and Hartnick to our setting.
\begin{lemma}\label{lem:bi-harmonic-q}
	Let $(G,d_G)$ be a metric group endowed with a right-invariant metric and let $q\colon G \to \Reals$ be a homogeneous $d_G$-Lipschitz quasimorphism.
	Further, let $\mu$ be a Borel probability measure on $G$.
	Then there exists a bi-harmonic map $\tilde{q}\colon G \to \Reals$ that is at bounded distance from $q$.
\end{lemma}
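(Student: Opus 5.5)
The plan is to imitate the averaging construction behind~\cite[Proposition 2.2]{bjorklund-hartnick-2011}. The one place where local compactness (or the continuum hypothesis) enters there is measurability of a Banach-limit average, and I would replace that input by a Lipschitz estimate. I read ``bi-harmonic'' as \emph{quasi-bi-harmonic}, with some drift $\ell$. I assume, as in Theorem~\ref{thm:q-clt}, that $q$ is $\mu$-integrable; this makes all the integrals below finite, using $\abs{q(gh)} \le \abs{q(g)} + \abs{q(h)} + D(q)$ together with the fact that $q$ is a class function. I also assume $G$ is a topological group, so that $(g,h)\mapsto q(gh)$ is continuous and therefore jointly Borel, and Fubini applies.

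\textbf{Step 1: the averaged functions.} Set $c_n \coloneqq \int_G q \, d\mu^{*n}$ and
\begin{equation*}
q_n(g) \coloneqq \int_G q(gh)\, d\mu^{*n}(h) - c_n .
\end{equation*}
The quasimorphism inequality gives $\abs{q_n(g) - q(g)} \le D(q)$ for every $n$ and every $g$. Right-invariance of $d_G$ gives $d_G(gh,g'h) = d_G(g,g')$. Hence $\abs{q(gh)-q(g'h)} \le L\, d_G(g,g')$ for all $h$, where $L$ is the Lipschitz constant of $q$. Integrating in $h$ shows that every $q_n$ is $L$-Lipschitz, with the same constant for all $n$.

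\textbf{Step 2: the Banach limit.} Fix a shift-invariant Banach limit $\mathrm{LIM}$ on $\ell^\infty(\Nats)$. Define $\tilde q(g) \coloneqq q(g) + \mathrm{LIM}_n\,(q_n(g)-q(g))$; the sequence inside is bounded by $D(q)$, so this makes sense. Then $\abs{\tilde q - q} \le D(q)$, so $\tilde q$ is at bounded distance from $q$. Moreover $\mathrm{LIM}$ is positive and normalized, and the family $(q_n)$ is equi-Lipschitz. Therefore
\begin{equation*}
\abs{\tilde q(g) - \tilde q(g')} \le \mathrm{LIM}_n \abs{q_n(g)-q_n(g')} \le L\, d_G(g,g'),
\end{equation*}
so $\tilde q$ is Lipschitz, hence continuous, hence Borel. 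Integrability of $\tilde q$ follows from that of $q$.

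\textbf{Step 3: the harmonicity identities.} For quasi-right-harmonicity I would compute, using Fubini and $\mu * \mu^{*n} = \mu^{*(n+1)}$,
\begin{equation*}
\int_G q_n(gh)\,d\mu(h) = q_{n+1}(g) + (c_{n+1} - c_n - c_1).
\end{equation*}
The correction terms $c_{n+1}-c_n-c_1$ are bounded by $D(q)$. Passing $\mathrm{LIM}$ under the $d\mu(h)$-integral is the delicate point. I would first try dominated convergence in the form of linearity plus positivity of $\mathrm{LIM}$. Concretely, write $q_n(gh) = q(g) + q(h) + r_n(g,h)$ with $\abs{r_n} \le 2D(q)$, and then justify $\int \mathrm{LIM}_n\, r_n(g,h)\, d\mu(h) = \mathrm{LIM}_n \int r_n(g,h)\, d\mu(h)$. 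This interchange is not automatic for Banach limits, and \textbf{I expect it to be the main obstacle}. The fallback I would use is to replace $\mathrm{LIM}$ by a limit along a subnet of Cesàro averages $\frac1N\sum_{n<N} q_n$. By Step 1 these are equi-Lipschitz and uniformly bounded relative to $q$, so on each compact set an Arzelà--Ascoli-type argument gives a pointwise limit along a subnet. Along such a limit, bounded convergence does commute with $\int d\mu$. Shift-invariance of the Cesàro limit then yields
\begin{equation*}
\int_G \tilde q(gh)\, d\mu(h) = \tilde q(g) + \ell, \qquad \ell \coloneqq c_1 + \mathrm{LIM}_n (c_{n+1}-c_n-c_1).
\end{equation*}

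For quasi-left-harmonicity I would use that a homogeneous quasimorphism is conjugation invariant. Since $hg = h(gh)h^{-1}$, we have $q(hg) = q(gh)$ for all $g,h$. Consequently $\int q(hg)\,d\mu^{*n}(h) = \int q(gh)\,d\mu^{*n}(h)$, and the same construction gives the left identity with the same $\ell$. Thus $\tilde q$ is quasi-bi-harmonic.
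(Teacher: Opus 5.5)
Your $\tilde q$ is exactly the paper's. Since $q(g)+\tilde q_n(g,e)=\int_G q(gh)\,d\mu^{*n}(h)-c_n$, the paper's $q+\underline{\lambda}(\tilde q_n(\cdot,e))$ is your $\mathrm{LIM}_n\,q_n$. Measurability comes from the same uniform Lipschitz bound, and the left identity comes from conjugation invariance.

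You are right that everything hinges on moving the Banach limit under $\int_G\cdot\,d\mu(h)$. The paper disposes of this in one line ``by linearity of $\underline{\lambda}$'', which by itself only handles finite sums. However, your fallback does not close the gap:
\begin{itemize}
\item Bounded pointwise convergence along a subnet does \emph{not} commute with integration. The indicators of finite subsets of $[0,1]$, directed by inclusion, converge pointwise to $1$, while all their Lebesgue integrals vanish.
\item Uniform convergence on compact sets does not control $\int_G\cdot\,d\mu$ on a non-locally compact group unless you know $\mu$ is tight.
\item Step~1 controls $q_n$ in its own argument, whereas Step~3 integrates $h\mapsto q_n(gh)$. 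Left translation is not an isometry of a right-invariant metric.
\end{itemize}

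Two ingredients are missing.
\begin{itemize}
\item \emph{Equicontinuity in the integration variable.} This comes from conjugation invariance: $q(ghk)=q(hkg)$ together with right invariance gives $\abs{q(ghk)-q(gh'k)}\le L\,d_G(h,h')$. Hence $h\mapsto r_n(g,h)$ and $h\mapsto \mathrm{LIM}_n\, r_n(g,h)$ are $2L$-Lipschitz, uniformly in $n$.
\item \emph{Inner regularity of $\mu$.} You need that for all $\eps,\delta>0$, some finitely many $\delta$-balls carry $\mu$-mass at least $1-\eps$. This holds if $\mu$ is Radon, or if $G$ is separable. $(\Ham(M,\omega),d_{\Hof})$ is separable, being a Lipschitz image of $C^\infty_0([0,1]\times M)$ with the sup-norm. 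Separability also gives $\mathcal{B}(G\times G)=\mathcal{B}(G)\otimes\mathcal{B}(G)$, which your Fubini step tacitly needs.
\end{itemize}

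With these, the interchange follows. Disjointify such balls into sets $A_1,\dots,A_m$ with centres $h_1,\dots,h_m$. Then
$\abs{\int_G r_n(g,h)\,d\mu(h)-\sum_i r_n(g,h_i)\,\mu(A_i)}\le 2L\delta+2D(q)\eps$
uniformly in $n$, and the same estimate holds for $\mathrm{LIM}_n\, r_n(g,\cdot)$. Since $\mathrm{LIM}$ is linear of norm one, it commutes with the finite sums and preserves uniform-in-$n$ approximations. Letting $\eps,\delta\to 0$ gives the interchange.

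So the original Banach limit works as it stands, and the Ces\`aro/subnet detour is unnecessary. One minor slip: the correction term in your first display of Step~3 should be $c_{n+1}-c_n$, not $c_{n+1}-c_n-c_1$. Your final value of $\ell$ is nonetheless correct.
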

\begin{proof}
	Let $\ul{\lambda}\colon \ell^\infty \to \Reals$ be a Banach limit, i.e., a positive linear functional on $\ell^\infty$ with norm one that is shift-invariant and extends the usual limit on convergent sequences.
	Recall that for a quasimorphism $q$, the defect $\partial q$ is defined by $\partial q(g,h) = q(gh) - q(g) - q(h)$ for all $g,h \in G$.
	By definition of a quasimorphism, $\partial q$ is uniformly bounded.
	Using this, we define 
	\begin{equation*}\tilde{q}_n(g_1,g_2) = \int_G \partial q(g_1,g_2 h) \, d\mu^{*n}(h).\end{equation*}
	Note that since $\partial q$ is uniformly bounded, the functions $\tilde{q}_n$ are uniformly bounded on $G \times G$. 
	Then, the function
	\begin{equation*}\tilde{q}(g) = q(g) + \ul{\lambda}(\{\tilde{q}_n(g,e)\}_{n \in \Nats})\end{equation*}
	is well-defined and at bounded distance from $q$. 
	We will now show that it is measurable and bi-harmonic.

	We first claim that $\tilde{q}$ is measurable. To see this, we first note that 
	$q$ is measurable, and that $\ul{\lambda}$ is a bounded linear operator with norm one.
	Further, we show that all $\tilde{q}_n(g,e)$ are Lipschitz.
	Let $L_q$ be the Lipschitz constant of $q$, 
	and let $g,g' \in G$. Then, we have 
	\begin{align*}
		\abs{\tilde{q}_n(g,e) - \tilde{q}_n(g',e)} &= \abs*{\int_G \partial q(g,h) - \partial q(g',h) \, d\mu^{*n}(h)} 
		\leq \int_G \abs{\partial q(g,h) - \partial q(g',h)} \,d\mu^{*n}(h) \\
		&= \int_G \abs{q(gh)-q(g'h)+q(g)-q(g')} \,d\mu^{*n}(h) \\
		&\leq \int_G L_q ( d_G(gh,g'h)+d_G(g,g') ) \,d\mu^{*n}(h) \\
		&= 2 L_q d_G(g,g'),
	\end{align*}
	where we use the fact that $\mu^{*n}$ is a probability measure and that the metric on $G$ is right-invariant, 
	i.e., $d_G(gh,g'h) = d_G(g,g')$.
	Thus, the map from $G$ to $\ell^\infty$ given by $g \mapsto \{\tilde{q}_n(g,e)\}_{n \in \Nats}$ is Lipschitz and hence continuous.
	As $\ul{\lambda}$ is a bounded linear operator, the composition $g \mapsto \ul{\lambda}(\{\tilde{q}_n(g,e)\}_{n \in \Nats})$ is continuous and hence measurable.
	Thus, $\tilde{q}$ is measurable as the sum of two measurable functions.
	Notice further that, by construction, $\tilde{q}$ is at bounded distance from $q$ since all $\tilde{q}_n$ are uniformly bounded by the defect of $q$.

	Next, we need to show that $\tilde{q}$ is indeed bi-harmonic. 
	Note that 
	\begin{align*}
		\int_G \tilde{q}_n(g,h) \, d\mu(h) 
		&=
		\int_G \int_G \big( q(ghk) - q(hk) - q(g) \big) \, d\mu^{*n}(k) \, d\mu(h) \\
		&=
		\int_G \big( q(gh) - q(h) - q(g) \big) \, d\mu^{*(n+1)}(h) \\
		&=
		\tilde{q}_{n+1}(g,e),
	\end{align*}
	for all $n$. Note that by the shift-invariance of $\ul{\lambda}$, we have
		$\ul{\lambda}(\{\tilde{q}_n(g,e)\}_{n \in \Nats})
		=
		\ul{\lambda}(\{\tilde{q}_{n+1}(g,e)\}_{n \in \Nats})$.
		Furthermore, note that by linearity of $\ul{\lambda}$, we have that $\ul{\lambda}$ and $\int_G \cdot \, d\mu(h)$ commute, so that we can conclude that
		\begin{align*}
		\int_G \ul{\lambda}(\{\tilde{q}_n(g,h)\}_{n \in \Nats}) \, d\mu(h)
		&=
		\ul{\lambda}(\{\int_G \tilde{q}_n(g,h) \, d\mu(h)\}_{n \in \Nats}) \\
		&=
		\ul{\lambda}(\{\tilde{q}_{n+1}(g,e)\}_{n \in \Nats}) \\
		&=
		\ul{\lambda}(\{\tilde{q}_n(g,e)\}_{n \in \Nats}).
	\end{align*}
	Note that 
	\begin{align*}
	\tilde{q}_n(gk,e) &= \int_G \partial q(gk,h) \, d\mu^{*n}(h)
	= \int_G q(gkh) - q(gk) - q(h) \, d\mu^{*n}(h) \\
	&= \int_G \left[q(ghk) - q(g) - q(kh)\right] + \left[q(kh) -q(k) - q(h) \right] + q(g) + q(k) - q(gk)\, d\mu^{*n}(h) \\
	&= \tilde{q}_n(g,k) + \tilde{q}_n(k,e) + q(g) + q(k) - q(gk)
	\end{align*}
	holds for all group elements $g, k \in G$ and any $n \in \Nats$.
	Recall that \emph{homogeneous} quasimorphisms are conjugation-invariant.
	This can be easily seen by the standard computation that
	\begin{equation*}
		\abs{q(hgh^{-1}) - q(g)} = \frac{\abs{q(hg^kh^{-1}) - q(g^k)}}{k} = \frac{\abs{q(hg^kh^{-1}) - q(h) - q(g^k) - q(h^{-1})}}{k} \leq 2 \frac{D(q)}{k},
	\end{equation*}
	where $D(q) = \norm{\partial q}_\infty$ is the defect of $q$ and $k$ is an arbitrary positive integer. By letting $k$ go to infinity, we obtain that $q(hgh^{-1}) = q(g)$ for all $g,h \in G$.
	Thus, we have 
	\begin{align*}
		\tilde{q}_n(kg,e) &= \int_G q(kgh) - q(kg) - q(h) \, d\mu^{*n}(h)
		= \int_G q(ghk) - q(kg) - q(h) \, d\mu^{*n}(h)
	\end{align*}
	since $kgh$ and $ghk$ are conjugate. In the same manner as before, we obtain:
	\begin{equation*}
		\tilde{q}_n(kg,e) = \tilde{q}_n(g,k) + \tilde{q}_n(k,e) + q(g) + q(k) - q(kg).
	\end{equation*}
	These two formulas allow us to directly check that $\tilde{q}$ is quasi-right-harmonic and quasi-left-harmonic, respectively.
	We start by showing the quasi-right-harmonicity of $\tilde{q}$.
	Our previous computations allow us to conclude for all $g \in G$ that
	\begin{align*}
		\int_G \tilde{q}(gk) \, d\mu(k) 
		&=
		\int_G q(gk) + \ul{\lambda}(\{\tilde{q}_n(gk,e)\}_{n \in \Nats}) \, d\mu(k) \\
		&=
		\int_G q(gk) \, d\mu(k) + \ul{\lambda}\left( \int_G \{\tilde{q}_n(gk,e)\}_{n \in \Nats} \, d\mu(k)\right) \\
		&=
		\int_G q(gk) \, d\mu(k) + \ul{\lambda}\left(\int_G\{\tilde{q}_n(g,k) + \tilde{q}_n(k,e) + q(g) + q(k) - q(gk)\}_{n \in \Nats}
 \, d\mu(k)\right) \\
		&=
		q(g) + \int_G q(k) \, d\mu(k) + \ul{\lambda}\left(\int_G\{\tilde{q}_n(g,k) + \tilde{q}_n(k,e)\}_{n \in \Nats}
 \, d\mu(k) \right) \\
		&=
		q(g) +\ul{\lambda}\left(\{\tilde{q}_n(g,e)\}_{n \in \Nats} \right) + \int_G q(k) + \ul{\lambda}\left( \{\tilde{q}_n(k,e)\}_{n \in \Nats} \right) \, d\mu(k) \\
		&=
		\tilde{q}(g) + \int_G \tilde{q}(k) \, d\mu(k).
	\end{align*}
	Thus, we have shown that $\tilde{q}$ is quasi-right-harmonic. We now need to show the quasi-left-harmonicity of $\tilde{q}$. 
	By the same method, we obtain 
	\begin{align*}
		\int_G \tilde{q}(kg) \, d\mu(k)  
		&= 
		\int_G q(kg) \, d\mu(k) + \ul{\lambda}\left( \int_G \{\tilde{q}_n(kg,e)\}_{n \in \Nats}\right) \, d\mu(k) \\
		&=
		\int_G q(kg) \, d\mu(k) + \ul{\lambda}\left( \int_G \{ \tilde{q}_n(g,k) + \tilde{q}_n(k,e) + q(g) + q(k) - q(kg) \}_{n \in \Nats}\right) \, d\mu(k) \\
		&= \tilde{q}(g) + \int_G \tilde{q}(k) \, d\mu(k).
	\end{align*} 
	This completes the proof that $\tilde{q}$ is bi-harmonic. 
\end{proof}
We obtain Theorem~\ref{thm:q-clt} using the proof of~\cite[Theorem 1.11]{bjorklund-hartnick-2011}
after replacing the use of~\cite[Proposition 2.2]{bjorklund-hartnick-2011} by the above lemma. 
Since the Hofer norm is in particular right-invariant, this theorem directly applies to Hofer-Lipschitz homogeneous quasimorphisms on $\Ham(M,\omega)$ or $\widetilde{\Ham}(M,\omega)$.
We can immediately apply this central limit theorem to obtain the following growth estimate for the expected value of a Hofer-Lipschitz homogeneous quasimorphism under the random walk induced by a centered autonomous law-defining datum.
\begin{theorem}\label{thm:abs-q-growth}
    Let $(M,\omega)$ be a closed symplectic manifold and $\mathcal{D}$ a centered 
	 autonomous law-defining datum on $M$ such that there is some Hofer-Lipschitz homogeneous quasimorphism $q\colon \Ham(M,\omega) \to \Reals$ that is not $\muGP^{\mathcal{D}}$-tame.
    Then there exist constants $0 \leq c \leq C < \infty$ such that $c\sqrt{n} \leq \E[\abs{q(\Phi_n)}] \leq C\sqrt{n}$ for all $n$, where $\Phi_n$ is the random walk induced by $\mathcal{D}$. 
	Further, we have that there exists $N \in \Nats$ such that for all $n \geq N$, we have $\E[\abs{q(\Phi_n)}] \geq c'\sqrt{n}$ for some constant $c' > 0$.
\end{theorem}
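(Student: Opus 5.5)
The approach is to combine Theorem~\ref{thm:q-clt} with uniform integrability. First I would check that the hypotheses of Theorem~\ref{thm:q-clt} hold. The Hofer metric is bi-invariant, hence right-invariant, and $\muGP^{\mathcal{D}}$ is a Borel probability measure by Theorem~\ref{thm:existence-of-measure}. Since $q$ is Hofer-Lipschitz, $\abs{q(\phi)} \leq L\norm{\phi}_{\Hof}$, and the finite second Hofer moment from Theorem~\ref{thm:existence-of-measure} gives square-integrability (alternatively, Corollary~\ref{cor:quasimorphism-gaussian-tail}).

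Next I would show $\ell_\mu(q) = 0$ for $\mu = \muGP^{\mathcal{D}}$. Centeredness of $\mathcal{D}$ means the coefficient processes are centered Gaussians, so $\randH$ and $-\randH$ have the same law. For an autonomous datum, $\phi^1_{-H} = (\phi^1_H)^{-1}$, so $\mu$ is symmetric. Then $\mu^{*n}$ is symmetric as well, and homogeneity gives $q(g^{-1}) = -q(g)$, so $\int q\,d\mu^{*n} = 0$ and $\ell_\mu(q)=0$. Theorem~\ref{thm:q-clt} then yields $q(\Phi_n)/\sqrt n \to \mathcal N(0,\sigma^2)$ in distribution, with $\sigma^2>0$ because $q$ is not $\mu$-tame.

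For the upper bound, I would use the bi-harmonic function $\tilde q$ from Lemma~\ref{lem:bi-harmonic-q}, which satisfies $\abs{\tilde q - q}\le B$. Set $X_k = q(\psi_k)$, where $\psi_k$ are the i.i.d.\ increments. Then $M_n = \tilde q(\Phi_n) - n\int\tilde q\,d\mu$ is a martingale with increments bounded in $L^2$ by $\norm{q}_{L^2(\mu)} + 2B$, uniformly in $n$. This uses independence of the increments and right-invariance $\tilde q(gh)$-harmonicity, and I would check the martingale difference expression explicitly. Orthogonality of martingale increments gives $\E[M_n^2]\le C_0 n$. Since $\ell_\mu(q)=0$ and $q,\tilde q$ differ by at most $B$, the drift $\int \tilde q\,d\mu$ is zero as well: $\ell_\mu(\tilde q)$ equals $\ell_\mu(q)$, and quasi-harmonicity forces $\ell = \int\tilde q\,d\mu$. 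Hence $\E[q(\Phi_n)^2]\le 2C_0n+2B^2$, and Cauchy--Schwarz gives $\E\abs{q(\Phi_n)}\le C\sqrt n$ for all $n$. The lower bound with $c=0$ is trivial, which is why the statement allows $c\ge0$.

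For the eventual lower bound, the uniform $L^2$ bound on $q(\Phi_n)/\sqrt n$ implies uniform integrability of $\abs{q(\Phi_n)}/\sqrt n$. Convergence in distribution then upgrades to convergence of first absolute moments:
\[
\E\abs{q(\Phi_n)}/\sqrt n \to \sigma\sqrt{2/\pi}>0.
\]
So for $n\ge N$ we get $\E\abs{q(\Phi_n)}\ge c'\sqrt n$ with $c' = \tfrac12\sigma\sqrt{2/\pi}$.

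The main obstacle is the second-moment bound. It rests on the martingale structure of $\tilde q$, in particular on verifying that $\tilde q$ is square-integrable along the walk and that the increments $\tilde q(\Phi_{n}) - \tilde q(\Phi_{n-1})$ have uniformly bounded variance. I would handle this through the quasimorphism inequality
\[
\abs{q(\Phi_{n-1}\psi_n) - q(\Phi_{n-1}) - q(\psi_n)}\le D(q),
\]
which bounds each increment by $\abs{q(\psi_n)} + D(q) + 2B$.
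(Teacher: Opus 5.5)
Your argument is correct. It has the same skeleton as the paper's proof: symmetry of $\muGP^{\mathcal{D}}$ gives $\ell_\mu(q)=0$, Theorem~\ref{thm:q-clt} gives $q(\Phi_n)/\sqrt{n}\to\mathcal{N}(0,\sigma^2)$ with $\sigma>0$, and the bounds are read off from $\E[\abs{q(\Phi_n)}]/\sqrt{n}\to\sigma\sqrt{2/\pi}$. Where you differ is the analytic step in the middle.

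The paper passes directly from convergence in distribution to convergence of $\E[\abs{q(\Phi_n)}/\sqrt{n}]$ ``by the standard formula'' for $\E\abs{X}$. It then takes $C$ as the supremum of this sequence and $c'$ as its $\liminf$. Convergence in distribution alone does not give convergence of first moments. That step needs uniform integrability of $\abs{q(\Phi_n)}/\sqrt{n}$, and the paper does not verify it.

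Your martingale estimate $\E[q(\Phi_n)^2]=O(n)$ supplies exactly that uniform integrability. It comes from the quasi-right-harmonic $\tilde q$ of Lemma~\ref{lem:bi-harmonic-q} together with the increment bound $\abs{q(\psi_n)}+D(q)+2B$. It also gives $\E[\abs{q(\Phi_n)}]\le C\sqrt{n}$ for every $n$ by Cauchy--Schwarz, without using the limit at all. The cost is some extra martingale bookkeeping, but that bookkeeping is light. It uses only the right-harmonic half of the lemma and only the marginal laws $(\muGP^{\mathcal{D}})^{*n}$, so you may realize the walk as $\psi_1\cdots\psi_n$.

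A few small slips to tidy up:
\begin{itemize}
\item Your first stated $L^2$ bound on the increments omits $D(q)$ and $\abs{\ell}$; your final paragraph corrects this.
\item The second moment is $\E[M_n^2]=\tilde q(\id)^2+\sum_{k\le n}\E[(M_k-M_{k-1})^2]$, and here $\tilde q(\id)=0$.
\item The cleanest way to see $\ell=0$: iterating quasi-right-harmonicity gives $\int\tilde q\,d\mu^{*n}=\tilde q(\id)+n\ell$. This must stay within $B$ of $\int q\,d\mu^{*n}=0$ for all $n$, which forces $\ell=0$.
\end{itemize}
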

\begin{proof}
    By Theorem~\ref{thm:q-clt}, 
    the distribution of the random variable $\frac{q(\Phi_n)}{\sqrt{n}}$ converges to a (non-degenerate) Gaussian distribution as $n \to \infty$. 
    Since $q$ is homogeneous and $\muGP^{\mathcal{D}}$ is symmetric, i.e., invariant under inversion, 
    we have that $\E[q(\Phi_n)] = 0$ for all $n$.
	Here we use the fact that $\mathcal{D}$ is centered, which implies that $\muGP^{\mathcal{D}}$ is symmetric.
	Thus, the limiting Gaussian distribution has mean $0$ and some variance $\sigma_q^2 > 0$, i.e., $\frac{q(\Phi_n)}{\sqrt{n}} \xrightarrow{d} \mathcal{N}(0,\sigma_q^2)$ as $n \to \infty$.
    Thus, we have that 
    \begin{align*}
        \lim_{n \to \infty} \E\left[\frac{\abs{q(\Phi_n)}}{\sqrt{n}}\right] = \sqrt{\frac{2}{\pi}\sigma_q^2} < \infty,
    \end{align*}
	by the standard formula $\E[\abs{X}] = \sqrt{\frac{2}{\pi}}\sigma$ for a Gaussian random variable $X$ with mean $0$ and variance $\sigma^2$.
	In particular, this implies that 
	\begin{align*}
		c \coloneqq \inf_{n \in \Nats} \E\left[\frac{\abs{q(\Phi_n)}}{\sqrt{n}}\right] < \infty, \quad
		C \coloneqq \sup_{n \in \Nats} \E\left[\frac{\abs{q(\Phi_n)}}{\sqrt{n}}\right] < \infty \quad \text{and} \quad
		c' \coloneqq \liminf_{n \to \infty} \E\left[\frac{\abs{q(\Phi_n)}}{\sqrt{n}}\right] < \infty.
	\end{align*}
	Thus, we have $c\sqrt{n} \leq \E[\abs{q(\Phi_n)}] \leq C\sqrt{n}$ for all $n$.
	Note that $c' > 0$ since the limiting Gaussian distribution is non-degenerate since $q$ is not $\muGP^{\mathcal{D}}$-tame.
	It follows that $c'\sqrt{n} \leq \E[\abs{q(\Phi_n)}]$ for all sufficiently large $n$.
\end{proof}
\begin{remark}\label{rmk:autotame}
	Note that if $\mathcal{D}$ is autonomously exhaustive, 
	any Hofer-Lipschitz homogeneous quasimorphism is not $\muGP^{\mathcal{D}}$-tame.
	This can be seen by noting that for any real number there is some Hofer-ball where the quasimorphism takes values larger than that number.
	By~\cite[Theorem 1.13]{dawid-2025}, any Hofer ball is contained in the support of the law of the random walk for some sufficiently large number of steps, which implies that the quasimorphism is not $\muGP^{\mathcal{D}}$-tame.
\end{remark}
We conclude this section by connecting the above result to the growth of the stable commutator length.
Recall that by Bavard duality, the stable commutator length of an element $g$ is related to the supremum of the values of homogeneous quasimorphisms on $g$.
Thus, we can use Theorem~\ref{thm:abs-q-growth} to obtain a growth estimate for the expected value of the stable commutator length under the random walk induced by a centered autonomous law-defining datum if there exists a Hofer-Lipschitz homogeneous quasimorphism that is not $\muGP^{\mathcal{D}}$-tame.
\begin{corollary}\label{cor:scl-growth}
	Let $(M,\omega)$ be a closed symplectic manifold and $\mathcal{D}$ a centered autonomous law-defining datum.
	Assume that there exists a Hofer-Lipschitz homogeneous quasimorphism on $\Ham(M,\omega)$ that is not $\muGP^{\mathcal{D}}$-tame.
	Then there exists an $N \in \Nats$ such that for all $n \geq N$, we have
	$\E[\scl(\Phi_n)] \geq c\sqrt{n}$ for some constant $c > 0$, where $\Phi_n$ is the random walk induced by $\mathcal{D}$.
\end{corollary}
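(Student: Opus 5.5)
The plan is to compare $\scl$ from below with a single quasimorphism via Bavard duality, and then apply Theorem~\ref{thm:abs-q-growth}.

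Let $q$ be a Hofer-Lipschitz homogeneous quasimorphism that is not $\muGP^{\mathcal{D}}$-tame. Then $q$ is non-trivial and hence has positive defect $D(q)>0$; a homogeneous quasimorphism with zero defect would be a homomorphism $\Ham(M,\omega)\to\Reals$, and these vanish because $\Ham(M,\omega)$ is perfect. Since $\Ham(M,\omega)=[\Ham(M,\omega),\Ham(M,\omega)]$, Bavard duality holds at every element. Applying it to both $q$ and $-q$ (which is also a homogeneous quasimorphism with the same defect) gives the pointwise bound
\begin{equation*}
\scl(\phi) \geq \frac{\abs{q(\phi)}}{2D(q)} \qquad \text{for all } \phi \in \Ham(M,\omega).
\end{equation*}

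Both sides are random variables. The function $\scl$ is $C^\infty$-Borel measurable by the lemma above, and $q$ is $C^\infty$-continuous by Edtmair's result. The law of $\Phi_n$ is the convolution power $(\muGP^{\mathcal{D}})^{*n}$. By Theorem~\ref{thm:convolution} this is again of the form $\muGP^{\mathcal{D}'}$, so Theorem~\ref{thm:measurability_Ck} applies to $\Phi_n$. Alternatively, one can note directly that $\Phi_n$ is a composition of $C^\infty$-measurable maps, since composition is continuous in the $C^\infty$-topology. In either case $\scl(\Phi_n)$ and $q(\Phi_n)$ are measurable. Taking expectations then yields
\begin{equation*}
\E[\scl(\Phi_n)] \geq \frac{1}{2D(q)}\E[\abs{q(\Phi_n)}].
\end{equation*}

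Finally, Theorem~\ref{thm:abs-q-growth} supplies $N$ and $c'>0$ with $\E[\abs{q(\Phi_n)}]\geq c'\sqrt{n}$ for all $n\geq N$. Setting $c=c'/(2D(q))$ gives the claim.

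The only delicate point is the measurability of $\scl(\Phi_n)$ for the $n$-step walk rather than for a single step. The convolution theorem handles this, because $\Phi_n$ is the time-one map of a concatenated random Hamiltonian. The rest is formal.
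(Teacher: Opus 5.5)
Your proof is correct and follows the paper's own argument. Like the paper, you combine the Bavard-duality bound $\scl(\Phi_n) \geq \abs{q(\Phi_n)}/(2D(q))$ with the eventual $\sqrt{n}$ lower bound on $\E[\abs{q(\Phi_n)}]$ from Theorem~\ref{thm:abs-q-growth}. Your remarks on $D(q)>0$ and on the measurability of $\scl(\Phi_n)$ for the $n$-step walk fill in details the paper leaves implicit; the paper instead cites Corollary~\ref{cor:scl-gaussian-tail} for integrability, which a lower bound does not need since $\scl \geq 0$.
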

\begin{proof}
	The fact that $\scl(\Phi_n)$ is integrable follows from Corollary~\ref{cor:scl-gaussian-tail}.
	By Bavard duality, we have that $\scl(\Phi_n) \geq \frac{1}{2D(q)}\abs{q(\Phi_n)}$.
	Now Theorem~\ref{thm:abs-q-growth} implies that there exists an $N \in \Nats$ such that for all $n \geq N$, we have $\E[\abs{q(\Phi_n)}] \geq c\sqrt{n}$ for some constant $c > 0$.
	Thus, for such an $n \geq N$, we have 
	\begin{equation*}
	\E[\scl(\Phi_n)] \geq \frac{c}{2D(q)}\sqrt{n},
	\end{equation*}
	which completes the proof.
\end{proof}
\section{The Hofer geometry of random walks}
In the following section we will conclude by deducing some results on the Hofer geometry (and $L^p$-geometry) of $\Ham(M,\omega)$ as seen 
from the viewpoint of random walks.
In particular, we focus on the following question:
how does $\E[\norm{\Phi_n}_{\Hof}]$ behave as $n \to \infty$?
If $\E[\norm{\Phi_n}_{\Hof}]$ grows linearly in $n$, this would suggest that the large-scale geometry of $\Ham(M, \omega)$ is dominated by hyperbolic behavior, while growth at rate $\sqrt{n}$ would suggest that the large-scale geometry is dominated by flat behavior.
Slower growth would suggest positive curvature behavior.
\subsection{The lower bound}
We will now show
that under certain assumptions we can define a lower bound on the expected value of the Hofer norm of the random walk.
Here we will use the result from Section~\ref{sec:clt}.
The goal is to show Theorem~\ref{thm:lower-bound-Ham}, which we recall below:
\lowerBoundOnHofer*
\begin{remark}
    Using the Entov-Polterovich quasimorphism and its various generalizations this theorem applies to a large class of symplectic manifolds with 
    sufficiently rich quantum cohomology. Link spectral invariants offer a further large class of quasimorphisms that can be used to apply the proposition in the two-dimensional setting.
\end{remark}
\begin{proof}
    This proof starts out similar to that of Theorem~\ref{thm:abs-q-growth}.
    Again, we note that by Theorem~\ref{thm:q-clt}, 
    the distribution of the random variable $\frac{q(\Phi_n)}{\sqrt{n}}$ converges to a (non-degenerate) Gaussian distribution as $n \to \infty$. 
    As in the proof of Theorem~\ref{thm:abs-q-growth}, we have that $\E[q(\Phi_n)] = 0$ for all $n$ and thus $\frac{q(\Phi_n)}{\sqrt{n}}$ converges in distribution to a Gaussian distribution with mean $0$ and some variance $\sigma_q^2 > 0$.
    As before,
    \begin{align*}
        \lim_{n \to \infty} \E\left[\frac{\abs{q(\Phi_n)}}{\sqrt{n}}\right] = \sqrt{\frac{2}{\pi}\sigma_q^2}.
    \end{align*}
    Note that we have the following, by the Hofer-Lipschitz property of $q$, we have $\abs{q(\Phi_n)} \leq L_q \cdot d_{\Hof}(\id, \Phi_n)$ where $L_q$ is the Lipschitz constant of $q$.
    Now let $\eps > 0$ be arbitrarily small. Then 
    there is some $N \in \Nats$ such that for all $n \geq N$ we have $\E\left[\frac{\abs{q(\Phi_n)}}{\sqrt{n}}\right] \geq \sqrt{\frac{2}{\pi}\sigma_q^2} - \eps$.
    Of course, $\eps$ is sufficiently small for this quantity to be positive. 
    Now, for any $n \geq N$ we have that 
    \begin{align*}
        \E[d_{\Hof}(\id, \Phi_n)] \geq \frac{1}{L_q} \cdot \E[\abs{q(\Phi_n)}] = \frac{1}{L_q} \sqrt{n} \cdot \E\left[\frac{\abs{q(\Phi_n)}}{\sqrt{n}}\right]
        \geq \frac{\sqrt{\frac{2}{\pi}\sigma_q^2} - \eps}{L_q} \sqrt{n}.
    \end{align*}
    Now we are almost done.
    Given our assumptions on $\mathcal{D}$, we have that the 
    support of $(\muGP^{\mathcal{D}})^{*k}$ is never concentrated at the identity for any $k$.
    Thus, $\E[d_{\Hof}(\id, \Phi_n)]$ is positive for all $n$.
    Therefore, we obtain 
    \begin{align*}
        c \coloneqq \min\left\{\frac{\sqrt{\frac{2}{\pi}\sigma_q^2} - \eps}{L_q}, \E[d_{\Hof}(\id, \Phi_1)], \frac{\E[d_{\Hof}(\id, \Phi_2)]}{\sqrt{2}}, \dots, \frac{\E[d_{\Hof}(\id, \Phi_N)]}{\sqrt{N}}\right\} > 0.
    \end{align*}
    It follows from the construction that 
    \begin{align*}
        \E[d_{\Hof}(\id, \Phi_n)] \geq c\sqrt{n}
    \end{align*}
    for all $n \in \Nats$. This shows the first part of the proposition.

    It remains to show that almost all sample paths leave any bounded region at some point in time.
    Consider the set of all $\theta \in \Omega$ such that 
    the associated sample path leaves any bounded region, i.e.\
    \begin{equation*} 
        \mathcal{U}_0 \coloneqq \left\{ \theta \in \Omega \mid \limsup_{n \to \infty} d_{\Hof}(\id, \Phi_n(\theta)) = +\infty \right\}.
    \end{equation*} 
    Note that 
    \begin{align*}
        \mathcal{U}_1 \coloneqq \left\{ \theta \in \Omega \mid \limsup_{n \to \infty} \abs{q(\Phi_n(\theta))} = +\infty \right\}
    \end{align*}
    satisfies $\mathcal{U}_1 \subset \mathcal{U}_0$ by the Hofer-Lipschitz property of $q$.
    Now by the law of the iterated logarithm from Theorem~\ref{thm:q-clt}, we have that $\Prob(\mathcal{U}_1) = 1$,
    and thus $\Prob(\mathcal{U}_0) = 1$ which completes the proof.
\end{proof}
As stated before, there is a large class of symplectic manifolds for which the above theorem applies, 
although the existence of a Hofer-Lipschitz homogeneous quasimorphism that is not $\muGP^{\mathcal{D}}$-tame is a non-trivial condition to verify in general. However, many such quasimorphisms are known to exist.
For the convenience of the reader, we formulate the following corollary:
\begin{corollary}\label{cor:lower-bound-Ham}
    Let $(M,\omega)$ be one of the following symplectic manifolds:
    \begin{enumerate}
        \item the sphere $S^2$ with any area form $\omega$;
        \item the $n$-fold product $S^2 \times \cdots \times S^2$ with the split symplectic form given by $\omega^{\oplus n}$ for any $n \in \Nats$;
        \item the complex projective space $\C P^n$ with the Fubini-Study form;
        \item the product $\C P^{n_1} \times \cdots \times \C P^{n_k}$ with a monotone symplectic form for any $k \in \Nats$ and $n_1, \dots, n_k \in \Nats$.
    \end{enumerate}
    Then for any centered autonomously exhaustive law-defining datum $\mathcal{D}$ on $M$ there exists a constant $c > 0$ such that $\E[d_{\Hof}(\id, \Phi_n)] \geq c\sqrt{n}$ for all $n$, where $\Phi_n$ is the random walk induced by $\mathcal{D}$.
\end{corollary}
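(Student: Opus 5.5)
The corollary follows from Theorem~\ref{thm:lower-bound-Ham}. For each of the four listed manifolds it therefore suffices to exhibit a non-trivial homogeneous quasimorphism $q\colon \Ham(M,\omega)\to\Reals$ that is Hofer-Lipschitz. The centered autonomously exhaustive hypothesis on $\mathcal{D}$ is then exactly what Theorem~\ref{thm:lower-bound-Ham} requires. By Remark~\ref{rmk:autotame} it also guarantees that such a $q$ is not $\muGP^{\mathcal{D}}$-tame, so the central limit theorem (Theorem~\ref{thm:q-clt}) produces a non-degenerate Gaussian limit, as used inside that proof.

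For cases (1) and (3), and for (2) with $n=2$, we invoke Theorem~\ref{thm:entov-polterovich} directly. It gives a non-trivial homogeneous quasimorphism on $\Ham(M,\omega)$ with $|q(\phi)|\le C\norm{\phi}_{\Hof}$.

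What Theorem~\ref{thm:lower-bound-Ham} actually uses is the Lipschitz property $|q(\phi)-q(\psi)|\le L\, d_{\Hof}(\phi,\psi)$. This follows from the one-sided bound by a standard argument. Write $\psi=\phi\eta$ with $\norm{\eta}_{\Hof}=d_{\Hof}(\phi,\psi)$. The quasimorphism property gives $|q(\phi\eta)-q(\phi)-q(\eta)|\le D(q)$. Homogenization then removes the defect: apply the estimate to powers using conjugation invariance and bi-invariance of the Hofer metric. Alternatively, one can quote the stronger Lipschitz statement for the Entov--Polterovich quasimorphism from~\cite{entov-polterovich-2003}. On $S^2$ one could instead use the link quasimorphisms $\mu_{\ul{L}}$ of Theorem~\ref{thm:quasimorphisms-links}, whose Hofer-Lipschitz property is stated verbatim there.

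For the general cases (2) and (4), the manifolds are closed toric, so Theorem~\ref{thm:toric-Entov-Polterovich} (Usher, Fukaya--Oh--Ohta--Ono) provides a non-trivial Hofer-Lipschitz homogeneous quasimorphism on $\widetilde{\Ham}(M,\omega)$. The main step, and the only one with content, is descending this quasimorphism to $\Ham(M,\omega)$. For monotone products of projective spaces, including $(S^2)^n$ with the split monotone form, the image of the Seidel homomorphism is finite, as discussed after Theorem~\ref{thm:toric-Entov-Polterovich} with reference to~\cite{branson-2011}.

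The descent argument runs as follows. For $\gamma\in\pi_1(\Ham(M,\omega))$, the difference $q(\tilde\phi\gamma)-q(\tilde\phi)$ is governed by the spectral-invariant shift coming from the Seidel element. Finiteness of the image forces a power of $\gamma$ to act trivially on the spectral invariants. Since $\gamma$ is central, homogeneity then gives $q(\tilde\phi\gamma)=q(\tilde\phi)$, so $q$ descends. The Hofer-Lipschitz property also descends, because the Hofer norm on $\Ham(M,\omega)$ is the infimum of $\widetilde{d}_{\Hof}$ over lifts. Non-triviality is preserved as well, since the descended function agrees with $q$ on lifts.

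With $q$ in hand in every case, Theorem~\ref{thm:lower-bound-Ham} yields $c>0$ with $\E[d_{\Hof}(\id,\Phi_n)]\ge c\sqrt n$ for all $n$.
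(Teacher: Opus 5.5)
Your proposal is correct and follows essentially the same route as the paper. It uses the Entov--Polterovich quasimorphisms for $S^2$, $S^2\times S^2$ and $\C P^n$, and the Usher/Fukaya--Oh--Ohta--Ono quasimorphism descended via~\cite{branson-2011} for the remaining products. Non-tameness comes from Remark~\ref{rmk:autotame}, and the conclusion from Theorem~\ref{thm:lower-bound-Ham}. Your extra step, upgrading $|q(\phi)|\le C\norm{\phi}_{\Hof}$ to the two-point Lipschitz bound needed in Theorem~\ref{thm:q-clt} via powers and conjugation invariance of the Hofer norm, is a correct clarification of a point the paper leaves implicit.
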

\begin{proof}
    This follows directly by applying Theorem~\ref{thm:lower-bound-Ham} to the quasimorphism constructed by Entov and Polterovich, see Theorem~\ref{thm:entov-polterovich} for the $S^2$ and $\C P^n$ cases.
    In the remaining cases, we use the generalization of the Entov-Polterovich quasimorphism from Theorem~\ref{thm:toric-Entov-Polterovich}
    and note that it descends to $\Ham(M,\omega)$ in these cases, see~\cite{branson-2011}.
    In all cases, this quasimorphism is Hofer-Lipschitz and non-trivial on $\Ham(M,\omega)$.
    By Remark~\ref{rmk:autotame}, it is therefore not $\muGP^{\mathcal{D}}$-tame since it is non-trivial and $\mathcal{D}$ is autonomously exhaustive.
    Thus, the assumptions of Theorem~\ref{thm:lower-bound-Ham} are satisfied, and we obtain the desired conclusion.
\end{proof}
One should note that it is possible to extend this picture to the universal cover $\widetilde{\Ham}(M,\omega)$ of $\Ham(M,\omega)$, and to obtain a similar lower bound on the drift of the random walk on $\widetilde{\Ham}(M,\omega)$ in a more general setting.
For this, recall from~\cite{dawid-2025} that 
any law-defining datum $\mathcal{D}$ on $M$ induces a measure on $\widetilde{\Ham}(M,\omega)$, which we denote by $\widetilde{\muGP}^{\mathcal{D}}$.
If $\mathcal{D}$ is autonomous, this still holds true, and we can consider the random walk on $\widetilde{\Ham}(M,\omega)$ induced by $\widetilde{\muGP}^{\mathcal{D}}$.
For this, let $\{\tilde{\phi}_n\}_{n \in \Nats}$ be independent and $\widetilde{\muGP}^{\mathcal{D}}$-distributed random variables
and let 
\begin{equation*}
    \tilde{\Phi}_n \coloneqq \tilde{\phi}_1 \cdots \tilde{\phi}_n
\end{equation*}
be the associated random walk on $\widetilde{\Ham}(M,\omega)$.
Then we have the following corollary:
\begin{corollary}\label{cor:lower-bound-universal-cover}
    Let $(M,\omega)$ be a closed toric symplectic manifold.
    Then for any centered autonomously exhaustive law-defining datum $\mathcal{D}$ on $M$ there exists a constant $c > 0$ such that $\E[d_{\Hof}(\id, \tilde{\Phi}_n)] \geq c\sqrt{n}$ for all $n$, where $\tilde{\Phi}_n$ is the random walk on $\widetilde{\Ham}(M,\omega)$ induced by $\mathcal{D}$.
\end{corollary}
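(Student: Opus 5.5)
The plan is to run the proof of Theorem~\ref{thm:lower-bound-Ham} verbatim on the group $G=\widetilde{\Ham}(M,\omega)$, with the quasimorphism supplied by Theorem~\ref{thm:toric-Entov-Polterovich}.

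\textbf{Step 1: the quasimorphism and the CLT.} Let $q\colon \widetilde{\Ham}(M,\omega)\to\Reals$ be the non-trivial homogeneous Hofer-Lipschitz quasimorphism of Usher and Fukaya--Oh--Ohta--Ono. The lifted Hofer pseudo-metric $\widetilde{d}_{\Hof}$ is right-invariant, and Lemma~\ref{lem:bi-harmonic-q} only uses right-invariance and the Lipschitz property. If genuine positivity of the metric is needed, I would pass to the metric quotient, through which $q$ factors because it is Lipschitz. Hence Theorem~\ref{thm:q-clt} applies to the measure $\widetilde{\muGP}^{\mathcal{D}}$, once the following inputs are checked:
\begin{itemize}
\item $\widetilde{\muGP}^{\mathcal{D}}$ is Borel for the Hofer topology on the cover (from~\cite{dawid-2025});
\item $q$ is measurable, since it is Lipschitz and hence continuous;
\item $q$ is square-integrable, since $|q(\tilde\phi)|\le L_q\,\widetilde{d}_{\Hof}(\id,\tilde\phi)\le 2L_q\norm{\randH}_\infty$, and $\norm{\randH}_\infty$ has all moments.
\end{itemize}
Because $\mathcal{D}$ is centered, $\randH$ and $-\randH$ have the same law, so $\widetilde{\muGP}^{\mathcal{D}}$ is symmetric under inversion. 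Homogeneity of $q$ then gives $\E[q(\tilde\Phi_n)]=0$ and $\ell_\mu(q)=0$.

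\textbf{Step 2: non-tameness, the main obstacle.} I need an analogue of Remark~\ref{rmk:autotame} on the cover, namely that $q$ is unbounded on the union of the supports of $(\widetilde{\muGP}^{\mathcal{D}})^{*n}$. The approach has two parts.
\begin{itemize}
\item Since $q$ is non-trivial and homogeneous, there is some $\tilde\psi$ with $q(\tilde\psi)\neq0$, so $|q(\tilde\psi^k)|=k|q(\tilde\psi)|\to\infty$.
\item Each $\tilde\psi^k$ must be Hofer-approximated by elements of the support. Autonomous exhaustiveness should give that every autonomous Hamiltonian path, viewed as an element of the cover through its canonical homotopy class, lies in the support. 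Products of $n$ such elements then exhaust a neighborhood of any element represented by a concatenation of $n$ autonomous paths.
\end{itemize}
It is cleanest to choose $\tilde\psi$ autonomous from the start. For the Usher/FOOO quasimorphism the Calabi-type property gives $q(\tilde\phi_H)\neq0$ for suitable autonomous $H$, for instance $H$ supported in a small displaceable ball with nonzero mean, after normalization. Then $\tilde\psi^k=\tilde\phi_{kH}$ is itself autonomous. Combined with Lipschitz continuity of $q$, this shows $q$ is not essentially bounded, hence not tame, and so $\sigma^2>0$. If the lifted support statement from~\cite[Theorem 1.13]{dawid-2025} is not directly available on the cover, I would prove it by lifting the path-level argument: the random Hamiltonian $\randH$ itself determines the lift, and Hofer-small perturbations of $H$ give $\widetilde{d}_{\Hof}$-small perturbations of the lift.

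\textbf{Step 3: conclusion.} As in the proof of Theorem~\ref{thm:lower-bound-Ham}:
\begin{itemize}
\item The CLT gives $\E[|q(\tilde\Phi_n)|]/\sqrt n\to\sqrt{2/\pi}\,\sigma$. Strictly, passing from convergence in distribution to convergence of the first absolute moment needs uniform integrability of $|q(\tilde\Phi_n)|/\sqrt n$. I expect this to follow from the martingale approximation behind Theorem~\ref{thm:q-clt}, which bounds the second moments by $Cn$.
\item The Lipschitz bound $|q|\le L_q\widetilde{d}_{\Hof}(\id,\cdot)$ then yields $\E[\widetilde{d}_{\Hof}(\id,\tilde\Phi_n)]\ge c\sqrt n$ for all $n\ge N$.
\item The finitely many $n<N$ are handled by positivity of $\E[\widetilde{d}_{\Hof}(\id,\tilde\Phi_n)]$. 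This holds because $\widetilde{d}_{\Hof}\ge d_{\Hof}$ after projection, and the projected walk is non-degenerate under exhaustiveness.
\end{itemize}
Taking the minimum of the resulting constants gives the claimed $c>0$.
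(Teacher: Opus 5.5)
Your proposal is correct and follows the paper's route. The paper observes that the lifted Hofer metric is bi-invariant, applies Theorem~\ref{thm:q-clt} to the Usher/Fukaya--Oh--Ohta--Ono quasimorphism, and repeats the proof of Corollary~\ref{cor:lower-bound-Ham} (non-tameness from autonomous exhaustiveness, then the argument of Theorem~\ref{thm:lower-bound-Ham}). Your additional checks supply details the paper leaves implicit: the pseudo-metric quotient, symmetry of the lifted measure, square-integrability, and an explicit autonomous lift with $q\neq 0$ obtained from the Calabi property. One simplification: you do not need uniform integrability for the lower bound, because convergence in distribution already gives $\liminf_n \E[\abs{q(\tilde\Phi_n)}]/\sqrt{n} \geq \sqrt{2/\pi}\,\sigma$ by lower semicontinuity of $\E\abs{\cdot}$ under weak convergence.
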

\begin{proof}
    The Hofer metric on $\widetilde{\Ham}(M,\omega)$ is bi-invariant, so that 
    we can apply the central limit theorem (Theorem~\ref{thm:q-clt}) to the quasimorphism constructed by Usher and Fukaya--Oh--Ohta--Ono, see Theorem~\ref{thm:toric-Entov-Polterovich}.
    From there the proof is identical to that of Corollary~\ref{cor:lower-bound-Ham}.
\end{proof}
\subsection{Some lower bounds on the $L^p$-geometry of the random walk}
In the previous section we have seen that the central limit theorem for quasimorphisms can be used to obtain a lower bound on the growth of $\E[d_{\Hof}(\id, \Phi_n)]$.
Note that Theorem~\ref{thm:q-clt} is not specific to the Hofer metric, but rather applies to any right-invariant metric on $\Ham(M,\omega)$.
Recall from Section~\ref{sec:prelims:lp} that there is a family of right-invariant metrics $d_{p}$ on $\Ham(M,\omega)$ for $p \in [1,\infty)$.
These $L^p$-metrics are defined by taking an infimum over the average $L^p$-norm of the Hamiltonian vector field generating a path connecting two Hamiltonian diffeomorphisms. In the following, we assume that the metric $d_p$ is defined with respect to a Riemannian metric $g$ on $M$ that is induced by the almost-complex structure $J$ of the relevant \ldd .

Recall from Section~\ref{sec:prelims:lp} that the Gambaudo--Ghys quasimorphisms on symplectic surfaces are Lipschitz with respect to all $L^p$-metrics.
Thus, by applying Theorem~\ref{thm:q-clt} to the Gambaudo--Ghys quasimorphisms, we can obtain a lower bound on the growth of the expected $L^p$-distance of the random walk $\Phi_n$ from the identity.
Before we state the result, note the following useful lemma:
\begin{lemma}
    Let $(M,\omega)$ be a closed symplectic manifold and $J$ a compatible almost-complex structure on $M$.
    Then for any $p \in [1,\infty)$ we have 
    \begin{equation*}
        \ell_p(\{\phi_H^t\}_{t \in [0,1]}) \leq \norm{H}_{C^1}
    \end{equation*}
    for any $H \in C^\infty([0,1] \times M)$.
\end{lemma}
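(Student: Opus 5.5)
The plan is to bound the integrand pointwise and then integrate; the statement reduces to a pointwise estimate on the length of the Hamiltonian vector field.

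Fix $t \in [0,1]$ and $x \in M$, and write $g = \omega(\cdot, J\cdot)$. By definition $\omega(X_H, \cdot) = -dH_t$. For any $v \in T_xM$ we have $g(JX_H, v) = \omega(JX_H, Jv) = \omega(X_H, v) = -dH_t(v)$, so $JX_H = -\nabla_g H_t$. Since $J$ is a $g$-isometry (compatibility), this gives
\begin{equation*}
\norm{X_H(t,x)}_g = \norm{\nabla_g H_t(x)}_g = \norm{dH_t(x)}_{g}.
\end{equation*}
This is the key identity, and it is the only place where compatibility of $J$ enters.

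Next, the $C^1$-norm has to be fixed so that it dominates $\sup_{t,x}\norm{dH_t(x)}_g$. The natural convention in this setting is to measure derivatives with the metric $g$, for example $\norm{H}_{C^1} = \sup\abs{H} + \sup_{t,x}\norm{dH_t(x)}_g$. If the time derivative is also included, that only increases the norm. With this convention, $\norm{X_H(t,x)}_g \le \norm{H}_{C^1}$ for all $(t,x)$.

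Finally, insert this bound into the definition of $\ell_p$. Because the measure $\frac{1}{\vol(M,\omega)}\omega^n$ is normalized, hence a probability measure, we get
\begin{equation*}
\Bigl(\int_M \tfrac{1}{\vol(M,\omega)}\norm{X_H(t,\cdot)}_g^p\,\omega^n\Bigr)^{1/p} \le \sup_x \norm{X_H(t,x)}_g \le \norm{H}_{C^1}.
\end{equation*}
Integrating over $t\in[0,1]$ then gives $\ell_p \le \norm{H}_{C^1}$.

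The only step needing care is the normalization conventions: the $C^1$-norm must be taken with respect to $g_J$ (otherwise a constant appears), and the volume must be normalized so that the measure is a probability measure. The paper's definition of $\ell_p$ already divides by $\vol(M,\omega)$, so I would state the $g_J$ convention for the $C^1$-norm explicitly. If a different norm is intended, the constant $1$ is replaced by a metric-dependent constant.
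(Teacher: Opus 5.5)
Your proposal is correct and follows essentially the same route as the paper: the pointwise identity $\norm{X_H(t,x)}_g = \norm{\nabla H_t(x)}_g$, which comes from $X_H = J\nabla H_t$ together with $J$ being a $g$-isometry, then a bound by the $C^1$-norm, then the fact that $\frac{1}{\vol(M,\omega)}\omega^n$ has total mass one, and finally integration in $t$. You also make explicit the conventions the paper's argument silently assumes: the $C^1$-norm measured with $g_J$, the volume normalization, and working with the norm rather than its square, which the paper's displayed identity $\norm{X_H}_g = \omega(X_H, JX_H)$ conflates.
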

\begin{proof}
    First, note that $X_H(t,\cdot) = J \nabla H_t$, where $H_t(\cdot) = H(t,\cdot)$ and $\nabla$ is the gradient with respect to the Riemannian metric $g$ induced by $J$.
    Then we have that 
    \begin{equation*}
        \norm{X_H(t,\cdot)}_{g} = \omega(X_H(t,\cdot), J X_H(t,\cdot)) = \omega(J \nabla H_t, J^2 \nabla H_t) = \omega(\nabla H_t, J \nabla H_t) = \norm{\nabla H_t}_{g}
    \end{equation*}
    for any $t \in [0,1]$.
    By integrating over $t \in [0,1]$ we obtain the desired inequality.
    Namely, 
    \begin{align*}
        \ell_p(\{\phi_H^t\}_{t \in [0,1]}) &= \int_0^1 \left(\int_M \frac{1}{\vol(M,\omega)} \norm{X_H(t,\cdot)}_g^p \, \omega^n\right)^{1/p} dt 
                                           = \int_0^1 \left(\int_M \frac{1}{\vol(M,\omega)} \norm{\nabla H_t}_{g}^p \, \omega^n\right)^{1/p} dt \\
                                           &\leq \int_0^1 \left(\int_M \frac{1}{\vol(M,\omega)} \norm{H_t}_{C^1}^p \, \omega^n\right)^{1/p} dt
                                           \leq \norm{H}_{C^1}.
    \end{align*}
\end{proof}
We can now obtain the following lower bound on the growth of the random walk in the $L^p$-metric on $\Ham(\Sigma,\omega)$ for a closed symplectic surface $(\Sigma,\omega)$.
\begin{theorem}
    Let $(\Sigma,\omega)$ be a closed symplectic surface and $\mathcal{D}$ a centered autonomously exhaustive law-defining datum on $\Sigma$.
    Then for any $p \in [1,\infty)$ there exists a constant $c > 0$ such that $\E[d_{p}(\id, \Phi_n)] \geq c\sqrt{n}$ for all $n$, where $\Phi_n$ is the random walk induced by $\mathcal{D}$. Furthermore, almost all sample paths leave any bounded region eventually.
\end{theorem}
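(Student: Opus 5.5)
The plan is to mirror the proof of Theorem~\ref{thm:lower-bound-Ham}, replacing the Hofer metric by $d_p$ and the Floer-theoretic quasimorphism by a Gambaudo--Ghys quasimorphism. Fix $p\in[1,\infty)$. By Theorem~\ref{thm:gambaudo-ghys} there is a non-trivial homogeneous quasimorphism $q\colon\Ham(\Sigma,\omega)\to\R$ which is $d_p$-Lipschitz, say $\abs{q(\phi)-q(\psi)}\le L_q\, d_p(\phi,\psi)$. Note that the $L^p$-metrics for different compatible $J$ are bi-Lipschitz equivalent, so it does not matter that $d_p$ is taken with respect to the $J$ of $\mathcal{D}$.

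\textbf{Applying the CLT.} Since $d_p$ is right-invariant, Theorem~\ref{thm:q-clt} applies to $(\Ham(\Sigma,\omega),d_p)$ with $\mu=\muGP^{\mathcal{D}}$, provided three conditions hold.

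First, $\mu$ must be Borel for the $d_p$-topology. Here I would use the preceding lemma, $\ell_p(\{\phi^t_H\})\le\norm{H}_{C^1}$. Together with right-invariance, it shows that $H\mapsto\phi^1_H$ is continuous from the $C^1$ (indeed $C^{2}$) topology on $C_0^\infty([0,1]\times M)$ to $(\Ham,d_p)$. To see this, write $\phi^1_H(\phi^1_{H'})^{-1}$ as the time-one map of the Hamiltonian $(H-H')\circ\phi^t_{H'}$ with the usual composition formula, whose $C^1$-norm is controlled by $\norm{H-H'}_{C^1}$ and $\norm{H'}_{C^2}$. Measurability of $\randH$ in $C^{k}$ topologies, shown in Section~\ref{sec:topologies}, then gives Borel measurability.

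Second, $q$ must be square-integrable. This follows from Corollary~\ref{cor:quasimorphism-gaussian-tail}, or more directly from $\abs{q(\phi^1_{\randH})}\le L_q\norm{\randH}_{C^1}$, which is sub-Gaussian.

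Third, $q$ must not be $\mu$-tame. Since $\mathcal{D}$ is centered, $\mu$ is symmetric and $\ell_\mu(q)=0$. Non-tameness follows as in Remark~\ref{rmk:autotame}: $q$ is non-trivial and homogeneous, hence unbounded. Because $q$ is also $d_p$-continuous, there are elements with arbitrarily large $\abs{q}$, each with a small neighbourhood on which $\abs{q}$ stays large. I expect this to be the main obstacle. Remark~\ref{rmk:autotame} uses~\cite[Theorem 1.13]{dawid-2025} about Hofer balls lying in the support of $\mu^{*n}$, and I need the analogous statement for neighbourhoods in the $d_p$-topology. My first attempt is to go through Hamiltonians: pick $\psi=\phi^1_G$ with large $\abs{q(\psi)}$. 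The support statement of~\cite{dawid-2025} for autonomously exhaustive data is really proven by showing that the random Hamiltonian can be made $C^\infty$-close (or at least $C^1$-close) to a concatenation of autonomous Hamiltonians with positive probability. By the $C^1$-continuity established above, this yields positive $\mu^{*n}$-mass near $\psi$ in $d_p$. If only $L^\infty$-closeness is available from that theorem, I would instead redo it using the fact that the Gaussian series converges in every $C^k$.

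\textbf{Conclusion.} With these checks in place, the CLT gives $q(\Phi_n)/\sqrt n\to\mathcal{N}(0,\sigma^2)$ with $\sigma^2>0$. Uniform integrability comes from the martingale approximation, exactly as used in Theorem~\ref{thm:abs-q-growth}, and it gives $\E\abs{q(\Phi_n)}/\sqrt n\to\sigma\sqrt{2/\pi}$. Hence
\[
\E[d_p(\id,\Phi_n)]\ge L_q^{-1}\E\abs{q(\Phi_n)}\ge c'\sqrt n \quad\text{for } n\ge N.
\]
For the finitely many $n<N$, positivity of $\E[d_p(\id,\Phi_n)]$ holds because $\mu^{*n}$ is not a Dirac mass at $\id$. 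Taking a minimum then gives a uniform $c>0$. Finally, the law of the iterated logarithm in Theorem~\ref{thm:q-clt} gives $\limsup\abs{q(\Phi_n)}=\infty$ almost surely. Since $\abs{q(\Phi_n)}\le L_q d_p(\id,\Phi_n)$, almost every sample path leaves every $d_p$-bounded set.
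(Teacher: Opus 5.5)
Your proposal is correct and follows essentially the same route as the paper. Run the proof of Theorem~\ref{thm:lower-bound-Ham} for the right-invariant metric $d_p$ with a $d_p$-Lipschitz Gambaudo--Ghys quasimorphism; non-tameness then comes from the bound $\ell_p(\{\phi^t_H\})\le\norm{H}_{C^1}$ together with autonomous exhaustiveness placing $C^1$-neighbourhoods of suitable Hamiltonians in the support of $\mu^{*n}$. Your extra checks ($d_p$-Borel measurability, square-integrability, and uniform integrability from the martingale approximation) fill in points the paper leaves implicit. One small slip: $(H-H')\circ\phi^t_{H'}$ generates $(\phi^t_{H'})^{-1}\phi^t_H$ rather than $\phi^t_H(\phi^t_{H'})^{-1}$, but this is harmless, since conjugating by the fixed map $\phi^1_{H'}$ distorts $L^p$-lengths by at most the constant $\norm{d\phi^1_{H'}}_\infty$.
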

\begin{proof}
    We note that the proof of Theorem~\ref{thm:lower-bound-Ham} applies verbatim to any right-invariant metric on $\Ham(M,\omega)$, and in particular to the $L^p$-metric $d_p$. In our case a suitable quasimorphism is given by any Gambaudo--Ghys quasimorphism, which is Lipschitz with respect to $d_p$ by a result of Brandenbursky--Marcinkowski--Shelukhin, see Theorem~\ref{thm:gambaudo-ghys}.
    Thus, we simply need to verify that the Gambaudo--Ghys quasimorphism is not $\muGP^{\mathcal{D}}$-tame for any centered autonomously exhaustive law-defining datum $\mathcal{D}$ on $\Sigma$.
    However, the quasimorphism is non-trivial on $\Ham(\Sigma,\omega)$.
    In particular, by the above lemma, this means (due to the fact that it is Lipschitz with respect to $d_p$) that 
    there exists an entire open ball in the $C^1$-topology on $C^\infty([0,1] \times \Sigma)$ on which the quasimorphism is non-trivial.
    Since $\mathcal{D}$ is autonomously exhaustive, the support of the law of $\Phi_n$ will include some such ball for a sufficiently large $n$.
    Thus, we can conclude that the quasimorphism is not $\muGP^{\mathcal{D}}$-tame for any centered autonomously exhaustive law-defining datum $\mathcal{D}$ on $\Sigma$. This completes the argument.
\end{proof}
\begin{remark}
    By applying the central limit theorem for Lipschitz quasimorphisms (Theorem~\ref{thm:q-clt}) to the Gambaudo--Ghys quasimorphisms, 
    we can also gain some dynamical insight into the behavior of the trajectories of the random walk $\Phi_n$.
    For this, we note that the Gambaudo--Ghys quasimorphisms measure a certain notion of average asymptotic braiding of trajectories of finite configurations of points under the Hamiltonian isotopy.
    We refer the reader to~\cite{gambaudo-ghys-2004} for details.
    Thus, high values of Gambaudo--Ghys quasimorphisms indicate that for some finite configuration of points on the surface, the trajectories of these points under the Hamiltonian isotopy are highly braided. From this perspective, the above theorem implies that for any centered autonomously exhaustive law-defining datum $\mathcal{D}$ on a closed symplectic surface $(\Sigma,\omega)$, almost all sample paths of the random walk $\Phi_n$ will eventually exhibit highly braided behavior for any suitable finite configuration of points on $\Sigma$.
    Furthermore, the $L^1$-metric on $\Ham(\Sigma,\omega)$ can be interpreted as the average length of trajectories of points on $\Sigma$ under the Hamiltonian isotopy.
    Thus, the above theorem also implies that for any centered autonomously exhaustive law-defining datum $\mathcal{D}$ on a closed symplectic surface $(\Sigma,\omega)$, almost all sample paths of the random walk $\Phi_n$ will eventually exhibit trajectories that are arbitrarily long.
\end{remark}
\begin{remark}
    Similarly to Corollary~\ref{cor:scl-growth}, 
    the above theorem implies that the stable commutator length of the random walk $\Phi_n$ grows at least at rate $\sqrt{n}$ asymptotically when $\mathcal{D}$ is a centered autonomously exhaustive law-defining datum on a closed symplectic surface $(\Sigma,\omega)$
    and $\Phi_n$ is the random walk induced by $\mathcal{D}$.
\end{remark}
\subsection{Some upper bounds on the expected Hofer norm of the random walk}
In general, an upper bound on the growth of $\E[d_{\Hof}(\id, \Phi_n)]$ has so far proven elusive 
except for the trivial linear bound.
However, for some special cases, we can obtain a sub-linear upper bound. Specifically, 
when the random walk is constrained to an abelian subgroup of $\Ham(M,\omega)$, we can obtain an upper bound of order $\sqrt{n}$.
\subsubsection{Abelian subgroups of $\Ham(M,\omega)$}
While there are of course no normal abelian subgroups of $\Ham(M,\omega)$, there are many abelian subgroups of $\Ham(M,\omega)$ given by autonomous Hamiltonians and integrable systems as well as various other constructions.
Interestingly, the study of such abelian subgroups is central to the study of $\Ham(M,\omega)$ and its geometry.
The construction of Hofer flats in $\Ham(M,\omega)$ is also in many cases a process that involves (at least implicitly)
the construction of abelian subgroups of $\Ham(M,\omega)$.
It is a classical fact that for finite-dimensional Lie groups, flatness and commutativity are equivalent.
Thus, a first step towards understanding the diffusion of the random walk $\Phi_n$ might be to understand 
it in cases where the random walk is constrained to a (possibly infinite-dimensional) abelian subgroup of $\Ham(M,\omega)$.
It is reasonable to think that by restricting to a law-defining datum $\mathcal{D}$ whose support consists of commuting Hamiltonian diffeomorphisms, we can obtain an upper bound on the growth of $\E[d_{\Hof}(\id, \Phi_n)]$ by utilizing that commutativity.
We start by the following definition:
\begin{definition}
    Let $\mathcal{D}$ be an autonomous law-defining datum on a symplectic manifold $(M,\omega)$.
    Then we say that $\mathcal{D}$ is \emph{commutative}, if for any two 
    independent versions $\randH_1,\randH_2$ of the Gaussian process~\eqref{eq:GP_H} associated with $\mathcal{D}$
    we have that 
    \begin{equation*}
        \Prob\left[\{\randH_1, \randH_2\} = 0\right] = 1,
    \end{equation*}
    where $\{f, g\} \coloneqq \omega(X_{f}, X_{g})$ denotes the Poisson bracket on $M$.
\end{definition}
The following lemma is an easy computation which we will use in the proof of Theorem~\ref{thm:upper-bound-commutative}.
We recall it here for the convenience of the reader.
\begin{lemma}\label{lem:commutative-Hamiltonians}
    Let $H_1,\dots,H_m$ be autonomous Hamiltonian functions on $M$ such that for any $i,j$ we have 
    $\{H_i,H_j\} = 0$.
    Then $H_1 \# \cdots \# H_m = H_1 + \cdots + H_m$.
\end{lemma}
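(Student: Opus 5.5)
The claim is that, for commuting autonomous Hamiltonians, the concatenation $H_1 \# \cdots \# H_m$ can be replaced by the plain sum $H_1 + \cdots + H_m$. I read $H_1 \# \cdots \# H_m$ as any Hamiltonian generating the composition $\phi^1_{H_1}\circ\cdots\circ\phi^1_{H_m}$ (the standard product $H\#K(t,x)=H(t,x)+K(t,(\phi_H^t)^{-1}(x))$, iterated). The assertion is then that this generator equals the autonomous function $H_1+\cdots+H_m$, and in particular that
\begin{equation*}
\phi^t_{H_1}\circ\cdots\circ\phi^t_{H_m}=\phi^t_{H_1+\cdots+H_m}\quad\text{for all } t.
\end{equation*}
I will argue by induction on $m$. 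The case $m=1$ is trivial, so everything reduces to the case $m=2$.

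First, commuting Poisson brackets give commuting flows. The identity $X_{\{F,G\}}=\pm[X_F,X_G]$, with the sign depending on convention, shows that $\{H_i,H_j\}=0$ implies $[X_{H_i},X_{H_j}]=0$. Commuting complete vector fields on a closed manifold have commuting flows. Second, the flows preserve each other's Hamiltonians: $\frac{d}{dt}\bigl(H_j\circ\phi^t_{H_i}\bigr)=dH_j(X_{H_i})\circ\phi^t_{H_i}=\pm\{H_i,H_j\}\circ\phi^t_{H_i}=0$, so $H_j\circ\phi^t_{H_i}=H_j$.

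For $m=2$, the product formula gives
\begin{equation*}
H_1\#H_2(t,x)=H_1(x)+H_2\bigl((\phi^t_{H_1})^{-1}(x)\bigr)=H_1(x)+H_2(x).
\end{equation*}
Alternatively, I would differentiate $\psi_t=\phi^t_{H_1}\circ\phi^t_{H_2}$ directly. This gives
\begin{equation*}
\dot\psi_t=X_{H_1}\circ\psi_t+d\phi^t_{H_1}\bigl(X_{H_2}\bigr)\circ\phi^t_{H_2}.
\end{equation*}
Since the flows commute, $(\phi^t_{H_1})_*X_{H_2}=X_{H_2}$, so $\dot\psi_t=(X_{H_1}+X_{H_2})\circ\psi_t$. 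By uniqueness of ODE solutions, $\psi_t=\phi^t_{H_1+H_2}$.

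For the induction step, note that $\{H_1+\cdots+H_{m-1},H_m\}=0$ by bilinearity of the bracket. Applying the case $m=2$ to the pair $(H_1+\cdots+H_{m-1},\,H_m)$ and using the induction hypothesis finishes the argument, since $\#$ is associative on generated isotopies.

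The only delicate point is bookkeeping: the sign convention for $X_{\{F,G\}}$ and the normalization. Neither affects the argument, because the bracket vanishes and sums of mean-zero functions are mean-zero. So there is no real obstacle.
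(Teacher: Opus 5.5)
Your proof is correct and is essentially the paper's argument. You show that $dH_j(X_{H_i})=\pm\{H_i,H_j\}=0$, so each flow preserves the other Hamiltonian and $H_1\#H_2(t,x)=H_1(x)+H_2((\phi^t_{H_1})^{-1}(x))$ collapses to $H_1+H_2$; you then induct on $m$ using bilinearity of the bracket exactly as the paper does, and your extra ODE-uniqueness argument via commuting flows is valid but not needed.
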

\begin{proof}
    This can be straightforwardly proved by induction.
    Let $m=2$. 
    Note that 
    \begin{align*}
        \frac{d}{dt} H_2(\phi_{H_1}^t(x)) \Big\vert_{t=s} = dH_2(X_{H_1}(\phi_{H_1}^s(x))) = \{H_2,H_1\}(\phi_{H_1}^s(x)) = 0,
    \end{align*}
    which implies that $H_2(\phi_{H_1}^s(x))$ is constant in $s \in \Reals$.
    Then we have that 
    \begin{equation*}
        H_1 \# H_2(t,x) = H_1(x) + (H_2 \circ \phi_{H_1}^{-t})(x) = H_1(x) + H_2(x).
    \end{equation*}
    Now let us formulate the induction step.
    We assume that $H_1 \# \cdots \# H_{m-1} = H_1 + \cdots + H_{m-1}$ holds true.
    Then 
    \begin{align*}
        (H_1 \# \cdots \# H_m)(t,x)
        &= (H_1 \# \cdots \# H_{m-1}) \# H_m(t,x)
        = (H_1 + \cdots + H_{m-1}) \# H_m(t,x) \\
        &= H_1(x) + \cdots + H_{m-1}(x) + (H_m \circ \phi_{H_1 + \cdots + H_{m-1}}^{-t})(x).
    \end{align*}
    Note that 
    \begin{align*}
        \frac{d}{dt} H_m(\phi_{H_1 + \cdots + H_{m-1}}^{-t}(x)) \Big\vert_{t=s} 
        &= dH_m(X_{H_1 + \cdots + H_{m-1}}(\phi_{H_1 + \cdots + H_{m-1}}^{-s}(x))) \\
        &= \sum_{i=1}^{m-1} dH_m(X_{H_i}(\phi_{H_1 + \cdots + H_{m-1}}^{-s}(x))) \\
        &= \sum_{i=1}^{m-1} \{H_m,H_i\}(\phi_{H_1 + \cdots + H_{m-1}}^{-s}(x)) = 0,
    \end{align*}
    for any $s \in \Reals$. Thus, the last term in the previous expression is constant in $t$, and equal to $H_m(x)$,
    which then implies that
    $H_1 \# \cdots \# H_m = H_1 + \cdots + H_m$.
\end{proof}
Of course, this in particular means that 
a random walk generated by a commutative law-defining datum will never leave the set of autonomous 
Hamiltonian diffeomorphisms.
At first glance, one might think this is a restriction.
However, that is not the case.
The following simple lemma shows us that any connected commutative subgroup 
must lie within $\Aut(M,\omega)$.
Thus, if we want to restrict our random walks so such commutative 
subgroups, the notion of a commutative \ldd\ does not
impose any artificial restrictions.
\begin{lemma}
    Let $G < \Ham(M,\omega)$ be a connected abelian subgroup.
    Then $G \subset \Aut(M,\omega)$.
\end{lemma}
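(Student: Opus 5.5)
The plan is to show that every element of $G$ is the time-one map of a single autonomous Hamiltonian. We read ``connected'' as meaning that any $g \in G$ can be joined to $\id$ by a smooth path $\{g_t\}_{t \in [0,1]}$ contained in $G$. This is the natural meaning for subgroups of $\Ham(M,\omega)$ with the $C^\infty$-topology. It is also the point I expect to need the most care: if only topological connectedness is assumed, one would first have to upgrade it to smooth path-connectedness. Such a path is a Hamiltonian isotopy, so it is generated by a normalized Hamiltonian $H \in C_0^\infty([0,1]\times M)$ with $\frac{d}{dt} g_t = X_{H_t}\circ g_t$.

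\emph{Step 1: $H_t$ is invariant under each $g_s$.} Since $G$ is abelian, $g_s \circ g_t = g_t \circ g_s$ for all $s,t$. Differentiating in $t$ gives
\begin{equation*}
dg_s\big(X_{H_t}(g_t x)\big) = X_{H_t}(g_s g_t x).
\end{equation*}
Hence $(g_s)_* X_{H_t} = X_{H_t}$. Because $g_s$ is symplectic, $(g_s)_*X_{H_t} = X_{H_t\circ g_s^{-1}}$, so $H_t \circ g_s^{-1} - H_t$ is constant on $M$. The map $g_s$ preserves $\omega^n$ and $H_t$ is normalized, so integrating against $\omega^n$ shows that this constant is zero. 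Therefore $H_t \circ g_s = H_t$ for all $s,t$.

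\emph{Step 2: the $H_t$ Poisson-commute.} Differentiating $H_t(g_s x) = H_t(x)$ in $s$ gives $dH_t(X_{H_s}) = 0$. By the computation in the proof of Lemma~\ref{lem:commutative-Hamiltonians}, this says $\{H_t, H_s\} = 0$ for all $s,t \in [0,1]$.

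\emph{Step 3: identify the isotopy with an autonomous flow.} Set $F_t \coloneqq \int_0^t H_s\, ds$ and $\psi_t \coloneqq \phi^1_{F_t}$, the time-one map of the autonomous Hamiltonian $F_t$. By Step 2, $F_t$, $H_t$ and $F_{t+h}-F_t$ all Poisson-commute. Lemma~\ref{lem:commutative-Hamiltonians} then gives
\begin{equation*}
\psi_{t+h} = \phi^1_{F_{t+h}-F_t}\circ \psi_t.
\end{equation*}
Since $F_{t+h}-F_t = hH_t + o(h)$ in $C^1$, it follows that $\frac{d}{dt}\psi_t = X_{H_t}\circ\psi_t$. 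Also $\psi_0 = \id$. By uniqueness of solutions of the ODE, $\psi_t = g_t$, and so $g = g_1 = \phi^1_{F_1}$ with $F_1 = \int_0^1 H_s\,ds$ autonomous. Hence $g \in \Aut(M,\omega)$, and as $g \in G$ was arbitrary, $G \subset \Aut(M,\omega)$.

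Steps 1–3 are routine calculations. The only genuine subtlety is the regularity issue flagged at the start, namely that the connecting paths in $G$ are smooth.
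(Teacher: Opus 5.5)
Your proof is correct and follows essentially the same route as the paper: take a smooth path in $G$, use commutativity to get $(g_s)_*X_{H_t}=X_{H_t}$, deduce that the $H_t$ Poisson-commute, and identify $g$ with the time-one map of the averaged autonomous Hamiltonian $\int_0^1 H_t\,dt$ via Lemma~\ref{lem:commutative-Hamiltonians}. Your two variations are, if anything, cleaner than the paper's steps, and the smooth path-connectedness you flag is the same assumption the paper makes implicitly. The variations are: you kill the constant by normalization to get $H_t\circ g_s=H_t$ instead of passing through $[X_{H_t},X_{H_s}]=0$, and you conclude by ODE uniqueness for $\psi_t=\phi^1_{F_t}$ instead of the paper's Riemann-sum approximation.
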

\begin{proof}
    Let $\phi \in G$ be arbitrary.
    Since $G$ is connected (and thereby automatically path-connected),
    there exists some path $\{\phi_t\}_{t\in[0,1]} \subset G$ from $\phi_0 = \id$ 
    to $\phi_1 = \phi$.
    This path is generated by some unique normalized Hamiltonian function $H \in C^\infty_0([0,1] \times M)$, i.e., $\phi_H^t = \phi_t$
    for all $t \in [0,1]$.
    Note that since $G$ is commutative, we have $\phi_t = \phi_t \circ \phi_s \circ \phi_s^{-1} =\phi_s^{-1}\circ \phi_t \circ \phi_s =\phi_s\circ \phi_t \circ \phi_s^{-1}$.
    By differentiating with respect to $t$, we obtain that 
    $(\phi_s)_*X_H(t,\cdot) = X_H(t,\cdot)$ for all $s,t \in [0,1]$.
    By now differentiating with respect to $s$, we obtain 
    that $[X_H(t,\cdot), X_H(s,\cdot)] = 0$ for all $s,t \in [0,1]$.
    Since $[X_f,X_g] = -X_{\{f,g\}}$, this implies 
    that $\{X_H(t,\cdot),X_H(s,\cdot)\} = 0$.
    The conclusion is now essentially implied by Lemma~\ref{lem:commutative-Hamiltonians}.
    In particular, 
    we can define 
    \begin{equation*}
        \bar{H}(x) = \int_0^1 H(t,x) dt.
    \end{equation*}
    Applying Lemma~\ref{lem:commutative-Hamiltonians} to Riemann sums approximating this integral 
    together with $\{X_H(t,\cdot),X_H(s,\cdot)\} = 0$
    for any $s,t \in [0,1]$ implies that 
    $\phi^1_{\bar{H}} = \phi_H^1 = \phi \in \Aut(M,\omega)$.
    Since $\phi \in G$ was arbitrary, this concludes the proof.
\end{proof}
With this context laid out, we 
can proceed to the main objective of the section, namely to proving Theorem~\ref{thm:upper-bound-commutative}.
\begin{proof}[Proof of Theorem~\ref{thm:upper-bound-commutative}]
    Let $\randH_1,\randH_2, \dots$ be independent versions of the Gaussian process associated with $\mathcal{D}$.
    Note that by~\cite[Theorem 1.9]{dawid-2025} the induced law on $\Aut(M,\omega)$ is invariant under inversion, since $\mathcal{D}$ is centered.
    Thus, we have that the law of $\Phi_k$ is the same as the law of $\phi_{\randH_1}^1 \circ \cdots \circ \phi_{\randH_k}^1$.
    In particular, 
    \begin{equation*}
        \E\left[d_{\Hof}(\id, \Phi_k)\right] = \E\left[ \norm*{\phi_{\randH_1}^1 \circ \cdots \circ \phi_{\randH_k}^1}_{\Hof}\right].
    \end{equation*}
    Furthermore, 
    \begin{align*}
        \norm*{\phi_{\randH_1}^1 \circ \cdots \circ \phi_{\randH_k}^1}_{\Hof} \leq 2 \int_0^1 \norm*{(\randH_1 \# \cdots \# \randH_k)(t,\cdot)}_{\infty} dt 
    \end{align*}
    by definition of the Hofer norm.
    Now, note that $\{\randH_i, \randH_j\} = 0$ holds almost-surely for any $i,j$ by the commutativity assumption on $\mathcal{D}$.
    Thus, we can almost-surely apply Lemma~\ref{lem:commutative-Hamiltonians} to obtain that 
    \begin{align*}
        \int_0^1 \norm*{(\randH_1 \# \cdots \# \randH_k)(t,\cdot)}_{\infty} dt 
        = \norm*{\randH_1 + \cdots + \randH_k}_{\infty}
    \end{align*}
    holds almost-surely.
    Recall that the process $\randH_i$ can be written as 
        $\randH_i = \sum_{j=1}^{\infty} w_j \cdot Z_{i,j} \cdot e_j$,
    where $w_j$ are the weights, $e_j$ are the eigenfunctions of the Laplacian, and $Z_{i,j}$ are Gaussian variables as specified by the \ldd\ $\mathcal{D}$.
    By construction, $Z_{i,j}$ and $Z_{i',j}$ are independent for any $i,i' \in \Nats$.
    Let $\sigma_j^2$ be the variance of $Z_{i,j}$ for any $i \in \Nats$.
    Recall from~\cite[Definition 3.1]{dawid-2025} that $\sup_{j} \sigma_j^2 < \infty$.
    To conclude the proof, we need another fact from elementary probability theory: if $Y_1,\dots,Y_n$ are independent identically distributed centered Gaussian variables with variance $\sigma^2$, then \begin{equation*}\E[\abs{Y_1 + \cdots + Y_n}] = \sqrt{\frac{2}{\pi} n \sigma^2}.\end{equation*}
    It follows that 
    \begin{align*}
        \E\left[ \norm*{\randH_1 + \cdots + \randH_k}_{\infty} \right] &\leq \sum_{j=1}^{\infty} w_j \cdot \E\left[\abs*{\sum_{i=1}^k Z_{i,j}}\right] \cdot \norm{e_j}_\infty = \sum_{j=1}^{\infty} w_j\norm{e_j}_\infty \cdot \sqrt{\frac{2}{\pi} k \sigma_j^2} \\
        &\leq \underbrace{\left(\sup_{j \in \Nats} \sqrt{\frac{2}{\pi}\sigma^2_j} \cdot \sum_{j=1}^{\infty} w_j \norm{e_j}_\infty \right)}_{\eqqcolon C} \cdot \sqrt{k}.
    \end{align*}
    Note that $C$ is finite since $\sup_{j} \sigma_j^2 < \infty$ and $\sum_{j=1}^{\infty} w_j \norm{e_j}_\infty < \infty$. The latter follows from the fact that the weights $w_j$ decay exponentially in $j$ and $\norm{e_j}_\infty$ grows at most polynomially in $j$ by standard estimates on the growth of eigenfunctions of the Laplacian.
    By combining the above estimates, we obtain that $\E[d_{\Hof}(\id, \Phi_k)] \leq 2C \sqrt{k}$ for all $k \in \Nats$, which completes the proof.
\end{proof}
\subsubsection{Toy example: the torus}
In the last section we obtained an abstract result.
However, the question remains: are there any (non-trivial) examples of law-defining data $\mathcal{D}$ for which the assumptions of Theorem~\ref{thm:upper-bound-commutative} are satisfied?

The easiest example of this is the torus $\T^{2n}$, equipped with the standard symplectic form $\omega$ and the standard complex structure $J$, inducing the standard flat Riemannian metric $g$.
Then, as a Riemannian manifold, $\T^{2n} \cong \T^n \times \T^n$, where each $\T^n$ is equipped with the standard flat Riemannian metric.
We call the projection onto the first factor $\pi\colon \T^{2n} \to \T^n$.
It is easy to see that for any eigenfunction $f$ of the Laplacian $\Laplace_{\T^n}$ on $\T^n$, the function $f \circ \pi$ is an eigenfunction of the Laplacian $\Laplace_{\T^{2n}}$ on $\T^{2n}$ with the same eigenvalue.
Thus, we can construct a law-defining datum $\mathcal{D}$ on $\T^{2n}$ whose support consists of Hamiltonian diffeomorphisms that are generated by functions lifted from $\T^n$.
For the resulting random walk we can establish a square-root order upper bound on the growth of $\E[d_{\Hof}(\id, \Phi_n)]$
by applying Theorem~\ref{thm:upper-bound-commutative} since the Poisson bracket of any two functions lifted from $\T^n$ vanishes.

To formalize this, let us start with the following definition:
\begin{definition}
    Let $\mathcal{D}$ be a law-defining datum on $\T^{2n}$ equipped with the standard symplectic form $\omega$ and the standard complex structure $J$.
    We say that $\mathcal{D}$ is \emph{lifted from the base} if the coefficient process $Z_n$ associated with any eigenfunction $e_n$ that is not of the form $f \circ \pi$ for an eigenfunction $f$ of $\Laplace_{\T^n}$ is almost-surely zero.
    We further say that $\mathcal{D}$ is \emph{exhaustively lifted from the base} if additionally we have that the support of $\muGP^{\mathcal{D}}$ contains all Hamiltonian diffeomorphisms generated by functions lifted from $\T^n$, i.e., if $\{\phi_{f \circ \pi}^1 \mid f \in C^\infty(\T^n)\} \subset \supp(\muGP^{\mathcal{D}})$.
\end{definition}
With this definition in hand, we can state and prove the following proposition:
\begin{proposition}
    Let $\mathcal{D}$ be an autonomous centered law-defining datum on $\T^{2n}$ that is lifted from the base.
    Denote by $\Phi_k$ the associated random walk on $\Ham(\T^{2n},\omega)$.
    Then there exists a constant $C > 0$ such that $\E[d_{\Hof}(\id, \Phi_k)] \leq C\sqrt{k}$ for all $k \in \Nats$.
\end{proposition}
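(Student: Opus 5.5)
The plan is to reduce the proposition to Theorem~\ref{thm:upper-bound-commutative}. That means checking that a centered autonomous law-defining datum $\mathcal{D}$ on $\T^{2n}$ which is lifted from the base is \emph{commutative} in the sense of the definition above. Once this is done, the theorem applies verbatim and gives the constant $C$.

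\textbf{Step 1: the random Hamiltonian is a lift.} Since $\mathcal{D}$ is autonomous, each coefficient is a single Gaussian variable $Z_j$ rather than a genuine time-dependent process. By the lifted-from-the-base assumption, $Z_j = 0$ almost surely whenever $e_j$ is not of the form $f\circ\pi$. Hence, almost surely,
\begin{equation*}
\randH(x) = \sum_{j \in I} w_j Z_j\, (f_j\circ\pi)(x),
\end{equation*}
where $I$ is the set of indices with $e_j = f_j \circ \pi$. The series converges in $C^\infty$ almost surely, as established in~\cite{dawid-2025}, and the limit is again of the form $F\circ\pi$ for a smooth random function $F$ on $\T^n$. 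To see this, note that $\pi$ is a Riemannian submersion, so the $C^k$-norms of $f\circ\pi$ and of $f$ agree. A sample then lies almost surely in the closed subspace $\{F\circ\pi\}$ of $C^\infty(\T^{2n})$.

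\textbf{Step 2: lifts Poisson commute.} Write the coordinates as $(q,p)\in \T^n\times\T^n$ with $\omega=\sum dq_i\wedge dp_i$, so that $\pi(q,p)=q$. For $F,G\in C^\infty(\T^n)$ we have
\begin{equation*}
\{F\circ\pi, G\circ\pi\}=\sum_i \left(\partial_{q_i}F\,\partial_{p_i}G-\partial_{p_i}F\,\partial_{q_i}G\right)=0,
\end{equation*}
because neither function depends on $p$. The base $\T^n$ is a Lagrangian factor for the standard structure, which is what makes this work; this is the only point where the product structure matters. Consequently, for two independent copies $\randH_1,\randH_2$, the bracket $\{\randH_1,\randH_2\}$ vanishes identically on the probability-one event where both are lifts. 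So $\mathcal{D}$ is commutative.

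\textbf{Step 3: conclude.} Since $\mathcal{D}$ is centered and commutative, Theorem~\ref{thm:upper-bound-commutative} gives $\E[d_{\Hof}(\id,\Phi_k)]\le C\sqrt{k}$. Alternatively, one can redo the computation directly. By Lemma~\ref{lem:commutative-Hamiltonians}, $\Phi_k$ is generated by $\randH_1+\dots+\randH_k$. Bounding term by term and using $\E\abs{\sum_i Z_{i,j}}=\sqrt{2k\sigma_j^2/\pi}$ together with $\sum_j w_j\norm{e_j}_\infty<\infty$ gives the same constant.

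The main obstacle is minor and sits in Steps 1 and 2. One has to make sure that a coordinate labelling exists in which the base of the product splitting is a Lagrangian factor, so that the bracket genuinely vanishes. One must also handle the infinite sum correctly: the bracket of the limit equals the limit of the brackets, which follows from the almost-sure $C^1$-convergence of the series.
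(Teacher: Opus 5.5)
Your proposal is correct and follows the paper's proof. Both reduce the statement to Theorem~\ref{thm:upper-bound-commutative} by showing that lifted Hamiltonians Poisson-commute because the fibers of $\pi$ are Lagrangian. The differences are minor: you check the vanishing bracket in coordinates on the limit $F\circ\pi$ rather than expanding $\{\randH_1,\randH_2\}$ termwise, and you state explicitly that the splitting $\T^{2n}\cong\T^n\times\T^n$ must be the Lagrangian one, which the paper assumes implicitly.
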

\begin{proof}
    By Theorem~\ref{thm:upper-bound-commutative}, it suffices to show that $\mathcal{D}$ is commutative.
    Let $\randH_1,\randH_2$ be independent versions of the Gaussian process associated with $\mathcal{D}$.
    Let $e_n$ be an eigenbasis of the Laplacian $\Laplace_{\T^{n}}$ on $\T^{n}$.
    Then we know that there exist two independent\footnote{We mean that any element of the first family is independent from any element of the second family.} families of Gaussian variables $Z_{i,n}$ for $i=1,2$ and $n \in \Nats$ such that $\randH_i = \sum_{n=1}^{\infty} w_n \cdot Z_{i,n} \cdot (e_n \circ \pi)$ for $i=1,2$ by the definition of being lifted from the base.
    An easy computation then shows that 
    \begin{align*}
        \{\randH_1, \randH_2\} = \sum_{n,m=1}^{\infty} w_n w_m Z_{1,n} Z_{2,m} \cdot \{e_n \circ \pi, e_m \circ \pi\} = 0.
    \end{align*}
    Here we first use the bilinearity of the Poisson bracket and then the fact that the Poisson bracket of $\{e_n \circ \pi, e_m \circ \pi\}$ vanishes for any $n,m$ since the Hamiltonian vector field of a function lifted from $\T^n$ is tangent to the fibers of $\pi$ and these fibers are Lagrangian.
    In particular, $\mathcal{D}$ is commutative, which completes the proof.
\end{proof}
\subsubsection{Toric varieties}
The torus is itself famously not a toric variety.
However, the most crucial idea in the above example is that the Hamiltonian vector field of a function lifted from the base is tangent to the fibers of the projection $\pi\colon \T^{2n} \to \T^n$ and these fibers are Lagrangian.
Thus, we can hope to extend the above example to more general symplectic manifolds that admit Lagrangian fibrations.
In general, we wish to allow for singularities in the fibration, and thus we cannot expect to be able to lift eigenfunctions from the base to the total space in general.
However, a large class of manifolds that admit Lagrangian fibrations are toric varieties.
These additionally have the advantage that by studying eigenfunctions which are invariant under the torus action, 
we can easily generalize the idea from the previous section.
In a general Lagrangian fibration, this might present a problem.

Recall that a toric variety is a closed Kähler manifold $(M^{2n},\omega,I)$ equipped with an effective Hamiltonian action of $\T^n$.
Associated with this action, there is a moment map $\mu\colon M \to \Reals^n$ whose image is a convex polytope $\Delta_{D}$.
This polytope is called the Delzant polytope or moment polytope associated with $M$.
The following is a standard fact, see e.g.~\cite{guillemin-1994} or~\cite{schwarz-1975} for a more general case:
\begin{lemma}
    Smooth functions on the closure of $\Delta_{D}$ are in one-to-one correspondence with smooth functions on $M$ that are invariant under the torus action.
\end{lemma}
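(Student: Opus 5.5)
The plan is to show that the map $f \mapsto f \circ \mu$ from $C^\infty(\overline{\Delta_D})$ to the torus-invariant smooth functions on $M$ is well defined, injective and surjective. Here a smooth function on the closed polytope means the restriction of a smooth function on an open neighbourhood of $\overline{\Delta_D}$ in $\Reals^n$.

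Well-definedness is immediate. The moment map $\mu$ is smooth and constant on $\T^n$-orbits, so $f \circ \mu$ is smooth and invariant. Injectivity follows because $\mu(M) = \overline{\Delta_D}$, by the Atiyah--Guillemin--Sternberg convexity theorem. Moreover the fibres of $\mu$ are exactly the $\T^n$-orbits, since the torus acts effectively on a manifold of dimension $2n$. Hence any invariant function $F$ descends to a unique function $f$ on $\overline{\Delta_D}$ with $F = f \circ \mu$, and $f$ is continuous. It remains to show that this $f$ is smooth.

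For smoothness I would argue locally on the polytope.

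\textbf{Interior points.} Over the interior of $\Delta_D$ the action is free. Action--angle coordinates give an equivariant symplectomorphism $\mu^{-1}(\mathrm{int}\,\Delta_D) \cong \mathrm{int}\,\Delta_D \times \T^n$ under which $\mu$ becomes the projection. An invariant function there is a function of the action variables alone, so $f$ is smooth on the interior.

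\textbf{Boundary points.} Near a point of a codimension-$k$ face, the Delzant local normal form gives an equivariant neighbourhood modelled on $\C^k \times (\T^{n-k} \times \Reals^{n-k})$. In this model the action is standard and the moment map is $(z,\theta,a) \mapsto (\tfrac{1}{2}|z_1|^2, \dots, \tfrac{1}{2}|z_k|^2, a)$, up to an affine change of coordinates. An invariant smooth function on this model is smooth in $(z,a)$ and invariant under the $\T^k$-action rotating each $z_j$ separately. The problem therefore reduces to the following claim: a smooth function on $\C^k$ invariant under the standard $\T^k$-action is a smooth function of $(|z_1|^2, \dots, |z_k|^2)$, restricted to the orthant $\Reals^k_{\geq 0}$.

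\textbf{The invariant-theory step.} I expect this claim to be the main obstacle. For $k=1$ it is Whitney's theorem that a smooth even function $g(x)$ on $\Reals$ equals $h(x^2)$ for some smooth $h$. I would prove it from Taylor expansion with remainder plus a Borel-lemma argument. The $k$-variable version follows by applying the one-variable result one variable at a time, with smooth dependence on the parameters. Alternatively, one can invoke Schwarz's theorem~\cite{schwarz-1975} for compact group actions: the invariants $|z_j|^2$ generate the polynomial invariants, so every smooth invariant is a smooth function of them. Either route produces a smooth $h$ defined on an open neighbourhood of the orthant, which matches the convention for smoothness on the closed polytope.

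Finally I would patch the local smooth extensions of $f$ together with a partition of unity on $\Reals^n$. This gives a smooth function on a neighbourhood of $\overline{\Delta_D}$ that restricts to $f$, which completes the bijection.
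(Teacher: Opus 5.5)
Your proposal is correct and is essentially the standard argument behind the paper's treatment. The paper gives no proof of its own and simply cites \cite{guillemin-1994} and \cite{schwarz-1975}; your reduction via the Delzant local normal form to the linear $\T^k$-action on $\C^k$, followed by Schwarz's theorem (or Whitney's even-function theorem) and a partition of unity, is how those references combine to give the statement. One point needs tightening: effectiveness and a dimension count do not by themselves make the fibres of $\mu$ single orbits. That follows from connectedness of the fibres (Atiyah) together with the local normal form, which shows each orbit is open and closed in its fibre.
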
 
In particular, we can construct a law-defining datum $\mathcal{D}$ on $M$ whose support consists of Hamiltonian diffeomorphisms that are generated by functions invariant under the torus action.
For the resulting random walk we can establish a square-root order upper bound on the growth of $\E[d_{\Hof}(\id, \Phi_n)]$
by applying Theorem~\ref{thm:upper-bound-commutative} since the Poisson bracket of any two functions invariant under the torus action vanishes.
\begin{definition}
    Let $\mathcal{D}$ be a law-defining datum on a toric variety $(M,\omega,I)$ with moment map $\mu\colon M \to \Reals^n$ and associated Delzant polytope $\Delta_{D}$.
    We say that $\mathcal{D}$ is \emph{toric-compatible} if the coefficient process $Z_n$ associated with any eigenfunction $e_n$ that is not of the form $f \circ \mu$ for a smooth function $f$ on $\Delta_{D}$ is almost-surely zero.
\end{definition}
\begin{remark}\label{rmk:toric-compatible-existence}
    Note that such law-defining data always exist. Since $(M, \omega, I)$ is a toric variety, the torus action is by isometries.
    Thus, we can take any eigenfunction of the Laplacian associated with the metric $g(\cdot,\cdot) = \omega(\cdot, I\cdot)$ and average it over the torus action to obtain a smooth eigenfunction (of the same eigenvalue) that is invariant under the torus action.
\end{remark}
We then obtain the following proposition:
\begin{proposition}
    Let $\mathcal{D}$ be a centered autonomous toric-compatible law-defining datum on a toric variety $(M,\omega,I)$.
    Denote by $\Phi_k$ the associated random walk on $\Ham(M,\omega)$.
    Then there exists a constant $C > 0$ such that $\E[d_{\Hof}(\id, \Phi_k)] \leq C\sqrt{k}$ for all $k \in \Nats$.    
\end{proposition}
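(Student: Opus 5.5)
The plan is to reduce the statement to Theorem~\ref{thm:upper-bound-commutative}, exactly as in the torus case. It then suffices to show that a centered autonomous toric-compatible \ldd\ $\mathcal{D}$ is commutative. Let $\randH_1,\randH_2$ be independent copies of the random Hamiltonian~\eqref{eq:GP_H}. By toric-compatibility, almost surely only the coefficients of eigenfunctions of the form $e_n = f_n\circ\mu$ are non-zero, so almost surely
\begin{equation*}
\randH_i = \sum_{n \in I} w_n Z_{i,n}\, e_n, \qquad i = 1,2,
\end{equation*}
where $I$ is the index set of the torus-invariant eigenfunctions.

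The key geometric input is that any two $\T^n$-invariant functions Poisson commute. For $F = f\circ\mu$ we have $dF = \sum_a \partial_a f(\mu)\, d\mu_a$, so
\begin{equation*}
X_F = \sum_a \partial_a f(\mu)\, X_{\mu_a},
\end{equation*}
which is tangent to the torus orbits. Any other invariant function $G$ is constant along orbits, so $\{G,F\} = dG(X_F) = 0$. Equivalently, $\{\mu_a,\mu_b\} = 0$ because the action is abelian, and the bracket of two functions of $\mu$ is then a combination of these. On the open dense set where the action is free this argument is immediate. The identity $\{F,G\} = 0$ then extends to all of $M$ by continuity, so there is no need to check smoothness of $f$ at the faces of $\Delta_D$ (which is what the lemma above guarantees anyway).

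Commutativity of the random pair then follows from bilinearity. Each series converges in $C^1$ almost surely: by~\cite[Lemma 3.8]{dawid-2025} together with Sobolev embedding, the partial sums converge in $C^k$ for every $k$. Since the Poisson bracket is continuous from $C^1\times C^1$ to $C^0$, we obtain
\begin{equation*}
\{\randH_1,\randH_2\} = \sum_{n,m\in I} w_n w_m Z_{1,n}Z_{2,m}\{e_n,e_m\} = 0
\end{equation*}
almost surely. Hence $\mathcal{D}$ is commutative, and Theorem~\ref{thm:upper-bound-commutative} yields $\E[d_{\Hof}(\id,\Phi_k)] \le C\sqrt{k}$.

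The main (mild) obstacle is justifying the termwise bracket, i.e.\ interchanging the infinite sum with the Poisson bracket. I would handle this by first taking the bracket of the partial sums, which vanishes identically, and then passing to the limit using the almost-sure $C^1$ convergence just described.
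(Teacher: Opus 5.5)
Your proposal is correct and follows essentially the same route as the paper: reduce to Theorem~\ref{thm:upper-bound-commutative}, then prove commutativity by expanding $\{\randH_1,\randH_2\}$ bilinearly and using that torus-invariant functions Poisson commute. The paper takes for granted two points that you justify explicitly: why invariant functions Poisson commute ($X_{f\circ\mu}=\sum_a \partial_a f(\mu)\,X_{\mu_a}$ is tangent to the orbits, then extend by continuity), and why the bracket may be taken termwise (almost-sure $C^1$ convergence of the partial sums).
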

\begin{proof}
    By Theorem~\ref{thm:upper-bound-commutative}, it suffices to show that $\mathcal{D}$ is commutative.
    Let $\randH_1,\randH_2$ be independent versions of the Gaussian process associated with $\mathcal{D}$.
    Then we have that 
    \begin{align*}
        \{\randH_1, \randH_2\} = \sum_{n,m=1}^{\infty} w_n w_m Z_{1,n} Z_{2,m} \cdot \{e_n \circ \mu, e_m \circ \mu\} = 0,
    \end{align*}
    where $e_n \circ \mu$ and $e_m \circ \mu$ are eigenfunctions of the Laplacian on $M$ that are invariant under the torus action.
    The above equality holds almost-surely since the Poisson bracket of any two functions invariant under the torus action vanishes.
    In particular, $\mathcal{D}$ is commutative, which completes the proof.
\end{proof}
\begin{remark}
    The attentive reader might have noticed that 
    both in the case of the torus and in the case of toric varieties, 
    the crucial property that we use to obtain that a certain \ldd\ is commutative is that the manifold admits a Lagrangian fibration.
    Thus, one might ask: can we extend the above result to more general symplectic manifolds that admit Lagrangian fibrations?
    The answer is presumably yes, but some care has to be taken.
    We generally wish to allow Lagrangian fibrations with singularities, and the presence of these singular fibers can cause problems for the above argument.
    To argue via eigenfunctions, we would want the base of the fibration to be endowed with a compatible 
    metric and for the Lagrangian fibration to be a Riemannian submersion.
    Additionally, 
    to lift the eigenfunctions from the base to the total space, we would need the fibers 
    to be minimal submanifolds of the total space, which is a very strong condition to impose on the fibration. See~\cite{watson-1973} for more details.
    Indeed, this essentially forces us to only consider special Lagrangian fibrations. 
    In the absence of these conditions, one could still hope to obtain a similar result by taking some Gaussian process on the base 
    that is guaranteed to vanish near the singular points and then lift it to the total space, but this
    seems less natural.
    Given the lack of interesting examples in this class, we have not pursued this direction further.
\end{remark}
We can now use the above proposition to obtain ``probabilistically flat'' subspaces of $\Ham(S^{2},\omega)$.
To have such a subspace, we wish to use a toric-compatible law-defining datum on $S^{2}$.
It remains to ensure that such a law-defining datum is non-trivial.
We say that a law-defining datum $\mathcal{D}$ on $S^{2}$ is non-trivial if the support of $\muGP^{\mathcal{D}}$ contains some non-identity Hamiltonian diffeomorphism.
Note that when we endow $S^{2}$ with the standard complex structure, the eigenfunctions of the Laplacian are given by the spherical harmonics.
It is easy to see that there are non-trivial toric-compatible law-defining data on $S^{2}$, e.g.\ by taking a law-defining datum where the coefficient process associated with (some) zonal harmonics are non-trivial and the coefficient processes associated with all other eigenfunctions vanish.
We now obtain the following corollary:
\begin{corollary}\label{cor:exact-growth-S2}
    Let $\mathcal{D}$ be a non-trivial toric-compatible autonomous centered law-defining datum on $S^{2}$.
    Denote by $\Phi_n$ the associated random walk on $\Ham(S^{2},\omega)$.
    Then there exist constants $c,C > 0$ such that $c\sqrt{n} \leq \E[d_{\Hof}(\id, \Phi_n)] \leq C\sqrt{n}$ for all $n$.
\end{corollary}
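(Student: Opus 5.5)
The upper bound follows immediately from the proposition for toric varieties. $S^2$ with its standard structure is toric for the rotation $S^1$-action. A toric-compatible datum has nonzero coefficient processes only for zonal harmonics, i.e.\ functions of the height $z$, and these Poisson commute. So $\mathcal{D}$ is commutative, and Theorem~\ref{thm:upper-bound-commutative} gives $\E[d_{\Hof}(\id,\Phi_n)]\le C\sqrt{n}$.

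For the lower bound I will not invoke Theorem~\ref{thm:lower-bound-Ham} directly. $\mathcal{D}$ is supported on the abelian subgroup of $z$-invariant autonomous diffeomorphisms, so it is not autonomously exhaustive. Instead I rerun its proof with an explicit quasimorphism and verify non-tameness by hand. By Lemma~\ref{lem:commutative-Hamiltonians}, almost surely $\Phi_n=\phi^1_{F_n}$ with
\[
F_n=\randH_1+\dots+\randH_n=\sum_j w_j\Big(\sum_{i=1}^n Z_{i,j}\Big)e_j ,
\]
and $F_n$ is again a zonal function $f_n(z)$. I take the quasimorphism $\mu_{\ul{L}}$ of Theorem~\ref{thm:quasimorphisms-links}, which is Hofer-Lipschitz, for a monotone link $\ul{L}$ made of horizontal circles $\{z=z_1\},\dots,\{z=z_k\}$ as in Figure~\ref{fig:monotone-link}. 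Since $F_n$ is constant on each circle, Lagrangian control gives the explicit linear formula
\[
\mu_{\ul{L}}(\Phi_n)=\frac1k\sum_{i=1}^k f_n(z_i).
\]
Here $f_n$ is normalized; if the Lagrangian control requires mean-normalization, it holds automatically. Hence $\mu_{\ul{L}}(\Phi_n)=\sum_j w_j\big(\sum_{i\le n}Z_{i,j}\big)a_j$, where $a_j=\frac1k\sum_i e_j(z_i)$. This is a centered Gaussian with variance $n\,\sigma^2$, where $\sigma^2=\mathrm{Var}\big(\sum_j w_jZ_{1,j}a_j\big)$. Therefore
\[
\E|\mu_{\ul{L}}(\Phi_n)|=\sqrt{2/\pi}\,\sigma\sqrt n .
\]
This computation bypasses the CLT entirely, because exact Gaussianity is available. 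Combined with $|\mu_{\ul{L}}(\phi)|\le d_{\Hof}(\id,\phi)$, it gives the lower bound with $c=\sqrt{2/\pi}\,\sigma$.

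The main obstacle is showing $\sigma>0$ for a suitable choice of link. Non-triviality of $\mathcal{D}$ means some zonal Hamiltonian $h(z)=\sum_j w_jZ_{1,j}e_j$ with nonconstant $h$ occurs with positive probability. So the Gaussian linear functional $h\mapsto\frac1k\sum_i h(z_i)$ must be shown non-degenerate for some monotone horizontal link. I would argue by contradiction. If the functional vanished almost surely for every monotone horizontal link, then every realizable $h$ would have $\frac1k\sum_i h(z_i)=0$ for all such configurations. The height levels $z_i$ of monotone links can be varied essentially freely: two equal caps of area $A$ plus $k-2$ equal-area annuli between them give continuous families. Comparing configurations should then force $h$ to be constant. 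I expect the cleanest route is the two-circle link with caps of equal area $A$ on either side, giving $h(z_A)+h(-z_A)=0$ for all cap areas $A$, i.e.\ $h$ odd in $z$. Then I would use three-circle links (three equal-area regions are not allowed, so instead two caps of area $A$ and annuli of equal area) to kill the odd part as well. Since $h$ is normalized, it must then be zero, contradicting non-triviality. If this varying-link argument becomes delicate, a fallback is to average over a one-parameter family of links, which still yields a homogeneous Hofer-Lipschitz quasimorphism to which the same formula applies.
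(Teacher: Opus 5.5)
The upper bound and the exact-Gaussian computation are correct. The latter is a cleaner route than the paper's appeal to Theorem~\ref{thm:q-clt}: it gives $\E[d_{\Hof}(\id,\Phi_n)]\ge\sqrt{2/\pi}\,\sigma\sqrt n$ for every $n$, with no separate treatment of small $n$.

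The gap is the step $\sigma>0$, and your argument for it fails. Consider a horizontal link $\{z=z_1\}\cup\dots\cup\{z=z_k\}$ with $z_1<\dots<z_k$. It is monotone only if both caps have the same area and all $k-1$ annuli have the same area (at most the cap area). Since area is affine in the height, this forces $z_k=-z_1$ and equal spacing. So \emph{every} horizontal monotone link is symmetric under $z\mapsto -z$, and $\frac1k\sum_i h(z_i)$ depends only on the even part of $h$. Your two-circle links $\{\pm a\}$, which require $a\le 1/3$, do show that $h$ would have to be odd. But your three-circle links are exactly $\{-a,0,a\}$ with $a\le 1/2$, and $h(-a)+h(0)+h(a)\equiv 0$ for odd $h$. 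The same holds for every $k$ and for any average over such links, so the odd part can never be detected.

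This is not a technicality: the corollary is false as stated. Let $\mathcal{D}$ have a single non-zero coefficient, a time-independent non-degenerate centered Gaussian attached to the $\ell=1$ zonal harmonic $e\propto z$. This datum is non-trivial, toric-compatible, autonomous and centered. Yet every step is a rotation about the $z$-axis, so $\Phi_n$ is a rotation by some angle $\theta\in(-\pi,\pi]$. Such a rotation is generated by a multiple of $z$ whose oscillation is bounded independently of $n$, hence $\sup_n\E[d_{\Hof}(\id,\Phi_n)]<\infty$.

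Consistently, every Hofer-Lipschitz homogeneous quasimorphism $q$ vanishes on such a rotation $g$, since $\abs{q(g)}=\abs{q(g^n)}/n\le L\norm{g^n}_{\Hof}/n\to 0$. So no other choice of quasimorphism, including your averaging fallback, can rescue the argument. The paper's proof has the same hole, at ``after possibly adding some additional components to the link''.

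What your argument does prove is the corollary under an extra non-degeneracy hypothesis, such as $\mathrm{Var}[\randH|_{\{z=0\}}]>0$, the analogue of the hypothesis in Corollary~\ref{cor:exact-growth-CPn}. Take $\ul{L}=\{z=-a\}\cup\{z=0\}\cup\{z=a\}$ with $a$ small. Then $\sigma^2\to\mathrm{Var}[\randH|_{\{z=0\}}]>0$ as $a\to 0$, and your exact Gaussian formula finishes the proof.
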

\begin{proof}
    The upper bound follows from the previous proposition.
    For the lower bound, we unfortunately cannot apply Corollary~\ref{cor:lower-bound-Ham} since $\mathcal{D}$ is not autonomously exhaustive.
    However, we can still apply the same idea, i.e., to apply Theorem~\ref{thm:lower-bound-Ham}.
    Note that the quasimorphisms constructed by Cristofaro-Gardiner, Humili\`ere, Mak, Seyfaddini and Smith in~\cite{cristofaro-gardiner-humiliere-mak-seyfaddini-smith-2022} satisfy a Lagrangian control property, see Section~\ref{sec:prelims:floer}.

    Since $\mathcal{D}$ is non-trivial, the associated Gaussian process $\randH$ is non-trivial as well.
    Furthermore, since $\mathcal{D}$ is toric-compatible, $\randH$ is invariant under the torus action, in this case rotations of the sphere around the $z$-axis.
    Let $h\colon S^{2} \to \Reals$ be the height function.
    Let $z_0 \in [-1,1]$ be chosen such that $\randH$ does not vanish almost-surely at the level set $h^{-1}(z_0)$.
    Then the level set $h^{-1}(z_0)$ is a circle that is invariant under the torus action, and thus it is a Lagrangian submanifold of $S^{2}$.
    We can use this fact in place of the exhaustiveness assumption to obtain that the quasimorphism is not $\muGP^{\mathcal{D}}$-tame, which then allows us to apply Theorem~\ref{thm:lower-bound-Ham} to obtain the desired lower bound.
    Let us take a monotone link $\ul{L} = L_1 \sqcup \cdots \sqcup L_k$ in $S^{2}$ such that it contains $h^{-1}(z_0)$ as a component, and let $\mu_{\ul{L}}$ be the associated quasimorphism.
    Without loss of generality, we can assume that $h^{-1}(z_0)$ is $L_1$.
    Again, without loss of generality, we can assume that $L_i = h^{-1}(z_i)$ for some $z_i \in [-1,1]$ for all $i=1,\dots,k-1$.
    Note that $\randH$ will take a single value on $L_1$, since $\mathcal{D}$ is toric-compatible and $L_1$ is invariant under the torus action.
    Then by the Lagrangian control property of $\mu_{\ul{L}}$ we have that 
    \begin{equation*}
        \mu_{\ul{L}}(\phi_{\randH}^1) = \frac{1}{k} \left( \randH(h^{-1}(z_0)) + 
        \randH(h^{-1}(z_1)) + \cdots + \randH(h^{-1}(z_{k-1})) \right).
    \end{equation*}
    See Figure~\ref{fig:lagrangian-control} for a visualization.
    Since we assumed that $\randH$ does not vanish almost-surely, we have that $\mu_{\ul{L}}(\phi_{\randH}^1)$ does not vanish almost-surely as well (after possibly adding some additional components to the link $\ul{L}$).
    Thus, $\mu_{\ul{L}}$ is not $\muGP^{\mathcal{D}}$-tame, and we can apply Theorem~\ref{thm:lower-bound-Ham} to obtain the desired lower bound.
    Thus, by the same argument as in the proof of Corollary~\ref{cor:lower-bound-Ham} we obtain that $\E[d_{\Hof}(\id, \Phi_n)] \geq c\sqrt{n}$ for some $c > 0$ and all $n$, which completes the proof.
\end{proof}
\begin{figure}[h]
    \centering
    \includegraphics[width=0.5\textwidth]{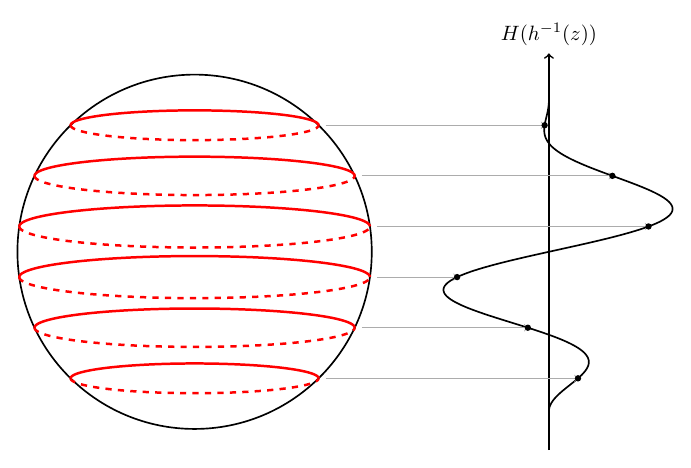}
    \caption{A visualization of the Lagrangian control property of the quasimorphism $\mu_{\ul{L}}$ associated with a monotone link $\ul{L} = L_1 \sqcup \cdots \sqcup L_k$ in $S^{2}$.
    The quasimorphism $\mu_{\ul{L}}$ is controlled by the values of the Hamiltonian function on the components of the link.}
    \label{fig:lagrangian-control}
\end{figure}
It is worth noting that the above corollary can be seen as a probabilistic version of the quasi-flat constructed by Polterovich and Shelukhin in~\cite{polterovich-shelukhin-2023}. 
See also Figure~\ref{fig:lagrangian-control}.
In some sense, we are looking at a random walk that is restricted to this quasi-flat, and we can see that it behaves as expected on a flat space.

The corollary above only applies to $S^2$, as this is the only surface that (1) is known to admit a non-trivial Hofer-Lipschitz quasimorphism, and (2) is a toric variety (when viewed as $S^2 \cong \C P^1$). 
The Lagrangian control property of the quasimorphisms coming from link spectral invariants is another crucial ingredient in the proof.
For a general $\C P^n$, computing the value of the Entov-Polterovich quasimorphism is more complicated.
However, when restricting to Hamiltonians that are invariant under the torus action, 
we can use a result of Biran, Entov, and Polterovich~\cite{biran-entov-polterovich-2004}.
By slightly strengthening the assumptions on the law-defining datum $\mathcal{D}$, we can then obtain a similar result to Corollary~\ref{cor:exact-growth-S2} for all complex projective spaces $\C P^n$ for $n \geq 1$ and their monotone products.

We first quickly recall the relevant definitions and results from~\cite{biran-entov-polterovich-2004}.
Let $\C P^n$ be equipped with the Fubini-Study form $\omega_{FS}$. We normalize $\omega_{FS}$ so that the integral of $\omega_{FS}$ over
the complex projective line is 1. In particular, $\vol(\C P^n) = \int_{\C P^n} \omega_{FS}^{n} = 1$.  
Recall that the $n$-torus $\T^n$ acts on $\C P^n$ by 
$[z_0 :\ldots : z_n] \mapsto
[z_0 : e^{2\pi i s_1} z_1 :\ldots : e^{2\pi i s_n} z_n]$ for $(s_1,\ldots, s_n)\in \T^n$.
A moment map $\mu: \C P^n\to\R^n$ for the action is given by the
formula:
\[\mu([z_0 :\ldots : z_n]) = \left(\frac{|z_1|^2}{|z_0|^2+ \ldots +|z_n|^2}, \ldots , \frac{|z_n|^2}{|z_0|^2+\ldots+|z_n|^2} \right). \]
The moment polytope $\Delta$ is the image of $\mu$ and is given by
$\Delta= \{ p\in \R^n\ | \ p_1 +\ldots+p_n \leq 1\, ; \, p_i\geq 0, \,
i=1,\ldots, n \}$.
The fiber of this map over a point $p\in\Delta$ is an isotropic torus.
In particular, let $
\bar{p} = \left(\frac{1}{n+1},\ldots, \frac{1}{n+1}\right)$
be the barycenter of $\Delta$. The Lagrangian torus fiber over the barycenter
is the Clifford torus $\T^n_{\text{cliff}} = \mu^{-1}(\bar{p})$.
Then the following holds:
\begin{theorem}[Biran--Entov--Polterovich]
   Let $q: \Ham(\C P^n, \omega_{FS}) \to \R$ be any Calabi quasimorphism which is
   continuous with respect to the Hofer metric. Then for every
   autonomous Hamiltonian $F:\C P^n\to \R$ of the form $\mu^*\bar{F}$, where $\bar{F}: \Delta\to\R$ is a smooth function, we have
   \[
   q (\phi^1_F) = \int_{\C P^n} F\omega_{FS}^n - \bar{F} (\bar{p}).
   \]
\end{theorem}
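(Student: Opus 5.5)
The plan is to study the restriction of $q$ to the abelian subgroup
\begin{equation*}
\mathcal{A} \coloneqq \{\phi^1_{\mu^*\bar F} \mid \bar F \in C^\infty(\Delta)\} \subset \Ham(\C P^n,\omega_{FS}).
\end{equation*}
On $\mathcal{A}$ the quasimorphism will turn out to be a continuous linear functional of $\bar F$. I would then identify that functional using the Calabi property together with displaceability of the non-Clifford toric fibers.

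\textbf{Step 1: linearity.} Any two torus-invariant functions Poisson-commute, so Lemma~\ref{lem:commutative-Hamiltonians} gives
\begin{equation*}
\phi^1_{\mu^*\bar F}\circ\phi^1_{\mu^*\bar G}=\phi^1_{\mu^*(\bar F+\bar G)}.
\end{equation*}
Hence $\bar F\mapsto \phi^1_{\mu^*\bar F}$ is a homomorphism from $(C^\infty(\Delta),+)$ onto $\mathcal{A}$. A homogeneous quasimorphism restricted to an abelian group is a homomorphism: for commuting $g,h$, $q(gh)=\frac{1}{k}q(g^kh^k)$ differs from $q(g)+q(h)$ by at most $D(q)/k\to 0$. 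Therefore $\Lambda(\bar F)\coloneqq q(\phi^1_{\mu^*\bar F})$ is additive. I would then define
\begin{equation*}
L(\bar F)\coloneqq \int_{\C P^n}\mu^*\bar F\,\omega_{FS}^n - \Lambda(\bar F).
\end{equation*}
This is additive, hence $\mathbb{Q}$-linear.

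\textbf{Step 2: continuity.} By the definition of the Hofer norm, $d_{\Hof}(\phi^1_{\mu^*\bar F},\phi^1_{\mu^*\bar G})\le 2\sup_\Delta|\bar F-\bar G|$. Since $q$ is Hofer-continuous and additive, it is bounded near $0$, so $L$ is bounded in the sup norm. It is therefore $\R$-linear and extends to a bounded functional on $C^0(\Delta)$, i.e.\ a signed Radon measure $\nu$ on $\Delta$. For constants $c$ we have $\phi^1_{c}=\id$, so $L(c)=c\cdot\vol(\C P^n)-0=c$, i.e.\ $\nu(\Delta)=1$.

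\textbf{Step 3: support.} The remaining claim is $\supp\nu\subset\{\bar p\}$, which then forces $\nu=\delta_{\bar p}$ and gives exactly $q(\phi^1_F)=\int F\omega_{FS}^n-\bar F(\bar p)$. Let $\bar F$ be smooth with support in a compact $K\subset\Delta\setminus\{\bar p\}$. I would use that every toric fiber $\mu^{-1}(p)$ with $p\neq\bar p$ is displaceable, e.g.\ by McDuff's probes. Displaceability is an open condition, so each such $p$ has a neighbourhood $U_p$ with $\mu^{-1}(U_p)$ displaceable. Cover $K$ by finitely many $U_p$ and use a partition of unity to write $\bar F=\sum_i \bar F_i$ with each $\supp\mu^*\bar F_i$ displaceable. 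The Calabi property of $q$ then gives $\Lambda(\bar F_i)=\int\mu^*\bar F_i\,\omega^n$, so $L(\bar F_i)=0$. By additivity $L(\bar F)=0$. Density of smooth functions in $C_c(\Delta\setminus\{\bar p\})$ yields $\nu|_{\Delta\setminus\{\bar p\}}=0$.

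The main obstacle is Step 3. It needs the displaceability of all non-Clifford fibers, including those over boundary faces of $\Delta$, where the fibers are lower-dimensional isotropic tori and the probes must be set up with care. It also needs the normalization and sign conventions of the Calabi property to match those of the stated formula. I would check the latter against the constant-function case computed in Step 2.
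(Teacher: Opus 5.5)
The paper does not prove this statement; it quotes it from Biran--Entov--Polterovich. Your argument is correct and is essentially the one in that reference. On the abelian subgroup $\{\phi^1_{\mu^*\bar F}\}$ the homogeneous quasimorphism is additive and, by Hofer continuity, $C^0$-continuous in $\bar F$. Hence $\bar F\mapsto\int_{\C P^n}\mu^*\bar F\,\omega_{FS}^n-q(\phi^1_{\mu^*\bar F})$ is given by a measure on $\Delta$ of total mass $\vol(\C P^n)=1$. The Calabi property on the displaceable sets $\mu^{-1}(U)$, $U\subset\Delta\setminus\{\bar p\}$, then forces this measure to be $\delta_{\bar p}$.

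The step you single out as the main obstacle is in fact elementary and needs no probes. Put $p_0=1-\sum_{i\ge1}p_i$; if $p\neq\bar p$ then $p_i\neq p_j$ for some $0\le i<j\le n$. The coordinate swap $\sigma\colon z_i\leftrightarrow z_j$ lies in $PU(n+1)$, hence in $\Ham(\C P^n,\omega_{FS})$, since this group is connected and acts in a Hamiltonian way. It satisfies $\sigma(\mu^{-1}(U))=\mu^{-1}(\tau(U))$, where $\tau$ is the affine involution of $\Delta$ swapping $p_i$ and $p_j$. Since $\tau(p)\neq p$, every sufficiently small relatively open $U\ni p$ has $\overline{U}\cap\tau(\overline{U})=\emptyset$, so $\mu^{-1}(U)$ is displaced. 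This handles the boundary fibers and the openness of displaceability in one stroke.

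One minor correction: the constant-function case cannot detect the sign convention, since $\phi^1_c=\id$ and that case only fixes $\nu(\Delta)$. The sign is part of the definition of a Calabi quasimorphism, namely $q(\phi^1_G)=\int G\,\omega_{FS}^n$ for $G$ compactly supported in a displaceable open set. With that definition your Step 3 gives exactly the stated formula.
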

The authors then note that the result can be generalized to products of complex projective spaces, see~\cite[Section 7]{biran-entov-polterovich-2004}.
The idea is as follows: 
Let $M = \C P^{n_1} \times \ldots \times \C P^{n_k}$ be a product of complex projective spaces.
Note that if we endow $M$ with the split symplectic form $\omega_{FS}^{(1)} \oplus \ldots \oplus \omega_{FS}^{(k)}$, then $M$ is only monotone if all $n_i$ are equal.
This can be easily remedied by rescaling the symplectic form on each factor.
We set $\omega = \lambda((n_1+1)\omega_{FS}^{(1)} \oplus \ldots \oplus (n_k+1)\omega_{FS}^{(k)})$ for some $\lambda > 0$.
For simplicity, we will pick $\lambda$ such that $\vol(M) = 1$.
Note that $(M,\omega)$ is a monotone symplectic manifold and admits an Entov-Polterovich quasimorphism, see Section~\ref{sec:prelims:floer}.
Furthermore, $M$ is a toric variety with respect to the direct product of the torus actions on each factor.
Let $\Delta_i$ be the moment polytope associated with the action of $\T^{n_i}$ on the $i$-th factor $\C P^{n_i}$,
which is rescaled accordingly to account for the rescaling of the symplectic form on each factor.
The moment polytope $\Delta_M = \Delta_1 \times \ldots \times \Delta_k$ is then the moment polytope associated with the action of $\T^{n_1} \times \ldots \times \T^{n_k}$ on $M$.
We denote the corresponding moment map by $\mu_M: M \to \Delta_M$.
Let $\bar{p} = (\bar{p}_1,\ldots, \bar{p}_k) \in \Delta_M$,
where $\bar{p}_i$ denotes the barycenter of the moment polytope $\Delta_i$.
The Lagrangian torus fiber over $\bar{p}$ is the Clifford torus in the product.
By the same argument as in the case of $\C P^n$, we can see that the Entov-Polterovich quasimorphism is controlled by the values of the Hamiltonian function on this Lagrangian torus. We refer the reader to~\cite{biran-entov-polterovich-2004} for more details.
\begin{theorem}[Biran--Entov--Polterovich]
   Let $q: \Ham(M, \omega) \to \R$ be any Calabi quasimorphism which is
   continuous with respect to the Hofer metric. Then for every
   autonomous Hamiltonian $F:M\to \R$ pulled back by the moment map $\mu_M$ from a smooth function $\bar{F}: \Delta_M\to\R$,
   we have 
   \[
   q (\phi^1_F) = \int_{M} F\omega^{n_1 + \ldots + n_k} - \bar{F} (\bar{p}).
   \]
\end{theorem}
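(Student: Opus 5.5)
This is the product version of the Biran--Entov--Polterovich formula, and I will prove it by reducing to the single-factor theorem stated just above and reusing its proof structure.

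\textbf{Step 1: the formula for $F$ constant near the Clifford fiber.} Recall from~\cite{biran-entov-polterovich-2004} that a Calabi quasimorphism $q$ restricts, on Hamiltonians supported in a displaceable open set $U$, to the Calabi homomorphism $\phi^1_G \mapsto \int_M G\,\omega^{N}$, where $N = n_1+\cdots+n_k$. The proof then has two inputs.

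\emph{Input (a): superheaviness.} The Clifford torus $\mu_M^{-1}(\bar p)$ is $q$-superheavy. In the product this follows because each factor $\T^{n_i}_{\text{cliff}}$ is superheavy in $\C P^{n_i}$, and superheaviness is preserved under products for the Entov--Polterovich quasimorphisms associated to products of monotone manifolds; this is where the rescaling making $(M,\omega)$ monotone is used. Superheaviness gives
\[
q(\phi_F^1) = \int_M F\omega^N - F|_{\mu_M^{-1}(\bar p)}
\]
whenever $F$ is constant on a neighborhood of the Clifford fiber. This uses the normalization convention behind the formula, namely that the homogeneous quasimorphism evaluated on a non-normalized $F$ incorporates the mean.

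\emph{Input (b): displaceability.} Every fiber $\mu_M^{-1}(p)$ with $p \neq \bar p$ is displaceable. Such a $p$ has some coordinate $p_i \neq \bar p_i$, and the fiber $\mu_i^{-1}(p_i)$ is displaceable in $\C P^{n_i}$ by the standard probe argument (McDuff). Taking the product of a displacing map with the identity on the other factors displaces $\mu_M^{-1}(p)$.

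\textbf{Step 2: pass to general $\bar F$.} Write $\bar F = \bar F(\bar p) + \bar G$, where $\bar G$ vanishes at $\bar p$. I approximate $\bar G$ by functions that vanish near $\bar p$ and are decomposed by a partition of unity on $\Delta_M \setminus\{\bar p\}$ into pieces supported in small neighborhoods of non-barycentric points. The preimages of these neighborhoods are displaceable, so $q$ agrees with Calabi on each piece. All these functions Poisson-commute because they are torus invariant, so by Lemma~\ref{lem:commutative-Hamiltonians} the flows compose additively. The quasimorphism is a homomorphism on this abelian subgroup, since a homogeneous quasimorphism restricted to an abelian group is a homomorphism. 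Summing the pieces gives $q(\phi^1_{\mu_M^*\bar G}) = \int_M \mu_M^*\bar G\,\omega^N$ in the limit.

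The limit is justified by Hofer continuity of $q$ together with the $C^0$-convergence $\mu_M^*\bar G_\epsilon \to \mu_M^*\bar G$, which yields Hofer convergence. Combining this with the constant part $\bar F(\bar p)$ via Step 1 gives the stated formula.

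\textbf{Main obstacle.} I expect the hardest step to be the superheaviness of the product Clifford torus for the product Entov--Polterovich quasimorphism, together with identifying which Calabi quasimorphism is meant. My first attempt will be to use the Künneth-type product formula for spectral invariants of the product idempotent. Failing that, I will simply cite~\cite[Section 7]{biran-entov-polterovich-2004} for this point.
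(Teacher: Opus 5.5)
The paper gives no proof of its own here; it refers to \cite[Section 7]{biran-entov-polterovich-2004}. The argument there is your Step 2 together with Input (b):
\begin{itemize}
\item every fiber $\mu_M^{-1}(p)$ with $p\neq\bar p$ is displaceable;
\item so for $\bar G$ vanishing near $\bar p$, a partition of unity writes $\phi^1_{\mu_M^*\bar G}$ as a product of commuting maps supported in displaceable sets, on which $q$ equals Calabi;
\item homogeneity makes $q$ additive on this abelian family;
\item Hofer continuity then handles a general $\bar G$ with $\bar G(\bar p)=0$.
\end{itemize}
That part of your proposal is correct and self-contained. One small addition to Input (b): probes treat interior points, but $\supp \bar G_\epsilon$ may meet $\partial\Delta_M$. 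So you also need that fibers over boundary points are displaceable. These are isotropic tori of dimension $<n_i$ in $\C P^{n_i}$, so this holds for dimensional reasons.

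What should be removed is Step 1, and with it your ``main obstacle''. Superheaviness is relative to a specific quasimorphism, namely the Entov--Polterovich one attached to a particular idempotent. The statement concerns \emph{every} Hofer-continuous Calabi quasimorphism. If the constant part really went through Step 1, you would have proved the theorem only for Entov--Polterovich quasimorphisms, and only modulo a product theorem you do not establish.

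But the constant part needs nothing. We have $\phi^1_F=\phi^1_{F-c}$ for every constant $c$. Because $\int_M\omega^N=1$ (with your $N=n_1+\cdots+n_k$), the right-hand side is also invariant under $F\mapsto F-c$. Taking $c=\bar F(\bar p)$, Step 2 gives $q(\phi^1_F)=q(\phi^1_{\mu_M^*\bar G})=\int_M\mu_M^*\bar G\,\omega^N=\int_M F\,\omega^N-\bar F(\bar p)$. Your remark that $q$ ``incorporates the mean'' of a non-normalized $F$ is therefore a red herring: $q$ only sees the time-one map.

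The logic in fact runs the other way from what you expected. Given any such $q$, the formula \emph{implies} that $\mu_M^{-1}(\bar p)$ is non-displaceable. Otherwise, a $\bar F$ supported near $\bar p$ with $\bar F(\bar p)=1$ would give $q(\phi^1_F)=\int_M F\,\omega^N$ by the Calabi property and $q(\phi^1_F)=\int_M F\,\omega^N-1$ by the formula. Likewise, monotonicity of $\omega$ plays no role in the argument; it matters only for the existence of such a $q$.

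Conversely, if you did have superheaviness for a given $q$, Step 1 alone would finish. Every $\mu_M^*\bar F$ is constant on $\mu_M^{-1}(\bar p)$, and superheaviness needs no neighborhood condition. So Steps 1 and 2 are alternative arguments, and only Step 2 covers the statement as posed.
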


With this result in hand, we can now generalize Corollary~\ref{cor:exact-growth-S2}.
For this purpose, we say that a toric-compatible law-defining datum $\mathcal{D}$ on $\C P^{n_1} \times \ldots \times \C P^{n_k}$ is \emph{non-degenerate} if the variance of the associated Gaussian $\randH(\mu_M^{-1}(\bar{p}))$ is non-zero.
Note that since $\mathcal{D}$ is toric-compatible, any sample of the associated Gaussian process $\randH$ is invariant under the torus action, and thus it takes a single value on the Lagrangian torus fiber over $\bar{p}$. The variance in the definition above is therefore the variance of a single Gaussian variable.
It is easy to see that such a law-defining datum always exists, since we can always find an invariant eigenfunction of the Laplacian that does not vanish on the Lagrangian torus fiber over $\bar{p}$, see Remark~\ref{rmk:toric-compatible-existence}.
The same strategy as in the proof of Corollary~\ref{cor:exact-growth-S2} then allows us to obtain the following result:
\begin{corollary}\label{cor:exact-growth-CPn}
    Let $\mathcal{D}$ be a non-degenerate toric-compatible autonomous centered law-defining datum on $\C P^{n_1} \times \ldots \times \C P^{n_k}$ for $k \geq 1$ and $n_i \geq 1$ for all $i$.
    Denote by $\Phi_j$ the associated random walk on $\Ham(\C P^{n_1} \times \ldots \times \C P^{n_k},\omega)$.
    Then there exist constants $c,C > 0$ such that $c\sqrt{j} \leq \E[d_{\Hof}(\id, \Phi_j)] \leq C\sqrt{j}$ for all $j \in \Nats$.
\end{corollary}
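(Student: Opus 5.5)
The plan is to mirror the proof of Corollary~\ref{cor:exact-growth-S2}, with the link spectral invariants replaced by the Entov--Polterovich quasimorphism and the Lagrangian control property replaced by the Biran--Entov--Polterovich formula.

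\textbf{Upper bound.} This is immediate. $M=\C P^{n_1}\times\cdots\times\C P^{n_k}$ with the product torus action is a toric variety, and $\mathcal{D}$ is centered, autonomous and toric-compatible. The proposition for toric-compatible data (which reduces to Theorem~\ref{thm:upper-bound-commutative} via commutativity of torus-invariant Hamiltonians) therefore gives $\E[d_{\Hof}(\id,\Phi_j)]\le C\sqrt{j}$.

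\textbf{Lower bound.} Let $q$ be the Entov--Polterovich Calabi quasimorphism on $\Ham(M,\omega)$. It descends from the universal cover by Theorem~\ref{thm:toric-Entov-Polterovich} and~\cite{branson-2011}, and it is homogeneous and Hofer-Lipschitz. Almost surely, every sample $\randH$ is torus-invariant, hence of the form $\mu_M^*\bar F$ for a smooth $\bar F$ on $\Delta_M$ by the standard lemma on invariant functions. Since $\randH$ is normalized, $\int_M\randH\,\omega^{N}=0$. The Biran--Entov--Polterovich theorem then gives
\begin{equation*}
q(\phi^1_{\randH}) = -\randH(\mu_M^{-1}(\bar p)).
\end{equation*}
The right-hand side is a centered Gaussian whose variance is nonzero by non-degeneracy.

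The random walk stays in the abelian group of torus-invariant autonomous diffeomorphisms. Lemma~\ref{lem:commutative-Hamiltonians} gives $\Phi_j=\phi^1_{\randH_1+\cdots+\randH_j}$, and the sum is again torus-invariant and normalized. Hence $q(\Phi_j)=-\sum_{i=1}^j \randH_i(\mu_M^{-1}(\bar p))$, a sum of $j$ i.i.d.\ nondegenerate centered Gaussians. In particular $q(\Phi_j)$ is unbounded in $j$, so $q$ is not $\muGP^{\mathcal{D}}$-tame.

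At this point there are two ways to finish. One is to invoke Theorem~\ref{thm:lower-bound-Ham}, or more precisely its proof, which only uses the non-tameness of $q$ and the central limit theorem Theorem~\ref{thm:q-clt}; this gives $\E[d_{\Hof}(\id,\Phi_j)]\ge c\sqrt j$. The other avoids the central limit theorem altogether: directly,
\begin{equation*}
\E[\abs{q(\Phi_j)}]=\sqrt{\tfrac{2}{\pi} j\sigma^2},
\end{equation*}
and the Hofer-Lipschitz bound $\abs{q}\le L_q\norm{\cdot}_{\Hof}$ gives the lower bound for every $j\ge1$ with $c=\sqrt{2\sigma^2/\pi}/L_q$. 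I would present the direct computation, since it is cleaner and avoids the small-$j$ adjustment made in the proof of Theorem~\ref{thm:lower-bound-Ham}.

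\textbf{Main obstacle.} The main obstacle is justifying the Biran--Entov--Polterovich formula in exactly the normalization used here, for the rescaled monotone form $\omega$ with $\vol(M)=1$ and for the quasimorphism obtained by descending. Three points need checking:
\begin{itemize}
\item whether the quasimorphism is Calabi and Hofer-continuous for this rescaled product form, citing~\cite[Section 7]{biran-entov-polterovich-2004};
\item that the barycenter $\bar p$ of the rescaled polytope is the correct fiber in the formula;
\item that the Hofer continuity in that theorem is compatible with the Lipschitz constant we use.
\end{itemize}
A further point is that the invariant samples must correspond to smooth functions on $\Delta_M$, and this must hold almost surely for the infinite series $\randH$. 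It should follow because each $e_n$ is invariant and the series converges in $C^\infty$, but I would state it explicitly.
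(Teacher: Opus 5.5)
Your proposal is correct. Your upper bound is exactly the paper's argument: toric-compatible implies commutative, then apply Theorem~\ref{thm:upper-bound-commutative}. Your first option for the lower bound is also the paper's route. The paper only says the corollary follows by ``the same strategy'' as Corollary~\ref{cor:exact-growth-S2}. That is, it uses the Biran--Entov--Polterovich formula only to show the Entov--Polterovich quasimorphism is not $\muGP^{\mathcal{D}}$-tame. It then reruns the proof (not the statement, since $\mathcal{D}$ is not autonomously exhaustive) of Theorem~\ref{thm:lower-bound-Ham}. That proof uses the central limit theorem, Theorem~\ref{thm:q-clt}, for large $j$, and positivity of $\E[d_{\Hof}(\id,\Phi_j)]$ for the finitely many small $j$.

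Your preferred direct computation is a genuinely different and more elementary route. Because the walk stays in an abelian subgroup, $q(\Phi_j)=-\sum_{i\le j}\randH_i(\mu_M^{-1}(\bar p))$ is an explicit centered Gaussian of variance $j\sigma^2$. The Hofer--Lipschitz bound then gives $c=\sqrt{2\sigma^2/\pi}/L_q$ uniformly for every $j\ge 1$. This bypasses the bi-harmonic lemma and the Bj\"orklund--Hartnick martingale machinery, and it gives an explicit constant.

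The underlying mechanism is that a homogeneous quasimorphism restricted to an abelian subgroup is a homomorphism. Hence $q(\Phi_j)$ is an i.i.d.\ sum, and the central limit theorem is trivial in this setting. The Biran--Entov--Polterovich formula is needed only to see that each summand is a nondegenerate (Gaussian) variable.

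The paper's CLT route buys generality. It needs no explicit knowledge of the law of $q(\Phi_j)$, works for any non-tame Hofer--Lipschitz quasimorphism along non-commutative walks, and also gives the law of the iterated logarithm and escape of sample paths. None of that is required for the corollary as stated.
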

\subsection{And beyond?}
We have now established that (whenever there exists a non-trivial Hofer-Lipschitz quasimorphism)
the random walk generated by a centered autonomously exhaustive law-defining datum $\mathcal{D}$ on a closed symplectic manifold $(M,\omega)$ has a lower bound of order $\sqrt{n}$ on the growth of $\E[d_{\Hof}(\id, \Phi_n)]$.
Thus, the random walk drifts away from the identity at least as fast as a random walk on a flat space.
In the opposite direction, we have established that if $\mathcal{D}$ is commutative, then the random walk generated by $\mathcal{D}$ has an upper bound of order $\sqrt{n}$ on the growth of $\E[d_{\Hof}(\id, \Phi_n)]$.
Thus, in this case, we can see that the random walk drifts away from the identity at most as fast as a random walk on a flat space.
If there exists a non-trivial Hofer-Lipschitz quasimorphism that does not vanish on the support of $\mathcal{D}$, then we can combine these two results to obtain that the random walk generated by $\mathcal{D}$ drifts away from the identity at exactly the same rate as a random walk on a flat space.

The obvious question that arises is: what can we say about the growth of $\E[d_{\Hof}(\id, \Phi_n)]$ in general, i.e., without restricting to commutative law-defining data? Is it still of order $\sqrt{n}$, or can it be faster?

It should be noted that $\Ham(M, \omega)$ contains many quasi-flats,
i.e., subspaces that are quasi-isometric to a flat space.
See e.g.~\cite{py-2008, usher-2013, zapolsky-2013, usher-2014, stevenson-2018, polterovich-shelukhin-2023, dawid-2024} for some examples of such quasi-flats in a variety of symplectic manifolds.
What is interesting is that in these constructions, one usually (at least implicitly) constructs a group 
homomorphism from some vector space $V$ to $\Ham(M, \omega)$, and then shows that the image of this homomorphism is a quasi-flat.
In particular, the quasi-flat is actually part of a commutative subgroup of $\Ham(M, \omega)$
and therefore in particular all of these quasi-flats are contained in $\Aut(M, \omega)$.
While they can be easily moved outside of $\Aut(M, \omega)$ by the bi-invariance of the Hofer metric, 
their geometry is still controlled by the geometry of $\Aut(M, \omega)$.
In~\cite{polterovich-shelukhin-2016}, Polterovich and Shelukhin have shown that $\Aut(M, \omega)$ is a \emph{small}
set from a metric perspective, and 
thus its geometry should not be taken as representative of the geometry of $\Ham(M, \omega)$
as a whole.
Thus, even the infinite-dimensional quasi-flats mentioned above might not mean that $\Ham(M, \omega)$ behaves like a flat space in general.
In Theorem~\ref{thm:upper-bound-commutative} we have seen that 
random walks constrained to a commutative subgroup of $\Ham(M, \omega)$ behave 
as expected on flat spaces.
This result is not entirely unexpected, since the construction of quasi-flats in $\Ham(M, \omega)$ usually passes through a commutative subgroup of $\Ham(M, \omega)$.
However, the question remains: do random walks on the whole group $\Ham(M, \omega)$ also behave like random walks on flat spaces?

We will now give a brief discussion of why this question is far more challenging than it might seem at first glance, and why the techniques we have used so far do not seem to be sufficient to answer it.
Of course, the way we would go about answering the question depends heavily on what the answer is.
Let us first lay out how one might prove an upper bound of order $\sqrt{n}$ on the growth of $\E[d_{\Hof}(\id, \Phi_n)]$ for a general law-defining datum $\mathcal{D}$.
For this it would suffice to produce an explicit random Hamiltonian function
with the correct law which has a $\infty$-norm growth of order $\sqrt{n}$.
One possible approach is as follows:
Let $\mathcal{D}$ be a centered autonomously exhaustive law-defining datum on a closed symplectic manifold $(M,\omega)$, and let $\Phi_n$ be the associated random walk on $\Ham(M,\omega)$.
Let $\randH_{1}, \dots, \randH_{n}$ be independent versions of the process~\eqref{eq:GP_H} associated with $\mathcal{D}$
and let $\{\Phi_n\}_{n \in \Nats}$ be the associated random walk on $\Ham(M,\omega)$.
Then we have that $\Phi_n$ has the same law as $\phi_{\randH_n}^1 \circ \cdots \circ \phi_{\randH_1}^1$.
Since $\mathcal{D}$ induces an inversion invariant measure, this is the same law as that of $\phi_{\randH_1}^1 \circ \cdots \circ \phi_{\randH_n}^1$.
The latter is the time-$1$ map of the time-dependent Hamiltonian function $\randH_1 \# \cdots \# \randH_n$
given by 
\begin{align*}
    \tilde{H}^{\randomness}_n (t,x)\coloneqq\randH_{1} \# \cdots \# \randH_{n} (t,x) & =
    \sum_{i=1}^{n}
    \left(
    \randH_i \circ
    \phi_{\randH_{i-1}}^{-t} \circ \cdots \circ \phi_{\randH_1}^{-t}
    \right)(x)
    \\
    &=
    \sum_{i=1}^{n}
    \randH_i(
    {(\phi_{\randH_{1} \# \cdots \# \randH_{i-1}}^{t})}^{-1}(x)).
\end{align*}
We now wish to show that $\norm{\tilde{H}^{\randomness}_n}_{\infty}$ grows at most like $\sqrt{n}$.
One approach would be to try to expand $\tilde{H}^{\randomness}_n$ in the eigenbasis of the Laplacian given by $\mathcal{D}$, and then try to control the growth of the coefficients of this expansion.
However, to do this we would need to control the coefficients of $e_k \circ {(\phi_{\randH_{1} \# \cdots \# \randH_{i-1}}^{t})}^{-1}$ for all $i$ and $k$.
A priori, it seems that controlling how far these coefficients spread is impossible in any but the most rudimentary way.
A second approach would be to use general methods for the estimation of the maximum of a stochastic process, and apply them to the process $\tilde{H}^{\randomness}_n$ directly.
The problem with this approach is that methods such as Dudley's entropy bound or 
Talagrand's majorization theorem require the process to be (sub-)Gaussian, which is not the case for $\tilde{H}^{\randomness}_n$.
This is due to the fact that the distortion by the flow of the previous Hamiltonians $\phi_{\randH_{1} \# \cdots \# \randH_{i-1}}^{t}$ acts on the covariance structure of the process in a complex way, and thus the resulting process is not Gaussian, but rather a mixture of Gaussian processes.
The author's approach to try to adapt the techniques of Dudley's entropy bound to this setting has not been successful at yielding a bound better than the trivial linear bound.
While this by no means rules out the possibility of obtaining a better bound, there seems to not be a clear approach to do so at this time.

\begin{figure}[h]
    \centering
    \includegraphics[width=0.48\textwidth]{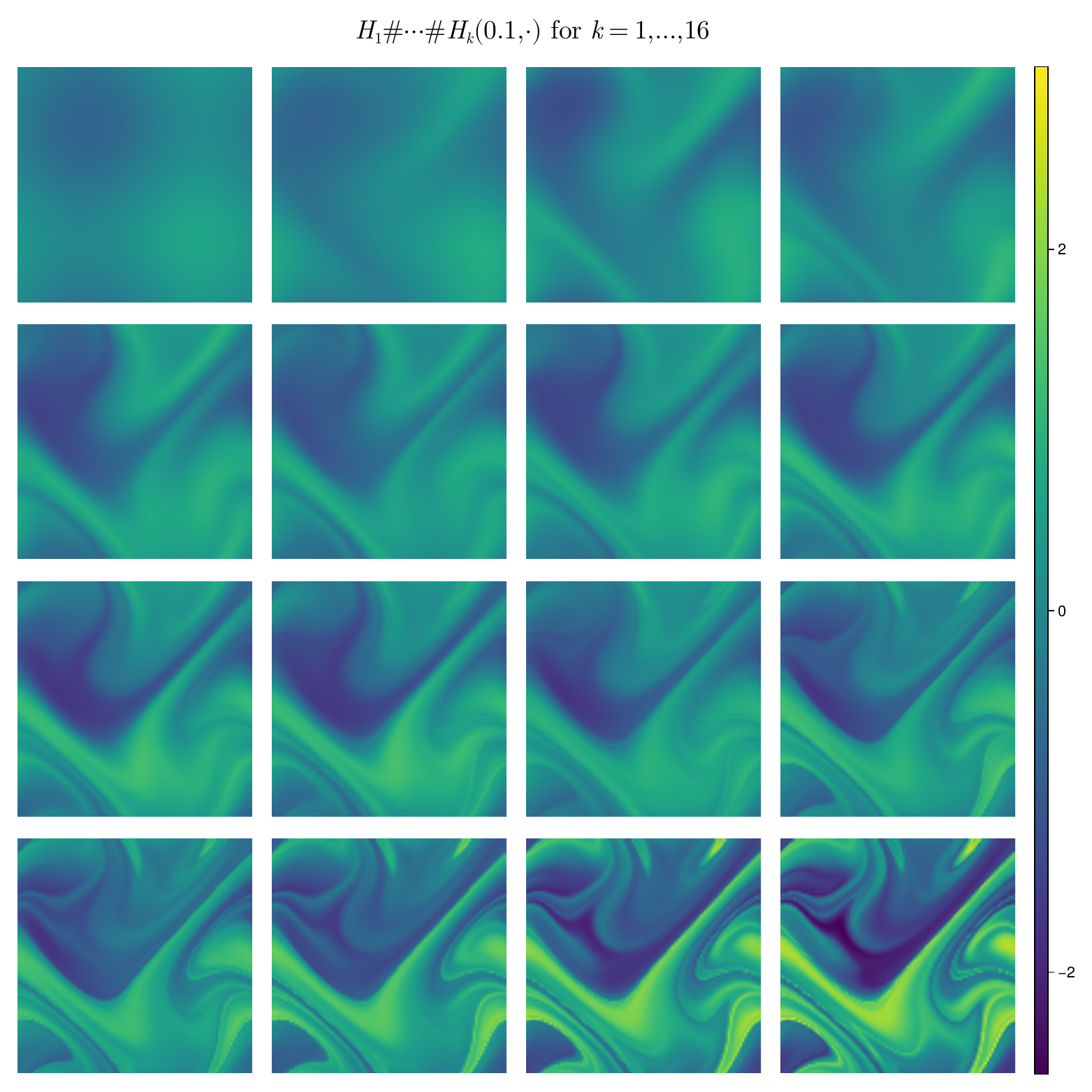}
    \includegraphics[width=0.48\textwidth]{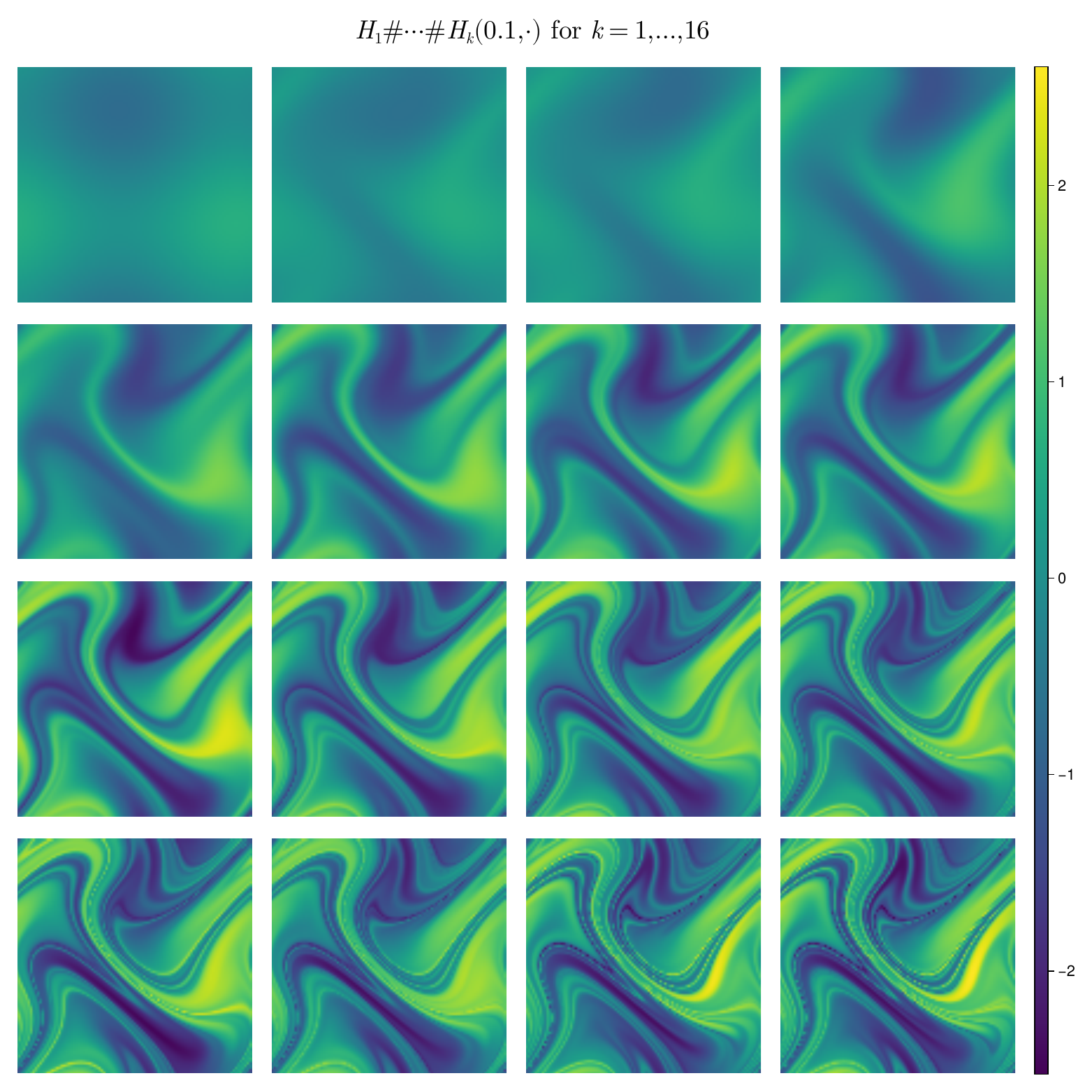}
    \caption{The evolution of the Hamiltonian function under the first 16 steps of the random walk for two draws at time $t=0.1$.}
    \label{fig:H-under-random-walk}
\end{figure}
This might lead us to speculate that the growth of $\E[d_{\Hof}(\id, \Phi_n)]$ might be faster than $\sqrt{n}$ in general.
If so, instead of an improved upper bound, we would need an improved lower bound on the growth of $\E[d_{\Hof}(\id, \Phi_n)]$.
This also seems to be a difficult problem.
The central limit theorem for quasimorphisms (Theorem~\ref{thm:q-clt}) cannot be applied in this case, since it always results in a lower bound of order $\sqrt{n}$.
It would thus be necessary to have a similar statement about the limiting distribution for another known lower bound of the Hofer norm, such 
as the boundary depth or some other quantitative invariant.

One small way in which we can get insight is to estimate the growth of $\tilde{H}^{\randomness}_n$ numerically. While this is not a true proxy for the Hofer norm, which could be vastly smaller, it can give us some insight into the behavior of $\tilde{H}^{\randomness}_n$.
Figure~\ref{fig:H-under-random-walk} shows a visualization of the evolution of the Hamiltonian function under the first 16 steps of the random walk for two draws at time $t=0.1$.
Note that in this non-commutative setting, the Hamiltonian function $\tilde{H}^{\randomness}_n$ is truly time-dependent.

The first thing to note is that the Hamiltonian function seems to become much less smooth over time, presenting a problem for any attempt to control the coefficients of the eigenfunction expansion.
The second thing to note is that the Hamiltonian function seems to form certain persistent patterns over time, and that the overall oscillation of the Hamiltonian function
therefore seems to grow faster than if we were to simply add up independent Gaussian processes,
since fewer cancellations occur.
The numerical experiments seem to confirm this, as can be seen in the figure below (Figure~\ref{fig:H-osc}), which shows the growth of
the expected value of $\osc(\tilde{H}^{\randomness}_n(0.1, \cdot))$ over the first 20 steps of the random walk for 600 independent draws.
\begin{figure}[h!]
    \centering
    \includegraphics[width=\textwidth]{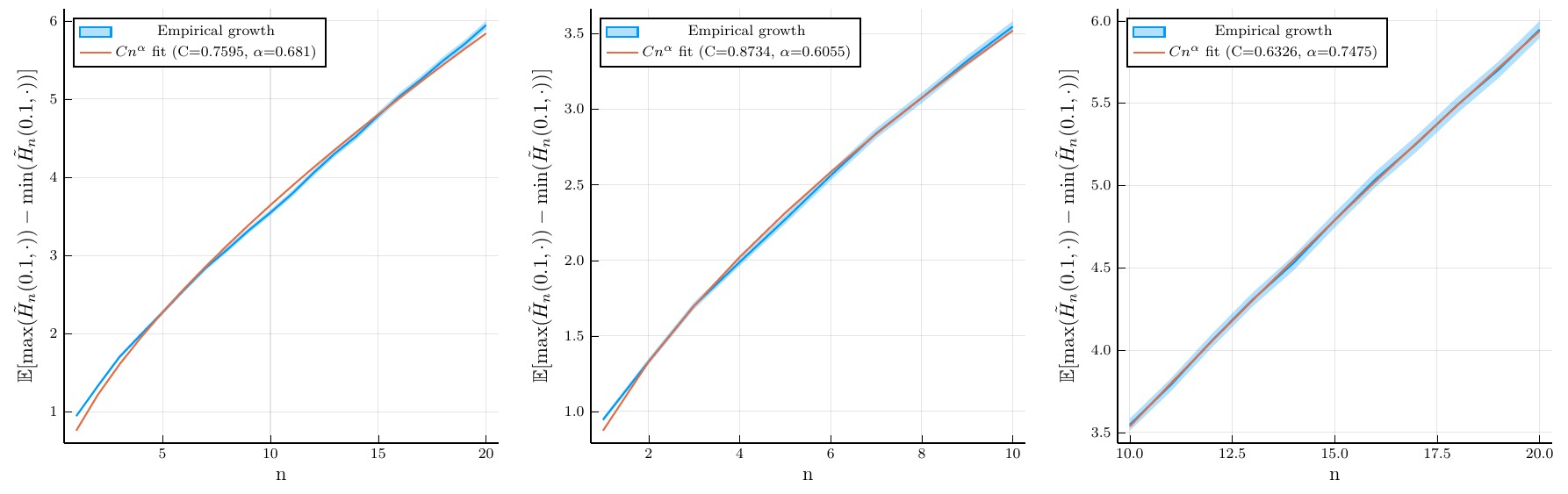}
    \caption{The growth of the expected value of the oscillation of the Hamiltonian function $\tilde{H}^{\randomness}_n$ at time $t=0.1$ after $n$ steps of the random walk. The growth is estimated from 600 independent draws.}\label{fig:H-osc}
\end{figure}
When we study this figure, we can see that the growth of the expected value of $\osc(\tilde{H}^{\randomness}_n(0.1, \cdot))$ seems to be faster than $\sqrt{n}$, and might even be linear in $n$. The figure contains curves of the form $C \cdot n^{\alpha}$ fitted to the data.
We can see that for the first few steps of the random walk, the growth is clearly sublinear, although it already seems to be faster than $\sqrt{n}$.
This makes some sense: given any two autonomous Hamiltonians $H_1$ and $H_2$, the oscillation of $H_1 \# H_2$ is exactly the same as the oscillation of $H_1 + H_2$. This phenomenon breaks down when we start concatenating more (non-commuting) Hamiltonians, and the oscillation of $H_1 \# H_2 \# H_3$ can be larger than the oscillation of $H_1 + H_2 + H_3$. Thus, the growth of the oscillation of $\tilde{H}^{\randomness}_n$ is slower in the beginning.
As we can see from the figure, the growth accelerates when we discard the first few steps. It seems entirely believable that we approach 
linear growth asymptotically, although we cannot make any rigorous statements about this at this time.
If one could rigorously show a similar growth rate for the Hofer norm of $\Phi_n$, this would imply that the random walk generated by a general law-defining datum $\mathcal{D}$ drifts away from the identity faster than a random walk on a flat space. This would reveal the presence of hyperbolic geometry in $\Ham(M, \omega)$ that has so far stayed undetected.

\bibliographystyle{alpha}
\bibliography{refs}

@article{branson-2011,
	author = {Branson, Mark},
	doi = {10.2140/agt.2011.11.1077},
	fjournal = {Algebraic \& Geometric Topology},
	issn = {1472-2747,1472-2739},
	journal = {Algebr. Geom. Topol.},
	mrclass = {53D35 (20F69 53D45)},
	mrnumber = {2792374},
	mrreviewer = {Vincent\ Humili\`ere},
	number = {2},
	pages = {1077--1096},
	title = {Symplectic manifolds with vanishing action-{M}aslov homomorphism},
	url = {https://doi.org/10.2140/agt.2011.11.1077},
	volume = {11},
	year = {2011}}

@article{gambaudo-ghys-2004,
	author = {Gambaudo, Jean-Marc and Ghys, \'Etienne},
	doi = {10.1017/S0143385703000737},
	fjournal = {Ergodic Theory and Dynamical Systems},
	issn = {0143-3857,1469-4417},
	journal = {Ergodic Theory Dynam. Systems},
	mrclass = {37E30 (20F28 37C05 57M60)},
	mrnumber = {2104597},
	mrreviewer = {Martin\ J.\ Schmoll},
	number = {5},
	pages = {1591--1617},
	title = {Commutators and diffeomorphisms of surfaces},
	url = {https://doi.org/10.1017/S0143385703000737},
	volume = {24},
	year = {2004}}

@article{brandenbursky-marcinkowski-shelukhin-2022,
	author = {Brandenbursky, Michael and Marcinkowski, Micha\l{} and Shelukhin, Egor},
	fjournal = {Selecta Mathematica. New Series},
	journal = {Selecta Math. (N.S.)},
	number = {4},
	pages = {Paper No. 74, 20},
	title = {The {S}chwarz-{M}ilnor lemma for braids and area-preserving diffeomorphisms},
	volume = {28},
	year = {2022}}

@article{shnirelman-1994,
	author = {Shnirelman, A. I.},
	doi = {10.1007/BF01896409},
	fjournal = {Geometric and Functional Analysis},
	journal = {Geom. Funct. Anal.},
	number = {5},
	pages = {586--620},
	title = {Generalized fluid flows, their approximation and applications},
	volume = {4},
	year = {1994}}

@article{cheuk-yu-trifa-2026,
	author = {Mak, Cheuk Yu and Trifa, Ibrahim},
	doi = {10.4171/cmh/601},
	fjournal = {Commentarii Mathematici Helvetici. A Journal of the Swiss Mathematical Society},
	issn = {0010-2571,1420-8946},
	journal = {Comment. Math. Helv.},
	mrclass = {53D40 (37E30 37J06)},
	mrnumber = {5042352},
	number = {2},
	pages = {345--392},
	title = {Hameomorphism groups of positive genus surfaces},
	url = {https://doi.org/10.4171/cmh/601},
	volume = {101},
	year = {2026}}

@article{milnor-1976,
	author = {John Milnor},
	doi = {https://doi.org/10.1016/S0001-8708(76)80002-3},
	issn = {0001-8708},
	journal = {Advances in Mathematics},
	number = {3},
	pages = {293-329},
	title = {Curvatures of left invariant metrics on lie groups},
	url = {https://www.sciencedirect.com/science/article/pii/S0001870876800023},
	volume = {21},
	year = {1976}}

@article{polterovich-shelukhin-2016,
	author = {Polterovich, Leonid and Shelukhin, Egor},
	doi = {10.1007/s00029-015-0201-2},
	fjournal = {Selecta Mathematica. New Series},
	issn = {1022-1824,1420-9020},
	journal = {Selecta Math. (N.S.)},
	mrclass = {53D05 (37K05)},
	mrnumber = {3437837},
	mrreviewer = {Karl\ Friedrich\ Siburg},
	number = {1},
	pages = {227--296},
	title = {Autonomous {H}amiltonian flows, {H}ofer's geometry and persistence modules},
	url = {https://doi.org/10.1007/s00029-015-0201-2},
	volume = {22},
	year = {2016}}

@article{schwarz-1975,
	author = {Schwarz, Gerald W.},
	doi = {10.1016/0040-9383(75)90036-1},
	fjournal = {Topology. An International Journal of Mathematics},
	issn = {0040-9383},
	journal = {Topology},
	mrclass = {58C25 (57E15)},
	mrnumber = {370643},
	mrreviewer = {G.\ R.\ Belitski\u i},
	pages = {63--68},
	title = {Smooth functions invariant under the action of a compact {L}ie group},
	url = {https://doi.org/10.1016/0040-9383(75)90036-1},
	volume = {14},
	year = {1975}}

@article{guillemin-1994,
	author = {Guillemin, Victor},
	fjournal = {Journal of Differential Geometry},
	issn = {0022-040X,1945-743X},
	journal = {J. Differential Geom.},
	number = {2},
	pages = {285--309},
	title = {Kaehler structures on toric varieties},
	volume = {40},
	year = {1994}}

@article{watson-1973,
	author = {Watson, Bill},
	doi = {10.4310/jdg/1214431482},
	issn = {0022-040X},
	journal = {Journal of Differential Geometry},
	month = jan,
	number = {1},
	title = {Manifold maps commuting with the {Laplacian}},
	volume = {8},
	year = {1973}}

@article{cheng-1975,
	author = {Cheng, Shiu-Yuen},
	doi = {10.1007/BF01214381},
	issn = {1432-1823},
	journal = {Mathematische Zeitschrift},
	month = oct,
	number = {3},
	pages = {289--297},
	title = {Eigenvalue comparison theorems and its geometric applications},
	volume = {143},
	year = {1975}}

@article{donnelly-2001,
	author = {Donnelly, Harold},
	doi = {10.1006/jfan.2001.3817},
	issn = {0022-1236},
	journal = {Journal of Functional Analysis},
	month = dec,
	number = {1},
	pages = {247--261},
	title = {Bounds for {Eigenfunctions} of the {Laplacian} on {Compact} {Riemannian} {Manifolds}},
	volume = {187},
	year = {2001}}

@article{bavard-1991,
	author = {Bavard, Christophe},
	fjournal = {L'Enseignement Math\'ematique. Revue Internationale. 2e S\'erie},
	issn = {0013-8584},
	journal = {Enseign. Math. (2)},
	mrclass = {20F12 (20J05 57M07)},
	mrnumber = {1115747},
	mrreviewer = {Darryl\ McCullough},
	number = {1-2},
	pages = {109--150},
	title = {Longueur stable des commutateurs},
	volume = {37},
	year = {1991}}

@article{zapolsky-2013,
	author = {Zapolsky, Frol},
	journal = {Journal of Symplectic Geometry},
	month = aug,
	number = {3},
	pages = {475--488},
	title = {On the {Hofer} geometry for weakly exact {Lagrangian} submanifolds},
	volume = {11},
	year = {2013}}

@article{usher-2014,
	author = {Michael Usher},
	journal = {Journal of Symplectic Geometry},
	number = {3},
	pages = {619 -- 656},
	publisher = {International Press of Boston},
	title = {{Hofer Geometry and cotangent fibers}},
	volume = {12},
	year = {2014}}

@article{py-2008,
	author = {Py, Pierre},
	doi = {10.1515/CRELLE.2008.053},
	journal = {Journal f{\"u}r die reine und angewandte Mathematik},
	pages = {185--193},
	title = {Quelques plats pour la m\'etrique de {Hofer}},
	volume = {620},
	year = {2008}}

@article{usher-2013,
	author = {Usher, Michael},
	doi = {10.24033/asens.2185},
	journal = {Annales scientifiques de l'{\'E}cole Normale Sup{\'e}rieure},
	number = {1},
	pages = {57--129},
	series = {4},
	title = {{Hofer}'s metrics and boundary depth},
	volume = {46},
	year = {2013}}

@article{stevenson-2018,
	author = {Stevenson, Bret},
	journal = {Israel Journal of Mathematics},
	number = {1},
	pages = {141--195},
	title = {A quasi-isometric embedding into the group of {Hamiltonian} diffeomorphisms with {Hofer}'s metric},
	volume = {223},
	year = {2018}}

@article{polterovich-shelukhin-2023,
	author = {Polterovich, Leonid and Shelukhin, Egor},
	doi = {10.1112/S0010437X23007455},
	journal = {Compositio Mathematica},
	number = {12},
	pages = {2483--2520},
	title = {Lagrangian configurations and {Hamiltonian} maps},
	volume = {159},
	year = {2023}}

@article{dawid-2024,
	author = {Dawid, Adrian},
	issn = {1540-2347},
	journal = {Journal of Symplectic Geometry},
	number = {6},
	pages = {1235--1286},
	publisher = {International Press of Boston},
	title = {Hofer geometry of ${A}_3$-configurations},
	volume = {23},
	year = {2025}}

@misc{edtmair-2025,
	archiveprefix = {arXiv},
	author = {Oliver Edtmair},
	eprint = {2509.16327},
	primaryclass = {math.SG},
	title = {Smooth perfectness of Hamiltonian diffeomorphism groups},
	url = {https://arxiv.org/abs/2509.16327},
	year = {2025}}

@article{fukaya-oh-ohta-ono-2019,
	author = {Fukaya, Kenji and Oh, Yong-Geun and Ohta, Hiroshi and Ono, Kaoru},
	doi = {10.1090/memo/1254},
	journal = {Memoirs of the American Mathematical Society},
	number = {1254},
	title = {Spectral invariants with bulk, quasimorphisms and {L}agrangian {F}loer theory},
	volume = {260},
	year = {2019}}

@article{usher-2011,
	author = {Usher, Michael},
	doi = {10.2140/gt.2011.15.1313},
	journal = {Geometry \& Topology},
	number = {3},
	pages = {1313--1417},
	title = {Deformed {H}amiltonian {F}loer theory, capacity estimates, and {C}alabi quasimorphisms},
	volume = {15},
	year = {2011}}

@article{biran-entov-polterovich-2004,
	author = {Biran, Paul and Entov, Michael and Polterovich, Leonid},
	doi = {10.1142/S0219199704001525},
	journal = {Communications in Contemporary Mathematics},
	number = {5},
	pages = {793--802},
	title = {Calabi quasimorphisms for the symplectic ball},
	volume = {6},
	year = {2004}}

@article{entov-polterovich-2003,
	author = {Entov, Michael and Polterovich, Leonid},
	doi = {10.1155/S1073792803210011},
	journal = {International Mathematics Research Notices},
	number = {30},
	pages = {1635--1676},
	title = {Calabi quasimorphism and quantum homology},
	volume = {2003},
	year = {2003}}

@article{stout-1970,
	author = {William F. Stout},
	issn = {00034851, 21688990},
	journal = {The Annals of Mathematical Statistics},
	number = {6},
	pages = {2158--2160},
	publisher = {Institute of Mathematical Statistics},
	title = {The Hartman-Wintner Law of the Iterated Logarithm for Martingales},
	url = {http://www.jstor.org/stable/2240358},
	urldate = {2026-02-18},
	volume = {41},
	year = {1970}}

@article{billingsley-1961,
	author = {Patrick Billingsley},
	issn = {00029939, 10886826},
	journal = {Proceedings of the American Mathematical Society},
	number = {5},
	pages = {788--792},
	publisher = {American Mathematical Society},
	title = {The Lindeberg-L{\'e}vy Theorem for Martingales},
	url = {http://www.jstor.org/stable/2034876},
	urldate = {2026-02-10},
	volume = {12},
	year = {1961}}

@article{bjorklund-hartnick-2011,
	author = {Bj{\"o}rklund, Michael and Hartnick, Tobias},
	doi = {10.2140/gt.2011.15.123},
	issn = {1465-3060},
	journal = {Geometry \& Topology},
	month = jan,
	number = {1},
	pages = {123--143},
	publisher = {Mathematical Sciences Publishers},
	title = {Biharmonic functions on groups and limit theorems for quasimorphisms along random walks},
	volume = {15},
	year = {2011}}

@article{dawid-2025,
	archiveprefix = {arXiv},
	author = {Adrian Dawid},
	eprint = {2510.03190},
	note = {\textit{arXiv:2510.03190}},
	primaryclass = {math.SG},
	title = {{R}andom {H}amiltonians {I}: {P}robability measures and random walks on the {H}amiltonian diffeomorphism group},
	url = {https://arxiv.org/abs/2510.03190},
	year = {2025}}

@article{cristofaro-gardiner-humiliere-mak-seyfaddini-smith-2022,
	author = {Cristofaro-Gardiner, Daniel and Humili{\`e}re, Vincent and Mak, Cheuk Yu and Seyfaddini, Sobhan and Smith, Ivan},
	doi = {10.1017/fmp.2022.18},
	journal = {Forum of Mathematics, Pi},
	pages = {e27},
	title = {Quantitative Heegaard Floer cohomology and the Calabi invariant},
	volume = {10},
	year = {2022}}

@book{villani-2009,
	author = {Villani, C\'edric},
	doi = {10.1007/978-3-540-71050-9},
	isbn = {978-3-540-71049-3},
	mrclass = {49-02 (28A75 37J50 49Q20 53C23 58E30)},
	mrnumber = {2459454},
	mrreviewer = {Dario\ Cordero-Erausquin},
	note = {Old and new},
	pages = {xxii+973},
	publisher = {Springer-Verlag, Berlin},
	series = {Grundlehren der mathematischen Wissenschaften [Fundamental Principles of Mathematical Sciences]},
	title = {Optimal transport},
	url = {https://doi.org/10.1007/978-3-540-71050-9},
	volume = {338},
	year = {2009}}

@book{mcduff-salamon-2017,
	author = {McDuff, Dusa and Salamon, Dietmar},
	doi = {10.1093/oso/9780198794899.001.0001},
	edition = {Third},
	isbn = {978-0-19-879490-5; 978-0-19-879489-9},
	mrclass = {53D35 (53D40 57R17 57R57 57R58)},
	mrnumber = {3674984},
	mrreviewer = {Hansj\"org\ Geiges},
	pages = {xi+623},
	publisher = {Oxford University Press, Oxford},
	series = {Oxford Graduate Texts in Mathematics},
	title = {Introduction to symplectic topology},
	url = {https://doi.org/10.1093/oso/9780198794899.001.0001},
	year = {2017}}

\end{document}